\DeclareMathOperator{\maxdeg}{maxdeg}
\DeclareMathOperator{\Ext}{Ext}
\DeclareMathOperator{\Cl}{Cl}
\DeclareMathOperator{\Isom}{Isom}
\newcommand{\nfp}{\mathrm{nfp}}
\newcommand{\FI}{\mathbf{FI}}
\newcommand{\univ}{\mathrm{univ}}
\newcommand{\stacks}[1]{\cite[\href{http://stacks.math.columbia.edu/tag/#1}{Tag~#1}]{stacks}}
\DeclareMathOperator{\uExt}{\ul{Ext}}
\newcommand{\uK}{\ul{\rK}}
\newcommand{\uMod}{\ul{\Mod}}
\DeclareMathOperator{\ini}{in}
\DeclareMathOperator{\sd}{sd}
\newcommand{\fpres}{\mathrm{fp}}
\newcommand{\lpp}{(\!(}
\newcommand{\rpp}{)\!)}
\newcommand{\coloneq}{\mathrel{\mathop:}\mkern-1.2mu=}
\title{The module theory of divided power algebras}
\date{\today}
\author{Rohit Nagpal}
\address{Department of Mathematics, University of Chicago, Chicago, IL}
\email{\href{mailto:nagpal@math.uchicago.edu}{nagpal@math.uchicago.edu}}
\urladdr{\url{http://math.uchicago.edu/~nagpal/}}
\author{Andrew Snowden}
\address{Department of Mathematics, University of Michigan, Ann Arbor, MI}
\email{\href{mailto:asnowden@umich.edu}{asnowden@umich.edu}}
\urladdr{\url{http://www-personal.umich.edu/~asnowden/}}
\thanks{AS was supported by NSF grants DMS-1303082 and DMS-1453893.}
\subjclass{13C, 13P10, 16Z05, 14F30}
\begin{document}

\begin{abstract}
We study modules for the divided power algebra $\bD$ in a single variable over a commutative noetherian ring $\bk$. Our first result states that $\bD$ is a coherent ring. In fact, we show that there is a theory of Gr\"obner bases for finitely generated ideals, and so computations with finitely presented $\bD$-modules are in principle algorithmic. We go on to determine much about the structure of finitely presented $\bD$-modules, such as: existence of certain nice resolutions, computation of the Grothendieck group, results about injective dimension, and how they interact with torsion modules. Our results apply not just to the classical divided power algebra, but to its $q$-variant as well, and even to a much broader class of algebras we introduce called ``generalized divided power algebras.'' On the other hand, we show that the divided power algebra in two variables over $\bZ_p$ is not coherent.
\end{abstract}

\maketitle
\tableofcontents

\section{Introduction}

Let $\bD$ be the divided power algebra in a variable $x$ over the commutative ring $\bk$. Recall that $\bD$ is free as a $\bk$-module with basis $x^{[0]}, x^{[1]}, \ldots$, and multiplication is defined by
\begin{displaymath}
x^{[n]} x^{[m]} = \binom{n+m}{n} x^{[n+m]}.
\end{displaymath}
If $\bQ \subset \bk$ then $\bD$ is isomorphic to the polynomial ring $\bk[x]$ via $x^{[n]} \mapsto \frac{x^n}{n!}$. However, in general $\bD$ is quite different from the polynomial ring: for example, if $\bk=\bF_p$ then $\bD$ is not even noetherian.

This paper is an investigation of the theory of $\bD$-modules. Even though $\bD$ is typically non-noetherian, we show that finitely presented $\bD$-modules are well-behaved, and prove a variety of results concerning them. In the remainder of the introduction, we summarize our results and explain our motivation for studying $\bD$-modules.

\subsection{Summary of results}

Recall that a ring $R$ is {\bf coherent} if every finitely generated ideal is finitely presented. Equivalently, $R$ is coherent if the kernel of any map of finitely presented modules is again finitely presented; this ensures that the category of finitely presented modules is abelian. Our first result about $\bD$ is:

\begin{theorem} \label{mainthm:coh}
If $\bk$ is noetherian then $\bD$ is coherent.
\end{theorem}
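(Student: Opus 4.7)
The plan is to develop a Gr\"obner basis theory for $\bD$ adapted to its unusual multiplication. The natural grading on $\bD$ (with $x^{[n]}$ in degree $n$) makes leading terms easy to define, but the product rule $x^{[n]} x^{[m]} = \binom{n+m}{n} x^{[n+m]}$ introduces binomial coefficients that significantly complicate the theory. For $f \in \bD \setminus \{0\}$, let $\maxdeg(f)$ denote the top degree and $\mathrm{lc}(f)$ the corresponding leading coefficient. For an ideal $I \subseteq \bD$, define the initial ideal $\ini(I)$ as the graded submodule whose degree-$n$ piece is spanned by $\mathrm{lc}(f) \cdot x^{[n]}$ for $f \in I$ with $\maxdeg(f) = n$. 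Since the associated graded ring of $\bD$ with respect to the degree filtration is again $\bD$, $\ini(I)$ is itself a graded ideal.

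The main technical step is to show that if $I$ is finitely generated, then $\ini(I)$ is finitely generated in an appropriate sense. Writing $\ini(I) = \bigoplus_n J_n \cdot x^{[n]}$, the ideals $J_n \subseteq \bk$ satisfy the compatibility $\binom{n+m}{n} J_n \subseteq J_{n+m}$; call such a sequence admissible. When $I$ is finitely generated by $f_1, \ldots, f_r$, one gets an explicit description of the contribution of each generator to $J_n$ through multiplication by $x^{[k]}$'s (with binomials), and the goal is to show that, under the noetherian hypothesis on $\bk$, only finitely many new generators of the $J_n$ arise beyond those accounted for by the $f_i$.

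Given this, I would lift generators of $\ini(I)$ to a finite Gr\"obner basis $g_1, \ldots, g_s$ of $I$ and set up a division algorithm producing normal forms modulo $I$. A Buchberger-style criterion would then show that the syzygy module of $(g_1, \ldots, g_s)$ in $\bD^s$ is generated by finitely many $S$-pair-like relations; this gives the desired finite presentation of $I$ and hence coherence.

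The main obstacle will be the second step. The ideals $J_n$ do not form an ascending chain under inclusion---one only has $\binom{n+m}{n} J_n \subseteq J_{n+m}$, and the binomial can vanish or kill content, so the noetherianness of $\bk$ does not apply directly to the sequence $(J_n)$. The crux will be identifying the right ascending chain of ideals of $\bk$ (for instance, certain saturations of the $J_n$, or ideals that parametrize admissible sequences in an auxiliary noetherian ring) that does stabilize under the noetherian hypothesis and carries enough data to produce a finite Gr\"obner basis.
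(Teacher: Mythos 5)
Your proposal correctly identifies the crux --- finite generation of the initial ideal $\ini(I)$ --- but leaves it unproved, and the route you sketch for it would not work as stated. The sequence of coefficient ideals $J_n$ genuinely fails to stabilize in any useful sense: already for $I=(x^{[1]})$ over $\bk=\bZ_p$ one computes $J_n=(n)$, so $J_{p^k}=(p^k)$ is strictly \emph{descending} along $n=p^k$, and no saturation of the $J_n$ recovers the situation. The correct statement is that $\ini(I)$ is finitely generated \emph{as an ideal of $\bD$} (here $\ini((x^{[1]}))=(x^{[1]})$ is principal even though the $J_n$ shrink forever), and this does not follow from any chain condition on ideals of $\bk$. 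There is also a second, unacknowledged gap: even granting a finite Gr\"obner basis and a division algorithm, a Buchberger-type argument gives a finite presentation of $I$ only if one already knows the syzygies among the $S$-pair reductions are finitely generated, which is essentially the graded-coherence one is trying to prove; without an independent input the argument is circular.

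The paper's proof avoids Gr\"obner combinatorics entirely at the crucial step and instead runs a noetherian induction on $\bk$. After disposing of two easy cases --- $\bk$ not a domain (split $\bD$ using a zero divisor $xy=0$ and apply the inductive hypothesis to $\bD/x\bD$ and $\bD/y\bD$), and infinitely many $\pi_n=0$ (then $\bD$ is a rising union of subrings finite free over $\bk$, hence coherent by Soublin) --- one reduces to $\bk$ a domain with all $\pi_n\ne 0$ for $n\ge 2$ (after an $h$-transform). The key observation is that $\bD\otimes_\bk\Frac(\bk)\cong \Frac(\bk)[x]$, so a finitely generated ideal $I$ becomes principal generically; clearing denominators replaces $I$ by an isomorphic ideal $J$ containing a nonzero element $c\in\bk$, and the exact sequence $0\to c\bD\to J\to \ol{J}\to 0$ with $\ol{J}\subset\bD/c\bD$ finitely presented by the inductive hypothesis gives finite presentation of $J$, hence of $I$. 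The finite generation of $\ini(I)$ then falls out of the identity $a\,\ini(I)=\ini(z)\ini(J)$ rather than being attacked head-on. If you want to rescue your approach, the missing idea is precisely this passage to the generic fiber plus induction on quotients of $\bk$; a purely ``coefficient-ideal'' analysis inside $\bk$ will not see why $I$ is finitely presented.
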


In fact, our results are more precise, in two ways. First, we give a complete characterization of the rings $\bk$ for which $\bD$ is graded-coherent (meaning any finitely generated homogeneous ideal is finitely presented): namely, $\bD$ is graded-coherent if and only if $\bk$ is coherent and for any finitely generated ideal $\fa$ of $\bk$, the torsion submodule of $\bk/\fa$ has finite exponent. And second, when $\bk$ is noetherian, we actually prove that $\bD$ is {\bf Gr\"obner-coherent}. This is a notion we introduce in \cite[\S 4]{grobcoh}, which basically means that there is a good theory of Gr\"obner bases for finitely generated ideals in $\bD$. In particular, this means that calculations with finitely presented $\bD$-modules are algorithmic, at least in principle.

We next investigate the structure of finitely presented $\bD$-modules, working under the assumption that $\bk$ is noetherian. We first construct certain nice resolutions. In general, finitely presented $\bD$-modules do not have finite projective dimension. There are two obsturctions. First, if $M$ is a $\bk$-module with infinite projective dimension then $M \otimes_{\bk} \bD$ has infinite projective dimension as a $\bD$-module. And second, if $\bk/p\bk$ is non-zero and $q>1$ is a power of $p$ then
%\rohit{replaces $=$ by $\coloneq$ here to take care of the referee's comment.}
\begin{displaymath}
(\bD/p \bD)^{(q)} \coloneq \bigoplus_{q \mid n} (\bk/p\bk) x^{[n]}
\end{displaymath}
is naturally a $\bD$-module, and does not have finite projective dimension. We therefore introduce a class of modules that includes the above two counterexamples: we say that a $\bD$-module $N$
%\rohit{Earlier we wrote $M$ which seems confusing as we use $M$ later in the sentence for something else.}
is {\bf special} if it has a finite length filtration such that the graded pieces have the form $(M \otimes_{\bk} \bD)^{(q)}$, where $M$ is a finitely generated $\bk$-module, $q$ is a power of a prime $p$, and $pM=0$ if $q \ne 1$. We then prove:

\begin{theorem} \label{intro:sr}
Every finitely presented $\bD$-module $M$ admits a finite resolution
\begin{displaymath}
0 \to P_r \to \cdots \to P_0 \to M \to 0
\end{displaymath}
by special modules. In fact, one can take $P_0, \ldots, P_{r-1}$ to be free and $P_r$ to be special, and one can bound $r$ in terms of the Krull dimension of $\bk$ (if it is finite).
\end{theorem}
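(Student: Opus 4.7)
The plan is to construct the resolution iteratively by taking free covers and proving that the resulting syzygies eventually become special. By Theorem \ref{mainthm:coh}, $\bD$ is coherent, so starting from a finite free presentation $\bD^{n_0} \twoheadrightarrow M$ and iterating, each syzygy module $\Omega^i M$ is again finitely presented. Setting $P_i = \bD^{n_i}$ for $i < r$ and $P_r = \Omega^r M$ (once this is special) yields the desired resolution. Thus the entire content of the theorem is that $\Omega^r M$ becomes special after a number of steps bounded in terms of $\dim \bk$.

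First I would dispose of the case where $\bk$ is a field. In characteristic $0$, $\bD \cong \bk[x]$ is a PID, every f.p.~module has projective dimension at most one, and free modules are special with $q=1$. In characteristic $p$, I would appeal directly to the Gr\"obner-basis theory from \cite{grobcoh}: the leading-term ideal of a finitely generated homogeneous ideal in $\bD_{\bF_p}$ has a very restricted combinatorial shape, supported essentially on a union of Frobenius-twisted arithmetic progressions $\{n : q \mid n\}$. Reading off the graded pieces of the Gr\"obner normal form should exhibit a f.p.~module as special with $r=0$, giving the base case $\dim \bk = 0$ (after first reducing an Artinian ring to a finite product of local Artinian rings and then to residue fields via nilpotent extensions).

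To handle noetherian $\bk$ I would induct on $d = \dim \bk$. Given $d > 0$ and a f.p.~module $M$, choose a non-zerodivisor $f \in \bk$. After one free cover to replace $M$ by a syzygy that is $f$-torsion-free (the $f$-torsion part being a finite-exponent $\bk$-module and so contributing a special piece), consider the short exact sequence $0 \to M \xrightarrow{f} M \to M/fM \to 0$. The quotient $M/fM$ is f.p.~over $\bD_{\bk/f\bk}$, which has Krull dimension $d-1$, so by induction admits a special resolution of bounded length. Lifting this resolution back to $\bD_{\bk}$ via standard change-of-rings should cost one additional step, giving the bound $r \le \dim \bk + c$.

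The main obstacle I anticipate is pinning down the precise Krull-dimension bound and, relatedly, showing that the class of special modules is robust enough for the lifting step to go through. In particular, one must verify that an extension of a special $\bD_{\bk/f\bk}$-module by a free $\bD_{\bk}$-module (as arises in the lift) is itself special, and that syzygies of $f$-torsion-free f.p.~modules interact well with reduction modulo $f$. Because ``special'' is defined via a filtration rather than an intrinsic closure property, establishing these stability properties — ideally by giving a more intrinsic characterization of specialness in terms of Gr\"obner data — is where I expect the bulk of the technical work to lie.
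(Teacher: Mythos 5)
Your overall frame (iterate free covers, show the $r$-th syzygy is special, induct on the dimension of $\bk$) is reasonable, but two points need attention. First, the base case: over a field the correct conclusion is $\sd(M)\le 1$, not $0$. When only finitely many $\pi_n$ vanish in $\bk$ (e.g.\ $\bD=\bk[x]$ over a characteristic-$0$ field), a finitely presented module such as $\bk[x]/(x)$ is \emph{not} special — over such a $\bD$ the special modules are exactly the free ones — and what is true is only that submodules of finite free modules are special. This weaker statement is in fact the form of the field case that the paper uses.

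The genuine gap is in your inductive step. The sequence $0 \to M \stackrel{f}{\to} M \to M/fM \to 0$ relates $M$ to itself, so no formal manipulation of special dimensions in short exact sequences can extract a bound on $\sd(M)$ from $\sd(M/fM)$. The analogous change-of-rings statement for projective dimension ($\mathrm{pd}_R M = \mathrm{pd}_{R/f} (M/fM)$ for $f$ regular on $R$ and $M$) is proved via minimal resolutions and the $\Tor$-characterization of projective dimension over a local ring; no such characterization of special dimension is available (the paper explicitly lists ``can special dimension be detected by the vanishing of a derived functor?'' as an open problem), so the ``standard change-of-rings'' lifting you invoke does not exist, and this is exactly where the bulk of the difficulty sits. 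The paper avoids reduction mod $f$ altogether: for $\bk$ a domain with fraction field $\bK$ it takes one syzygy $N\subset F$, observes that $N\otimes_\bk\bK$ is special over $\bD\otimes_\bk\bK$ by the field case, produces a special $\bD$-module $P$ (a ``special lattice'') together with a map $N\to P$ that becomes an isomorphism over $\bK$, and then applies \emph{noetherian} induction to the kernel and cokernel of $N\to P$, both of which have nonzero annihilator in $\bk$; the non-domain case is handled via $0\to xM\to M\to M/xM\to 0$ with $xy=0$. Only elementary inequalities of the form $\sd(M_i)\le\max(\sd(M_j),\sd(M_k))+1$ for short exact sequences are needed, and these are proved directly from the definition. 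To salvage your route you would need an actual comparison theorem between special resolutions over $\bD$ and over $\bD/f\bD$, which is substantially harder than the generic-fiber argument.
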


As consequence of this theorem, we obtain a useful spanning set for the Grothendieck group $\rK(\bD)$ of finitely presented $\bD$-modules. With more work, we prove:

\begin{theorem} \label{intro:intK}
There is a canonical short exact sequence
\begin{displaymath}
0 \to \rK(\bk) \to \rK(\bD) \to \bigoplus_p \bQ_p/\bZ_p \otimes \rK(\bk/p \bk) \to 0,
\end{displaymath}
where the direct sum is taken over all prime numbers. This sequence splits if and only if for every prime $p$ the natural map $\rK(\bk/p \bk) \to \rK(\bk)$ is zero.
\end{theorem}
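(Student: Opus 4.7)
The plan is to present $\rK(\bD)$ via generators and relations using Theorem~\ref{intro:sr}, construct the two maps of the sequence, and analyze when the sequence admits a section. By Theorem~\ref{intro:sr}, every finitely presented $\bD$-module has a finite resolution by free-plus-special modules; since each special module admits a finite filtration with graded pieces of the form $(N \otimes_\bk \bD)^{(p^n)}$ (with $n \geq 1$ and $pN=0$), it follows that $\rK(\bD)$ is generated by the classes $[M \otimes \bD]$ (for $M$ a finitely generated $\bk$-module) and $[(N \otimes \bD)^{(p^n)}]$. The homomorphism $\iota \colon \rK(\bk) \to \rK(\bD)$, $[M] \mapsto [M \otimes \bD]$, is well-defined because $\bD$ is flat over $\bk$.

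The crux is the identity
\[
p \cdot [(N \otimes \bD)^{(p^n)}] = [(N \otimes \bD)^{(p^{n-1})}] \quad \text{in } \rK(\bD) \qquad (n \geq 1),
\]
with the convention $(N \otimes \bD)^{(p^0)} \coloneq N \otimes \bD$. I would prove it by checking that sending the generator of $(\bD/p\bD)^{(p^n)}$ to $x^{[p^n-1]}$ extends to an injective $\bD$-module homomorphism $g_n \colon (\bD/p\bD)^{(p^n)} \hookrightarrow \bD/p\bD$: the identities $x^{[i]} \cdot x^{[p^n-1]} \equiv 0 \pmod p$ for $0 < i < p^n$ are a consequence of Kummer's theorem on the $p$-adic valuation of binomial coefficients. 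Because $\mathrm{im}(g_n) \subseteq \mathrm{im}(g_{n-1})$, one obtains a short exact sequence
\[
0 \to (\bD/p\bD)^{(p^n)} \to (\bD/p\bD)^{(p^{n-1})} \to Q_n \to 0.
\]
A direct computation with the same binomial identities produces a length-$(p-1)$ filtration of $Q_n$ whose associated graded pieces are each $\bD$-isomorphic to $(\bD/p\bD)^{(p^n)}$; hence $[Q_n] = (p-1)\,[(\bD/p\bD)^{(p^n)}]$, giving the relation. The case of general $N$ follows by tensoring over $\bk/p\bk$, an exact operation.

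Next, define $\pi \colon \rK(\bD) \to \bigoplus_p \bQ_p/\bZ_p \otimes \rK(\bk/p\bk)$ on generators by $[M \otimes \bD] \mapsto 0$ and $[(N \otimes \bD)^{(p^n)}] \mapsto (1/p^n) \otimes [N]$. The key relation is compatible with the identity $p \cdot (1/p^n) \otimes [N] = (1/p^{n-1}) \otimes [N]$ in the target, and the ``intra-level'' relations (fixed $p,n$, varying $N$) are handled by exactness of the tensor functors. The main obstacle is to show that these relations genuinely \emph{present} $\rK(\bD)$: every short exact sequence in $\bD$-mod must reduce, via horseshoe-type arguments applied to the finite free-plus-special resolutions of Theorem~\ref{intro:sr}, to a combination of the above. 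Granting this, injectivity of $\iota$, surjectivity of $\pi$, and exactness at the middle all follow from the presentation.

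Finally, any section $\tau$ of $\pi$ must satisfy $\tau((1/p^n) \otimes [N]) = [(N \otimes \bD)^{(p^n)}] + \iota(c_{n,N})$ for some $c_{n,N} \in \rK(\bk)$, since any two lifts differ by an element of $\ker\pi = \iota(\rK(\bk))$. The torsion relation $p^n \cdot (1/p^n) \otimes [N] = 0$, together with iterating the key relation, forces $[N] + p^n c_{n,N} = 0$ in $\rK(\bk)$. The canonical choice $c_{n,N}=0$ works precisely when $[N]=0$ in $\rK(\bk)$ for every $N$ in the image of $\rK(\bk/p\bk) \to \rK(\bk)$ — i.e., exactly when this map is zero. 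The converse direction reduces to showing that in the contrary case, no consistent family $\{c_{n,N}\}$ with $c_{n,N} = p \, c_{n+1,N}$ and $p^n c_{n,N} = -[N]$ can exist, which is the final delicate step.
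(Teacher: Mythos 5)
Your generators and your key relation are correct: the classes $[M\otimes_{\bk}\bD]$ and $[(N\otimes_{\bk}\bD)^{(p^n)}]$ do span $\rK(\bD)$ by the special resolution theorem, and the identity $p\,[(N\otimes\bD)^{(p^n)}]=[(N\otimes\bD)^{(p^{n-1})}]$ is exactly Proposition~\ref{prop:mah}(b); your proof of it via the embedding $1\mapsto x^{[p^n-1]}$ and a length-$(p-1)$ filtration of the quotient is sound and essentially equivalent to the paper's filtration of $(\bD/p\bD)^{(p^{n-1})}_{<p^n}\otimes(\bD/p\bD)^{(p^n)}$.

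The genuine gap is the step you flag and then defer: showing that these relations \emph{present} $\rK(\bD)$, equivalently that $\pi$ is well defined and $\iota$ is injective. This is not a technicality --- it is the entire content of the theorem, and ``horseshoe-type arguments applied to special resolutions'' will not deliver it. The relations in a Grothendieck group are indexed by \emph{all} short exact sequences of finitely presented modules, and there is no a priori reduction of an arbitrary such sequence to combinations of the filtration relations; proving that such a reduction exists is logically equivalent to proving the theorem. The paper's solution is to construct the invariants \emph{intrinsically} so that additivity is automatic: the Hilbert series $\rH_M(t)=\sum[M_n]t^n$ (which detects the $\rK(\bk)$-part and gives injectivity of $\iota$ after inverting the $t^n-1$), and the $\rL$ invariant $\rL^0_M=\sum_i(-1)^i[\Tor_i^{\bD}(M,\bk)]_+$, whose additivity on short exact sequences follows from the long exact sequence of $\Tor$ (Lemmas~\ref{lem:L1}--\ref{lem:L3}) and whose value on $M(\fa,h)$ is $[\bk/\fa]_+/(1-t^h)$, so that it picks off exactly the $\bQ_p/\bZ_p$-part. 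Note also that both invariants, and the $\Tor$-vanishing they rely on (Proposition~\ref{prop:tor-vanishing}), live in the \emph{graded} world; the paper first computes $\uK(\bD)$ and then descends to $\rK(\bD)$ by a separate localization/d\'evissage argument (Proposition~\ref{prop:Kgrade}, $\rK(\bD)\cong\uK(\bD)/(t-1)$). Your all-ungraded approach has no substitute for either ingredient. Finally, a Chinese-remainder argument (Proposition~\ref{prop:classast}) is needed to separate the contributions of distinct primes, and your analysis of the splitting criterion likewise stops short: you derive the necessary condition $[N]=-p^nc_{n,N}$ but leave open why no compatible family $\{c_{n,N}\}$ can exist when some $\nu_p\colon\rK(\bk/p\bk)\to\rK(\bk)$ is nonzero. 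So the architecture is right, but the proposal is missing the two invariants that make the maps well defined, which is where all the work lies.
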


We next investigate the injective dimension of $\bD$-modules. We begin by completely characterizing the injective dimension of $\bD$ over itself when $\bk$ is a field. We next show that if $E_0$ is a bounded complex of $\bk$-modules with finitely generated homology and injective amplitude $[a,b]$ then $E=\bD \otimes_{\bk} E_0$ has injective amplitude $[a,b+2]$, and show that $b+2$ can be lowered to $b+1$ or even $b$ in certain cases. Using this, we show that if $\bk$ is noetherian
%\rohit{Added  ``if $\bk$ is noetherian" to take care of referee's comment.}
and  $\omega_{\bk}$ is a dualizing complex for $\bk$ then $\omega_{\bD}=\omega_{\bk} \otimes_{\bk} \bD$ is one for $\bD$. In particular, if $\bk$ admits a dualizing complex then the bounded derived category of finitely presented $\bD$-modules is self-dual.

Next we study the relationship between torsion modules and finitely presented modules. Here, an element $m$ of a $\bD$-module $M$ is said to be {\bf torsion} if $x^{[n]} m=0$ for all $n \gg 0$, and $M$ is said to be {\bf torsion} if all of its elements are. If $\bk$ contains a field of characteristic~0 then $\bD=\bk[x]$, and finitely presented $\bD$-modules can have torsion, e.g., $\bk[x]/(x)$. However, this is essentially the only example:

\begin{theorem}
The following conditions are equivalent:
\begin{enumerate}
\item Every finitely presented $\bD$-module is torsion-free.
\item Every maximal ideal of $\bk$ has positive residue characteristic.
\end{enumerate}
\end{theorem}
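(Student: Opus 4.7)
The plan is to prove the two implications separately; the reverse direction is where most of the work lies.

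For (a) $\Rightarrow$ (b): if some maximal $\fm \subset \bk$ has $K \coloneq \bk/\fm$ of characteristic zero, I exhibit $K$ itself, regarded as a $\bD$-module via the augmentation $\bD \twoheadrightarrow \bk \twoheadrightarrow K$ (under which $x^{[n]}$ acts as zero for $n \geq 1$), as a nonzero finitely presented torsion $\bD$-module. Torsion-ness and nonvanishing are automatic, so the only content is finite presentation. I would check $K \cong \bD / (\fm\bD + \bD \cdot x^{[1]})$: working modulo $\fm$, the relation $x^{[1]} \cdot x^{[n-1]} = n \, x^{[n]}$ combined with invertibility of every positive integer in the residue field forces each $x^{[n]}$ (for $n \geq 1$) into the ideal generated by $x^{[1]}$. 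Noetherianity of $\bk$ makes $\fm$, and hence $\fm\bD + \bD \cdot x^{[1]}$, finitely generated.

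For (b) $\Rightarrow$ (a): suppose $M$ is finitely presented and $m \in M$ is torsion. I replace $M$ by the cyclic submodule $\bD m$, which is finitely presented by coherence of $\bD$ (Theorem~\ref{mainthm:coh}), and write $\bD m = \bD/I$ for a finitely generated ideal $I$ containing $x^{[n]}$ for every $n \geq N$ (some $N$). Let $\bar I \subseteq \bk$ be the image of $I$ under the augmentation $\bD \twoheadrightarrow \bk$, and split into two cases. If $\bar I = \bk$, pick $1 + g \in I$ with $g \in \bD_+ \coloneq \bigoplus_{n \geq 1} \bk x^{[n]}$; then $m = -gm$ in $\bD/I$, so $m = \pm g^k m$ for every $k$. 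The identity $x^{[a_1]} \cdots x^{[a_k]} \in \bk \cdot x^{[a_1 + \cdots + a_k]}$ gives $\bD_+^k \subseteq \bigoplus_{d \geq k} \bk x^{[d]}$, which annihilates $m$ once $k \geq N$, forcing $m = 0$, a contradiction. Otherwise $\bar I \subsetneq \bk$: choose a maximal $\fm \supseteq \bar I$ and set $K \coloneq \bk/\fm$, which has characteristic $p > 0$ by (b). Base changing along $\bk \to K$ preserves finite presentation, torsion-ness, and (since $\bD/I$ surjects onto $\bk/\bar I$, whose image in $K$ is nonzero) nonvanishing, reducing us to the case $\bk = K$ is a field of characteristic $p$.

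In this field case, I filter $\bD$ by $F_n \coloneq \bigoplus_{k \leq n} K x^{[k]}$ (whose associated graded ring is again $\bD$) and replace $I$ with the associated graded ideal $\mathrm{gr}(I)$. By the Gr\"obner-coherence of $\bD$ developed elsewhere in this project, $\mathrm{gr}(I)$ is homogeneous and finitely generated, and still contains $x^{[n]}$ for $n \gg 0$; so I may assume $I$ itself is homogeneous, say $I = (x^{[d_1]}, \ldots, x^{[d_s]})$ up to unit scalars with each $d_j \geq 1$. A direct computation then gives $I_n = K \cdot x^{[n]}$ iff some $\binom{n}{d_j}$ is nonzero modulo $p$. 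The decisive step is Lucas' theorem: for $n = p^M$ with $p^M > \max_j d_j$, every $\binom{p^M}{d_j}$ vanishes modulo $p$ (since $1 \leq d_j < p^M$), so $I_{p^M} = 0$, contradicting $x^{[p^M]} \in I$. The main obstacle I anticipate is this descent from a possibly nonhomogeneous $I$ to a homogeneous one in the field case, which is exactly why Gr\"obner-coherence enters.
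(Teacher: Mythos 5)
Your proof is correct, and while the outer skeleton matches the paper's (Theorem~\ref{notors}): exhibit $\bk/\fm$ as a nonzero finitely presented torsion module when the residue characteristic is zero, and in the converse direction reduce to a cyclic finitely presented torsion module and then to a residue field — the two arguments diverge at the decisive field-case step, and also in how they globalize. The paper globalizes by noting the cyclic torsion module $T$ is finitely generated over $\bk$ and applying Nakayama to $T/\fm T=0$; you instead split on whether the augmentation image $\ol{I}$ of the annihilator is all of $\bk$, disposing of that case by an elementary nilpotence argument ($m=(-g)^km$ with $g\in\bD_+$) and base-changing to $\bk/\fm$ otherwise. In the field case the paper invokes the soft structural fact Proposition~\ref{prop:h-free} — a finitely presented module in characteristic $p$ is free over $\bD^{(h)}$ for suitable $h$ with $\pi_h=0$, and a finite-dimensional free module has rank zero — whereas you pass to the initial ideal $\ini(I)$, finitely generated by Gr\"obner-coherence (Theorem~\ref{thm:grobcoh}), note it is proper (since $\bD_K/I_K\neq 0$, so all $d_j\ge 1$), and kill it with Lucas: $\binom{p^M}{d}\equiv 0 \pmod{p}$ for $0<d<p^M$, so the ideal $(x^{[d_1]},\dots,x^{[d_s]})$ has zero component in degree $p^M$ once $p^M>\max_j d_j$, contradicting $x^{[p^M]}\in\ini(I)$. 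Your route gives a very concrete contradiction but leans on the heavier Gr\"obner-coherence theorem where a softer input suffices, and it is tied to the classical divided power algebra (binomial coefficients and Lucas), whereas the paper's argument works verbatim for any GDPA, which is how the result is stated in the body (condition (b) becoming ``every maximal ideal contains infinitely many $\pi_n$'s'').
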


When $\bk$ is $p$-adically complete, we can say much more, at least for graded modules:

\begin{theorem} \label{intro:torext}
Suppose $\bk$ is $p$-adically complete. Let $M$ and $T$ be graded $\bD$-modules, with $M$ finitely presented and $T$ torsion. Then $\uExt^i_{\bD}(T, M)=0$ for all $i \ge 0$.
\end{theorem}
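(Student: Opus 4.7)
The plan is to reduce the vanishing, via two devissage arguments, to a concrete computation inside $\bD$, and then exploit $p$-adic completeness via Kummer's theorem.

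\emph{Reduction on $M$.} Apply Theorem~\ref{intro:sr} to $M$ to obtain a finite resolution $0\to P_r\to\cdots\to P_0\to M\to 0$ with each $P_j$ free for $j<r$ and $P_r$ special. Dimension-shifting via the induced long exact sequences of $\uExt_\bD(T,-)$ reduces the claim to showing $\uExt^i_\bD(T,P)=0$ for all $i\ge 0$ when $P$ is either free or a summand $(A\otimes_\bk\bD)^{(q)}$ of a special module (with $A$ finitely generated over $\bk$).

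\emph{Reduction on $T$.} Write $T$ as the filtered colimit of its finitely generated graded torsion submodules $T_\alpha$. Since derived $\uExt_\bD(-,P)$ sends filtered colimits to homotopy limits, vanishing for each $T_\alpha$ yields vanishing for $T$; so it suffices to treat $T$ finitely generated. Such a $T$ satisfies $\bD_{\ge N}T=0$ for some $N$ (the maximum over the annihilation thresholds of its generators), and is therefore bounded in degree and finitely generated over $\bk$. Filtering further by the ideal $\bD_{\ge 1}$ of $\bD/\bD_{\ge N}$ reduces to the case $T=A'$, where $A'$ is a finitely generated $\bk$-module concentrated in a single degree (with $\bD_{\ge 1}$ acting as zero).

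\emph{Core computation.} For $P=\bD$, the key input is Kummer's theorem: for any $N$ and any $K$ with $p^K>N$, the base-$p$ addition of $p^K$ and $N$ has no carries, so $\binom{p^K+N}{N}$ is coprime to $p$, and hence is a unit in $\bk$ by $p$-adic completeness. The $i=0$ case follows at once: a $\bD$-linear image $f=\sum_{i\le N}a_ix^{[i]}\in\bD$ of a torsion element $a\in A'$ would have $x^{[p^K]}f$ with nonzero top coefficient $a_N\binom{p^K+N}{N}$ for large $K$, contradicting $x^{[p^K]}f=\phi(x^{[p^K]}a)=0$. For higher $i$, devissage on $A'$ reduces to $A'=\bk/\mathfrak{m}$ with $\mathfrak{m}\supseteq(p)$; I would construct an explicit (infinitely generated) free resolution of $\bk/\mathfrak{m}$ whose differentials involve multiplication by various $x^{[k]}$, and verify that the dual complex is acyclic in positive degrees using the same unit-binomial-coefficient principle. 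For the special summand $P=(A\otimes_\bk\bD)^{(q)}$, a flat base change argument (over $\bk/p\bk$ when $q=p^e>1$, exploiting $pA=0$) reduces to the free case already handled.

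The \emph{main obstacle} is the higher-$\uExt$ portion of the core computation. Because the augmentation ideal $\bD_{\ge 1}$ is not finitely generated over $\bD$, any free resolution of $\bk/\mathfrak{m}$ is infinitely generated in each homological degree; producing it explicitly, then verifying acyclicity of its $\bD$-dual in positive degrees, uniformly enough to combine cleanly with the colimit and devissage reductions, is where the real work lies. The $i=0$ case is essentially a one-line Kummer argument, but promoting this to all $i$ requires a careful organization of the combinatorics of divided powers against $p$-adic valuations.
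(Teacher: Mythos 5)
Your two devissage reductions (on $M$ via special resolutions, on $T$ via filtered colimits and then filtration by $\bD_+$) match the structure of the paper's argument (Lemmas~\ref{extor3} and~\ref{extor5}), and your $i=0$ computation is correct. But the part you flag as ``where the real work lies'' is indeed the entire content of the theorem, and the tool you propose for it cannot close the gap. The principle ``$\binom{p^K+N}{N}$ is prime to $p$, hence a unit'' holds verbatim over $\bk=\bZ_{(p)}$, which is not $p$-adically complete --- and there the theorem is \emph{false}: Remark~\ref{rmk:ext-torsion} exhibits a nonzero class in $\uExt^1_{\bD}(\bZ_{(p)}, \bD/(x^{[1]}))_0$. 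So any argument for the higher $\uExt$ vanishing that uses completeness only through unit binomial coefficients proves too much. (Relatedly, Example~\ref{ex:ext} shows $\Ext^1_{\bD}(\bk,\bD)\neq 0$ in the ungraded category even over a field, so the grading must also enter essentially; your plan does not engage with either point.) A smaller error: after reducing $T$ to a $\bk$-module concentrated in one degree, its composition factors are $\bk/\fp$ for arbitrary primes $\fp$ --- there is no reason they contain $p$ --- and the essential case is $T=\bk$ itself, which the paper reaches by resolving the $\bk$-module $T$ by finite free $\bk$-modules and running a spectral sequence.

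Completeness has to be used structurally, and here is how the paper does it. First (Lemma~\ref{dercompl}) one shows
$\rR\uHom_{\bD}(\bk,M)\cong\rR\varprojlim\,\rR\uHom_{\bD/\fI^i\bD}(\bk/\fI^i, M/\fI^i M)$,
which is legitimate precisely because each graded piece of $M$ is a finitely generated module over the complete noetherian ring $\bk$, hence $\fI$-adically complete. This reduces everything to coefficient rings in which infinitely many $\pi_n$ are \emph{literally zero}. In that situation $\bD$ is an increasing union of finite free subalgebras $\bD_{<n_i}$; over a field these are self-injective, and a noetherian induction (Lemma~\ref{extor1}) gives $\rR\uHom_{\bD_{<n_i}}(\bk, M\otimes_{\bk}\bD_{<n_i})_d=0$ for $d<n_i-1$. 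One then assembles a projective resolution of $\bk$ over $\bD$ as a colimit of resolutions over the $\bD_{<n_i}$ and uses the Milnor sequence for $\varprojlim$ of complexes: in each fixed graded degree the finite-stage $\uExt$ groups vanish for $i\gg 0$, killing both the $\varprojlim$ and $\rR^1\varprojlim$ terms (Lemma~\ref{extor2}). This is exactly the step that fails ungraded and fails over $\bZ_{(p)}$, and it is not replaceable by the carry-free binomial computation. Over $\bZ_p$ itself no $\pi_n$ vanishes, $\bD_{<h}$ is not even a subalgebra, and I see no route to the explicit resolution you envisage without first passing to the quotients $\bD/p^i\bD$.
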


Here $\uExt$ denotes a graded version of $\Ext$, see \S \ref{ss:notation}. The above theorem is false without the completeness hypothesis, see Remark~\ref{rmk:ext-torsion}. It is also false in the non-graded case, as Example~ \ref{ex:ext} (with $\bk=\bF_p$) shows.

Finally, motivated by certain applications (see \S \ref{ss:motivation}), we study ``nearly finitely presented'' (nfp) $\bD$-modules. A graded $\bD$-module $M$ is nfp if there exists a finitely presented $\bD$-module $N$ (called a {\bf weak fp-envelope} of $M$) such that $\tau_{\ge n}(M) \cong \tau_{\ge n}(N)$ as $\bD$-modules, where $\tau_{\ge n}$ denotes truncation to degrees $\ge n$ (see \S \ref{s:nfp}). When $\bk$ is complete, nfp modules are very well behaved:

\begin{theorem}
Suppose $\bk$ is $p$-adically complete. Then the kernel, cokernel, and image of a map of nfp modules is again nfp, and so the category of nfp modules is abelian. Furthermore, the weak fp-envelope $N$ of an nfp module $M$ is unique up to canonical isomorphism, and there exists a canonical map $M \to N$ that is universal among maps from $M$ to finitely presented modules.
\end{theorem}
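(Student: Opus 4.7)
The plan is to use the strong vanishing in Theorem~\ref{intro:torext} to construct a canonical map $f\colon M \to N$ from any nfp module $M$ to a weak fp-envelope $N$, and to establish its universal property; uniqueness of the envelope and the abelian structure will then follow formally, with a single appeal to the coherence of $\bD$ from Theorem~\ref{mainthm:coh}. To construct $f$, I would start from an isomorphism $\phi\colon \tau_{\ge n}(M) \to \tau_{\ge n}(N)$ witnessing that $M$ is nfp. Any graded $\bD$-module concentrated in degrees $<n$ is torsion since $x^{[k]}$ raises degree by $k$, so $Q \coloneq M/\tau_{\ge n}(M)$ is torsion. Applying $\Hom_\bD(-, N)$ to the short exact sequence $0 \to \tau_{\ge n}(M) \to M \to Q \to 0$ and invoking the vanishing $\uExt^i_\bD(Q, N) = 0$ from Theorem~\ref{intro:torext}, the restriction map $\Hom_\bD(M, N) \to \Hom_\bD(\tau_{\ge n}(M), N)$ becomes a bijection. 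Thus the composition $\tau_{\ge n}(M) \xrightarrow{\phi} \tau_{\ge n}(N) \hookrightarrow N$ extends uniquely to a map $f\colon M \to N$, and exactness of truncation forces the kernel and cokernel of $f$ to vanish in degrees $\ge n$, hence to be torsion.

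For universality, given any $h\colon M \to N'$ with $N'$ finitely presented, the same vanishing argument applied to the pair $(N, N')$ shows that $\Hom_\bD(N, N') \to \Hom_\bD(\tau_{\ge n}(N), N')$ is bijective, so the map $h|_{\tau_{\ge n}(M)} \circ \phi^{-1}$ extends uniquely to a map $h'\colon N \to N'$. A brief diagram chase using the first bijection yields $h' \circ f = h$ and shows $h'$ is the unique such lift. Uniqueness of $N$ up to canonical isomorphism is then automatic: given two envelopes $N_1, N_2$ of $M$ with canonical maps $f_i\colon M \to N_i$, universality produces maps in both directions, and each composite $N_i \to N_{3-i} \to N_i$ must equal $\mathrm{id}_{N_i}$ by the uniqueness clause applied to $f_i$ itself.

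For the abelian structure, I would take a map $f\colon M_1 \to M_2$ of nfp modules with canonical maps $f_i\colon M_i \to N_i$, and use universality to produce a unique $\tilde f\colon N_1 \to N_2$ with $\tilde f \circ f_1 = f_2 \circ f$. By Theorem~\ref{mainthm:coh}, the kernel, image, and cokernel of $\tilde f$ are all finitely presented. For $n$ large enough that $\tau_{\ge n}(f_1)$ and $\tau_{\ge n}(f_2)$ are isomorphisms, exactness of the truncation functor on graded $\bD$-modules canonically identifies the truncations of the kernel, image, and cokernel of $f$ with those of $\tilde f$, so each of these is nfp.

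The hard part will be the initial construction of $f$, and this is precisely where the $p$-adic completeness hypothesis enters: without it, $\uExt^1_\bD(Q, N)$ need not vanish (cf.\ Remark~\ref{rmk:ext-torsion}) and the canonical extension may fail to exist or to be unique, breaking the entire formal edifice above.
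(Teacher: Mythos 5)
Your proposal is correct and follows essentially the same route as the paper: everything is reduced to the vanishing of $\uExt^i_{\bD}(T,N)$ for $T$ torsion and $N$ finitely presented (Theorem~\ref{thm:ext-torsion}), which makes restriction along the near isomorphism $\tau_{\ge n}(M)\hookrightarrow M$ a bijection on $\uHom$ into finitely presented targets; the universal property, uniqueness of the envelope, and the abelian structure then follow formally exactly as in the paper's Lemma~\ref{lem:exttau} and Proposition~\ref{prop:fpenv}. The only cosmetic difference is that the paper packages the key step as a statement about arbitrary near isomorphisms via a four-term exact sequence, whereas you work directly with the truncation sequence $0\to\tau_{\ge n}(M)\to M\to Q\to 0$.
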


As with the previous theorem, this one is false without the completeness assumption, see Proposition~\ref{prop:counter-example}. When $\bk$ is not complete, we can at least prove uniqueness of the weak fp-envelope in some cases:

\begin{theorem}
Suppose that every maximal ideal of $\bk$ has residue characteristic $p$, for some fixed prime number $p$. Then the weak fp-envelope of an nfp module is unique up to isomorphism.
\end{theorem}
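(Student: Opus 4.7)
The plan is to base-change to the $p$-adic completion of $\bk$, apply the previous (complete-case) theorem there, and descend the resulting isomorphism by $p$-adic approximation. The hypothesis that every maximal ideal of $\bk$ has residue characteristic $p$ forces $p \in J(\bk)$, so since $\bk$ is noetherian the $p$-adic completion $\hat\bk$ is faithfully flat over $\bk$; consequently $\hat\bD := \bD \otimes_\bk \hat\bk$ is faithfully flat over $\bD$.

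I would begin by unwinding the definitions. If $M$ is nfp with weak fp-envelopes $N_1, N_2$, then for some $n$ one has $\tau_{\geq n}(N_1) \cong \tau_{\geq n}(M) \cong \tau_{\geq n}(N_2)$, so the problem reduces to showing that two finitely presented graded $\bD$-modules whose truncations at some degree agree must themselves be isomorphic. Setting $\hat N_i := N_i \otimes_\bk \hat\bk$, flat base change gives $\tau_{\geq n}(\hat N_i) \cong \tau_{\geq n}(N_i) \otimes_\bk \hat\bk$, so the $\hat N_i$ are finitely presented $\hat\bD$-modules with agreeing truncations, hence weak fp-envelopes of $\hat M := M \otimes_\bk \hat\bk$. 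The previous theorem (applied to the $p$-adically complete ring $\hat\bk$) then produces a $\hat\bD$-linear isomorphism $\psi\colon \hat N_1 \to \hat N_2$.

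The core of the argument is the descent of $\psi$. Let $H := \Hom_\bD(N_1,N_2)_0$ be the $\bk$-module of degree-preserving $\bD$-linear maps. Presenting $N_1$ as the cokernel of a map between finite graded free $\bD$-modules realizes $H$ as the kernel of a map between finite products of the graded pieces $(N_2)_{d_i}$, each of which is finitely generated over the noetherian ring $\bk$ because $N_2$ is finitely presented over $\bD$. Hence $H$ is finitely generated over $\bk$, and flat base change identifies $H \otimes_\bk \hat\bk$ with $\Hom_{\hat\bD}(\hat N_1, \hat N_2)_0$, exhibiting the latter as the $p$-adic completion of $H$. I would then approximate $\psi$ by some $\phi \in H$ with $\hat\phi - \psi \in p\,\Hom_{\hat\bD}(\hat N_1, \hat N_2)_0$, and write $\hat\phi = \psi \circ (1+pu)$ for a suitable $u \in \End_{\hat\bD}(\hat N_1)_0$. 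Since this endomorphism ring is finitely generated over the $p$-adically complete ring $\hat\bk$, it is itself $p$-adically complete, and since $p$ lies in the Jacobson radical of $\hat\bk$ the element $1+pu$ is invertible via the geometric series. Hence $\hat\phi$ is a $\hat\bD$-linear isomorphism, and faithful flatness of $\bD \to \hat\bD$ then forces $\phi\colon N_1 \to N_2$ to be an isomorphism of $\bD$-modules.

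The main obstacle is the descent step: the isomorphism class of a $\bD$-module is not in general detected by its $\hat\bD$-base change, so one cannot simply invoke faithfully flat descent on isomorphism classes. The key enabling observation is that $\Hom_\bD(N_1, N_2)_0$ is finitely generated over $\bk$ despite $\bD$ being non-noetherian, which is precisely what makes the standard $p$-adic approximation available. A secondary (but routine) check is that the various base-change identifications for $\tau_{\geq n}$, for Hom, and for finite presentation all respect the grading, which they do because $\bk \to \hat\bk$ is purely a change of scalars.
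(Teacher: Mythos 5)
Your argument is correct, and at the top level it follows the same strategy as the paper's proof of Theorem~\ref{thm:weakfp}: pass to the $p$-adic completion, invoke the complete-case uniqueness of fp-envelopes (Proposition~\ref{prop:fpenv}), and descend the resulting isomorphism by $p$-adic approximation together with openness of the isomorphism locus. Where you genuinely differ is in how the descent is carried out. The paper first truncates, using Lemmas~\ref{lem:trunchom} and~\ref{lem:truncisom} to reduce to finitely generated modules over the noetherian ring $\tau^{\le d}(\bD)$, and then applies its decompletion package (Propositions~\ref{prop:complhom}--\ref{prop:decompl}), whose main input --- the identification of $\Hom_R(M,N)^{\wedge}$ with $\Hom_{\wh{R}}(\wh{M},\wh{N})$ --- is proved there by hand via the blow-up algebra and Artin--Rees. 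You instead observe that $\uHom_{\bD}(N_1,N_2)_0$ is already finitely generated over $\bk$ (being the kernel of a map between finitely many graded pieces of $N_2$), so that ordinary flat base change for $\Hom$ out of a finitely presented module, applied along the faithfully flat map $\bD \to \bD \otimes_{\bk} \wh{\bk}$, identifies $\uHom_{\wh{\bD}}(\wh{N}_1,\wh{N}_2)_0$ with the $p$-adic completion of $\uHom_{\bD}(N_1,N_2)_0$; this bypasses both the truncation lemmas and Proposition~\ref{prop:complhom}, and is a genuine streamlining. What each approach buys: the paper's route yields reusable general statements about completions of noetherian rings, while yours exploits the specific graded structure to keep everything at the level of $\bD$-modules. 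The one step to spell out in a final write-up is the invertibility of $1+pu$: the geometric series converges because $\uHom_{\wh{\bD}}(\wh{N}_1,\wh{N}_1)_0$ is a finitely generated module over the $p$-adically complete noetherian ring $\wh{\bk}$, hence itself $p$-adically complete and separated (alternatively, reduce modulo $p$ and use Nakayama together with \stacks{05G8}, as the paper does).
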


The isomorphism in the above theorem is not canonical, and the weak fp-envelope of $M$ is not functorial in $M$. The above theorem can fail without the hypothesis on $\bk$: if $\bk$ is a number ring with non-trivial class group then weak nfp-envelopes are necessarily non-unique, see Proposition~\ref{prop:class-group}.

\subsection{$q$-divided power algebras and beyond}

Let $q \in \bk$. One then defines
\begin{displaymath}
[n]_q = 1+q+\cdots+q^{n-1} = \frac{q^n-1}{q-1}
\end{displaymath}
and
\begin{displaymath}
[n]_q! = [n]_q [n-1]_q \cdots [1]_q, \qquad {n \brack m}_q = \frac{[n]_q!}{[n-m]_q! [m]_q!}.
\end{displaymath}
The quantity ${n \brack m}_q$ is a polynomial in $q$, and called the {\bf $q$-binomial coefficient} (or {\bf Gaussian binomial coefficient}). One can modify the definition of the divided power algebra
%\rohit{Fixed the mistype}
by replacing the usual binomial coefficients with their $q$- counterparts. The result is called the {\bf $q$-divided power algebra}.
%\rohit{Fixed the mistype}.
All of the results in this paper apply to, and are proved for, $q$-divided power algebras.\footnote{Except Theorem~\ref{intro:intK}, for which we only have partial results.}

In fact, we work even more generally. Let $\pi_2, \pi_3, \ldots$ be a sequence of elements in $\bk$ that is ``admissible'' in the sense that if $\pi_n$ and $\pi_m$ belong to a common maximal ideal then $n \mid m$ or $m \mid n$. We define
\begin{displaymath}
a(n) = a(n; \pi_{\bullet}) = \prod_{d \mid n, d \ne 1} \pi_d
\end{displaymath}
and\footnote{We give a better definition in the body of the paper that does not require division.}
\begin{displaymath}
C(n,m; \pi_{\bullet}) = \frac{a(n) a(n-1) \cdots a(n-m+1)}{a(m) a(m-1) \cdots a(1)}.
\end{displaymath}
The above coefficients are called {\bf $\pi_{\bullet}$-binomial coefficients}. One can modify the definition of the divided power algebra by replacing the usual binomial coefficients with their $\pi_{\bullet}$ counterparts. The resulting algebra is denoted $\bD(\pi_{\bullet}; \bk)$ and called a {\bf generalized divided power algebra} (GDPA). See Proposition~\ref{prop:chargdpa} for an intrinsic characterization of these algebras and Proposition~\ref{prop:gcd-morphic} for an enlightening description of admissible sequences when $\bk$ is a PID.

As one would expect, the main examples of interest fit into this general setup:
\begin{enumerate}
\item If $\pi_n=1$ for all $n$ then $\bD(\pi_{\bullet}; \bk) \cong \bk[x]$.
\item Define $\pi_n$ to be $p$ if $n$ is a power of a prime $p$, and~1 if $n$ is not a prime power. Then $\bD(\pi_{\bullet}; \bk)$ is the usual divided power algebra over $\bk$. (In the context of GDPA's, we refer to this as the {\bf classical divided power algebra}.)
\item Suppose $\pi_n=\Phi_n(q)$ where $\Phi_n$ is the $n$th cyclotomic polynomial. Then $\bD(\pi_{\bullet}; \bk)$ is the $q$-divided power algebra.
\end{enumerate}
In fact, by taking $q=0$ or $q=1$, example (c) reverts to (a) or (b). Here is a slightly different example:
\begin{enumerate}
\setcounter{enumi}{3}
\item Fix $m \ge 0$ and take $\bD$ to be the $\bZ_p$-submodule of $\bQ_p[x]$ spanned by $\frac{x^n}{\lfloor n/p^m \rfloor !}$. One can show that $\bD$ is a GDPA; in fact, it is isomorphic to $\bD(\pi_{\bullet}; \bZ_p)$ where $\pi_n$ is $p$ if $n$ is a power of $p$ that exceeds $p^m$, and~1 otherwise. (The ideal $\bD_+$ carries a partial divided power structure, in the sense of Berthelot. We thank the referee for this comment.) 
\end{enumerate}
All of the results of this paper are proved for arbitrary GDPA's.\footnote{Same caveat regarding Theorem~\ref{intro:intK}.} Working in this generality actually makes the structure of many arguments more clear and is no more difficult than treating the $q$-divided power algebra.

\subsection{Motivation} \label{ss:motivation}

A famous theorem of Nakaoka \cite{nakaoka}
%\rohit{Reference added}
asserts that the cohomology of symmetric groups stabilizes: for any coefficient ring $\bk$, the restriction map
\begin{displaymath}
\rH^t(S_n, \bk) \to \rH^t(S_{n-1}, \bk)
\end{displaymath}
is an isomorphism for $n>2t$. Motivated by applications to the cohomology of configuration spaces, the first author \cite{thesis} generalized Nakaoka's theorem to allow for non-trivial coefficients, as follows. An {\bf FI-module} over $\bk$ is a sequence $(M_n)_{n \ge 0}$, where $M_n$ is a $\bk[S_n]$-module, equipped with certain transition maps $M_n \to M_{n+1}$. The main theorem of \cite{thesis} states that if $\bk$ is a field of characteristic $p$ and $M$ is a finitely generated $\FI$-module over $\bk$ then, for fixed $t$, the dimension of $\rH^t(S_n, M_n)$ is periodic in $n$ with period a power of $p$.

In the forthcoming paper \cite{periodicity}, we generalize this periodicity theorem and greatly simplify its proof. For an $\FI$-module $M$, define
\begin{displaymath}
\Gamma^t(M) = \bigoplus_{n \ge 0} \rH^t(S_n, M_n).
\end{displaymath}
Then $\Gamma^t(M)$ naturally has the structure of a graded $\bD$-module, where $\bD$ is the univariate classical divided power algebra over $\bk$. The main theorem of \cite{periodicity} states that when $\bk$ is noetherian and $M$ is finitely generated, the $\bD$-module $\Gamma^t(M)$ is nearly finitely presented. When $\bk$ is a field, this immediately implies the periodicity result of \cite{thesis}.

\subsection{Multivariate divided power algebras}

It would be natural to attempt to generalize the results of this paper to divided power algebras in several variables. Unfortunately, the basic coherence property fails in multiple variables (see \S \ref{ss:two-variable}), so it is not clear to us what one could hope to say. (We note, however, that coherence still holds when $\bk$ is a field. Indeed, if $\bk$ has characteristic~0 then the $r$-variable divided power algebra is isomorphic to the $r$-variable polynomial ring. If $\bk$ has characteristic~$p$ then the $r$-variable divided power algebra is isomorphic to
\begin{displaymath}
\bk[x_{i,j}]_{i \in \bN, 1 \le j \le r}/(x_{i,j}^p),
\end{displaymath}
is thus coherent; in fact, it is isomorphic, as an ungraded $\bk$-algebra, to the single variable divided power algebra.) 

\subsection{Notation and terminology} \label{ss:notation}

By a graded ring or module, we will always mean one graded by the integers. Our rings are almost always concentrated in non-negative degrees. Suppose $V$ is a graded module. For integers $q$ and $k$, we let $V^{(q;k)}$ be the direct sum of the graded pieces $V_n$ with $n \equiv k \pmod{q}$, and we omit the $k$ when $k=0$. If $R$ is a graded ring then $R^{(q)}$ is again a graded ring, and each $R^{(q;k)}$ is an $R^{(q)}$-module. By the {\bf regrade} of $V^{(q)}$ we mean the module that is $V_{qn}$ in degree $n$. We write $V_{<n}$ for the sum of the $V_k$ with $0 \le k < n$.

Let $A$ be a graded ring. We write $\uMod_A$ for the category of graded $A$-modules and $\Mod_A$ for the category of non-graded modules. Let $M$ and $N$ be graded $A$-modules. We write $\uHom_A(M,N)_n$ for the set of all modules maps $M[n] \to N$ where the grading is respected, and we put
\begin{displaymath}
\uHom_A(M,N) = \bigoplus \uHom_A(M,N)_n.
\end{displaymath}
This is a graded $A$-module. We write $\Hom_A(M,N)$ for the set of all module maps $M \to N$, ignoring the grading. There is a natural map $\uHom_A(M,N) \to \Hom_A(M,N)$ which is an isomorphism if $M$ is finitely generated. We write $\uExt$ for the derived functor of $\uHom$.

\subsection{Outline}

In \S \ref{s:adm}, we introduce and study $\pi$-sequences, and in \S \ref{s:gdpa} we use them to define GDPA's. In \S \ref{s:coh}, we prove coherence results for GDPA's, including a much more general version of Theorem~\ref{mainthm:coh}. In \S\S \ref{s:sr}, \ref{s:groth}, \ref{s:inj}, \ref{s:tor}, \ref{s:nfp}, we study finitely presented and nearly finitely presented $\bD$-modules. Finally, \S \ref{s:prob} lists some open problems.

\subsection*{Acknowledgments}

The second author thanks Bhargav Bhatt for helpful conversations.

\section{$\pi$-sequences} \label{s:adm}

\subsection{Divisible sequences} \label{subsec:divisible-sequence}

A {\bf divisible sequence} is a sequence of positive integers $b_0=1, b_1, \ldots$, either infinite or finite in length, such that $b_i$ properly divides $b_{i+1}$, whenever $b_{i+1}$ is defined. Every integer $n \ge 0$ can be written uniquely in the form
\begin{displaymath}
n = \sum_{i \ge 0} d_i b_i
\end{displaymath}
where $0 \le d_i < b_{i+1}/b_i$ (we assume that $b_i= \infty$ if $i$ is the least integer such that $b_i$ is not defined). We call this the {\bf base $b_{\bullet}$ representation} of $n$. Carries in base $b_{\bullet}$ addition will play a prominent role in this paper. For an integer $k \ge 1$, put
\begin{displaymath}
\epsilon_k(n,m) = \lfloor \tfrac{n+m}{k} \rfloor - \lfloor \tfrac{n}{k} \rfloor - \lfloor \tfrac{m}{k} \rfloor.
\end{displaymath}
One easily sees that $\epsilon_k(n,m)$ is either~0 or~1: in fact, $\epsilon_k(n,m)$ can be interpreted as the carry produced in the one's place when adding $n$ and $m$ in base $k$. Thus $\epsilon_{b_i}(n,m)$ is the carry in the $i$th place when adding $n$ and $m$ in base $b_{\bullet}$.

\subsection{$\pi$-sequences}

A {\bf $\pi$-sequence} in a ring $\bk$ is simply a sequence $\pi_{\bullet}=\{\pi_n\}_{n \ge 2}$ of elements of $\bk$. By convention, we always put $\pi_1=0$. Let $\Pi$ be the ring $\bZ[\pi_n^{\univ}]_{n \ge 2}$, where the elements $\pi_n^{\univ}$ are indeterminates. Then the $\pi$-sequence $\pi^{\univ}_{\bullet}$ in $\Pi$ is universal: if $\pi_{\bullet}$ is any $\pi$-sequence in a ring $\bk$ then there is a unique ring homomorphism $\varphi \colon \Pi \to \bk$ such that $\pi_{\bullet}=\varphi(\pi^{\univ})$.

A {\bf $\pi$-invariant} is a rule $c$ assigning to every $\pi$-sequence $\pi_{\bullet}$ in any ring $\bk$ an element $c(\pi_{\bullet})$ of $\bk$ such that if $\varphi \colon \bk \to \bk'$ is a ring homomorphism then $c(\varphi(\pi_{\bullet}))=\varphi(c(\pi_{\bullet}))$. We have $c(\pi^{\univ}_{\bullet})=F_c(\pi_{\bullet}^{\univ})$ for some polynomial $F_c \in \Pi$, and thus $c(\pi_{\bullet})=F_c(\pi_{\bullet})$ for every $\pi$-sequence $\pi_{\bullet}$ in any ring $\bk$. Conversely, if $F \in \Pi$ then $c(\pi_{\bullet})=F(\pi_{\bullet})$ defines a $\pi$-invariant.

Suppose $c$ and $d$ are $\pi$-invariants such that $F_d$ divides $F_c$ in $\Pi$. We then define $\{c/d\}$ to be the $\pi$-invariant associated to $F_c/F_d$. By abuse of notation, we typically write $\{c(\pi_{\bullet})/d(\pi_{\bullet})\}$ for the value of $\{c/d\}$ on $\pi_{\bullet}$. If $d(\pi_{\bullet}) \in \bk$ is a non-zero divisor then $\{c(\pi_{\bullet})/d(\pi_{\bullet})\}$ is equal to $c(\pi_{\bullet})/d(\pi_{\bullet})$, where the division is performed in $\bk$. However, if $d(\pi_{\bullet})$ is a zero-divisor then one cannot determine $\{c(\pi_{\bullet})/d(\pi_{\bullet})\}$ from $c(\pi_{\bullet})$ and $d(\pi_{\bullet})$.

\subsection{Generalized binomial coefficients}

For $n \ge 1$ define $\pi$-invariants $a(n)$ and $A(n)$ by
\begin{displaymath}
a(n) = a(n; \pi_{\bullet}) = \prod_{d \mid n, d \ne 1} \pi_d
\end{displaymath}
and
\begin{displaymath}
A(n)=A(n;\pi_{\bullet}) = a(n) a(n-1) \cdots a(2) a(1).
\end{displaymath}
We remark that M\"obius inversion gives the identity
\begin{displaymath}
\pi_n = \left\{ \prod_{d \mid n} a(n/d)^{\mu(d)} \right\},
\end{displaymath}
and so $\pi_n$ can be recovered from the $a(n;\pi_{\bullet})$'s provided they are not zero-divisors. For $0 \le m \le n$, we define a $\pi$-invariant $C(n,m)$ by
\begin{displaymath}
C(n,m)=C(n,m; \pi_{\bullet}) = \prod_{k \ge 2} \pi_k^{\epsilon_k(n-m,m)}.
\end{displaymath}
We think of $a(n)$ as the $\pi_{\bullet}$-analog of $n$, $A(n)$ as the analog of $n!$, and $C(n,m)$ as the analog of the binomial coefficient $\binom{n}{m}$. This last point is justified by the following observation:

\begin{proposition}
We have
\begin{displaymath}
C(n,m) = \left\{ \frac{A(n)}{A(n-m) A(m)} \right\}
\end{displaymath}
\end{proposition}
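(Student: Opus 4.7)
The plan is to verify the identity as polynomials in the universal ring $\Pi$, after which both sides define the same $\pi$-invariant (and the division is legitimate in the $\{-/-\}$ sense because the denominator divides the numerator in $\Pi$).

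The key step is to rewrite $A(n)$ as a product over the $\pi_d$'s rather than the $a(i)$'s, by swapping the order of the double product. Unpacking the definitions,
\begin{displaymath}
A(n) = \prod_{i=1}^n a(i) = \prod_{i=1}^n \prod_{\substack{d \mid i \\ d \ne 1}} \pi_d = \prod_{d \ge 2} \pi_d^{\#\{1 \le i \le n \,:\, d \mid i\}} = \prod_{d \ge 2} \pi_d^{\lfloor n/d \rfloor},
\end{displaymath}
since the number of multiples of $d$ in $\{1, \ldots, n\}$ is $\lfloor n/d \rfloor$.

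Applying this to the numerator and both factors of the denominator, the quotient $A(n)/(A(n-m) A(m))$ equals
\begin{displaymath}
\prod_{d \ge 2} \pi_d^{\lfloor n/d \rfloor - \lfloor (n-m)/d \rfloor - \lfloor m/d \rfloor}.
\end{displaymath}
Writing $n = (n-m) + m$, the exponent is exactly $\epsilon_d(n-m, m)$ by definition, so the quotient equals $\prod_{d \ge 2} \pi_d^{\epsilon_d(n-m,m)} = C(n,m)$. In particular, the denominator $A(n-m)A(m)$ divides the numerator $A(n)$ in $\Pi$ (since the exponents of each $\pi_d^{\univ}$ are nonnegative, as carries are $0$ or $1$), which justifies the use of the $\{-/-\}$ notation.

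There is no real obstacle here: the whole argument is the interchange of a double product, combined with the observation (already recorded in \S\ref{subsec:divisible-sequence}) that the difference of floor functions $\lfloor (a+b)/d \rfloor - \lfloor a/d \rfloor - \lfloor b/d \rfloor$ is precisely the carry $\epsilon_d$.
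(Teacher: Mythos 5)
Your proof is correct and follows the same approach as the paper: compute the exponent of each $\pi_k$ in $A(n)$ as $\lfloor n/k \rfloor$ and observe that the resulting exponent in the quotient is exactly the carry $\epsilon_k(n-m,m)$. Your additional remark that the nonnegativity of the carries justifies the $\{-/-\}$ notation in $\Pi$ is a nice touch, though the paper leaves it implicit.
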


\begin{proof}
The power of $\pi_k$ in $a(n)$ is 1 if $n$ is a multiple of $k$, and 0 otherwise. Thus the power of $\pi_k$ in $A(n)$ is the number of multiples of $k$ between 1 and $n$, namely $\lfloor \tfrac{n}{k} \rfloor$. Therefore, the power of $\pi_k$ in the above fraction is $\epsilon_k(n-m, m)$, and this proves the identity.
\end{proof}

The following property of $C$ will be important later:

\begin{proposition} \label{prop:binom}
For $\ell \le m \le n$ we have
\begin{displaymath}
C(n,m) C(m,\ell) = C(n-\ell,m-\ell) C(n,\ell).
\end{displaymath}
\end{proposition}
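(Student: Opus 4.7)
My plan is to reduce the identity to an equality of exponents of each universal indeterminate $\pi_k$, and then verify that equality by a telescoping calculation based on the associativity of addition.

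Concretely, since $C(n,m) = \prod_{k \ge 2} \pi_k^{\epsilon_k(n-m,m)}$, multiplying the two sides of the claimed identity just adds exponents, so it suffices to check, for each fixed $k \ge 2$, the numerical identity
\begin{displaymath}
\epsilon_k(n-m,m) + \epsilon_k(m-\ell,\ell) = \epsilon_k(n-m,m-\ell) + \epsilon_k(n-\ell,\ell).
\end{displaymath}
(Strictly speaking, to pass from equality of exponents to equality of the $\pi$-invariants one invokes the universality of $\Pi$: the identity in $\Pi$ is equivalent to the exponents matching, and then applying the universal homomorphism to an arbitrary $\bk$ gives the identity for every $\pi$-sequence.)

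To verify the exponent identity, substitute $x = n-m$, $y = m-\ell$, $z = \ell$, so the four terms become $\epsilon_k(x,y+z)$, $\epsilon_k(y,z)$, $\epsilon_k(x,y)$, $\epsilon_k(x+y,z)$. Expanding $\epsilon_k(a,b) = \lfloor (a+b)/k\rfloor - \lfloor a/k\rfloor - \lfloor b/k\rfloor$ and telescoping, one finds that both sides equal
\begin{displaymath}
\lfloor (x+y+z)/k\rfloor - \lfloor x/k\rfloor - \lfloor y/k\rfloor - \lfloor z/k\rfloor,
\end{displaymath}
since on the left the two occurrences of $\lfloor (y+z)/k\rfloor$ cancel, and on the right the two occurrences of $\lfloor (x+y)/k\rfloor$ cancel.

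This is essentially the combinatorial statement that the total number of carries when adding three numbers in base $k$ does not depend on the order in which the additions are performed, i.e.\ the carry function is a $2$-cocycle. I don't anticipate a genuine obstacle here: once one rewrites $C$ in terms of the $\epsilon_k$'s the rest is a one-line check. The only subtlety worth remarking on is the passage from a $\Pi$-level identity to one holding for arbitrary $\pi$-sequences, which is handled cleanly by the universal property of $\pi^{\univ}_{\bullet}$ described earlier in the section.
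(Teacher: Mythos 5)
Your proof is correct and is essentially the paper's own argument: the paper likewise reduces the identity to the cocycle relation for $\epsilon_k$ (which holds because $\epsilon_k$ is the coboundary of $n \mapsto \lfloor n/k \rfloor$), exactly the telescoping computation you carry out. The only difference is that you spell out the substitution and the universality remark explicitly, neither of which changes the substance (and since $C(n,m)$ is by definition a monomial in the $\pi_k$'s, matching exponents already gives the identity directly in any $\bk$).
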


\begin{proof}
Since $\epsilon_k$ is a coboundary, it satisfies the cocycle identity
\begin{displaymath}
\epsilon_k(n,m)+\epsilon_k(n+m,\ell) = \epsilon_k(n,m+\ell)+\epsilon_k(m,\ell).
\end{displaymath}
The proposition follows directly from this.
\end{proof}

\subsection{The $h$-transform}

Let $h \ge 1$ be an integer. We define the {\bf $h$-transform} of the $\pi$-sequence $\pi_{\bullet}$ to be the $\pi$-sequence $\pi^{[h]}_{\bullet}$ given by
\begin{displaymath}
\pi^{[h]}_n=\prod_{d \mid h, (h/d, n)=1} \pi_{dn}
\end{displaymath}
We write $a^{[h]}(n)$ in places of $a(n; \pi^{[h]}_{\bullet})$, and similarly for $A^{[h]}$, $C^{[h]}$, etc. The motivation for this construction is not immediately clear, but Example~\ref{ex:q-dgpa} should provide some intuition.

\begin{proposition}
We have $a^{[h]}(n)=\{ a(hn)/a(h) \}$.
\end{proposition}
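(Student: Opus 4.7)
The plan is to show that both sides can be rewritten as the same product $\prod_{m} \pi_m$ indexed over a suitable set of divisors of $hn$, and then exhibit an explicit bijection between the indexing sets.

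First I would handle the right-hand side. Since every divisor of $h$ is also a divisor of $hn$, the factors of $a(h)=\prod_{d\mid h,\,d\ne 1}\pi_d$ appear (each with multiplicity one) among the factors of $a(hn)=\prod_{m\mid hn,\,m\ne 1}\pi_m$. Formally cancelling in $\Pi$ gives
\begin{displaymath}
\Bigl\{\tfrac{a(hn)}{a(h)}\Bigr\} \;=\; \prod_{\substack{m\mid hn\\ m\nmid h}} \pi_m,
\end{displaymath}
where the condition $m\nmid h$ automatically forces $m\ne 1$.

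Next I would unfold the left-hand side. By definition $a^{[h]}(n)=\prod_{e\mid n,\,e\ne 1}\pi^{[h]}_e$, and expanding $\pi^{[h]}_e$ yields
\begin{displaymath}
a^{[h]}(n) \;=\; \prod_{\substack{e\mid n,\,e\ne 1\\ d\mid h,\,(h/d,e)=1}} \pi_{de}.
\end{displaymath}
So it suffices to prove that the assignment $(d,e)\mapsto m=de$ gives a bijection
\begin{displaymath}
\{(d,e) : d\mid h,\; e\mid n,\; e\ne 1,\; (h/d,e)=1\} \;\longleftrightarrow\; \{m : m\mid hn,\; m\nmid h\}.
\end{displaymath}

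The heart of the argument is verifying this bijection, which I would do one prime at a time. Fix a prime $p$ and set $\alpha=v_p(h)$, $\beta=v_p(n)$. The condition $(h/d,e)=1$ translates, for each prime $p$, into the dichotomy $v_p(d)=\alpha$ or $v_p(e)=0$. A short case analysis shows that given any $m$ with $m\mid hn$, the only admissible choice is
\begin{displaymath}
v_p(d)=\min(v_p(m),\alpha), \qquad v_p(e)=\max(0,v_p(m)-\alpha),
\end{displaymath}
so $(d,e)$ is uniquely determined by $m$, and the constraints $d\mid h$ and $e\mid n$ hold automatically. Finally, the requirement $e\ne 1$ says some prime $p$ has $v_p(e)>0$, which is exactly the condition $m\nmid h$. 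The main (mild) obstacle is simply keeping track of which constraints become automatic and which get encoded into the final condition $m\nmid h$; once the prime-by-prime dictionary is established, the identity follows.
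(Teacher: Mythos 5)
Your proof is correct and is essentially the paper's argument: both reduce to the universal case (where cancellation in $\Pi$ is legitimate) and both rest on the same combinatorial fact, namely that divisors $m$ of $hn$ correspond bijectively to pairs $(d,e)$ with $d\mid h$, $e\mid n$, $(h/d,e)=1$. The only cosmetic difference is that the paper multiplies through by $a(h)$ and proves the bijection via the explicit formula $d=(m,h)$, whereas you cancel $a(h)$ first and verify the bijection prime by prime; these are the same argument in different packaging.
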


\begin{proof}
It suffices to prove $a^{[h]}(n)=a(hn)/a(h)$ for the universal $\pi$-sequence. We have
\begin{displaymath}
a(n; \pi^{[h]}_{\bullet}) = \prod_{d \mid n, d \ne 1} \pi_d^{[h]} = \prod_{\substack{d \mid n, d' \mid h \\ d \ne 1, (h/d', d)=1}} \pi_{dd'}.\end{displaymath}
Multiplying both sides by $a(h)$, we obtain
\begin{displaymath}
a(h) a(n; \pi^{[h]}_{\bullet}) = \prod_{\substack{d \mid n, d' \mid h \\ dd' \ne 1, (h/d', d)=1}} \pi_{dd'} = \prod_{d \mid nh, d \ne 1} \pi_d = a(hn),
\end{displaymath}
where in the second step we use the following lemma. This proves the result.
\end{proof}

\begin{lemma}
Let $n$ and $h$ be positive integers. Then every divisor of $nh$ can be written uniquely as $dd'$, where $d$ divides $n$, $d'$ divides $h$, and $(h/d', d)=1$.
\end{lemma}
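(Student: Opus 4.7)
The plan is to reduce everything to a prime-by-prime analysis via $p$-adic valuations, since all the conditions in the statement---divisibility and coprimality---are determined locally at each prime.

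Fix a divisor $e$ of $nh$. For every prime $p$ write $\nu_p$ for the $p$-adic valuation, and for any candidate factorization $e = dd'$ set $a_p = \nu_p(d)$ and $b_p = \nu_p(d')$. The required conditions translate into: $a_p \le \nu_p(n)$, $b_p \le \nu_p(h)$, $a_p + b_p = \nu_p(e)$, and (the coprimality condition $(h/d', d) = 1$) whenever $a_p \ge 1$ one has $\nu_p(h/d') = 0$, i.e.\ $b_p = \nu_p(h)$. Thus the problem decouples across primes, and it suffices to show that at each prime $p$ there is exactly one pair $(a_p, b_p)$ of nonnegative integers satisfying these four constraints.

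The case analysis is straightforward. If $\nu_p(e) \le \nu_p(h)$ then one cannot have $a_p \ge 1$, because that would force $b_p = \nu_p(h) \ge \nu_p(e)$ and hence $a_p = \nu_p(e) - b_p \le 0$; so we must take $a_p = 0$, $b_p = \nu_p(e)$, and this choice visibly satisfies all constraints. If $\nu_p(e) > \nu_p(h)$ then $a_p = 0$ is impossible (it would force $b_p = \nu_p(e) > \nu_p(h)$), so $a_p \ge 1$ and the coprimality constraint forces $b_p = \nu_p(h)$ and hence $a_p = \nu_p(e) - \nu_p(h)$; the inequality $a_p \le \nu_p(n)$ holds precisely because $e \mid nh$ gives $\nu_p(e) \le \nu_p(n) + \nu_p(h)$. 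In both cases the pair $(a_p, b_p)$ is uniquely determined.

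Taking the product of the local factors over all primes produces the unique global decomposition $e = dd'$ with the stated properties. The only mildly delicate point---and really the only thing that could go wrong---is getting the coprimality condition $(h/d', d) = 1$ correctly translated into its local form, namely ``$a_p \ge 1 \implies b_p = \nu_p(h)$''; once that is in hand, existence and uniqueness are both immediate from the trichotomy above.
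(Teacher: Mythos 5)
Your proof is correct, but it takes a different route from the paper's. The paper argues directly with gcd's: it sets $d'=(m,h)$ and $d=m/(m,h)$ for existence, and for uniqueness observes that the conditions $d=m/d'$ and $(h/d',m/d')=1$ force $d'=(h,m)$. You instead decouple the problem prime by prime, translate all four conditions into constraints on the valuations $(a_p,b_p)$, and show by a two-case trichotomy that the local system has exactly one solution; your translation of $(h/d',d)=1$ into ``$a_p\ge 1 \implies b_p=\nu_p(h)$'' is the right one, and the verification that $e\mid nh$ supplies $a_p\le\nu_p(n)$ in the second case is the only place the hypothesis is used, as it should be. The two arguments land on the same decomposition (your case analysis gives $b_p=\min(\nu_p(e),\nu_p(h))$, i.e.\ $d'=(e,h)$), so the difference is one of packaging: the paper's version is shorter and identifies the answer in closed form up front, while yours is more mechanical and makes the uniqueness transparent as a statement about a system of local constraints having a unique solution. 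Either is acceptable.
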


\begin{proof}
Let $m$ be a divisor of $nh$. Then $d'=(m,h)$ is a divisor of $h$ and $d=m/(m,h)$ is a divisor of $n$ such that $dd'=m$ and $(h/d',d)=(h/(m,h), m/(m,h))=1$. To show uniqueness, suppose that $m=dd'$ with $(h/d',d)=1$. Then $d=m/d'$, so $(h/d', m/d')=1$, which shows that $d'=(h,m)$, and so $d=m/(h,m)$.
\end{proof}

\begin{proposition} \label{dbltrans}
We have $(\pi^{[h]}_{\bullet})_{\phantom \bullet}^{[h']}=\pi^{[hh']}_{\bullet}$.
\end{proposition}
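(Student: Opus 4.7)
The plan is to reduce to the previous proposition via the formula $a^{[h]}(n) = \{a(hn)/a(h)\}$. Both sides of the proposed identity are (for each fixed $n \ge 2$) $\pi$-invariants in the input sequence $\pi_\bullet$, so by universality it suffices to verify the equality for the universal $\pi$-sequence in $\Pi = \bZ[\pi_n^{\univ}]_{n \ge 2}$. The advantage of working in $\Pi$ is that for $m \ge 2$ the element $a(m)$ is a product of distinct indeterminates, hence a non-zero divisor, so every symbol $\{F/G\}$ coincides with an honest quotient in the fraction field of $\Pi$.

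Next I would compute the $a$-invariants of both sides. Applying the previous proposition twice, once to the transformed sequence $\pi^{[h]}_\bullet$ and once to $\pi_\bullet$ itself, yields
$$a\bigl((\pi^{[h]}_\bullet)^{[h']};\, n\bigr) \;=\; \left\{\frac{a^{[h]}(h'n)}{a^{[h]}(h')}\right\} \;=\; \left\{\frac{a(hh'n)/a(h)}{a(hh')/a(h)}\right\} \;=\; \left\{\frac{a(hh'n)}{a(hh')}\right\} \;=\; a^{[hh']}(n),$$
where the cancellation of $a(h)$ is legitimate in the fraction field of $\Pi$.

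Finally I would invoke the M\"obius inversion formula $\pi_n = \{\prod_{d \mid n} a(n/d)^{\mu(d)}\}$ recorded earlier in the section. Since the $a(m)$ are non-zero divisors in $\Pi$, this formula uniquely recovers each $\pi_n$ from the associated $a$-values. Hence the two sequences $(\pi^{[h]}_\bullet)^{[h']}$ and $\pi^{[hh']}_\bullet$, having matching $a$-values in $\Pi$, must coincide as $\pi$-sequences in $\Pi$, and therefore in every ring $\bk$ by functoriality of $\pi$-invariants.

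The only real obstacle is bookkeeping with the curly-brace notation when iterating transforms, since in a general $\bk$ the intermediate quantities $a^{[h]}(h')$ need not be regular elements. Pushing everything into the universal polynomial ring $\Pi$ is precisely what removes this ambiguity, by turning each $\{\cdot/\cdot\}$ into genuine division in a domain.
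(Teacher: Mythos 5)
Your proposal is correct and follows essentially the same route as the paper: reduce to the universal sequence in $\Pi$, compute $a(n;(\pi^{[h]}_{\bullet})^{[h']})=\{a(hh'n)/a(hh')\}$ by applying the formula $a^{[h]}(n)=\{a(hn)/a(h)\}$ twice with the $a(h)$ factors cancelling in $\Frac(\Pi)$, and conclude by M\"obius inversion since the $a$'s are non-zero-divisors there. Your remarks about why the curly-brace ambiguity disappears in the universal case make explicit a point the paper only states parenthetically.
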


\begin{proof}
Again, it suffices to work in the universal case. We have
\begin{displaymath}
a(n; (\pi^{[h]}_{\bullet})_{\phantom \bullet}^{[h']})
= \frac{a(h'n; \pi^{[h]}_{\bullet})}{a(h'; \pi^{[h]}_{\bullet})}
= \frac{a(hh'n; \pi_{\bullet})/a(h; \pi_{\bullet})}{a(hh'; \pi_{\bullet})/a(h; \pi_{\bullet})}
=\frac{a(hh'n; \pi_{\bullet})}{a(hh'; \pi_{\bullet})}.
\end{displaymath}
Thus the two sequences $(\pi^{[h]}_{\bullet})_{\phantom \bullet}^{[h']}$ and $\pi^{[hh']}_{\bullet}$ define the same $a$'s, and are therefore equal by M\"obius inversion (which is allowed here since in the universal case the $\pi_n$'s are not zero-divisors).
\end{proof}

\subsection{Admissibility}

We say that the $\pi$-sequence $\pi_{\bullet}$ is {\bf admissible} if the following condition holds: if $(\pi_n, \pi_m)$ is not the unit ideal of $\bk$ then either $n \mid m$ or $m \mid n$. Equivalently, $\pi_{\bullet}$ is admissible if whenever $\pi_n$ and $\pi_m$ belong to some maximal ideal $\fm$ either $n \mid m$ or $m \mid n$. Suppose $\pi_{\bullet}$ is admissible, and let $\fa$ be a proper ideal of $\bk$. Define a sequence $b_{\fa,\bullet}$ inductively as follows. First, $b_{\fa,0}=1$. Having defined $b_{\fa,i}$, let $b_{\fa,i+1}$ be the smallest integer $n$ greater than $b_{\fa,i}$ such that $\pi_n \in \fa$; if no such integer exists, $b_{\fa,i+1}$ is undefined and the sequence has finite length. The sequence $b_{\fa,\bullet}$ is  divisible (as defined in \S \ref{subsec:divisible-sequence}).

\begin{proposition} \label{seq:adimg}
Let $\bk \to \bk'$ be a ring homomorphism, let $\pi_{\bullet}$ be an admissible $\pi$-sequence in $\bk$, and let $\pi'_{\bullet}$ be its image in $\bk'$. Then $\pi'_{\bullet}$ is also admissible.
\end{proposition}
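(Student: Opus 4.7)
The plan is to use the unit-ideal characterization of admissibility directly. Suppose $(\pi'_n, \pi'_m)$ is not the unit ideal of $\bk'$; we must show that $n \mid m$ or $m \mid n$.

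First I would argue by contrapositive that $(\pi_n, \pi_m)$ is not the unit ideal of $\bk$ either. Indeed, if we had a relation $a \pi_n + b \pi_m = 1$ in $\bk$ for some $a, b \in \bk$, then applying the homomorphism $\varphi \colon \bk \to \bk'$ would yield $\varphi(a) \pi'_n + \varphi(b) \pi'_m = 1$ in $\bk'$, contradicting the hypothesis that $(\pi'_n, \pi'_m) \ne \bk'$.

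Now that we know $(\pi_n, \pi_m)$ is a proper ideal of $\bk$, admissibility of $\pi_{\bullet}$ applies directly and gives $n \mid m$ or $m \mid n$, as required. Equivalently, one could phrase the argument via maximal ideals: pick a maximal ideal $\fm'$ of $\bk'$ containing $(\pi'_n, \pi'_m)$, observe that its preimage $\fm = \varphi^{-1}(\fm')$ is a proper (prime) ideal of $\bk$ containing both $\pi_n$ and $\pi_m$, and conclude via admissibility of $\pi_{\bullet}$.

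There is essentially no obstacle here; the proposition is a formal consequence of the fact that ``being a proper ideal'' is preserved under taking preimages along ring homomorphisms. The only thing worth flagging is that one must not try to pass maximal ideals forward along $\varphi$ (images of maximal ideals need not be maximal, or even proper), which is why the contrapositive or preimage formulation is the natural one.
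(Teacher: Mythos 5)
Your proof is correct and is exactly the paper's argument: if $(\pi'_n,\pi'_m)$ is not the unit ideal of $\bk'$ then $(\pi_n,\pi_m)$ cannot be the unit ideal of $\bk$, and admissibility of $\pi_{\bullet}$ finishes the job. The extra remarks about preimages of maximal ideals versus images are fine but not needed.
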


\begin{proof}
If $(\pi'_n, \pi'_m)$ is not the unit ideal of $\bk'$ then certainly $(\pi_n, \pi_m)$ is not the unit ideal of $\bk$, and so $n \mid m$ or $m \mid n$.
\end{proof}

\begin{proposition}
If $\pi_{\bullet}$ is admissible so is $\pi^{[h]}_{\bullet}$, for any $h \ge 1$.
\end{proposition}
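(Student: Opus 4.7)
The plan is to check the admissibility condition for $\pi^{[h]}_\bullet$ directly, by working locally at a maximal ideal. Suppose $\pi^{[h]}_n$ and $\pi^{[h]}_m$ both lie in some maximal ideal $\fm$ of $\bk$; I must show $n \mid m$ or $m \mid n$. Since $\pi^{[h]}_n = \prod_{d \mid h,\ (h/d,n)=1} \pi_{dn}$ and $\fm$ is prime, some factor of this product lies in $\fm$: there exists $d \mid h$ with $(h/d,n)=1$ and $\pi_{dn} \in \fm$. Similarly, there exists $d' \mid h$ with $(h/d',m)=1$ and $\pi_{d'm} \in \fm$. Admissibility of $\pi_\bullet$ applied to the pair $(\pi_{dn}, \pi_{d'm})$ then gives $dn \mid d'm$ or $d'm \mid dn$.

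It remains to deduce $n \mid m$ from $dn \mid d'm$ (the other case being symmetric). I would argue one prime at a time. Fix a prime $p$ and set $a = v_p(n)$, $b = v_p(m)$, $\alpha = v_p(d)$, $\beta = v_p(d')$, and $\gamma = v_p(h)$. From $d,d' \mid h$ one gets $\alpha, \beta \le \gamma$; from $(h/d,n)=1$ one gets that $a > 0$ forces $\alpha = \gamma$; and from $dn \mid d'm$ one gets $\alpha + a \le \beta + b$. If $a = 0$ there is nothing to show. If $a > 0$, then $\alpha = \gamma$ and hence $\gamma + a \le \beta + b \le \gamma + b$, i.e.\ $a \le b$. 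Since this holds for every prime $p$, we conclude $n \mid m$.

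There is no serious obstacle here; the proof is essentially a bookkeeping exercise in $p$-adic valuations once one has extracted a single factor $\pi_{dn}$ from the product defining $\pi^{[h]}_n$. The slightly subtle point is that the coprimality condition $(h/d,n)=1$ is precisely what is needed to pin down $\alpha = \gamma$ in the valuation step, thereby ruling out the would-be pathological configuration $a > 0$, $b = 0$, $\alpha < \gamma$ that would otherwise break the implication $dn \mid d'm \Rightarrow n \mid m$.
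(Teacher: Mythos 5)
Your proof is correct and follows essentially the same route as the paper's: extract a factor $\pi_{dn} \in \fm$ from the product defining $\pi^{[h]}_n$, apply admissibility of $\pi_{\bullet}$ to get $dn \mid d'm$, and then verify $v_p(n) \le v_p(m)$ prime by prime using the coprimality condition $(h/d,n)=1$ to pin down $v_p(d)=v_p(h)$. The only (cosmetic) difference is that you handle the cases $v_p(h)=0$ and $v_p(h)>0$ uniformly, whereas the paper splits them.
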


\begin{proof}
Let $\fm$ be a maximal ideal of $\bk$, and suppose $\pi^{[h]}_n$ and $\pi^{[h]}_m$ belong to $\fm$. Then $\pi_{dn} \in \fm$ for some $d \mid h$ with $(h/d, n)=1$, and similarly $\pi_{d'm} \in \fm$ for some $d' \mid h$ with $(h/d', m)=1$. Since $\pi_{\bullet}$ is admissible, either $dn \mid d'm$ or $d'm \mid dn$; without loss of generality, assume the former. We claim $n \mid m$, which will complete the proof. It suffices to show $v_p(n) \le v_p(m)$ for all primes $p$, where $v_p$ is the $p$-adic valution. Thus let $p$ be given. If $v_p(n)=0$ there is nothing to prove, so suppose $v_p(n)>0$. The divisibility $dn \mid d'm$ translates to the inequality
\begin{displaymath}
v_p(d)+v_p(n) \le v_p(d')+v_p(m).
\end{displaymath}
If $v_p(h)=0$ then $v_p(d')=0$, and so $v_p(n) \le v_p(m)$. Now suppose $v_p(h)>0$. Then the condition $(h/d, n)=1$ implies $v_p(d)=v_p(h)$. Thus
\begin{displaymath}
v_p(h)+v_p(n) \le v_p(d')+v_p(m) \le v_p(h)+v_p(m),
\end{displaymath}
and so $v_p(n) \le v_p(m)$. This completes the proof.
\end{proof}

\begin{proposition} \label{gdpa:elt}
Suppose $\pi_{\bullet}$ is admissible and $\pi_h$ belongs to the Jacobson radical of $\bk$. Then:
\begin{enumerate}
\item If $\pi_n$ is a non-unit of $\bk$ then either $h$ divides $n$ or $n$ divides $h$.
\item For $n>1$, we have $\pi^{[h]}_n=\pi_{hn}$ up to units.
\end{enumerate}
\end{proposition}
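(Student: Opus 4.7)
The plan is to handle the two parts in order, deducing (b) from (a) together with the explicit formula for the $h$-transform.

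For part (a), since $\pi_n$ is a non-unit, it lies in some maximal ideal $\fm$ of $\bk$. Because $\pi_h$ belongs to the Jacobson radical, it also lies in $\fm$. Admissibility of $\pi_{\bullet}$ then forces $h \mid n$ or $n \mid h$, which is exactly what we want.

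For part (b), I would write out the defining product
\begin{displaymath}
\pi^{[h]}_n = \prod_{d \mid h,\ (h/d, n) = 1} \pi_{dn}
\end{displaymath}
and identify the distinguished factor. Taking $d = h$ gives $h/d = 1$, which is coprime to $n$, so $\pi_{hn}$ appears in the product. The goal is therefore to show that every other factor $\pi_{dn}$ (with $d \mid h$, $d \neq h$, and $(h/d, n) = 1$) is a unit of $\bk$.

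To verify this, I would argue by contradiction: suppose such a $\pi_{dn}$ is a non-unit. Since $n > 1$, we have $dn > 1$, and part (a) applies to $\pi_{dn}$, yielding $h \mid dn$ or $dn \mid h$. In the first case, writing $h = d \cdot (h/d)$ and canceling $d$ gives $(h/d) \mid n$, so $(h/d, n) = h/d$; since $d \neq h$ this quantity exceeds $1$, contradicting the coprimality hypothesis. In the second case, $dn \mid h$ forces $n \mid h/d$, and since $n > 1$ we again get $(h/d, n) \geq n > 1$, another contradiction. Hence every other factor is a unit, so $\pi^{[h]}_n$ equals $\pi_{hn}$ up to units.

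The main subtlety is just keeping the divisibility bookkeeping straight in the two cases of the contradiction argument; no step requires anything beyond the definition of admissibility, the explicit formula for $\pi^{[h]}_n$, and the hypothesis $n > 1$ (which is precisely what rules out the second case).
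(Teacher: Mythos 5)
Your proof is correct and follows essentially the same route as the paper: part (a) is identical, and for part (b) you isolate the factor $\pi_{hn}$ and use (a) plus the coprimality condition $(h/d,n)=1$ to rule out any other non-unit factor, exactly as the paper does (the paper phrases the divisibility bookkeeping via the equivalence $(h/d,n)=1 \iff (h,nd)=d$, but the content is the same).
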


\begin{proof}
(a) Since $\pi_n$ is a non-unit, it belongs to some maximal ideal $\fm$. Since $\pi_h$ belongs to the Jacobson radical, it also belongs to $\fm$. Thus $n \mid h$ or $h \mid n$ by admissibility.

(b) We have
\begin{displaymath}
\pi^{[h]}_n = \prod_{d \mid h, (h/d,n)=1} \pi_{dn}.
\end{displaymath}
This has $\pi_{hn}$ as a factor, and we claim that all other factors are units. Suppose $\pi_{dn}$ is a non-unit appearing in the product. Then, by (a), $dn \mid h$ or $h \mid dn$. The condition $(h/d,n)=1$ is equivalent to $(h,nd)=d$. Thus if $dn \mid h$ then $d=nd$ and $n=1$, which is not the case, while if $h \mid dn$ then $d=h$, as claimed.
\end{proof}

\subsection{$\pi_{\bullet}$-torsion}

Let $M$ be a $\bk$-module, and let $m \in M$. We say that $m$ is {\bf $\pi_{\bullet}$-torsion} if it is annihilated by $a(n)$ for some $n \ge 1$. Equivalently, $m$ is $\pi_{\bullet}$-torsion if it is annihilated by a product of the form $\prod_{n \in S} \pi_n$, where $S$ is a finite subset of $\bZ_{>1}$. We write $T(M)=T(M; \pi_{\bullet})$ for the set of $\pi_{\bullet}$-torsion elements. It is a submodule of $M$. We say that $T(M)$ is {\bf bounded} if it is annihilated by $a(n)$ for some $n \ge 1$.

\begin{proposition}
If $M$ is noetherian then $T(M)$ is bounded.
\end{proposition}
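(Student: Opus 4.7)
The plan is to use the hypothesis that $M$ is noetherian to reduce bounding all of $T(M)$ to bounding finitely many elements, and then to use the multiplicative structure of $a(n)$ to combine these bounds into a single $a(n)$ that kills everything.

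More precisely, since $T(M)$ is a submodule of the noetherian $\bk$-module $M$, it is finitely generated. Choose generators $m_1, \ldots, m_r$ of $T(M)$. By definition of $\pi_{\bullet}$-torsion, for each $i$ there is some $n_i \ge 1$ with $a(n_i) m_i = 0$. Now set $n = \operatorname{lcm}(n_1, \ldots, n_r)$.

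The key observation is that whenever $n_i \mid n$, every divisor $d$ of $n_i$ is also a divisor of $n$, so in the product
\begin{displaymath}
a(n) = \prod_{d \mid n,\, d \ne 1} \pi_d
\end{displaymath}
every factor appearing in $a(n_i) = \prod_{d \mid n_i,\, d \ne 1} \pi_d$ also appears. Hence $a(n)$ is a $\bk$-multiple of $a(n_i)$ for each $i$ (literally, $a(n) = a(n_i) \cdot \prod_{d \mid n,\, d \nmid n_i,\, d \ne 1} \pi_d$). Therefore $a(n) m_i = 0$ for every $i$, and since the $m_i$ generate $T(M)$ we conclude $a(n) T(M) = 0$, i.e. $T(M)$ is bounded.

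There is really no obstacle here: the only thing to check is the divisibility $a(n_i) \mid a(n)$ in $\bk$, which is immediate from the definition of $a$ as a product over divisors. The result is essentially a packaging of ``finitely generated torsion module over a suitable multiplicative system has a common annihilator.''
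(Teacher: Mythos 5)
Your proof is correct and is essentially the argument the paper gives: both rest on noetherianity plus the observation that $k \mid m$ implies $a(k) \mid a(m)$. The paper phrases it via the ascending chain of submodules killed by $a(n!)$ stabilizing, while you phrase it via finite generation of $T(M)$ and taking an lcm of the exponents of the generators; these are interchangeable formulations of the same idea.
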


\begin{proof}
Let $M_n \subset M$ be the submodule killed by $a(n!)$. Then $T(M)$ is the ascending union of the $M_n$, and so, by noetherianity, $M=M_n$ for some $n$.
\end{proof}

\subsection{Admissible sequences in a PID}

Suppose now that $\bk$ is a PID (or just a B\'ezout domain). A sequence $\{a(n)\}_{n \ge 1}$ in $\bk$ is {\bf GCD-morphic} if it satisfies the identity
\begin{displaymath}
a(\gcd(n,m))=\gcd(a(n),a(m))
\end{displaymath}
for all $n,m \ge 1$. The following proposition seems to be well-known (e.g., see \cite{nowicki} or \cite{gcd-morphic}), but we include a proof to be self-contained.

\begin{proposition} \label{prop:gcd-morphic}
Giving a never-zero admissible $\pi$-sequence $\pi_{\bullet}$ is the same as giving a never-zero GCD-morphic sequence $\{a(n)\}_{n \ge 1}$; precisely, $\pi_{\bullet} \mapsto \{a(n; \pi_{\bullet})\}_{n \ge 1}$ provides a bijection between these two classes of sequences.
\end{proposition}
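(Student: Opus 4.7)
My strategy is to build an explicit inverse to the map $\pi_\bullet \mapsto \{a(n;\pi_\bullet)\}$ via M\"obius inversion and then verify all properties prime-by-prime in the PID $\bk$. In the forward direction, $a(n; \pi_\bullet) = \prod_{d \mid n, d \ne 1} \pi_d$ is manifestly nonzero. In the backward direction, given a never-zero sequence $\{a(n)\}_{n \ge 1}$ with $a(1)=1$, M\"obius inversion produces a candidate $\pi_n = \prod_{d \mid n} a(n/d)^{\mu(d)}$ \emph{a priori} in $\mathrm{Frac}(\bk)$, and $a(n;\pi_\bullet) = a(n)$ holds by construction. So the content of the proposition is (i) the $a$-sequence of an admissible $\pi$-sequence is GCD-morphic, and (ii) the $\pi$-sequence reconstructed from a never-zero GCD-morphic sequence actually lies in $\bk$ and is admissible.

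For (i), fix a prime $p$ of $\bk$ and write $v=v_p$. Admissibility says the set $S_p \coloneq \{n \ge 2 : v(\pi_n) > 0\}$ is totally ordered by divisibility, so it can be listed as an ascending chain $d_1 \mid d_2 \mid d_3 \mid \cdots$. The key observation is that $\{d_i : d_i \mid n\}$ is always an initial segment of this chain, since $d_{i+1} \mid n$ forces $d_i \mid d_{i+1} \mid n$. Writing $j(n)$ for the length of this initial segment, one gets $v(a(n)) = \sum_{i=1}^{j(n)} v(\pi_{d_i})$, and evidently $j(\gcd(n,m)) = \min(j(n), j(m))$. Because all summands $v(\pi_{d_i})$ are positive, the partial sums are strictly increasing, yielding $v(a(\gcd(n,m))) = \min(v(a(n)), v(a(m))) = v(\gcd(a(n), a(m)))$. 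Since this holds at every prime of $\bk$, $a(\gcd(n,m))$ and $\gcd(a(n),a(m))$ agree up to units.

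For (ii), set $f(n) = v_p(a(n))$, a non-negative integer with $f(1)=0$ and $f(\gcd(n,m)) = \min(f(n), f(m))$. The key structural lemma will be that any such $f$ arises from a divisibility chain: each non-empty level set $N_k = \{n : f(n) \ge k\}$ is closed under gcd (from the min property) and upward closed under divisibility (from the resulting monotonicity of $f$), so it has a unique divisibility-minimum $m_k$ with $N_k = m_k \bZ_{\ge 1}$; one then reads off $f(n) = |\{k \ge 1 : m_k \mid n\}|$. Plugging into the M\"obius formula and swapping the order of summation, the identity $\sum_{d \mid N} \mu(d) = [N=1]$ applied with $N = n/m_k$ collapses the computation to
\begin{displaymath}
v_p(\pi_n) = \sum_{d \mid n} \mu(d) f(n/d) = \sum_{k \ge 1} [n = m_k] = |\{k \ge 1 : m_k = n\}| \ge 0.
\end{displaymath}
So $\pi_n$ lies in $\bk$, and its support $\{n : v_p(\pi_n) > 0\}$ is contained in the chain $\{m_1, m_2, \ldots\}$; any two elements of the support are thus comparable under divisibility, which is precisely admissibility at $p$. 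Bijectivity of the correspondence is then immediate from M\"obius inversion, since the two constructions invert each other formally.

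The main obstacle will be the structural lemma recognizing that a non-negative integer function on $\bZ_{\ge 1}$ with the $\min$-on-gcd property is controlled by a divisibility chain of ``thresholds'' $m_k$. Once that is in hand, the M\"obius calculation collapses cleanly, and both integrality of $\pi_n$ and admissibility fall out simultaneously from the same chain picture; the B\'ezout-domain generality can then be handled by a localization argument at maximal ideals.
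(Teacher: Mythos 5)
Your plan is correct and would yield a complete proof in the PID case, but it is genuinely different from the argument in the paper. The paper never localizes or uses valuations: the equivalence ``admissible $\Leftrightarrow$ GCD-morphic'' is proved by the single gcd factorization $\gcd(a(n),a(m))=a(\ell)\gcd\bigl(\prod_{d\mid n,\,d\nmid\ell}\pi_d,\ \prod_{e\mid m,\,e\nmid\ell}\pi_e\bigr)$ with $\ell=\gcd(n,m)$, and integrality of the M\"obius-inverted $\pi_n$ is obtained by a maximality/stripping argument (take a maximal $\pi$-sequence with $\{a(n)/a(n;\pi_\bullet)\}$ still GCD-morphic, and use a lemma saying one may divide a GCD-morphic sequence by its first non-unit value to contradict maximality). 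Your route instead reduces everything at each prime $p$ to the combinatorics of $f(n)=v_p(a(n))$: the structural lemma that a function with $f(\gcd(n,m))=\min(f(n),f(m))$ is encoded by a divisibility chain of thresholds $m_k$, giving the clean formula $v_p(\pi_n)=\#\{k: m_k=n\}$ from which integrality and admissibility both drop out. This is more explicit than the paper's proof and makes the connection with the divisible sequences $b_{\fa,\bullet}$ of \S\ref{subsec:divisible-sequence} visible; the paper's proof, by contrast, stays entirely inside gcd arithmetic and avoids any prime-by-prime analysis.

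One caveat to address when writing this up: your argument as stated needs $\bk$ to be a UFD, since you use that elements are determined up to units by their valuations at primes and that $v_p(\gcd(x,y))=\min(v_p(x),v_p(y))$. For the parenthetical B\'ezout-domain case the promised ``localization at maximal ideals'' is not automatic: the localizations are valuation rings whose value groups need not be discrete, so $f$ would take values in a totally ordered abelian group rather than $\bZ_{\ge 0}$, and the level-set/threshold lemma would have to be reworked (your ``strictly increasing partial sums'' step survives, but the counting formula $v_p(\pi_n)=\#\{k:m_k=n\}$ does not literally make sense). Also make explicit the normalization $a(1)$ a unit, which you assume; it is forced if one wants the sequence to be in the image of $\pi_\bullet\mapsto a(\cdot;\pi_\bullet)$, since $a(1;\pi_\bullet)=1$.
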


\begin{lemma}
Let $\{a(n)\}_{n \ge 1}$ be a never-zero GCD-morphic sequence. Let $r \ge 1$ be minimal such that $a(r)$ is not a unit. Define
\begin{displaymath}
b(n) = \begin{cases}
a(n)/a(r) & \text{if $r \mid n$} \\
a(n) & \text{if $r \nmid n$} \end{cases}
\end{displaymath}
Then $\{b(n)\}_{n \ge 1}$ is GCD-morphic.
\end{lemma}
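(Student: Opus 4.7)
The plan is to verify the identity $b(\gcd(n,m)) = \gcd(b(n), b(m))$ by splitting into four cases according to whether or not $r$ divides each of $n$ and $m$. The key auxiliary fact, which I would establish at the outset, is that $a(r)$ is coprime to $a(m)$ whenever $r \nmid m$: indeed, if $r \nmid m$ then $\gcd(r,m) < r$, so by minimality of $r$ the element $a(\gcd(r,m))$ is a unit, and GCD-morphicity of $a$ gives $\gcd(a(r),a(m)) = a(\gcd(r,m))$, a unit.

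With this in hand I would handle the cases as follows. If $r \nmid n$ and $r \nmid m$, then $r \nmid \gcd(n,m)$ and $b$ agrees with $a$ on all three arguments, so the identity is immediate from GCD-morphicity of $a$. If $r \mid n$ and $r \mid m$, then $r \mid \gcd(n,m)$ and all three values of $b$ are obtained from those of $a$ by dividing by $a(r)$; the identity then reduces to the fact that $\gcd(a(r)x, a(r)y) = a(r)\gcd(x,y)$ in the PID $\bk$, using that $a(r) \neq 0$.

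The main subtle case is the mixed one, say $r \mid n$ and $r \nmid m$ (the other is symmetric). Here $r \nmid \gcd(n,m)$, so we must verify
\begin{displaymath}
a(\gcd(n,m)) = \gcd\bigl(a(n)/a(r),\, a(m)\bigr).
\end{displaymath}
Writing $a(n) = a(r) \cdot b(n)$ and using that $\gcd(a(r), a(m)) = 1$ from the preliminary observation, one factors $a(r)$ out of the gcd: $\gcd(a(r) b(n), a(m)) = \gcd(b(n), a(m))$. Combined with $\gcd(a(n), a(m)) = a(\gcd(n,m))$ from GCD-morphicity of $a$, this yields the desired equality.

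I do not expect any real obstacle; the entire argument is a short case check, and the only nontrivial ingredient is the coprimality of $a(r)$ and $a(m)$ for $r \nmid m$, which is forced by the minimality of $r$ together with GCD-morphicity. I would just want to be explicit that the divisions by $a(r)$ make sense, which uses the never-zero hypothesis together with the fact that we are in a B\'ezout domain (so $\bk$ is in particular a domain).
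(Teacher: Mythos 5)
Your proof is correct and follows essentially the same route as the paper's: the only nontrivial case is the mixed one, handled via the coprimality of $a(r)$ and $a(m)$ for $r \nmid m$, which you derive exactly as the paper does from minimality of $r$ and GCD-morphicity. One tiny remark: the reason $a(n)/a(r)$ lies in $\bk$ when $r \mid n$ is not really the never-zero hypothesis but the divisibility $a(r) \mid a(n)$, which itself follows from GCD-morphicity since $\gcd(a(r),a(n)) = a(\gcd(r,n)) = a(r)$.
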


\begin{proof}
We must show $\gcd(b(n),b(m))=b(\gcd(n,m))$. This is immediate if $r$ divides both of $n$ and $m$, and also if $r$ divides neither $n$ nor $m$. Thus assume $r \mid n$ but $r \nmid m$. Then $\gcd(a(r),a(m))=a(\gcd(r,m))=1$ since $\gcd(r,m)<r$ and $r$ is minimal such that $a(r)$ is not a unit. Thus $a(m)$ is coprime to $a(r)$, and so
\begin{displaymath}
\gcd(b(n), b(m))=\gcd(a(n)/a(r), a(m))=\gcd(a(n),a(m))=a(\gcd(n,m))=b(\gcd(n,m)).
\end{displaymath}
This completes the proof.
\end{proof}

\begin{lemma}
Let $\{a(n)\}_{n \ge 1}$ be a never-zero GCD-morphic sequence. Then $a(n)=a(n;\pi_{\bullet})$ for a unique never-zero $\pi$-sequence $\pi_{\bullet}$.
\end{lemma}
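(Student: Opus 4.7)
The plan is to construct $\pi_\bullet$ by strong induction on $n$, defining
\[
\pi_n \;=\; \frac{a(n)}{\prod_{d \mid n,\, 1 < d < n}\pi_d}
\]
a priori in $\mathrm{Frac}(\bk)$. Since $\bk$ is a domain and every $a(n)$ is nonzero, this formula forces any two $\pi$-sequences with the same associated $a(n)$'s to coincide, and it produces nonzero elements, so uniqueness (and nonvanishing) is automatic; what requires real work is showing $\pi_n \in \bk$.

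For this I would argue valuation-by-valuation. For each maximal ideal $\fm$ the B\'ezout hypothesis makes $\bk_\fm$ a valuation ring, and $\bk = \bigcap_\fm \bk_\fm$ holds for any integral domain, so it suffices to show $v_\fm(\pi_n) \ge 0$ for every $\fm$. Fix such $\fm$, write $v = v_\fm$ and $\alpha(n) = v(a(n))$. Noting that $a(1) = 1$ is forced by the very shape of $a(\cdot;\pi_\bullet)$, the GCD-morphism assumption translates to
\[
\alpha(\gcd(n,m)) \;=\; \min(\alpha(n), \alpha(m)),
\]
with $\alpha(1)=0$ and $n \mid m \Rightarrow \alpha(n) \le \alpha(m)$.

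The combinatorial heart is the following observation: for each integer $k \ge 1$ the level set $S_k = \{n : \alpha(n) \ge k\}$ is closed under multiples (by monotonicity) and under pairwise $\gcd$ (by the displayed identity), so $S_k$ is exactly the set of positive multiples of its smallest element $r_k$. This gives the layer decomposition $\alpha(n) = \#\{k \ge 1 : r_k \mid n\}$, and M\"obius inversion then yields
\[
v(\pi_n) \;=\; \sum_{d \mid n}\mu(n/d)\,\alpha(d) \;=\; \sum_{k \ge 1}\sum_{r_k \mid d \mid n}\mu(n/d) \;=\; \#\{k \ge 1 : r_k = n\} \;\ge\; 0,
\]
using the telescoping identity $\sum_{r \mid d \mid n}\mu(n/d) = [r = n]$. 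Hence $\pi_n \in \bk_\fm$ for every $\fm$, so $\pi_n \in \bk$, and the identity $a(n) = \prod_{d \mid n,\, d \ne 1}\pi_d$ holds by construction.

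The main obstacle is the principality statement $S_k = r_k\,\bZ_{>0}$, which is where the full strength of GCD-morphism (rather than mere monotonicity of $\alpha$) is used; once this is in hand, the surrounding M\"obius inversion and the localization argument are formal.
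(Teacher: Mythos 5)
Your argument is correct for $\bk$ a PID, and it takes a genuinely different route from the paper's. The paper argues globally: it chooses a never-zero sequence $\pi_{\bullet}$ maximal (with respect to divisibility) among those for which $\{a(n)/a(n;\pi_{\bullet})\}$ remains GCD-morphic, and uses the preceding lemma (dividing a GCD-morphic sequence by $a(r)$ at multiples of the first non-unit index $r$) to show that maximality forces each $a(n)/a(n;\pi_{\bullet})$ to be a unit; integrality of the M\"obius product then follows. You instead define $\pi_n$ by the M\"obius formula inside $\Frac(\bk)$ --- which, as you note, also disposes of uniqueness immediately, since the $a(n)$ are nonzero and $\bk$ is a domain --- and verify integrality one valuation at a time, converting GCD-morphism into $\alpha(\gcd(n,m))=\min(\alpha(n),\alpha(m))$ and computing $v_{\fm}(\pi_n)=\#\{k: r_k=n\}\ge 0$ via the principality of the level sets. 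Your route is more quantitative: it identifies $v_{\fm}(\pi_n)$ exactly, and the divisible chain $r_1\mid r_2\mid\cdots$ you extract is essentially the local sequence $b_{\fm,\bullet}$ appearing elsewhere in the paper. It also sidesteps the paper's slightly delicate appeal to the existence of a maximal sequence.

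One caveat. The standing hypothesis of this subsection is ``$\bk$ a PID (or just a B\'ezout domain),'' and for a B\'ezout domain $\bk_{\fm}$ is a valuation ring whose value group need not be $\bZ$; your layer decomposition $\alpha(n)=\#\{k\ge 1:\alpha(n)\ge k\}$ only makes sense for integer-valued $\alpha$. This is repairable without new ideas: for $n\ge 2$, writing the M\"obius sum over the complementary squarefree divisors $n/p_S$ and using that $\alpha(n/p_{S\cup T})=\min(\alpha(n/p_S),\alpha(n/p_T))$, one gets $\sum_{d\mid n}\mu(n/d)\alpha(d)=\alpha(n)-\max_{p\mid n}\alpha(n/p)\ge 0$ directly, valid for any totally ordered value group. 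With that substitution (or by simply restricting the lemma to PIDs, where every $\bk_{\fm}$ is a DVR) your proof is complete.
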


\begin{proof}
By M\"obius inversion, $\pi_n=\prod_{d \mid n} a(n/d)^{\mu(d)}$ is the unique sequence $\pi_{\bullet}$, if it exists: we must show that this product belongs to $\bk$. Let $\pi_{\bullet}$ be a maximal never-zero sequence such that $\{a(n)/a(n;\pi_{\bullet})\}_{n \ge 1}$ is a GCD-morphic sequence in $\bk$. Here by ``maximal'' we mean that if $\pi'_{\bullet}$ is another other sequence with this property and $\pi_n \mid \pi'_n$ for all $n \ge 2$ then $\pi_n=\pi'_n$ up to units. It is easy to see that a maximal sequence exists: start by maximizing $\pi_2$, then go to $\pi_3$, and so on. Put $a'(n)=a(n)/a(n;\pi_{\bullet})$. We claim that $a'(n)$ is a unit for all $n$. Indeed, suppose not and let $r$ be minimal such that $a'(r)$ is not a unit. Define $\pi'_{\bullet}$ by $\pi'_n=\pi_n$ for $n \ne r$ and $\pi'_r=a(r)$. Then, by the previous lemma, $\{a(n)/a(n;\pi'_{\bullet})\}_{n \ge 1}$ is GCD-morphic. This contradicts the maximality of $\pi_{\bullet}$, and thus proves the claim. We thus see that $a(n)=a(n;\pi_{\bullet})$ up to units for all $n$, and so $\prod_{d \mid n} a(n/d)^{\mu(d)}$ is equal to $\pi_n$ up to units, and thus belongs to $\bk$.
\end{proof}

\begin{lemma}
Let $\pi_{\bullet}$ be a never-zero $\pi$-sequence in $\bk$ and put $a(n)=a(n;\pi_{\bullet})$. Then $\pi_{\bullet}$ is admissible if and only if $\{a(n)\}_{n \ge 1}$ is GCD-morphic.
\end{lemma}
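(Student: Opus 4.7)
My plan is to verify the equality (respectively inequality) of GCDs by working one maximal ideal at a time: in a B\'ezout domain, two elements generate the same ideal iff their $\fm$-adic valuations $v_\fm$ agree at every maximal ideal $\fm$, and $v_\fm(\gcd(x,y))=\min(v_\fm(x),v_\fm(y))$. Since $a(n)=\prod_{d\mid n,\,d\ne 1}\pi_d$ is a product of nonzero elements, we have
\begin{displaymath}
v_\fm(a(n)) = \sum_{\substack{d\mid n\\ d\ne 1}} v_\fm(\pi_d),
\end{displaymath}
and only the indices $d$ lying in the set $S_\fm\coloneq\{d\ge 2:\pi_d\in\fm\}$ contribute. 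So proving GCD-morphicity at $\fm$ reduces to a combinatorial statement about how the sets $\{d\in S_\fm:d\mid n\}$ vary in $n$.

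For the forward implication, assume $\pi_\bullet$ is admissible and fix a maximal $\fm$. Admissibility at $\fm$ is the statement that $S_\fm$ is totally ordered by divisibility, so I would enumerate $S_\fm=\{d_1,d_2,d_3,\ldots\}$ with $d_i$ properly dividing $d_{i+1}$. The crucial observation (and the heart of the argument) is that $\{d\in S_\fm:d\mid n\}$ is then an \emph{initial segment} $\{d_1,\ldots,d_{I(n)}\}$: if $d_{i+1}\mid n$ then $d_i\mid d_{i+1}\mid n$. Setting $g=\gcd(n,m)$, an element $d_i$ divides $g$ iff it divides both $n$ and $m$, so $I(g)=\min(I(n),I(m))$. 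Since all summands $v_\fm(\pi_{d_i})$ are nonnegative, this gives
\begin{displaymath}
v_\fm(a(g)) = \sum_{i=1}^{I(g)} v_\fm(\pi_{d_i}) = \min\!\left(\sum_{i=1}^{I(n)} v_\fm(\pi_{d_i}),\ \sum_{i=1}^{I(m)} v_\fm(\pi_{d_i})\right) = \min(v_\fm(a(n)),v_\fm(a(m))),
\end{displaymath}
as desired.

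For the converse, I would argue by contraposition: suppose admissibility fails, so there exist $n,m\ge 2$ with $\pi_n,\pi_m\in\fm$ for some maximal $\fm$, and with $n\nmid m$ and $m\nmid n$. Let $g=\gcd(n,m)$, so $g<n$ and $g<m$. Then $n$ is a divisor of $n$ that is not a divisor of $g$, hence $\pi_n$ appears as a factor of $a(n)$ but not of $a(g)$; since all other factors $\pi_d$ with $d\mid g$ also appear in $a(n)$, and $v_\fm(\pi_d)\ge 0$ everywhere, we get $v_\fm(a(n))\ge v_\fm(a(g))+v_\fm(\pi_n)\ge v_\fm(a(g))+1$. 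By symmetry $v_\fm(a(m))\ge v_\fm(a(g))+1$. Hence $v_\fm(a(g))<\min(v_\fm(a(n)),v_\fm(a(m)))=v_\fm(\gcd(a(n),a(m)))$, so $\{a(n)\}$ is not GCD-morphic.

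I do not expect a real obstacle: once one recognizes that admissibility is exactly the local total-ordering of $S_\fm$ by divisibility, both directions are short. The one place needing slight care is the forward direction's passage from ``$I(n)\le I(m)$'' to the equality of valuations; this uses that the sets involved are initial segments of the \emph{same} totally ordered chain $S_\fm$, which is where the admissibility hypothesis is genuinely used.
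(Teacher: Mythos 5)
Your proof is correct, but it takes a genuinely different route from the paper's. The paper argues globally: it factors $a(n)=a(\ell)\prod_{d\mid n,\,d\nmid\ell}\pi_d$ and $a(m)=a(\ell)\prod_{e\mid m,\,e\nmid\ell}\pi_e$ with $\ell=\gcd(n,m)$, pulls $a(\ell)$ out of the gcd (using that gcd distributes over products in a B\'ezout domain), and observes that GCD-morphicity is exactly the statement that the residual gcd is a unit, which in turn is equivalent to $(\pi_d,\pi_e)=1$ for all incomparable pairs $d,e$ --- i.e.\ admissibility. You instead localize at each maximal ideal $\fm$ and reduce to a valuation count, with the key combinatorial point being that admissibility makes $S_\fm$ a divisibility chain so that $\{d\in S_\fm: d\mid n\}$ is an initial segment. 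Both arguments are short; the paper's avoids any local structure theory and works uniformly, while yours isolates more explicitly \emph{where} admissibility enters (the initial-segment observation) and makes the failure in the converse quantitatively visible. The one small caution with your version: for a B\'ezout domain that is not a PID (the proposition is stated for ``a PID or just a B\'ezout domain''), $\bk_\fm$ is a valuation ring whose value group need not be discrete, so ``$\ge v_\fm(a(g))+1$'' should read ``$\ge v_\fm(a(g))+v_\fm(\pi_n)>v_\fm(a(g))$''; everything else (non-negativity of valuations, $v_\fm(\gcd)=\min$, and the local-to-global comparison of principal ideals) goes through verbatim for general value groups.
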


\begin{proof}
Let $\ell=\gcd(n,m)$. Working from the definition of $a(n;\pi_{\bullet})$, we find
\begin{displaymath}
\gcd(a(n),a(m))=a(\ell) \gcd \left( \prod_{d \mid n, d \nmid \ell} \pi_d, \prod_{e \mid m, e \nmid \ell} \pi_e \right).
\end{displaymath}
Suppose $\pi_{\bullet}$ is admissible. If $d$ and $e$ are indices in the products, then $d \nmid e$ and $e \nmid d$, and so $(\pi_d, \pi_e)=1$ by admissibility. It follows that the gcd on the right is~1, and so $\{a(n)\}$ is GCD-morphic. Conversely, suppose $\{a(n)\}$ is GCD-morphic. Then the gcd on the right side above is~1, and so $(\pi_d, \pi_e)=1$ for all indices $d$ and $e$ in products. Suppose now $d$ and $e$ are given such that $d \nmid e$ and $e \nmid d$. Taking $n=d$ and $m=e$, we find $(\pi_e, \pi_d)=1$, and so $\pi_{\bullet}$ is admissible.
\end{proof}

\subsection{Examples}

We now give some examples to illustrate the definitions in this section.

\begin{example}
\label{ex:classical-dgpa}
Take $\bk=\bZ$ and define $\pi_k=p$ if $k$ is a power of a prime $p$ and $\pi_k=1$ otherwise. Then $a(n)=n$ and $A(n)=n!$ and $C(n,m)=\binom{n}{m}$, the usual binomial coefficient. We have $a^{[h]}(n)=a(hn)/a(h)=n$. Thus $A^{[h]}=A$ and $C^{[h]}=C$ as well. It is clear that the sequence $\pi_{\bullet}$ is admissible.
\end{example}

\begin{example}
\label{ex:fibonomial-dgpa}
Take $\bk=\bZ$ and define $a(n) = F_n$ where $F_n$ is the $n$th Fibonacci number. It is well-known that this sequence is GCD-morphic, and so, by Proposition~\ref{prop:gcd-morphic}, there exists a unique admissible sequence $\pi_{\bullet}$ such that $a(n)=a(n;\pi_{\bullet})$. More information on the sequence $\pi_{\bullet}$ can be found at \cite[\href{https://oeis.org/A061446}{A061446}]{sloane}. The coefficients $C(n,m)$ are the so-called ``Fibonomial coefficients.''
\end{example}

\begin{example} \label{ex:q-dgpa}
Take $\bk=\bZ[q]$ and define $\pi_k=\Phi_k(q)$ to be the $k$th cyclotomic polynomial. This is admissible by the following lemma. We have
\begin{displaymath}
a(n) = [n]_q = \frac{q^n-1}{q-1}
\end{displaymath}
and $A(n)=[n]_q!$ and $C(n,m)={n \brack m}_q$, the $q$-binomial coefficient. We have $a^{[h]}(n)=[hn]_q/[h]_q=[n]_{q^h}$ and $A^{[h]}(n)=[n]_{q^h}!$. Thus $C^{[h]}(n,m)={n \brack m}_{q^h}$ is the $q^h$-binomial coefficient.
\end{example}

\begin{lemma}
The $\pi$-sequence in $\bZ[q]$ given by $\pi_n=\Phi_n(q)$ is admissible.
\end{lemma}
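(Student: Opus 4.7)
The plan is to translate admissibility into a statement about common roots of cyclotomic polynomials in characteristic $p$, and then invoke the standard description of $\Phi_n \bmod p$. Since $\bZ[q]$ is a Jacobson ring, every maximal ideal has the form $\mathfrak{m} = (p, f(q))$ for a prime $p$ and a monic $f$ whose reduction modulo $p$ is irreducible in $\bF_p[q]$, with residue field a finite subfield of $\overline{\bF_p}$. Consequently, $\Phi_n(q)$ and $\Phi_m(q)$ share a maximal ideal if and only if they have a common root in $\overline{\bF_p}$ for some prime $p$, and the task reduces to showing that in this case $n$ and $m$ are linearly ordered under divisibility.

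The key classical fact I will use is: writing $n = p^a n'$ with $p \nmid n'$, the roots of $\Phi_n(q)$ in $\overline{\bF_p}$ are precisely the primitive $n'$-th roots of unity. When $a = 0$, this follows because $\Phi_n \mid q^n - 1$ is separable modulo $p$ and has degree $\varphi(n)$, leaving room only for the primitive $n$-th roots. When $a \ge 1$, the identities $\Phi_{pk}(x) = \Phi_k(x^p)/\Phi_k(x)$ for $p \nmid k$ and $\Phi_{pk}(x) = \Phi_k(x^p)$ for $p \mid k$, combined with the Frobenius congruence $f(x^p) \equiv f(x)^p \pmod{p}$, yield by induction on $a$ the congruence $\Phi_n(x) \equiv \Phi_{n'}(x)^{p^{a-1}(p-1)} \pmod{p}$; hence in $\overline{\bF_p}$ the roots of $\Phi_n$ coincide with those of $\Phi_{n'}$, as claimed.

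Granted this, suppose $\alpha \in \overline{\bF_p}$ is a common root of $\Phi_n$ and $\Phi_m$, and write $n = p^a n'$ and $m = p^b m'$ with $p \nmid n'm'$. Then $\alpha$ has multiplicative order simultaneously equal to $n'$ and to $m'$, forcing $n' = m'$. Hence $n$ and $m$ are both of the form $p^j n'$, so the smaller divides the larger, as required. The only step that is not immediate is the identification of the roots of $\Phi_n \bmod p$; this is standard and I do not foresee any real obstacle.
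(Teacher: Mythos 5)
Your proposal is correct and follows essentially the same route as the paper: reduce to a common root of $\Phi_n$ and $\Phi_m$ in a finite residue field of characteristic $p$, use the congruence $\Phi_{p^a n'} \equiv \Phi_{n'}^{p^{a-1}(p-1)} \pmod{p}$ to see that the image of $q$ is simultaneously a primitive $n'$-th and $m'$-th root of unity, conclude $n'=m'$, and hence divisibility. You merely supply a bit more detail (the recursive cyclotomic identities and the separability/degree count) for facts the paper cites as standard.
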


\begin{proof}
Let $\fm$ be a maximal ideal of $\bZ[q]$ containing $\Phi_n$ and $\Phi_m$. Let $\kappa=\bZ[q]/\fm$ be the quotient field, which is necessarily finite, say of characteristic $p$, and let $\zeta$ be the image of $q$ in $\kappa$. Write $n=p^s n_0$ and $m=p^r m_0$ with $n_0$ and $m_0$ prime to $p$. Then $\Phi_n(q)=\Phi_{n_0}(q)^{p^s-p^{s-1}}$ modulo $p$, and similarly for $\Phi_m$. We thus see that $\Phi_{n_0}(\zeta)=\Phi_{m_0}(\zeta)=0$, and so it follows that $\zeta$ is both a primitive $n_0$ and $m_0$ root of unity. We conclude that $n_0=m_0$, and so $n \mid m$ or $m \mid n$ according to whether $s \le r$ or $r \le s$.
\end{proof}

\section{Generalized divided power algebras} \label{s:gdpa}

\subsection{The algebra associated to divisible sequence} \label{ss:Salg}

Given a divisible sequence $b_{\bullet}$ and a ring $\bk$, we define a ring $\bS=\bS(\bk, b_{\bullet})$ as follows. First suppose that $b_{\bullet}$ has infinite length. Then $\bS=\bk[y_0, y_1, \ldots]/(y_i^{b_{i+1}/b_i})$. Now suppose $b=(b_0, \ldots, b_r)$. Then $\bS=\bk[y_0, \ldots, y_r]/(y_i^{b_{i+1}/b_i})$, where the relations are imposed for $0 \le i < r$. In particular, $y_r$ is not nilpotent. We give $y_i$ degree $b_i$ which makes $\bS$ a graded $\bk$-algebra.

Let $n \ge 0$ be an integer and let $n=\sum n_i b_i$ be its base $b_{\bullet}$ expansion. We define $x^{[n]} \in \bS$ to be the element $\prod y_i^{n_i}$. It is clearly non-zero of degree~$n$, and the only element of degree~$n$ up to scalar multiples. Let $m=\sum m_i b_i$ be a second integer. Then $x^{[n]} x^{[m]}$ is non-zero if and only if $n_i+m_i<b_{i+1}/b_i$ for all $i$, that is, there is no base carry when computing $n+m$ in base $b_{\bullet}$. Assuming there is no carry, $n+m=\sum (n_i+m_i) b_i$ is the base $b_{\bullet}$ expansion of $n+m$, and so $x^{[n]} x^{[m]} = x^{[n+m]}$.

The following proposition summarizes the above discussion:

\begin{proposition}
\label{prop:GDPA-modulo-maximal-ideal}
As a $\bk$-module, we have $\bS = \bigoplus_{n \ge 0} \bk x^{[n]}$. In this basis, multiplication is given by
\begin{displaymath}
x^{[n]}x^{[m]} = \begin{cases}
x^{[n+m]} & \text{if there is no base $b_{\bullet}$ carry in $n+m$} \\
0 \qquad &\text{if there is a base $b_{\bullet}$ carry in $n+m$}
\end{cases}
\end{displaymath}
\end{proposition}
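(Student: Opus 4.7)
The proposition is essentially a clean restatement of the discussion immediately preceding it, so the plan is to organize that discussion into a direct two-part argument.

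First, I would establish the $\bk$-module decomposition. By definition $\bS=\bk[y_0,y_1,\ldots]/(y_i^{b_{i+1}/b_i})$ (with the obvious finite version if $b_\bullet$ has length $r+1$, and with the convention that no relation is imposed on $y_r$ in that case). A quotient of a polynomial ring by a monomial ideal of this form is free as a $\bk$-module on the monomials $\prod_i y_i^{n_i}$ with $0\le n_i<b_{i+1}/b_i$. The uniqueness of the base $b_\bullet$ expansion (recalled in \S\ref{subsec:divisible-sequence}) sets up a bijection between non-negative integers $n$ and such monomials via $n=\sum n_i b_i \leftrightarrow \prod_i y_i^{n_i}$. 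Since $x^{[n]}$ is defined to be this monomial, the first assertion $\bS=\bigoplus_{n\ge 0}\bk x^{[n]}$ is immediate, and each $x^{[n]}$ is homogeneous of degree $\sum n_i b_i=n$ under the grading $\deg(y_i)=b_i$.

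Next I would verify the multiplication rule. Given base $b_\bullet$ expansions $n=\sum n_i b_i$ and $m=\sum m_i b_i$, we have
\begin{displaymath}
x^{[n]} x^{[m]} = \prod_i y_i^{n_i+m_i}.
\end{displaymath}
If $n_i+m_i\ge b_{i+1}/b_i$ for some $i$, then $y_i^{n_i+m_i}$ is divisible by $y_i^{b_{i+1}/b_i}=0$, so the product vanishes. If $n_i+m_i<b_{i+1}/b_i$ for every $i$, then the digits $n_i+m_i$ satisfy the bound required to be the base $b_\bullet$ digits of an integer, and that integer is $\sum(n_i+m_i)b_i=n+m$; hence $\prod_i y_i^{n_i+m_i}=x^{[n+m]}$. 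To finish I need only observe that the two conditions in the case split match the stated ones: the condition ``$n_i+m_i<b_{i+1}/b_i$ for all $i$'' is exactly the statement that no digit overflows when adding the base $b_\bullet$ representations, which in turn is equivalent to saying that no carry is produced at any position (any carry would force a first position $i$ at which the incoming digits already satisfy $n_i+m_i\ge b_{i+1}/b_i$, since there is no carry coming in from position $i-1$).

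No serious obstacle is anticipated; the only point that needs a brief comment, rather than routine expansion, is the carry-versus-digit-overflow equivalence above, since the statement of the proposition phrases the condition in terms of carries while the ring-theoretic calculation naturally produces the digit bound.
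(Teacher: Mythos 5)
Your proposal is correct and follows essentially the same route as the paper, which gives no separate proof but states that the proposition summarizes the preceding discussion in \S\ref{ss:Salg} (the monomial basis from the base $b_{\bullet}$ expansion, and the product vanishing exactly when some digit overflows). Your extra remark making explicit the equivalence between ``no digit overflows'' and ``no carry is produced'' is a fair elaboration of a point the paper simply asserts.
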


\subsection{Generalized divided power algebras}

Let $\bk$ be a ring and let $\pi_{\bullet}$ be a $\pi$-sequence in $\bk$. We define a graded $\bk$-algebra $\bD=\bD(\bk, \pi_{\bullet})$ as follows. As a graded $\bk$-module, $\bD$ is free with basis $x^{[i]}$ for $i \in \bN$, where $x^{[i]}$ has degree $i$. Multiplication is defined by $x^{[n-m]} x^{[m]} = C(n,m) x^{[n]}$. Proposition~\ref{prop:binom} ensures that multiplication is associative, while commutativity follows from the obvious relation $C(n,m)=C(n,n-m)$. The element $x^{[0]}$ is the unit, and so we write $x^{[0]}=1$. As usual, $\bD^{[h]}$ denotes $\bD(\bk, \pi^{[h]}_{\bullet})$.

\begin{definition}
A {\bf generalized divided power algebra} (GDPA) over $\bk$ is a graded $\bk$-algebra isomorphic to $\bD(\bk, \pi_{\bullet})$ for some admissible sequence $\pi_{\bullet}$.
\end{definition}

\begin{example} \label{ex:gdpa}
Some examples of GDPA's:
\begin{enumerate}
\item If $\pi_n=1$ for each $n>1$ then $\bD = \bk[x]$.
\item If $\pi_{\bullet}$ is as in Example~\ref{ex:classical-dgpa} then $\bD$ is the classical divided power algebra.
\item If $\pi_{\bullet}$ is as in Example~\ref{ex:q-dgpa} then $\bD$ is the $q$-divided power algebra. \qedhere
\end{enumerate}
\end{example}

\begin{proposition} \label{gdpa:bc}
Let $\bk'$ be a $\bk$-algebra, and let $\pi'_k$ be the image of $\pi_k$ in $\bk'$. Then $\bD' = \bD \otimes_{\bk} \bk'$ is isomorphic to $\bD(\bk', \pi'_{\bullet})$. In particular, the base change of a GDPA is still a GDPA.
\end{proposition}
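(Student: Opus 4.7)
The plan is to use the explicit construction of $\bD$ as a free $\bk$-module with basis $\{x^{[i]}\}_{i \ge 0}$ and prescribed structure constants $C(n,m;\pi_{\bullet})$. Base change commutes with taking free modules and preserves structure constants, so the desired isomorphism should be essentially tautological once the naturality of $C(n,m)$ under ring homomorphisms is invoked.

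Concretely, I would first observe that since $\bD$ is free as a $\bk$-module on $\{x^{[i]}\}_{i \ge 0}$, the tensor product $\bD' = \bD \otimes_{\bk} \bk'$ is free as a $\bk'$-module on the basis $\{x^{[i]} \otimes 1\}_{i \ge 0}$, with the grading inherited by declaring $x^{[i]} \otimes 1$ to have degree $i$. Write $e_i = x^{[i]} \otimes 1$. Then I would define a graded $\bk'$-module map
\begin{displaymath}
\Phi \colon \bD(\bk', \pi'_{\bullet}) \to \bD', \qquad x^{[i]} \mapsto e_i,
\end{displaymath}
which is an isomorphism of graded $\bk'$-modules on general principles, since it sends a basis to a basis.

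The remaining step is to check that $\Phi$ is a ring map. In $\bD'$, the multiplication on basis elements is
\begin{displaymath}
e_{n-m} \cdot e_m = (x^{[n-m]} x^{[m]}) \otimes 1 = C(n,m;\pi_{\bullet}) x^{[n]} \otimes 1 = \varphi(C(n,m;\pi_{\bullet})) \, e_n,
\end{displaymath}
where $\varphi \colon \bk \to \bk'$ is the structure map. By the discussion of $\pi$-invariants in \S 2.2, $C(n,m)$ is a $\pi$-invariant, so $\varphi(C(n,m;\pi_{\bullet})) = C(n,m;\pi'_{\bullet})$. But this is exactly the structure constant governing the multiplication in $\bD(\bk',\pi'_{\bullet})$, so $\Phi$ respects products on basis vectors, and therefore on all elements by $\bk'$-bilinearity. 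The unit is preserved since $\Phi(1) = \Phi(x^{[0]}) = e_0 = 1 \otimes 1$.

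There is essentially no obstacle here: the only point worth verifying with care is that the structure constants $C(n,m;\pi_{\bullet})$, defined as a $\pi$-invariant via the polynomial $F_{C(n,m)} \in \Pi$, are genuinely transported by $\varphi$ to $C(n,m;\pi'_{\bullet})$. This is immediate from the defining property of $\pi$-invariants (applied to the composition $\Pi \to \bk \xrightarrow{\varphi} \bk'$), together with the fact that $\pi'_{\bullet} = \varphi(\pi_{\bullet})$ by hypothesis. The final sentence ("the base change of a GDPA is still a GDPA") follows once we note, via Proposition~\ref{seq:adimg}, that $\pi'_{\bullet}$ remains admissible.
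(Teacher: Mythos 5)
Your proof is correct and follows exactly the route the paper takes (the paper simply declares the isomorphism of algebras "clear" and then cites Proposition~\ref{seq:adimg} for admissibility); you have merely spelled out the verification that the structure constants $C(n,m)$ transport under $\varphi$ because they are $\pi$-invariants. No issues.
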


\begin{proof}
It is clear that $\bD'$ is isomorphic to $\bD(\bk', \pi'_{\bullet})$. If $\pi_{\bullet}$ is admissible then so is $\pi'_{\bullet}$, by Proposition~\ref{seq:adimg}, and so $\bD'$ is a GDPA.
\end{proof}

We say that two sequences $\pi_{\bullet}$ and $\pi'_{\bullet}$ in $\bk$ are {\bf associate} if $\pi_n$ is associate to $\pi'_n$ for all $n$ (that is, $\pi_n$ is a unit times $\pi'_n$). In this case, one is admissible if and only if the other is.

\begin{proposition} \label{gdpa:assoc}
Suppose $\pi_{\bullet}$ is an arbitrary $\pi$-sequence and $\pi'_{\bullet}$ is an admissible $\pi$-sequence. Then $\bD=\bD(\bk, \pi_{\bullet})$ is isomorphic to $\bD'=\bD(\bk, \pi'_{\bullet})$ as graded $\bk$-algebras if and only if $\pi_{\bullet}$ and $\pi'_{\bullet}$ are associate.
\end{proposition}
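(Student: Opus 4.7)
The ``if'' direction is straightforward: if $\pi_k = \theta_k \pi'_k$ for units $\theta_k \in \bk^\times$ ($k \geq 2$), then setting $v_n = \prod_{k \geq 2} \theta_k^{-\lfloor n/k \rfloor}$ (a finite product of units), we have $v_n/(v_m v_{n-m}) = \prod_k \theta_k^{-\epsilon_k(n-m, m)} = C(n, m;\pi')/C(n, m;\pi)$. This means the $\bk$-linear map $\phi(x^{[n]}) = v_n \tilde{x}^{[n]}$ (where $\tilde{x}^{[n]}$ denotes the basis of $\bD'$) satisfies $v_n C(n, m;\pi) = v_m v_{n-m} C(n, m;\pi')$, is therefore a graded $\bk$-algebra homomorphism $\bD \to \bD'$, and is an isomorphism since each $v_n$ is a unit.

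For the ``only if'' direction, given an isomorphism $\phi$, the fact that each graded piece of $\bD$ and $\bD'$ is free of rank one forces $\phi(x^{[n]}) = v_n \tilde{x}^{[n]}$ for units $v_n \in \bk^\times$ with $v_0 = 1$, and multiplicativity gives $v_n C(n, m;\pi) = v_m v_{n-m} C(n, m;\pi')$ for all $0 \le m \le n$. Setting $\alpha_k = v_1 v_{k-1}/v_k$ and defining units $u_n = \prod_{d \mid n} \alpha_{n/d}^{\mu(d)}$ by M\"obius inversion (so $u_2 = \alpha_2$), I will prove $\pi_n = u_n \pi'_n$ by induction on $n$. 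The base case $n=2$ follows directly from the $(2, 1)$-relation: $\pi_2 = \alpha_2 \pi'_2 = u_2 \pi'_2$.

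For the inductive step, assume $\pi_j = u_j \pi'_j$ for $2 \le j < n$. Substituting into the $(n, m)$-relation and separating the $j = n$ factor of $C(n, m)$ yields
\[
R_m \bigl( v_n U_m \pi_n - v_m v_{n-m} \pi'_n \bigr) = 0
\]
for each $1 \le m \le n-1$, where $R_m = \prod_{2 \le j < n} (\pi'_j)^{\epsilon_j(n-m, m)}$ and $U_m = \prod_{2 \le j < n} u_j^{\epsilon_j(n-m, m)}$. The main obstacle here is verifying the key identity $v_m v_{n-m} = u_n v_n U_m$ in $\bk^\times$ for every $m$: this is a formal identity that I will prove by taking logarithms, expanding the M\"obius formulas for the $u_j$'s, and applying the arithmetic identity $\sum_{d \ge 1} \mu(d) \lfloor x/(kd) \rfloor = [x \ge k]$ together with a telescoping sum over the differences $w_k - w_{k-1}$ (with $w_k = \log v_k$ and $w_0 = 0$), taking careful account of the $k = n$ boundary term. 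Granting this, the relation simplifies to $R_m (\pi_n - u_n \pi'_n) = 0$ for every $m$.

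Finally, I will conclude $\pi_n = u_n \pi'_n$ by showing the ideal $(R_1, \dots, R_{n-1})$ equals $\bk$, which is where admissibility enters. Given a maximal $\fm$, let $T = \{j \in [2, n-1] : \pi'_j \in \fm\}$; by admissibility of $\pi'_\bullet$, $T$ is totally ordered by divisibility. If $T = \emptyset$ then $R_1 = \prod_{j \mid n,\, 2 \le j < n} \pi'_j$ is a product of units modulo $\fm$, so $R_1 \notin \fm$. Otherwise, letting $t = \max T$, every $j \in T$ satisfies $j \mid t$, hence $t \bmod j = 0 \le n \bmod j$ and so $\epsilon_j(n-t, t) = 0$; thus $R_t$ involves only $\pi'_j$'s with $j \notin T$, which are units mod $\fm$, so $R_t \notin \fm$. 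Either way, no maximal ideal contains every $R_m$, so they generate $\bk$, and we conclude $\pi_n = u_n \pi'_n$, closing the induction.
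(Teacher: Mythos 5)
Your proof is correct, but it takes a genuinely different route from the paper's. The paper first reduces to the case of a local ring $\bk$ (asserting that associateness can be checked locally), and then runs an induction on $n$ organized around the divisible sequence $b_{\bullet}$ attached to $\pi'_{\bullet}$ at the maximal ideal: for $n \ne b_i$ it splits $n=a+b$ without carry to see that $\pi_n$ and $\pi'_n$ are both units, and for $n=b_i$ it uses the decomposition $b_i=(b_i-b_{i-1})+b_{i-1}$, which has exactly one carry, to compare $\pi_{b_i}$ with $C(b_i,b_{i-1})$. You instead work globally: you extract the explicit candidate unit $u_n$ from the cocycle data $v_{\bullet}$ by M\"obius inversion, reduce the $(n,m)$-relation to $R_m(\pi_n-u_n\pi'_n)=0$, and then show the $R_m$ generate the unit ideal. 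I checked your two key claims: the formal identity $v_mv_{n-m}=u_nv_nU_m$ does follow from $w_m=mw_1-\sum_{d\ge 2}\lfloor m/d\rfloor s_d$ (itself a consequence of M\"obius inversion applied to $g_k=w_k-w_{k-1}$) together with $\epsilon_n(n-m,m)=1$ and $\epsilon_k(n-m,m)=0$ for $k>n$; and the unit-ideal argument via $t=\max T$ is sound, since admissibility forces every $j\in T$ to divide $t$, whence $\epsilon_j(n-t,t)=0$. Your approach buys a cleaner logical structure (no appeal to "associate locally implies associate globally," which the paper leaves implicit) and produces the comparison unit $u_n$ explicitly; the paper's approach is shorter and makes the role of the divisible sequence more transparent. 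Do write out the verification of the formal identity in full rather than leaving it as a sketch, since it is the one place where an index error could derail the argument.
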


\begin{proof}
First suppose that $\pi_{\bullet}$ and $\pi'_{\bullet}$ are associate, and write $\pi_n=\alpha_n \pi'_n$ with $\alpha_n$ a unit. For $n \ge 0$, define
\begin{displaymath}
\beta_n=\prod_{k \ge 2} \alpha_k^{\lfloor n/k \rfloor},
\end{displaymath}
which is also a unit. Then $C'(n,m)=\beta_{n+m} \beta_n^{-1} \beta_m^{-1} C(n,m)$, where $C$ is computed with $\pi_{\bullet}$ and $C'$ with $\pi'_{\bullet}$. Thus the map $\bD \to \bD'$ taking $x^{[n]}$ to $\beta_n y^{[n]}$ is an isomorphism of graded $\bk$-algebras.

Now suppose that $\bD$ and $\bD'$ are isomorphic. It suffices to check that $\pi_{\bullet}$ and $\pi'_{\bullet}$ are associate locally, so we assume that $\bk$ is local. Let $b_{\bullet}$ be the divisible sequence associated to $\pi'_{\bullet}$. We note that the isomorphism $\bD \cong \bD'$ implies that $C(n,m)$ and $C'(n,m)$ are associate for all $n$ and $m$. Assume that $\pi_k$ and $\pi'_k$ are associate for $k<n$, and let us prove that $\pi_n$ and $\pi'_n$ are associate. We consider two cases:
\begin{itemize}
\item {\it Case~1: $n \ne b_i$ for any $i$.} Write $n=a+b$ with $a,b>1$ such that there is no base $b_{\bullet}$ carry. Then $C'(n,a)$ is a unit, and so $C(n,a)$ is a unit. Since $\pi_n$ divides $C(n,a)$, it follows that $\pi_n$ is a unit, and thus associate to $\pi'_n$.
\item {\it Case~2: $n=b_i$.} In the addition $b_i=(b_i-b_{i-1})+b_{i-1}$ there is only a carry into $b_i$'s place. We thus see that
\begin{displaymath}
\left\{ \frac{C'(b_i,b_{i-1})}{\pi'_{b_i}} \right\}
\end{displaymath}
is a unit, and so, by the inductive hypothesis, so is
\begin{displaymath}
\left\{ \frac{C(b_i,b_{i-1})}{\pi_{b_i}} \right\}.
\end{displaymath}
Thus $\pi'_n$ is associate to $C'(b_i,b_{i-1})$, which is associate to $C(b_i,b_{i-1})$, which is associate to $\pi_n$. \qedhere
\end{itemize}
\end{proof}

\begin{corollary} \label{cor:adm}
If $\bD(\bk, \pi_{\bullet})$ is a GDPA then $\pi_{\bullet}$ is admissible.
%\rohit{Mistype fixed.}
\end{corollary}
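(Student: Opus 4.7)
The plan is to deduce this immediately from Proposition~\ref{gdpa:assoc} together with the observation (stated just before that proposition) that being associate preserves admissibility. Unwinding the definition, saying $\bD(\bk,\pi_{\bullet})$ is a GDPA means there exists an admissible $\pi$-sequence $\pi'_{\bullet}$ in $\bk$ such that $\bD(\bk,\pi_{\bullet}) \cong \bD(\bk,\pi'_{\bullet})$ as graded $\bk$-algebras.

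Now I would apply Proposition~\ref{gdpa:assoc} to this isomorphism: since $\pi'_{\bullet}$ is admissible, the proposition gives that $\pi_{\bullet}$ and $\pi'_{\bullet}$ are associate, i.e., $\pi_n = \alpha_n \pi'_n$ for units $\alpha_n \in \bk^\times$ and all $n \geq 2$. Because $(\pi_n, \pi_m) = (\pi'_n, \pi'_m)$ as ideals when the sequences are associate, the admissibility condition---that $(\pi_n,\pi_m)$ being a non-unit ideal forces $n \mid m$ or $m \mid n$---transfers verbatim from $\pi'_{\bullet}$ to $\pi_{\bullet}$. Hence $\pi_{\bullet}$ is admissible.

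There is no real obstacle here: the entire content of the corollary is already packaged into Proposition~\ref{gdpa:assoc}, which was the nontrivial statement. The corollary simply records the consequence that admissibility of the defining sequence, while not part of the definition of a GDPA, is forced on any $\pi$-sequence that realizes a GDPA. So the proof reduces to two lines: cite Proposition~\ref{gdpa:assoc} to get associateness, then note that associate sequences generate the same ideals and hence share the admissibility property.
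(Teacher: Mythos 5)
Your proof is correct and is exactly the intended argument: the paper states this as an immediate corollary of Proposition~\ref{gdpa:assoc} combined with the preceding remark that associate sequences are simultaneously admissible or not, which is precisely what you do.
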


\begin{corollary} \label{gdpa:poly}
Suppose that $\pi_n$ is a unit of $\bk$ for all $n \ge 2$. Then the natural map $\bk[x] \to \bD$ sending $x$ to $x^{[1]}$ is an isomorphism of graded $\bk$-algebras.
\end{corollary}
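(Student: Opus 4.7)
The plan is to show directly that $x^{[1]}$ is an algebra generator of $\bD$, with each power $(x^{[1]})^n$ a unit multiple of the basis element $x^{[n]}$, and then deduce that the natural ring map $\bk[x] \to \bD$ sending $x$ to $x^{[1]}$ is a $\bk$-linear isomorphism.

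First, I would observe that under the hypothesis, $a(n) = \prod_{d \mid n,\, d \ne 1} \pi_d$ is a product of units for every $n \ge 1$, so $A(n) = a(n) a(n-1) \cdots a(1)$ is a unit of $\bk$ for every $n \ge 0$. Next, I would compute $C(n,1)$: the carry $\epsilon_k(n-1,1)$ equals $1$ precisely when $k$ divides $n$, so
\begin{displaymath}
C(n,1) = \prod_{k \ge 2,\, k \mid n} \pi_k = a(n).
\end{displaymath}

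Now, by induction on $n$, I claim $(x^{[1]})^n = A(n) x^{[n]}$ in $\bD$. The base case $n=0$ is immediate ($A(0)=1$ and $x^{[0]}=1$), and the inductive step uses the defining relation:
\begin{displaymath}
(x^{[1]})^n = (x^{[1]})^{n-1} \cdot x^{[1]} = A(n-1) x^{[n-1]} x^{[1]} = A(n-1) C(n,1) x^{[n]} = A(n) x^{[n]}.
\end{displaymath}

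Finally, since $\bk[x]$ is free as a $\bk$-algebra on the generator $x$, the assignment $x \mapsto x^{[1]}$ extends uniquely to a graded $\bk$-algebra homomorphism $\varphi \colon \bk[x] \to \bD$. By the calculation above, $\varphi(x^n) = A(n) x^{[n]}$. Since each $A(n)$ is a unit and $\{x^{[n]}\}_{n \ge 0}$ is a $\bk$-basis of $\bD$, the images $\{\varphi(x^n)\}_{n \ge 0}$ also form a $\bk$-basis, so $\varphi$ is a $\bk$-linear isomorphism and thus a graded $\bk$-algebra isomorphism. There is no real obstacle here beyond identifying $C(n,1)$ with $a(n)$; everything else is an immediate induction.
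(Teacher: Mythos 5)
Your proof is correct, and it takes a different route from the paper. The paper simply cites Proposition~\ref{gdpa:assoc} together with Example~\ref{ex:gdpa}(a): since every $\pi_n$ is a unit, the sequence $\pi_{\bullet}$ is associate to the constant sequence $1$, whose GDPA is $\bk[x]$, and the isomorphism of Proposition~\ref{gdpa:assoc} (which rescales $x^{[n]}$ by the unit $\beta_n=\prod_k \alpha_k^{\lfloor n/k\rfloor}$, with $\beta_1=1$) identifies the two algebras compatibly with $x \mapsto x^{[1]}$. You instead compute everything by hand: the identification $C(n,1)=a(n)$ via the carries $\epsilon_k(n-1,1)$, the induction $(x^{[1]})^n = A(n)x^{[n]}$, and the observation that the $A(n)$ are units so that $\varphi$ carries a basis to a basis. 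Both arguments are sound. Your version is more elementary and self-contained, and it has the small advantage of verifying directly that the \emph{specific} map $x \mapsto x^{[1]}$ is an isomorphism, whereas the paper's one-line proof requires unwinding the units $\beta_n$ in Proposition~\ref{gdpa:assoc} to see this. The paper's version buys brevity by reusing machinery already established for a harder purpose (the classification of GDPAs up to isomorphism).
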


\begin{proof}
This follows from proposition above and Example~\ref{ex:gdpa}(a).
\end{proof}

\begin{proposition} \label{prop:gdpa-field}
The GDPA's over a field $\bk$ are exactly the algebras $\bS(\bk, b_{\bullet})$ from \S \ref{ss:Salg}.
\end{proposition}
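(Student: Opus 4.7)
The plan is to use the characterization of GDPA's up to associate equivalence from Proposition~\ref{gdpa:assoc}, together with the explicit description of $\bS(\bk,b_{\bullet})$ given in Proposition~\ref{prop:GDPA-modulo-maximal-ideal}, to match the two families of algebras directly.

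I would first exploit the crucial simplification afforded by $\bk$ being a field: every element of $\bk$ is either zero or a unit, so an admissible $\pi$-sequence $\pi_{\bullet}$ is determined, up to associates, by the set $Z=\{n\ge 2 : \pi_n=0\}$. Given a GDPA $\bD=\bD(\bk,\pi_{\bullet})$, I apply the construction from \S\ref{s:adm} with the proper ideal $\fa=(0)$ to obtain the divisible sequence $b_{\bullet}=b_{(0),\bullet}$, whose terms $b_1,b_2,\ldots$ are exactly the elements of $Z$ listed in increasing order; admissibility of $\pi_{\bullet}$ is precisely what guarantees $b_{\bullet}$ is divisible. I then define an associate sequence $\pi'_{\bullet}$ by $\pi'_n=0$ if $n\in Z$ and $\pi'_n=1$ otherwise, and invoke Proposition~\ref{gdpa:assoc} to get $\bD \cong \bD(\bk,\pi'_{\bullet})$.

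Next I compute the structure constants of $\bD(\bk,\pi'_{\bullet})$. Since $\pi'_k$ equals $0$ exactly when $k=b_i$ for some $i\ge 1$ and equals $1$ otherwise, the product
\begin{displaymath}
C(n,m)=\prod_{k\ge 2}(\pi'_k)^{\epsilon_k(n-m,m)}
\end{displaymath}
is $1$ if $\epsilon_{b_i}(n-m,m)=0$ for every $i\ge 1$ (i.e.\ no carry occurs when adding $n-m$ and $m$ in base $b_{\bullet}$) and is $0$ otherwise. Comparing with Proposition~\ref{prop:GDPA-modulo-maximal-ideal}, the map $x^{[n]}\mapsto x^{[n]}$ gives a graded $\bk$-algebra isomorphism $\bD(\bk,\pi'_{\bullet})\cong \bS(\bk,b_{\bullet})$, establishing one direction.

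For the converse, given any divisible sequence $b_{\bullet}$, I define $\pi_{\bullet}$ by the same recipe ($\pi_{b_i}=0$ for $i\ge 1$ and $\pi_n=1$ otherwise). Admissibility is immediate: if $\pi_n$ and $\pi_m$ both fail to be units then both are zero, forcing $n=b_i$ and $m=b_j$, and divisibility of $b_{\bullet}$ yields $b_i\mid b_j$ or $b_j\mid b_i$. The same structure-constant calculation as above then produces the isomorphism $\bD(\bk,\pi_{\bullet})\cong \bS(\bk,b_{\bullet})$, exhibiting $\bS(\bk,b_{\bullet})$ as a GDPA. There is no serious obstacle here; the only mild subtlety is remembering to pass through the associate sequence so that Proposition~\ref{prop:GDPA-modulo-maximal-ideal} applies verbatim.
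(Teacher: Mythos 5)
Your proof is correct and follows essentially the same route as the paper: reduce via Proposition~\ref{gdpa:assoc} to an associate sequence with $\pi_n\in\{0,1\}$, observe that admissibility makes the zero set a divisible sequence, compute that $C(n,m)$ is the carry indicator, and match against Proposition~\ref{prop:GDPA-modulo-maximal-ideal}; the converse is handled the same way in both arguments.
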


\begin{proof}
Let $\pi_{\bullet}$ be an admissible $\pi$-sequence. By Proposition~\ref{gdpa:assoc}, we may as well assume $\pi_n \in \{0,1\}$ for all $n$. By admissibility, there is a divisible sequence $b_{\bullet}$ such that $\pi_n=0$ if and only if $n=b_i$ for some $i$. It now follows directly from the definition that $C(n,m)$ is~0 if there is a base $b_{\bullet}$ carry in computing $(n-m)+m$, and~1 otherwise. Thus $\bD$ is isomorphic to $\bS(b_{\bullet}, \bk)$ by Proposition~\ref{prop:GDPA-modulo-maximal-ideal}.

Now suppose $b_{\bullet}$ is a given divisible sequence. Define $\pi_n=0$ if $n=b_i$ for some $i$ and $\pi_n=1$ otherwise. Then $\bS(b_{\bullet}, \bk)$ is isomorphic to $\bD(\pi_{\bullet}, \bk)$, and so the former is a GDPA.
\end{proof}

\begin{remark}
In fact, over any ring $\bk$ the $\bS(\bk, b_{\bullet})$ are exactly the GDPA's $\bD(\pi_{\bullet}, \bk)$ for which $\pi_n$ is either~0 or a unit for all $n$.
\end{remark}

\subsection{Further properties of GDPA's}

For this section we fix a GDPA $\bD=\bD(\bk, \pi_{\bullet})$ and establish some basic results, mostly concerning what happens when $\pi_h=0$ for some $h$. We denote the $\bk$-subalgebra of $\bD$ generated by $x^{[n]}$ with $h \mid n$ by $\bD^{(h)}$.

\begin{proposition} \label{prop:regrade}
Suppose $\pi_h$ belongs to the Jacobson radical of $\bk$. Then:
\begin{enumerate}
\item For $0 \le k < h$, the module $\bD^{(h;k)}$ is free over $\bD^{(h)}$ and generated by $x^{[k]}$.
\item The natural map $\bD_{<h} \otimes_{\bk} \bD^{(h)} \to \bD$ is an isomorphism of graded $\bD^{(h)}$-modules.
\item The regrade of $\bD^{(h)}$ is isomorphic to $\bD^{[h]}$.
\end{enumerate}
\end{proposition}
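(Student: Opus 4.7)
My strategy is to dispatch (a) by a direct computation with $\pi$-binomial coefficients, deduce (b) as a formal consequence, and then reduce (c) to a cocycle calculation that rests on the same kind of computation as (a).

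For (a), since $\bD^{(h;k)}$ has $\bk$-basis $\{x^{[nh+k]}\}_{n \ge 0}$ and $x^{[nh]} \cdot x^{[k]} = C(nh+k, k)\, x^{[nh+k]}$, it suffices to show that $C(nh+k, k) = \prod_{\ell \ge 2} \pi_\ell^{\epsilon_\ell(nh, k)}$ is a unit. Units among the $\pi_\ell$ contribute trivially. If $\pi_\ell$ is a non-unit, Proposition~\ref{gdpa:elt}(a) forces $\ell \mid h$ or $h \mid \ell$, and in either case the hypothesis $0 \le k < h$ gives $(nh \bmod \ell) + (k \bmod \ell) < \ell$, so $\epsilon_\ell(nh, k) = 0$.

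Part (b) is then formal: writing $\bD_{<h} \otimes_\bk \bD^{(h)} = \bigoplus_{k=0}^{h-1} x^{[k]} \otimes_\bk \bD^{(h)}$, the natural map sends the $k$-th summand isomorphically onto $\bD^{(h;k)}$ by (a), and $\bD = \bigoplus_{k=0}^{h-1} \bD^{(h;k)}$ by splitting degrees modulo $h$.

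For (c), I would construct a graded $\bk$-algebra isomorphism $\phi \colon \bD^{[h]} \to \mathrm{regrade}(\bD^{(h)})$ by setting $\phi(y^{[n]}) = \lambda_n x^{[nh]}$ for units $\lambda_n \in \bk^\times$ to be chosen; multiplicativity of $\phi$ translates into the identity $\lambda_a \lambda_b \, C((a+b)h, bh) = \lambda_{a+b} \, C^{[h]}(a+b, b)$. The central step is to produce units $\alpha_{a, b} \in \bk^\times$ with $C((a+b)h, bh) = \alpha_{a, b} \cdot C^{[h]}(a+b, b)$. Expanding both sides as monomials in the $\pi_k$'s and comparing exponent by exponent, any $\pi_k$ that is a unit can be absorbed into $\alpha_{a, b}$; if $\pi_k$ is a non-unit, Proposition~\ref{gdpa:elt}(a) forces $k \mid h$ or $h \mid k$. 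In the case $k \mid h$, both sides have exponent $0$ in $\pi_k$ (the numerator because $k$ divides both $ah$ and $bh$; the denominator because the conditions $d \mid h$, $(h/d, \ell) = 1$, $\ell \ge 2$ in the definition of $\pi^{[h]}_\ell$ admit no valid decomposition $k = d\ell$). In the case $h \mid k$, writing $k = h\ell$, the identity $\epsilon_{h\ell}(ah, bh) = \epsilon_\ell(a, b)$ together with Proposition~\ref{gdpa:elt}(b) (identifying $\pi^{[h]}_\ell$ with $\pi_{h\ell}$ up to a unit) shows the exponents match. Thus $\alpha_{a, b}$ is a unit in $\bk$. Associativity of both algebras forces $\alpha$ to satisfy the $2$-cocycle identity, and since $\bN$ is free on one generator, any such normalized cocycle is a coboundary: setting $\lambda_0 = 1$ and $\lambda_{n+1} = \lambda_n \alpha_{n, 1}$ yields the required units by an easy induction on $b$ using the cocycle identity. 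The main obstacle throughout is the case analysis of $\pi_k$-exponents in $\alpha_{a,b}$, kept under control by Proposition~\ref{gdpa:elt}.
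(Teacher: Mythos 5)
Your proposal is correct and follows essentially the same route as the paper: for (a) the paper likewise reduces to showing $\epsilon_m(n,k)=0$ when $m\mid h$ or $h\mid m$ and combines this with Proposition~\ref{gdpa:elt}(a); (b) is the same formal consequence; and for (c) the paper also compares $C(h(n+m),hm)$ with the transformed binomial coefficient up to an explicit unit coboundary $u_{n+m}u_n^{-1}u_m^{-1}$, $u_n=\prod_{k\in S}\pi_k^{\lfloor hn/k\rfloor}$, which plays exactly the role of your $\lambda_n$. The only organizational difference is that the paper routes through the auxiliary sequence $\pi'_n=\pi_{nh}$ and then invokes Propositions~\ref{gdpa:elt}(b) and~\ref{gdpa:assoc}, whereas you compare directly with $C^{[h]}$ and absorb those units into $\alpha_{a,b}$.

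One step of your write-up does not work as justified: you cannot deduce the $2$-cocycle identity for $\alpha$ from associativity of the two algebras, since that deduction requires cancelling $C^{[h]}(a+b,b)\,C^{[h]}(a+b+e,e)$, which may be a zero divisor (indeed zero whenever some $\pi^{[h]}_\ell$ vanishes, which is the typical case of interest). Fortunately the cocycle machinery is unnecessary: every discrepancy factor you absorb into $\alpha_{a,b}$ has the form $v^{F(a+b)-F(a)-F(b)}$ for a unit $v$ and a function $F$ such as $n\mapsto\lfloor nh/m\rfloor$ or $n\mapsto\lfloor n/\ell\rfloor$, so $\alpha_{a,b}$ is by construction an explicit coboundary, and you may simply set $\lambda_n=\prod v^{F(n)}$ (a finite product for each $n$) with no induction at all. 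This is precisely what the paper's $u_n$ accomplishes.
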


\begin{proof}
(a) Suppose $n$ is a multiple of $h$, $0 \le k < h$, and $m$ either divides $h$ or is a multiple of $h$. Then one finds $\epsilon_m(n,k)=0$. Combined with Proposition~\ref{gdpa:elt}(a), this shows that $C(n+k,k)$ is a unit. We thus see that $\bD^{(h;k)}$ is free of rank one over $\bD^{(h)}$, generated by $x^{[k]}$.

(b) Follows from (a).

(c) Let $\pi'_n=\pi_{nh}$, let $C'(n,m)=C(n,m;\pi'_{\bullet})$, and let $S$ be the set of positive integers that do not divide $h$ or are not divisible by $h$. Define
\begin{displaymath}
u_n=\prod_{k \in S} \pi_k^{\lfloor hn/k \rfloor}.
\end{displaymath}
This is a unit by part~(a). We have
\begin{displaymath}
C(h(n+m),hm)=\left( \prod_{k \in S} \pi_k^{\epsilon_k(hn, hm)} \right) \left( \prod_{k \mid h} \pi_k^{\epsilon_k(hn,hm)} \right) \left( \prod_{h \mid k, k>h} \pi_k^{\epsilon_k(hn, hm)} \right).
\end{displaymath}
The first product is equal to $u_{n+m} u_n^{-1} u_m^{-1}$. The second product is~1 since $\epsilon_k(hn,hm)=0$ when $k$ divides $h$. The final product is $C'(n+m,m)$. Thus we have $C(hn+hm,hm)=u_{n+m} u_n^{-1} u_m^{-1} C'(n+m,m)$. Let $\bD'=\bD(\bk,\pi'_{\bullet})$ with basis $y^{[k]}$. Then the map $\bD' \to \bD^{(h)}$ given by $y^{[k]} \mapsto u_k x^{[hk]}$ is an isomorphism of $\bk$-algebras, and respects the grading after regrading $\bD^{(h)}$. By Proposition~\ref{gdpa:elt}(b), $\pi'_{\bullet}$ and $\pi^{[h]}_{\bullet}$ are associate, and so $\bD'$ is isomorphic to $\bD^{[h]}$ (by Proposition~\ref{gdpa:assoc}), which completes the proof.
\end{proof}

\begin{proposition} \label{gdpa:transcoh}
Suppose $\pi_h$ belongs to the Jacobson radical of $\bk$. Then $\bD$ is (graded-, Gr\"obner-, or [no adjective]) coherent if and only if $\bD^{[h]}$ is.
\end{proposition}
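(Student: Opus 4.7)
The plan is to chain together the three structural facts recorded in Proposition~\ref{prop:regrade}: part (b) exhibits $\bD$ as a graded free module of finite rank $h$ over the subring $\bD^{(h)}$, with homogeneous basis $1, x^{[1]}, \ldots, x^{[h-1]}$; part (c) identifies $\bD^{(h)}$ with $\bD^{[h]}$ via regrading. Thus it suffices to argue that each flavor of coherence passes (i) between $\bD$ and $\bD^{(h)}$ via this finite free extension, and (ii) between $\bD^{(h)}$ and $\bD^{[h]}$ via the regrade equivalence.

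For (i), the general principle I would prove is the following: if $R \subset S$ is an inclusion of graded rings such that $S$ is graded free of finite rank as an $R$-module with a homogeneous basis containing $1$, then $R$ is (graded-) coherent if and only if $S$ is. The forward direction is easy: a finitely presented $S$-module is also finitely presented as an $R$-module, and an $S$-submodule is finitely generated over $S$ iff finitely generated over $R$; so a finitely generated $S$-submodule of a finitely presented $S$-module is finitely presented over $R$ by coherence of $R$, and then finitely presented over $S$ because the kernel of any $S$-presentation, viewed as an $R$-submodule, is finitely generated. The reverse direction uses flatness of $S$ over $R$: given a finitely generated $R$-submodule $N$ of a finitely presented $R$-module $M$, tensoring with $S$ produces a finitely generated $S$-submodule $S \otimes_R N$ of the finitely presented $S$-module $S \otimes_R M$, hence finitely presented over $S$ by coherence of $S$. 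Since $1 \in S$ lies in an $R$-basis, the inclusion $N \hookrightarrow S \otimes_R N$ is $R$-split, so $N$ is an $R$-retract of a finitely generated $R$-module, forcing the kernel of any $R$-presentation of $N$ to be finitely generated.

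Applying this principle to $\bD^{(h)} \subset \bD$ gives the first equivalence. For (ii), regrading is an equivalence between the category of graded $\bD^{[h]}$-modules and the category of graded $\bD^{(h)}$-modules concentrated in degrees divisible by $h$; a general graded $\bD^{(h)}$-module decomposes into $h$ such pieces indexed by its residue mod $h$. This equivalence preserves finite generation, finite presentation, and kernels, hence transfers graded coherence; and since $\bD^{(h)} \cong \bD^{[h]}$ as ungraded rings, ordinary coherence transfers trivially.

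The main obstacle is the Gr\"obner-coherent case, which is not a purely abelian-categorical consequence and must be addressed by following the monomial basis through the above correspondences. I expect that the natural monomial order on $\bD$ with respect to the basis $\{x^{[n]}\}$ descends to a monomial order on $\bD^{(h)}$ compatible with the free-module decomposition, so that a Gr\"obner basis for a finitely generated homogeneous ideal of $\bD$ can be split according to residue classes mod $h$ into Gr\"obner data for finitely generated $\bD^{(h)}$-submodules of $\bD_{<h} \otimes_{\bk} \bD^{(h)}$, and conversely. Regrading then transports this combinatorics to $\bD^{[h]}$. Checking that these correspondences genuinely preserve the axioms of \cite[\S 4]{grobcoh} is the one step where I would expect nontrivial bookkeeping rather than a formal argument.
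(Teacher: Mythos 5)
Your proposal is correct and follows the same route as the paper, which reduces via Proposition~\ref{prop:regrade}(c) to comparing $\bD$ with $\bD^{(h)}$ and then invokes Proposition~\ref{prop:regrade}(a) (freeness of rank $h$ with homogeneous basis) together with ``standard facts about coherence'' that you have simply written out; your splitting argument for the reverse direction and the regrade equivalence are both sound. The paper is equally terse about the Gr\"obner-coherent case, so your acknowledged bookkeeping there is not a gap relative to the source.
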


\begin{proof} By Proposition~\ref{prop:regrade}(c), $\bD^{[h]}$ is isomorphic to a regrade of $\bD^{(h)}$, and so it is enough to shows that $\bD$ is (graded-, Gr\"obner-, or [no adjective]) coherent if and only if $\bD^{(h)}$ is. By Proposition~\ref{prop:regrade}(a), $\bD$ is homogeneous and free of rank $h$ as a module over $\bD^{(h)}$, and so the result follows from standard (and easily proved) basic facts about coherence.
\end{proof}

\begin{proposition} \label{gdpa:ideal}
Suppose $\pi_h$ belongs to the Jacobson radical of $\bk$, let $I \subset \bD$ be a homogeneous ideal, and let $0 \le k < h$. Then there exists an ideal $J_k$ of $\bD^{(h)}$ such that $I^{(h;k)} \cong J_k[k]$ as $\bD^{(h)}$-modules. Furthermore, if $I$ is generated in degrees $\le d$ then $J_k$ is generated in degrees $\le h \lceil (d-k)/h \rceil$.
\end{proposition}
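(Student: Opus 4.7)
The main input will be Proposition~\ref{prop:regrade}(a), which says that $\bD^{(h;k)}$ is free of rank one over $\bD^{(h)}$, generated by $x^{[k]}$. The plan is to use the multiplication-by-$x^{[k]}$ isomorphism $\phi \colon \bD^{(h)} \to \bD^{(h;k)}$ (which shifts grading by $k$) to transport $I^{(h;k)}$ back to an ideal of $\bD^{(h)}$. Namely, I would define $J_k := \phi^{-1}(I^{(h;k)})$. Since $I^{(h;k)}$ is a $\bD^{(h)}$-submodule of $\bD^{(h;k)}$, the preimage $J_k$ is a $\bD^{(h)}$-submodule of $\bD^{(h)}$, i.e., an ideal, and the isomorphism $I^{(h;k)} \cong J_k[k]$ is automatic from the degree shift of $\phi$.

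For the generator bound, suppose $I$ is generated by homogeneous elements $g_1, \ldots, g_s$ with $\deg g_i \le d$. For each $i$, I would let $j'_i \in \{0, 1, \ldots, h-1\}$ be the unique representative of $k - \deg g_i$ modulo $h$, and claim that the elements $x^{[j'_i]} g_i$ generate $I^{(h;k)}$ as a $\bD^{(h)}$-module (so that their $\phi^{-1}$-images generate $J_k$ as an ideal). To prove this claim, any element of $I^{(h;k)}$ is a sum of homogeneous products $fg_i$ with $\deg f + \deg g_i \equiv k \pmod h$, i.e., $\deg f \equiv j'_i \pmod h$. Another application of Proposition~\ref{prop:regrade}(a) shows that $f \in \bD^{(h; j'_i)} = \bD^{(h)} \cdot x^{[j'_i]}$, so $f = f' x^{[j'_i]}$ for some $f' \in \bD^{(h)}$, giving $fg_i = f' \cdot (x^{[j'_i]} g_i)$.

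The resulting generators of $J_k$ live in degrees $j'_i + \deg g_i - k$. To establish the claimed bound, I would then do a short mod-$h$ check: the quantity $j'_i + \deg g_i - k$ is a nonnegative multiple of $h$ with $j'_i < h$, which equals $h \lceil (\deg g_i - k)/h \rceil$ when $\deg g_i \ge k$ (splitting on whether $h \mid \deg g_i - k$) and equals $0$ when $\deg g_i < k$ (using $k < h$, so that $\deg g_i - k \in (-h, 0)$ forces $j'_i = k - \deg g_i$). Either way, the bound $h \lceil (d-k)/h \rceil$ follows since this function of $\deg g_i$ is nondecreasing and $\deg g_i \le d$. I do not anticipate any serious obstacle here; the structural content of the proposition is essentially repackaged from Proposition~\ref{prop:regrade}, and the bound is just bookkeeping about base-$h$ residues.
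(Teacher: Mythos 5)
Your proposal is correct and follows essentially the same route as the paper: $J_k$ is obtained from Proposition~\ref{prop:regrade}(a) via the rank-one generator $x^{[k]}$, and the degree bound comes from multiplying each generator $g_i$ of $I$ by $x^{[j'_i]}$ with $j'_i<h$ chosen to land in the residue class $k \bmod h$. The only (cosmetic) difference is that you invoke Proposition~\ref{prop:regrade}(a) a second time to factor $f=f'x^{[j'_i]}$, whereas the paper redoes the underlying no-carry computation directly by checking that $C(d'+xh,d')$ is a unit.
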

\begin{proof}
The assertion that $J_k$ exists and is an ideal follows directly from Proposition~\ref{prop:regrade}(a). For the degree bound, we may assume without loss of generality that $I$ is generated in degree $d$. Let $0 \le d' <h$ be such that $d+d' \equiv k \pmod{h}$. It is enough to show that $J_k[k]$ is generated in degree $d + d'$. To see this, let $N = d+d' + xh$ for some $x \in \bZ_{+}$.
%\Acom{Rohit: Earlier we wrote $H$ instead of $h$.}.
The product $x^{[d]} x^{[N-d]}$ differs from $x^{[d]}x^{[d']}x^{[N-d-d']}$ by a factor of $C(d' + xh, d')$. It suffices to show that this factor is a unit. Since $\pi_h$ is in the Jacobson radical of $\bk$, $h$ belongs to $b_{\fm, \bullet}$ for every maximal ideal $\fm$ of $\bk$  and there are no carries in the addition $d' + xh$ in any base containing $h$, and so $C(d' + xh, d')$ is  a unit.
\end{proof}

\begin{proposition} \label{gdpa:subalg}
Suppose that $\pi_n=0$. Then $\bD_{<n}$ is a subalgebra of $\bD$, and the natural map $\bD_{<n} \otimes_{\bk} \bD^{(n)} \to \bD$ is an isomorphism of graded $\bk$-algebras.
\end{proposition}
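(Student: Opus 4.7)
The plan is to handle the two claims separately: first verify that $\bD_{<n}$ is closed under multiplication, and then deduce the tensor-product decomposition by invoking the graded module decomposition already established in Proposition~\ref{prop:regrade}.

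For closure, I would pick basis elements $x^{[a]}, x^{[b]}$ with $0 \le a,b < n$ and use the defining relation $x^{[a]} x^{[b]} = C(a+b,b) x^{[a+b]}$. If $a+b<n$ the product is itself a basis element of $\bD_{<n}$. Otherwise $a+b \ge n$, and because $a,b<n$ the carry $\epsilon_n(a,b)=\lfloor (a+b)/n\rfloor$ equals~$1$, so $\pi_n^{\epsilon_n(a,b)}=\pi_n$ appears as a factor of $C(a+b,b)$; since $\pi_n=0$ by hypothesis, $x^{[a]}x^{[b]}=0 \in \bD_{<n}$. Thus $\bD_{<n}$ is a graded $\bk$-subalgebra.

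For the decomposition, note first that since $\bD$ is a GDPA, $\pi_\bullet$ is admissible by Corollary~\ref{cor:adm}, and the hypothesis $\pi_n=0$ trivially puts $\pi_n$ in the Jacobson radical of $\bk$. Hence Proposition~\ref{prop:regrade}(b) applies with $h=n$ and supplies an isomorphism $\bD_{<n} \otimes_{\bk} \bD^{(n)} \to \bD$ of graded $\bD^{(n)}$-modules, and in particular of graded $\bk$-modules. To promote this to an isomorphism of graded $\bk$-algebras it suffices to observe that $\bD_{<n}$ and $\bD^{(n)}$ are commuting subalgebras of the commutative ring $\bD$, so the universal property of the tensor product of commutative $\bk$-algebras furnishes a $\bk$-algebra homomorphism whose underlying $\bk$-linear map coincides with the isomorphism just imported. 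There is no real obstacle here: the only substantive input is the one-line carry observation used for closure, after which the module bijection is quoted verbatim and compatibility with multiplication is automatic from commutativity.
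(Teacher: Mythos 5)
Your proof is correct and follows essentially the same route as the paper: the same carry computation $\epsilon_n(a,b)=1$ forcing $C(a+b,b)=0$ for closure, followed by quoting Proposition~\ref{prop:regrade}(b) for the module isomorphism and noting that multiplicativity is automatic once $\bD_{<n}$ is known to be a subalgebra. (Your uniform treatment of the case $a+b\ge n$, including $a+b=n$, is slightly more careful than the paper's ``$i+j>n$''.)
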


\begin{proof}
To show that $\bD_{<n}$ is a subalgebra, it suffices to show that if $i,j<n$ then $x^{[i]} x^{[j]} \in \bD_{<n}$. This is clear if $i+j<n$. If $i+j>n$ then $\epsilon_n(i,j)=1$, and so $C(i+j,i)=0$ since $\pi_n$ appears as a factor of it; thus, in this case, $x^{[i]} x^{[j]}=0$, which does indeed belong to $\bD_{<n}$. The map $\bD_{<n} \otimes_{\bk} \bD^{(n)} \to \bD$ is an isomorphism of graded $\bk$-modules by Proposition~\ref{prop:regrade}(b). In the present situation, it is clearly compatible with multiplication.
\end{proof}

\begin{proposition} \label{gdpa:coh}
Suppose $\pi_n=0$ for infinitely many $n$. If $\bk$ is coherent (resp.\ noetherian) then $\bD$ is coherent (resp.\ Gr\"obner-coherent).
\end{proposition}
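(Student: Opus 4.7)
The plan is to realize $\bD$ as an ascending union of finite-rank $\bk$-subalgebras and then transfer coherence along the resulting flat extensions. By hypothesis there is an infinite sequence $n_1 < n_2 < \cdots$ with $\pi_{n_i} = 0$; put $\bD_i = \bD_{<n_i}$. Proposition~\ref{gdpa:subalg} says each $\bD_i$ is a graded $\bk$-subalgebra of $\bD$ and that the multiplication map $\bD_i \otimes_\bk \bD^{(n_i)} \to \bD$ is an isomorphism of graded $\bk$-algebras. In particular $\bD$ is free, hence flat, as a $\bD_i$-module.

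The first step is to verify that each $\bD_i$ inherits the relevant hypothesis on $\bk$. The noetherian case is immediate since $\bD_i$ is a finite $\bk$-algebra. For coherence, I would argue as follows. Since $\bD_i$ is $\bk$-free of rank $n_i$, any finitely generated $\bD_i$-ideal $J$ is also finitely generated as a $\bk$-module, and therefore finitely presented over $\bk$ by the coherence of $\bk$ (finitely generated submodules of finitely presented modules are finitely presented). Given a surjection $\bD_i^c \twoheadrightarrow J$, both source and target are $\bk$-finitely presented, so the kernel is $\bk$-finitely generated by another application of coherence, hence also $\bD_i$-finitely generated; this shows $J$ is finitely presented over $\bD_i$, so $\bD_i$ is coherent.

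The second step is to push finite presentation across the flat inclusion $\bD_i \hookrightarrow \bD$. Let $I$ be a finitely generated ideal of $\bD$. Its generators all lie in degree $\leq d$ for some $d$, and choosing $i$ with $n_i > d$ puts them inside $\bD_i$. If $J$ denotes the $\bD_i$-ideal they generate, then flatness yields $I = J \cdot \bD \cong J \otimes_{\bD_i} \bD$, so a finite $\bD_i$-presentation of $J$ tensors up to a finite $\bD$-presentation of $I$. This proves coherence of $\bD$. The Gr\"obner-coherent statement in the noetherian case would follow along the same lines: a Gr\"obner basis for $J$ over $\bD_i$ (available because $\bD_i$ is a finite noetherian $\bk$-algebra) is lifted to one for $I$ over $\bD$ via the free decomposition $\bD = \bigoplus_{k \geq 0} \bD_i \cdot x^{[k n_i]}$. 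The main technical obstacle there is bookkeeping with the term order and leading-term operation of \cite{grobcoh}, ensuring that they are compatible with the tensor factorization; this should ultimately reduce to the absence of base-$b_\bullet$ carries that underlies Proposition~\ref{prop:regrade}.
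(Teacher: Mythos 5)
Your argument is essentially the paper's: both realize $\bD$ as the ascending union of the finite free subalgebras $\bD_{<n_i}$, observe via Proposition~\ref{gdpa:subalg} that everything in sight is free (hence flat) over $\bD_{<n_i}$, and transfer coherence along these flat extensions. Your treatment of the plain coherence statement is complete and correct --- you simply inline the proof of the general fact that a filtered union of coherent rings along flat maps is coherent, where the paper instead cites \cite[Proposition~20]{soublin}; one small difference is that the paper notes $n_i \mid n_{i+1}$ (Proposition~\ref{gdpa:elt}(a)) to get flatness of the transition maps $\bD_{<n_i} \to \bD_{<n_{i+1}}$, while you use flatness of $\bD$ itself over each $\bD_{<n_i}$, which works just as well. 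The one place you leave something open is the Gr\"obner-coherent case: you propose to lift a Gr\"obner basis from $\bD_{<n_i}$ to $\bD$ by hand and flag the term-order bookkeeping as an unresolved obstacle. You do not need to do this: the statement you want --- that Gr\"obner-coherence passes to a filtered direct limit with flat transition maps --- is exactly \cite[Proposition~5.3]{grobcoh}, and the paper's proof concludes by citing it (each $\bD_{<n_i}$ is noetherian, hence Gr\"obner-coherent, and the limit is then Gr\"obner-coherent). So the gap in your sketch is real but is closed by a citation already in the paper's toolkit rather than by new work.
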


\begin{proof}
Let $n_1<n_2<\cdots$ be the indices with $\pi_{n_i}=0$; note that $n_i \mid n_{i+1}$ by Proposition~\ref{gdpa:elt}(a). It follows from Proposition~\ref{gdpa:subalg} that the map $\bD_{<n_i} \otimes_{\bk} \bD^{(n_i)}_{<n_{i+1}} \to \bD_{<n_{i+1}}$ is an isomorphism of rings, and so $\bD_{<n_{i+1}}$ is flat (even free) as a $\bD_{<n_i}$-module. Thus if $\bk$ is coherent then so is $\bD_{<n_i}$, being finite free over $\bk$. Since $\bD$ is the union of the $\bD_{<n_i}$, it too is coherent (\cite[Proposition~20]{soublin}).

Now suppose $\bk$ is noetherian. Then each $\bD_{n_i}$ is a noetherian module, and thus Gr\"obner-coherent. As with coherence, Gr\"obner-coherence passes to the limit \cite[Proposition~5.3]{grobcoh}.
\end{proof}

\begin{proposition} \label{prop:h-free}
Suppose $\pi_{\bullet}$ be an admissible sequence with $\pi_n = 0$ for infinitely many $n$, and let $\bD = \bD(\bk, \pi_{\bullet})$ be a GDPA. If $M$ is finitely presented (graded) $\bD$-module then there exists an $h$ such that $\pi_h =0$ and $M \cong \bD^{(h)} \otimes_{\bk} N$ as $\bD^{(h)}$-modules for some (graded) $\bD_{<h}$-module $N$. In particular, if $\bk$ is a field then $M$ is free as a $\bD^{(h)}$-module.
\end{proposition}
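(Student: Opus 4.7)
The plan is to exploit the tensor decomposition $\bD \cong \bD_{<h} \otimes_{\bk} \bD^{(h)}$ from Proposition~\ref{gdpa:subalg}, valid whenever $\pi_h = 0$. Since $M$ is finitely presented, only finitely many of the basis vectors $x^{[n]}$ appear in the matrix of a presentation. The strategy is to choose $h$ to be an index with $\pi_h = 0$ larger than all those $n$; the presentation is then already ``visible'' over the subalgebra $\bD_{<h}$, and its cokernel there supplies the desired $N$.

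More precisely, fix a finite (graded) presentation $\bD^a \xrightarrow{A} \bD^b \to M \to 0$. The matrix $A$ has only finitely many nonzero entries, each a finite $\bk$-linear combination of elements $x^{[n]}$ (and in the graded case each entry $A_{kj}$ is a $\bk$-multiple of the single basis element $x^{[e_k - d_j]}$ determined by the grading shifts of the source and target). Let $n_1 < n_2 < \cdots$ be the infinite sequence of indices with $\pi_{n_i} = 0$, and choose $h = n_i$ large enough that every $x^{[n]}$ appearing in $A$ satisfies $n < h$. Viewing $A$ as a matrix with entries in $\bD_{<h}$ then yields a map $A' \colon \bD_{<h}^a \to \bD_{<h}^b$ of (graded) $\bD_{<h}$-modules; let $N$ be its cokernel. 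This is a finitely presented (graded) $\bD_{<h}$-module.

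By Proposition~\ref{gdpa:subalg}, the multiplication map $\bD_{<h} \otimes_{\bk} \bD^{(h)} \to \bD$ is an isomorphism of graded $\bk$-algebras, and consequently $\bD[d] \cong \bD_{<h}[d] \otimes_{\bk} \bD^{(h)}$ as graded modules for every integer $d$. Applying the right exact functor $\bD^{(h)} \otimes_{\bk} -$ to the $\bD_{<h}$-presentation of $N$ therefore reproduces the original presentation of $M$ verbatim, yielding the required isomorphism $M \cong \bD^{(h)} \otimes_{\bk} N$ of $\bD$-modules (hence of $\bD^{(h)}$-modules). For the final assertion, if $\bk$ is a field then $\bD_{<h}$ is a finite-dimensional $\bk$-algebra, so $N$ is a finite-dimensional $\bk$-vector space; choosing a $\bk$-basis $N \cong \bk^d$ gives $M \cong (\bD^{(h)})^d$. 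There is no serious obstacle in this argument — the only bookkeeping one must do carefully is the compatibility of graded shifts in the tensor decomposition, which is a direct unwinding of the definitions.
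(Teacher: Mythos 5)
Your argument is correct and is essentially identical to the paper's proof: both choose $h$ with $\pi_h=0$ exceeding all indices appearing in a finite presentation matrix, reinterpret the matrix over $\bD_{<h}$, and apply $-\otimes_{\bk}\bD^{(h)}$ (using the decomposition $\bD \cong \bD_{<h}\otimes_{\bk}\bD^{(h)}$ of Proposition~\ref{gdpa:subalg}) to identify $M$ with $N\otimes_{\bk}\bD^{(h)}$. The handling of the field case is also the intended one.
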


\begin{proof}
Let $F_1 \to F_0 \to M \to 0$ be a presentation of $M$ with $F_0, F_1$ free of finite rank over $\bD$. Suppose that, in a suitable basis, the matrix entries of the map $f \colon F_1 \to F_0$ (as in the presentation above) only involve variables $x^{[0]}, x^{[1]}, \ldots, x^{[t]}$. Pick an $h > t$ such that $\pi_h = 0$. By Proposition~\ref{gdpa:subalg}, $\bD_{<h}$ is a subalgebra of $\bD$ and $\bD = \bD_{<h} \otimes_{\bk} \bD^{(h)}$ as graded $\bk$-algebras. Let $\ol{F}_i$ be a free $\bD_{<h}$-module with the same basis as $F_i$, and define $\ol{f} \colon \ol{F}_1 \to  \ol{F}_0$ using the same matrix that defines $f$. Then $f$ is obtained from $\ol{f}$ by applying the exact functor $- \otimes_{\bk} \bD^{(h)}$. Thus $M = \coker(f) = \coker(\ol{f}) \otimes_{\bk} \bD^{(h)}$ as required.
\end{proof}

\subsection{Characterization of GDPAs}

We defined GDPA's by explicit construction. However, they can also be characterized intrinsically:

\begin{proposition} \label{prop:chargdpa}
Let $\bD$ be a graded $\bk$-algebra. Then $\bD$ is a GDPA if and only if (1) each graded piece of $\bD$ is free of rank~1 over $\bk$; and (2) for every maximal ideal $\fm$ of $\bk$, the quotient $\bD/\fm \bD$ is isomorphic to $\bS(\bk/\fm, b_{\bullet})$ for some divisible sequence $b_{\bullet}$.
\end{proposition}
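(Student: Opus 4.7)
The forward direction is essentially immediate: if $\bD=\bD(\bk,\pi_{\bullet})$ with $\pi_{\bullet}$ admissible, then $\bD$ is free on $\{x^{[n]}\}$ with $x^{[n]}$ of degree $n$, giving (1); and Proposition~\ref{gdpa:bc} together with Proposition~\ref{prop:gdpa-field} gives (2) via $\bD/\fm\bD\cong\bD(\bk/\fm,\bar\pi_{\bullet})\cong\bS(\bk/\fm,b_{\bullet})$.

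For the converse, the plan is to construct an admissible $\pi$-sequence $\pi_{\bullet}$ in $\bk$ by induction on $n$, simultaneously rescaling a chosen basis of $\bD$ so that multiplication acquires the GDPA form. Using (1), pick a homogeneous basis $\{x^{[n]}\}_{n\ge 0}$ with $x^{[0]}=1$ and record structure constants $x^{[n-m]}x^{[m]}=C(n,m)\,x^{[n]}$ in $\bk$. Associativity gives the cocycle identity of Proposition~\ref{prop:binom} for $C$, and commutativity gives $C(n,m)=C(n,n-m)$; by the cocycle identity, all $C(n,m)$ with fixed top index $n$ are determined, up to products of lower-index data, by the single element $a(n)\coloneq C(n,1)$.

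The inductive step is the heart. Given $\pi_2,\ldots,\pi_{n-1}$ and unit rescalings $\lambda_k\in\bk^{\times}$ ($k<n$) making all $C(k,m)$ below degree $n$ agree with the GDPA coefficients determined by $\pi_{\bullet}$, we wish to produce $\pi_n\in\bk$ and $\lambda_n\in\bk^{\times}$ continuing this. The crucial observation is that $\epsilon_n(n-m,m)=1$ for every $0<m<n$, so in a GDPA one has $C(n,m)=\pi_n\cdot E(n,m)$ with $E(n,m)=\prod_{k<n}\pi_k^{\epsilon_k(n-m,m)}$ involving only already-constructed data; via the cocycle identity, the many constraints ``$\lambda_{n-m}\lambda_m\lambda_n^{-1}C(n,m)=\pi_n E(n,m)$'' collapse to a single equation at $m=1$.

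The main obstacle is performing the resulting division $\pi_n=(\lambda_1\lambda_{n-1}\lambda_n^{-1})\,a(n)/E(n,1)$ in a possibly non-domain $\bk$. This is where condition~(2) is used decisively. Modulo any maximal ideal $\fm$, Proposition~\ref{prop:gdpa-field} forces $\bar a(n)\in\bk/\fm$ to be either $0$ or a unit --- with $\bar a(n)=0$ precisely when $b_{\fm,1}\mid n$ --- and by induction the same dichotomy governs $\bar E(n,1)$, which vanishes mod $\fm$ iff some $b_{\fm,j}<n$ divides $n$, equivalently (by divisibility of $b_{\fm,\bullet}$) iff $n$ is a proper multiple of $b_{\fm,1}$. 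Patching this local vanishing information across all maximal ideals, combined with the freeness (condition (1)) of $\bD_n$ over $\bk$, produces the desired $\pi_n$ and $\lambda_n$. Admissibility of the resulting $\pi_{\bullet}$ is then automatic: the zero-locus of $\pi_n$ modulo $\fm$ coincides by construction with $b_{\fm,\bullet}$, so any $n,m$ with $\pi_n,\pi_m\in\fm$ both lie in the divisible sequence $b_{\fm,\bullet}$ and are hence comparable under divisibility.
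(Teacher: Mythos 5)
Your forward direction is fine, and your overall strategy (inductively constructing $\pi_n$ from the structure constants $c(n,m)$ of a chosen basis) is the same as the paper's. But the inductive step has a genuine gap at exactly the point you flag as "the heart." The claim that "via the cocycle identity, the many constraints collapse to a single equation at $m=1$" is false in general: the cocycle identity with $\ell=1$ reads $C(n,m)\,C(m,1)=C(n-1,m-1)\,C(n,1)$, and to deduce the $m$-th constraint from the $m=1$ constraint you must cancel $C(m,1)=a(m)$, which can be zero or a zero divisor. Concretely, for the classical divided power algebra over $\bF_p$ with $n=p^2$, the $m=1$ equation is $\pi_{p^2}\cdot\pi_p=c(p^2,1)$, i.e.\ $\pi_{p^2}\cdot 0=0$, which is satisfied by \emph{every} $\pi_{p^2}$; yet the $m=p$ equation is $\pi_{p^2}\cdot(\text{unit})=\binom{p^2}{p}=0$, which forces $\pi_{p^2}=0$. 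So normalizing at $m=1$ neither determines $\pi_n$ nor implies the remaining equations. For the same reason the "division" $\pi_n=\lambda\cdot a(n)/E(n,1)$ is not merely awkward in a non-domain: even locally it is a $0/0$ expression precisely in the cases that matter ($n$ a proper multiple of $b_{\fm,1}$), and "patching local vanishing information" cannot produce an element that is not locally determined.

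The paper's proof (Lemmas~\ref{char1}--\ref{char3}) avoids this by normalizing at a different $m$: in the local case it sets $\pi_N$ using the equation $C(N,b_i)=c(N,b_i)$ where $b_i$ is the largest term of $b_{\fm,\bullet}$ dividing $N$ (or $b_{i-1}$ when $N=b_i$), chosen precisely so that the cofactor $\{C(N,b_i)/\pi_N\}$ is a \emph{unit} and the division is legitimate. The remaining identities $C(N,m)=c(N,m)$ then do not follow formally; they are verified by the case analysis in Lemma~\ref{char1}, each time factoring through products whose structure constants are units. Finally, globalizing from the local solutions is itself a separate step (Lemma~\ref{char2}, a partition-of-unity argument for solving the simultaneous equations $\pi_N\cdot\{C(N,m)/\pi_N\}=c(N,m)$ over $\bk$). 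Your proposal would need all three of these ingredients to be repaired. (Your concluding remark on admissibility, and the unit rescalings $\lambda_n$, are harmless --- the paper shows rescaling is in fact unnecessary.)
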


If $\bD$ is a GDPA then (1) holds by definition, while (2) follows from Propositions~\ref{gdpa:bc} and~\ref{prop:gdpa-field}. It thus suffices to prove the converse. Let $x^{[n]}$ be a basis of $\bD^{[n]}$, and define $c(n,m) \in \bk$ by $x^{[n-m]} x^{[m]} = c(n,m) x^{[n+m]}$. We choose $x^{[0]}=1$, so that $c(n,0)=1$ for all $n$. It suffices to prove the following statement:
\begin{itemize}
\item[($\ast)$] Given elements $\pi_n \in \bk$ for $2 \le n < N$ such that $C(n,m)=c(n,m)$ for all $0 \le m \le n < N$, there exists an element $\pi_N \in \bk$ such that $C(N,m)=c(N,m)$ for all $0 \le m \le N$.
\end{itemize}
Indeed, if this were true then we could construct a $\pi$-sequence $\pi_{\bullet}$ for which $C(n,m)=c(n,m)$ for all $n \ge m \ge 0$, and so $\bD(\bk, \pi_{\bullet})$ would be isomorphic to $\bD$. Furthermore, condition (2) would imply (by Corollary~\ref{cor:adm}) that $\pi_{\bullet}$ is admissible in each quotient $\bk/\fm$, and therefore admissible.

\begin{lemma} \label{char1}
The condition $(\ast)$ holds when $\bk$ is local.
\end{lemma}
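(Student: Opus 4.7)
The strategy is to define $\pi_N$ explicitly, then verify $C(N, m) = c(N, m)$ for all $0 < m < N$ using associativity-based identities. Since $\bk$ is local with maximal ideal $\fm$, condition~(2) gives $\bD/\fm\bD \cong \bS(\bk/\fm, b_\bullet)$ for some divisible sequence $b_\bullet$; combined with the induction hypothesis this identifies $\pi_k$ (for $2 \le k < N$) as a unit of $\bk$ exactly when $k$ is not a term of $b_\bullet$, and as an element of $\fm$ when $k = b_i$ for some $i$.

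Write $C(N, m) = \pi_N\, R(N, m)$, where $R(N, m) := \prod_{k \ne N,\, k \ge 2} \pi_k^{\epsilon_k(N-m,\, m)}$ depends only on the already-fixed $\pi_k$'s. The task becomes finding $\pi_N \in \bk$ with $\pi_N\, R(N, m) = c(N, m)$ for every $0 < m < N$. I first locate a ``good'' index $m^\ast$ with $R(N, m^\ast)$ a unit: if $N$ is not any $b_i$, I choose any base-$b_\bullet$ no-carry decomposition $N = a + b$ (possible since some digit of $N$ can be split) and take $m^\ast = b$; if $N = b_j$, I take $m^\ast = b_{j-1}$, whose decomposition $b_j = (b_j - b_{j-1}) + b_{j-1}$ has its unique base-$b_\bullet$ carry at position $j$, contributed by $\pi_N$ itself and hence absent from $R$. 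Either way $R(N, m^\ast)$ is a product of units, and I define $\pi_N := c(N, m^\ast)/R(N, m^\ast)$.

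For the verification, combining associativity in $\bD$ (which gives $c(N, m)\, c(m, \ell) = c(N-\ell, m-\ell)\, c(N, \ell)$ for $\ell \le m$) with Proposition~\ref{prop:binom} and the inductive hypothesis produces the identity
\[
\bigl( c(N, m)\, R(N, \ell) - c(N, \ell)\, R(N, m) \bigr) \cdot C(m, \ell) = 0
\]
for $\ell \le m$, together with a symmetric dual in which $C(N-m, \ell-m)$ annihilates the analogous difference for $m \le \ell$. Taking $\ell = m^\ast$ reduces $C(N, m) = c(N, m)$ to showing that one of these $C$-factors is a non-zero-divisor (a unit suffices). Case B is clean: for $m \ge b_{j-1}$ the digit of $m$ at position $j-1$ is $\ge 1$, so $C(m, b_{j-1})$ is a unit; for $m < b_{j-1}$ the addition $(b_j - b_{j-1}) + (b_{j-1} - m) = b_j - m$ has no base-$b_\bullet$ carry, so $C(N - m,\, b_{j-1} - m)$ is a unit. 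Case A handles many $m$ by the same direct check, and I resolve the remaining ones by bootstrapping: once $C(N, \ell_0) = c(N, \ell_0)$ has been established for some new index $\ell_0$, I reapply the argument with $\ell_0$ in place of $m^\ast$, and I use the symmetry $c(N, m) = c(N, N-m)$ (commutativity of $\bD$) to reduce further cases.

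The main obstacle is arranging the case analysis of Case A so that every $m \in (0, N)$ is eventually reached by the bootstrap; the underlying reason this always succeeds is the divisibility structure of $b_\bullet$ (coming from admissibility and condition~(2)), which guarantees enough no-carry decompositions for a unit annihilator to appear at each step.
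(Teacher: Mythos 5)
Your overall strategy matches the paper's: use condition (2) and the inductive hypothesis to see that $\pi_k$ ($2\le k<N$) is a unit exactly when $k$ is not a term of $b_{\bullet}$, define $\pi_N$ by solving $\pi_N R(N,m^*)=c(N,m^*)$ at a distinguished $m^*$ where $R(N,m^*)$ is a unit, and propagate $C(N,m)=c(N,m)$ to all other $m$ using the associativity identity of Proposition~\ref{prop:binom} together with the inductive hypothesis. Your two annihilator identities are correctly derived, and your treatment of the case $N=b_j$ (with $m^*=b_{j-1}$) is complete and agrees with the paper's.

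The gap is in the case where $N$ is not a term of $b_{\bullet}$. Let $i$ be maximal with $b_i\mid N$, so that the $i$th digit of $N$ is nonzero and all lower digits vanish; the paper takes $m^*=b_i$ (you should too --- an arbitrary carry-free summand of $N$ does not have the property that all lower digits of $N$ vanish, which the propagation uses). A single application of your identities with $\ell=b_i$, combined with the symmetry $m\leftrightarrow N-m$, handles exactly those $m$ for which $m$ or $N-m$ has a nonzero $i$th digit. When both $i$th digits vanish (so that a carry into the $i$th place occurs in $m+(N-m)$), neither $C(m,b_i)$ nor $C(N-m,b_i-m)$ need be a unit, and you give no argument that your bootstrap eventually reaches such $m$; your closing sentence explicitly concedes this is ``the main obstacle.'' That sub-case is where the paper does its real work: it decomposes $n=N-m=n_1+n_2$ and $m=m_1+m_2$ with $n_1,m_1$ divisible by $b_i$, with $n_2,m_2$ the low parts, and with $n_2+m_2=b_i+\ell'$, and then chains several carry-free multiplications in $\bD$ to write $c(N,m)$ as an explicit product of factors each of which is either a unit covered by the inductive hypothesis or equal to $c(N,b_i)$. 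Without either that decomposition or a proof that your bootstrap terminates for every $m$, the lemma is not proved.
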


\begin{proof}
Let $\fm$ be the unique maximal ideal of $\bk$ and let $b_{\bullet}$ be a divisible sequence such that $\bD/\fm \bD \cong \bS(\bk/\fm, b_{\bullet})$. Note that if there is no base $b$ carry in the addition $(n-m)+m$ then $c(n,m)$ is nonzero modulo $\fm$, and thus a unit of $\bk$. Let $N$ and $\pi_n$ be given as in $(\ast)$. Note that if $n<N$ is not of the form $b_i$ then $\pi_n$ is a unit. Indeed, we can write $n=(n-m)+m$ for some $0<m<n$ with no carry, and so $C(n,m)=c(n,m)$ is a unit. As $C(n,m)$ is a multiple of $\pi_n$, it follows that $\pi_n$ is a unit.

First suppose that $N$ is not of the form $b_i$, and let $i$ be maximal such that $b_i$ divides $N$. Note that the $i$th base $b$ digit of $N$ is nonzero, and all digits in lower places are 0. Clearly then, there are no carries in the addition $b_i+(N-b_i)$. It follows that $C(N,b_i)$ has no factor of the form $\pi_{b_j}$, and so $\{C(N,b_i)/\pi_N\}$ is a unit of $\bk$. We define $\pi_N$ by
\begin{displaymath}
\pi_N = \{ C(N,b_i)/\pi_N \}^{-1} \cdot c(N,b_i).
\end{displaymath}
Thus, by construction, $C(N,b_i)=c(N,b_i)$. We now show that $C(N,m)=c(N,m)$ for all $0 \le m \le N$. If $m=0$ or $m=N$ the equality is clear, so assume $0<m<N$. Put $n=N-m$. By assumption, the $i$th base $b$ digit of $N$ is nonzero. Suppose that the $i$th base $b$ digit of $n$ or $m$ is nonzero, say $n$. Then we can write $n=b_i+n'$ without carry. We then have
\begin{displaymath}
x^{[n]} x^{[m]} = c(n,b_i)^{-1} x^{[b_i]} x^{[n']} x^{[m]} = c(n,b_i)^{-1} c(N-b_i, m) c(N, b_i) x^{[N]},
\end{displaymath}
and so
\begin{displaymath}
c(N, m) = c(n,b_i)^{-1} c(N-b_i, m) c(N, b_i).
\end{displaymath}
We get a similar identity for $C$. Since we know the $c$'s and $C$'s on the right agree, this gives $c(N,m)=C(N,m)$. Now suppose that the $i$th base $b$ digits of $n$ and $m$ are each zero. There must then be a carry that produces a nonzero $i$th digit in $n+m$. Write $n=n_1+n_2$ and $m=m_1+m_2$ where $b_i \mid n_1, m_1$ and $n_2,m_2<b_1$, and write $n_2+m_2=b_i+\ell$; all of these decompositions are without carry. We then have
\begin{displaymath}
x^{[n]} x^{[m]} = c(n,n_1)^{-1} c(m,m_1)^{-1} c(n_1+m_1, n_1) c(n_2+m_2, n_2) x^{[n_1+m_1]} x^{[n_2+m_2]}
\end{displaymath}
and
\begin{displaymath}
x^{[n_2+m_2]} = c(n_2+m_2, b_i)^{-1} x^{[\ell]} x^{[b_i]}, \qquad
x^{[n_1+n_2]} x^{[\ell]} = C(N-b_i, \ell) x^{[N-b_i]},
\end{displaymath}
and so
\begin{displaymath}
c(N,m) = c(n,n_1)^{-1} c(m,m_1)^{-1} c(n_1+m_1, n_1) c(n_2+m_2, n_2) c(n_2+m_2, b_i)^{-1} c(N-b_i, \ell) C(N, b_i).
\end{displaymath}
Again, there is a similar identity for $C$, which yields $C(N,m)=c(N,m)$.

Now suppose $N=b_i$ with $i \ge 1$. In the sum $b_{i-1}+(b_i-b_{i-1})$ there is exactly one carry, in the $i$th place. Thus $\{C(N,b_i)/\pi_N\}$ has no factor of $\pi_{b_j}$ with $j<i$, and is therefore a unit. We define
\begin{displaymath}
\pi_N = \{ C(N,b_i)/\pi_N \}^{-1} \cdot c(N,b_i).
\end{displaymath}
Once again, $C(N,b_i)=c(N,b_i)$ by construction. We now show that $C(N,m)=c(N,m)$ for $0 \le m \le N$. Again, we assume $m \ne 0, N$, and put $n=N-m$. Since $n$ and $m$ are less than $b_i$ and they sum to $b_i$, the $(i-1)$st digit of $n$ or $m$ (say $n$) must be nonzero. We can therefore write $n=b_{i-1}+n'$, with no carry. We have
\begin{displaymath}
x^{[n]} x^{[m]} = c(n, b_{i-1})^{-1} x^{[b_{i-1}]} x^{[n']} x^{[m]} = c(n,b_{i-1})^{-1} c(N-b_{i-1}, m) c(N, b_i) x^{[N]}.
\end{displaymath}
We have a similar identity for $C$, which shows $C(N,m)=c(N,m)$, and completes the proof.
\end{proof}

\begin{lemma} \label{char2}
Let $\bk$ be a commutative ring and let $x_1, \ldots, x_n$ and $y_1, \ldots, y_n$ be elements of $\bk$. Suppose that for each maximal ideal $\fm$ of $\bk$ there exists an element $a_{\fm} \in \bk_{\fm}$ such that $a_{\fm} x_i=y_i$ holds in $\bk_m$ for all $i$. Then there exists an element $a \in \bk$ such that $ax_i=y_i$ holds in $\bk$ for all $i$.
\end{lemma}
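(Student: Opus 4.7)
The plan is to reformulate the problem in terms of a cokernel and apply the standard local-to-global principle for vanishing of module elements. Consider the $\bk$-linear map
\[
\phi \colon \bk \longrightarrow \bk^n, \qquad a \longmapsto (ax_1, \ldots, ax_n),
\]
and let $N = \coker(\phi)$. The statement ``there exists $a \in \bk$ with $ax_i = y_i$ for all $i$'' is exactly the assertion that the image of $y \coloneq (y_1, \ldots, y_n) \in \bk^n$ in $N$ is zero.

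Next I would translate the hypothesis into a local vanishing statement. For a maximal ideal $\fm$ of $\bk$, localization is exact, so $N_\fm = \coker(\phi_\fm)$, where $\phi_\fm \colon \bk_\fm \to (\bk_\fm)^n$ is the corresponding map over $\bk_\fm$. The hypothesis provides $a_\fm \in \bk_\fm$ with $\phi_\fm(a_\fm) = (y_1, \ldots, y_n)$ in $(\bk_\fm)^n$, which is precisely the statement that the image of $y$ in $N_\fm$ vanishes.

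To conclude, I would invoke the standard fact that if an element of a $\bk$-module is zero in every localization at a maximal ideal, then it is zero in the module itself. Applied to the image of $y$ in $N$, this gives that $y$ lies in the image of $\phi$, producing the desired $a \in \bk$. There is no real obstacle here; the argument is a one-line application of faithful flatness of $\prod_\fm \bk_\fm$ over $\bk$ (or, equivalently, the fact that $N = 0$ iff $N_\fm = 0$ for all maximal $\fm$, applied to the submodule of $N$ generated by the image of $y$).
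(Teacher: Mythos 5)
Your proof is correct, and it is essentially the same argument as the paper's: the paper simply unwinds your "standard fact" explicitly, writing $a_{\fm}=s_{\fm}^{-1}b_{\fm}$ with $b_{\fm}x_i=s_{\fm}y_i$ in $\bk$ and using that finitely many of the $s_{\fm}$ generate the unit ideal to patch the local solutions via $a=\sum c_{\fm}b_{\fm}$. Your cokernel reformulation packages that partition-of-unity computation into the local-to-global principle for vanishing of a module element, which is a perfectly valid (and slightly cleaner) way to present the same idea.
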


\begin{proof}
Write $a_{\fm}=s_{\fm}^{-1} b_{\fm}$ with $b_{\fm} \in \bk$ and $s_{\fm} \in \bk \setminus \fm$. Choosing $b_{\fm}$ and $s_{\fm}$ appropriately, we can assume that $b_{\fm} x_i=s_{\fm} y_i$ holds in $\bk$ for all $i$. (Here it is important that there are only finitely many $x_i$ and $y_i$.) The $s_{\fm}$ generate the unit ideal of $\bk$, and therefore finitely many of them do. Let $\Sigma$ be a finite set of maximal ideals such that the $s_{\fm}$ with $\fm \in \Sigma$ generate the unit ideal, and choose an expression $\sum_{\fm \in \Sigma} c_{\fm} s_{\fm} = 1$ with $c_{\fm} \in \bk$. Then putting $a=\sum_{\fm \in \Sigma} c_{\fm} b_{\fm}$, we find $ax_i=y_i$ for all $i$.
\end{proof}

\begin{lemma} \label{char3}
The condition $(\ast)$ holds in general.
\end{lemma}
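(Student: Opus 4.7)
The strategy is to reduce the global statement to the local one (Lemma~\ref{char1}) using the gluing lemma (Lemma~\ref{char2}). We are given $\pi_2, \ldots, \pi_{N-1}$ in $\bk$ satisfying the hypothesis, and we seek $\pi_N \in \bk$ with $C(N,m)=c(N,m)$ for all $0 \le m \le N$. The cases $m=0$ and $m=N$ are automatic, since $C(N,0)=C(N,N)=1=c(N,0)=c(N,N)$, so we only need the equation to hold for $0<m<N$.

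The key observation is that $\pi_N$ enters $C(N,m)$ linearly. Specifically, $\epsilon_N(N-m,m)=1$ precisely when $0<m<N$, so for such $m$ we can write
\begin{displaymath}
C(N,m) = \pi_N \cdot D(N,m), \qquad D(N,m) := \prod_{k \ge 2,\, k \ne N} \pi_k^{\epsilon_k(N-m,m)},
\end{displaymath}
and $D(N,m) \in \bk$ is already determined by the previously chosen $\pi_2, \ldots, \pi_{N-1}$. Thus the problem becomes: find $\pi_N \in \bk$ such that $\pi_N \cdot D(N,m) = c(N,m)$ for all $m$ in the finite range $1 \le m \le N-1$.

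Now I would apply Lemma~\ref{char1} at each localization: for every maximal ideal $\fm$ of $\bk$, the $\pi_n$ for $n<N$ (viewed in $\bk_\fm$) satisfy the hypothesis of $(\ast)$, so there exists $\pi_{N,\fm} \in \bk_\fm$ such that the equality $\pi_{N,\fm} \cdot D(N,m) = c(N,m)$ holds in $\bk_\fm$ for every $0<m<N$. Feeding the finite list $x_m = D(N,m)$, $y_m = c(N,m)$ (for $1 \le m \le N-1$) and the local solutions $a_\fm = \pi_{N,\fm}$ into Lemma~\ref{char2} produces an element $\pi_N \in \bk$ satisfying $\pi_N \cdot D(N,m) = c(N,m)$ globally, i.e., $C(N,m)=c(N,m)$ for $0<m<N$.

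There is no serious obstacle here beyond identifying the linear dependence of $C(N,m)$ on $\pi_N$; once that is isolated, Lemmas~\ref{char1} and~\ref{char2} do all the work. The only point to verify carefully is that Lemma~\ref{char2} applies with $\bk_\fm$ ranging over \emph{all} maximal ideals and not just a finite set, but this is exactly what that lemma is designed to handle (using the fact that the denominators $s_\fm$ generate the unit ideal of $\bk$, and that the list of $x_m, y_m$ is finite).
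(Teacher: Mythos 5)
Your proof is correct and follows essentially the same route as the paper: the paper's equation $\pi_N \cdot \{C(N,m)/\pi_N\} = c(N,m)$ is exactly your linear factorization $C(N,m)=\pi_N\cdot D(N,m)$ (the exponent $\epsilon_N(N-m,m)$ equals $1$ for $0<m<N$ and only $k\le N$ contribute, so $D(N,m)$ is determined by $\pi_2,\ldots,\pi_{N-1}$), after which Lemma~\ref{char1} supplies local solutions and Lemma~\ref{char2} glues them. Your write-up just makes explicit the linearity in $\pi_N$ that the paper leaves implicit.
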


\begin{proof}
Suppose $N$ as in $(\ast)$ is given. We must find $\pi_N$ such that $C(N,m)=c(N,m)$ for all $0<m<N$ (the $m=0,N$ cases being automatic). That is, we must find $\pi_N$ that simultaneously solves the equations
\begin{displaymath}
\pi_N \cdot \{ C(N,m)/\pi_N \} = c(N, m)
\end{displaymath}
for $0<m<N$. We can find a solution after localizing at each maximal ideal by Lemma~\ref{char1}. We can therefore find a solution by Lemma~\ref{char2}.
\end{proof}

\subsection{A $\Tor$ computation}

In this section we prove the following result:

\begin{proposition} \label{prop:tor1}
Let $\bD$ be a GDPA over $\bk$. Then
\begin{displaymath}
\Tor^{\bD}_1(\bk, \bk) = \bigoplus_{n \ge 1} \bk/(\pi_n)[n].
\end{displaymath}
\end{proposition}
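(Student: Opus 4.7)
The plan is to identify $\Tor^{\bD}_1(\bk,\bk)$ with the indecomposables of the augmented graded $\bk$-algebra $\bD$, and then to compute those degree by degree using the explicit formula for multiplication in $\bD$.

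First, let $\bD_+ = \bigoplus_{n \ge 1} \bk x^{[n]}$ be the augmentation ideal, giving a short exact sequence $0 \to \bD_+ \to \bD \to \bk \to 0$ of graded $\bD$-modules. Tensoring with $\bk$ over $\bD$ and using flatness of $\bD$ over itself together with the fact that $\bD \otimes_{\bD} \bk \to \bk \otimes_{\bD} \bk$ is the identity yields a canonical graded isomorphism $\Tor^{\bD}_1(\bk,\bk) \cong \bD_+ \otimes_{\bD} \bk = \bD_+/\bD_+^2$. So it suffices to prove $\bD_+/\bD_+^2 \cong \bigoplus_{n \ge 1} \bk/(\pi_n)[n]$.

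Fix $N \ge 1$. Then $(\bD_+)_N = \bk x^{[N]}$, and $(\bD_+^2)_N$ is the $\bk$-submodule spanned by the products $x^{[N-m]} x^{[m]} = C(N,m) x^{[N]}$ with $1 \le m \le N-1$. I will show that the ideal $I_N := (C(N,m) : 1 \le m \le N-1)$ of $\bk$ equals $(\pi_N)$; for $N=1$ there is nothing to prove since $\pi_1 = 0$. For $N \ge 2$, a direct computation gives $\epsilon_N(N-m,m) = 1$ whenever $1 \le m \le N-1$, so the $\pi$-invariant $D(N,m) := \{C(N,m)/\pi_N\}$ satisfies $C(N,m) = \pi_N \cdot D(N,m)$ in $\bk$. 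This yields $I_N \subseteq (\pi_N)$, and the reverse containment is equivalent to the statement that $D(N,1), \ldots, D(N,N-1)$ generate the unit ideal of $\bk$.

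The heart of the argument -- and its main obstacle -- is this unit-ideal claim, which is precisely where admissibility of $\pi_\bullet$ enters. Since an ideal of $\bk$ is the unit ideal iff it is contained in no maximal ideal, I must show that for each maximal ideal $\fm$ there is some $m$ with $D(N,m) \notin \fm$. Recalling that $\pi_k \in \fm$ iff $k = b_{\fm,i}$ for some $i$, this reduces to finding $m \in \{1,\ldots,N-1\}$ such that $\epsilon_{b_{\fm,i}}(N-m,m) = 0$ for every index $i$ with $b_{\fm,i} \ne N$. I treat two cases. If $N$ is not of the form $b_{\fm,j}$, then the base-$b_{\fm,\bullet}$ expansion of $N$ admits a nontrivial splitting $N = N_1 + N_2$ with $N_1, N_2 \ge 1$ and no carries, and $m = N_1$ works. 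If $N = b_{\fm,j}$ with $j \ge 1$, then I take $m = b_{\fm,j-1}$; a direct inspection of the base expansion shows the only carry in $(N-m)+m$ happens at position $j$, i.e., at $b_{\fm,j} = N$ itself, which is the position we are allowed to ignore. Combining the two cases proves the claim and hence the proposition.
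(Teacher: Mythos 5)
Your proof is correct and follows essentially the same route as the paper: identify $\Tor_1^{\bD}(\bk,\bk)$ with $\bD_+/\bD_+^2$, reduce degree $N$ to showing the ideal generated by the $C(N,m)$ equals $(\pi_N)$, and verify this maximal-ideal-by-maximal-ideal via the dichotomy $N \ne b_{\fm,j}$ (carry-free splitting) versus $N = b_{\fm,j}$ (take $m = b_{\fm,j-1}$, single carry at position $j$). The only cosmetic differences are that you phrase the local check as a unit-ideal criterion rather than localizing outright, and you note up front that $\epsilon_N(N-m,m)=1$ always, so $I_N \subseteq (\pi_N)$ holds in all cases.
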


We note that this proposition gives another way to see that the $\pi_n$ are intrinsic to $\bD$, up to units. We recall that $\pi_1=0$ by convention.

\begin{lemma}
Let $\pi_{\bullet}$ be an admissible sequence in $\bk$, and let $n \ge 1$. Then the ideal $\fa$ generated by the element $C(n,i)$ for $1 \le i \le n-1$ is the principal ideal $(\pi_n)$.
\end{lemma}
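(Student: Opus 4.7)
The plan is to prove the two inclusions $\fa \subseteq (\pi_n)$ and $(\pi_n) \subseteq \fa$ separately. For the easy direction, I would observe that for $1 \le m \le n-1$, both $m$ and $n-m$ lie strictly between $0$ and $n$, while $(n-m)+m=n$; hence in base $n$ their addition produces exactly one carry (in the ones place), which means $\epsilon_n(n-m,m)=1$. Thus $\pi_n$ divides $C(n,m)$, and in fact we may factor $C(n,m) = \pi_n \cdot D(n,m)$ where
\begin{displaymath}
D(n,m) = \prod_{k \ge 2,\, k \ne n} \pi_k^{\epsilon_k(n-m,m)}.
\end{displaymath}

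For the reverse inclusion, since both ideals are finitely generated, it suffices to show $(\pi_n)_{\fm} \subseteq \fa_{\fm}$ at every maximal ideal $\fm$ of $\bk$. In each local ring I would aim to exhibit a single $m$ with $1 \le m \le n-1$ such that $D(n,m)$ is a unit in $\bk_{\fm}$; then $\fa_{\fm}$ contains $C(n,m) = \pi_n \cdot (\mathrm{unit})$, which gives the desired inclusion. Because $\pi_k \notin \fm$ for every $k$ that is not a term of the divisible sequence $b_{\fm,\bullet}$ associated to $\fm$ (as defined in \S\ref{s:adm}), the condition ``$D(n,m)$ a unit in $\bk_{\fm}$'' reduces to the purely combinatorial statement that $\epsilon_{b_{\fm,j}}(n-m,m)=0$ for every $j \ge 1$ with $b_{\fm,j} \ne n$.

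I would then split into two cases according to whether $n$ appears in $b_{\fm,\bullet}$. If $n$ does not appear, let $j_0$ be the smallest index with a nonzero digit in the base $b_{\fm,\bullet}$ expansion of $n$; then $b_{\fm,j_0} < n$, and taking $m = b_{\fm,j_0}$ the addition $(n-m)+m$ has no carries whatsoever in base $b_{\fm,\bullet}$, so $\epsilon_{b_{\fm,j}}(n-m,m)=0$ for all $j \ge 1$. If instead $n = b_{\fm,r}$ for some $r \ge 1$, I would take $m = b_{\fm,r-1}$; using the divisibility $b_{\fm,j} \mid b_{\fm,r-1}$ for $j \le r-1$ (and $n, m < b_{\fm,j}$ for $j \ge r+1$), a direct computation of $\lfloor \cdot / b_{\fm,j} \rfloor$ on $n$, $m$, and $n-m$ shows $\epsilon_{b_{\fm,j}}(n-m,m) = 1$ if $j=r$ and $0$ otherwise, so the condition is satisfied at every $j$ with $b_{\fm,j} \ne n$.

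The main obstacle is the bookkeeping in Case~B, where the ``no-carry at positions $j \ne r$, carry at position $r$'' behavior needs to be verified carefully; but admissibility ensures that $b_{\fm,\bullet}$ is genuinely a divisible sequence, so the divisibilities required to make the digit computations clean are automatic. The degenerate case $n=1$ is trivial since then $\fa = 0 = (\pi_1)$ by our convention $\pi_1 = 0$.
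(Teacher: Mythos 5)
Your proof is correct and follows essentially the same route as the paper's: reduce to the local case, split according to whether $n$ is a term of the divisible sequence $b_{\fm,\bullet}$, and in each case exhibit an explicit $m$ (the lowest nonzero place value of $n$, resp.\ $b_{\fm,r-1}$) for which the carry pattern forces $C(n,m)$ to be a unit, resp.\ $\pi_n$ times a unit. The only cosmetic difference is that you establish the containment $\fa \subseteq (\pi_n)$ globally via the identity $\epsilon_n(n-m,m)=1$, whereas the paper does it inside the local case analysis.
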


\begin{proof}
It suffices to prove this when $\bk$ is local. Assume this, and let $b_{\bullet}=b_{\fm,\bullet}$ be the associated divisible sequence. If $n \ne b_i$ for some $i$ then there is an decomposition $n=i+j$ that involves no base $b_{\bullet}$ carries. Thus $C(n,i)$ is a unit, and so $\fa$ is the unit ideal, and so $\fa=(\pi_n)$ since $\pi_n$ is a unit. Now suppose $n=b_r$. Then in every decomposition $n=i+j$ with $1 \le i,j \le n-1$, there is a carry in the $r$th digit, and so $\pi_n$ divides $C(n,i)$ for all such $i$. Let $m=b_{r-1}$. Then in the decomposition $n=(n-m)+m$, there is only a carry in the $r$th place, and so $C(n,m)=\pi_n$ up to units. It follows that $\fa=(\pi_n)$, completing the proof.
\end{proof}

\begin{proof}[Proof of Proposition~\ref{prop:tor1}]
Let $I$ be the ideal generated by the $x^{[n]}$ with $n \ge 1$, so that $\bk=\bD/I$. Then $\Tor_1^{\bD}(\bk,\bk)=I/I^2$. The degree $n$ piece of $I^2$ is spanned, as a $\bk$-module, by all products of the form $x^{[i]} x^{[j]}$ with $i+j=n$ and $i,j \ge 1$; it is thus equal to $\fa x^{[n]}$ where $\fa$ is the ideal generated by the $C(n,i)$ with $1 \le i \le n-1$. By the lemma, $\fa=(\pi_n)$, and so the degree $n$ piece of $I/I^2$ is $\bk/(\pi_n)$.
\end{proof}

\section{Coherence results} \label{s:coh}

\subsection{Overview}

In \S \ref{s:coh} we prove two main results. The first, in \S \ref{ss:grobcoh}, states that GDPA's over noetherian rings are coherent. The second, in \S \ref{ss:grcoh} gives more precise results about graded-coherence of GDPA's over non-noetherian rings. The second result depends on work in \S \ref{ss:torbd} and \S \ref{ss:relbd} where we establish bounds on the degrees of relations for ideals in a GDPA. These bounds could be useful even when $\bk$ is noetherian, although they are not needed to prove any coherence results in that case.  Finally, in \S \ref{ss:two-variable} we show that coherence fails for multivariate divided power algebras.

\subsection{Gr\"obner-coherence} \label{ss:grobcoh}

We recall some definitions from \cite[\S 4]{grobcoh}.  A graded module is {\bf Gr\"obner-coherent} if it is graded-coherent and every finitely generated inhomogeneous submodule admits a finite Gr\"obner-basis; this implies coherent, but is stronger (see \cite[Proposition~4.4]{grobcoh}). A graded ring is {\bf Gr\"obner-coherent} if it is so as a module over itself; this implies that all finitely presented modules are Gr\"obner-coherent. The following is our main theorem on coherence of GDPA's.
%\rohit{Paragraph extended and more precise references added}

\begin{theorem} \label{thm:grobcoh}
A GDPA over a noetherian ring $\bk$ is Gr\"obner-coherent.
\end{theorem}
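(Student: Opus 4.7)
The plan is to reduce to the case $\bk$ is local noetherian, then case-split based on the divisible sequence at the maximal ideal. First I would verify that Gr\"obner-coherence of a GDPA can be checked after localizing $\bk$ at each of its maximal ideals; this should follow from the formal properties of Gr\"obner-coherence developed in \cite{grobcoh} together with faithfully flat descent. Noetherianness is preserved under localization, so this reduces the problem to $\bk$ local noetherian with maximal ideal $\fm$.

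In the local case, let $b_\bullet=b_{\fm,\bullet}$ be the divisible sequence of indices $n \ge 2$ with $\pi_n \in \fm$. If $b_\bullet$ is finite, I would iteratively apply Proposition~\ref{gdpa:transcoh} with $h=b_1$ at each step. By Proposition~\ref{gdpa:elt}(b), the non-unit indices of $\pi^{[b_1]}_\bullet$ form the shorter divisible sequence $\{b_i/b_1 : i \ge 2\}$, so the sequence strictly shrinks with each transform. After finitely many iterations, all $\pi_n$ are units, and Corollary~\ref{gdpa:poly} identifies the resulting GDPA with $\bk[x]$, which is noetherian and therefore Gr\"obner-coherent. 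Tracing back through the chain of transforms via Proposition~\ref{gdpa:transcoh} yields Gr\"obner-coherence of $\bD$.

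The remaining case, when $b_\bullet$ is infinite, is the main obstacle; the iterative reduction does not terminate. Here my approach would be to lift Gr\"obner-coherence from the residue field: by Propositions~\ref{gdpa:bc} and~\ref{prop:gdpa-field}, $\bD/\fm\bD \cong \bS(\bk/\fm, b_\bullet)$, and this is Gr\"obner-coherent by Proposition~\ref{gdpa:coh} because the associated $\pi$-sequence (valued in $\{0,1\}$) has infinitely many zeros. The key technical tool for lifting would be Proposition~\ref{gdpa:ideal}: a homogeneous ideal $I \subset \bD$ generated in degrees $\le d$ decomposes, for each $h \in b_\bullet$, into ideals $J_k \subset \bD^{(h)}$ generated in degrees $\le h\lceil(d-k)/h\rceil$, and $\bD^{(h)}$ is itself a GDPA (isomorphic after regrading to $\bD^{[h]}$, by Proposition~\ref{prop:regrade}(c)). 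Combined with noetherianness of $\bk$, this should permit a Nakayama-style lift of finite presentation from $\bD/\fm\bD$ back to $\bD$. The hard part will be tracking \emph{both} finite presentation of ideals and finite termination of inhomogeneous Gr\"obner bases simultaneously through this lift: Gr\"obner-coherence is strictly stronger than coherence, and the degree estimates from Proposition~\ref{gdpa:ideal} must be combined with the Gr\"obner-basis calculus of \cite{grobcoh} in order to propagate the residue-field statement upward.
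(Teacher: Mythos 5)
Your first step --- reducing to the local case --- is the main gap. Coherence, graded-coherence, and Gr\"obner-coherence are not local properties of a ring: a finitely generated kernel module whose localizations are all finitely generated need not be finitely generated, and even in the graded setting (where each graded piece of a kernel is a finitely generated $\bk$-module by noetherianity) one would need a degree bound on generators that is \emph{uniform} over all maximal ideals of $\bk$. The paper's own remark following the theorem makes this visible: the descent result \cite[Proposition~5.1]{grobcoh} it invokes requires $\bD$ to be graded-coherent \emph{globally} as a hypothesis before Gr\"obner-coherence can be deduced from Gr\"obner-coherence of the localizations $\bD_{\fm}$. Producing such uniform degree bounds is precisely the content of the difficult Theorem~\ref{thm:torbd}, which the paper develops for the non-noetherian graded-coherence criterion and explicitly notes is \emph{not} needed for the noetherian theorem. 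So your reduction assumes a substantial part of what must be proved.

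The second gap is in your infinite-$b_\bullet$ case, which (even granting the localization) is the heart of the matter --- e.g.\ the classical divided power algebra over $\bZ_p$, where $\pi_{p^k}=p$ lies in $\fm$ for all $k$ but is never zero, so Proposition~\ref{gdpa:coh} does not apply to $\bD$ itself and $\bD$ is not a union of finite free subalgebras. A ``Nakayama-style lift'' from $\bD/\fm\bD$ cannot work as stated: Nakayama takes finite generation as an input, which is exactly the conclusion sought, and $\Tor_1^{\bD}(I,\bk)$ is not controlled by $\Tor_1^{\bD/\fm\bD}(I/\fm I,\bk/\fm)$ without correction terms from the base. The paper's route is entirely different: it runs a noetherian induction on $\bk$ (assuming $\bD/\fa\bD$ Gr\"obner-coherent for all nonzero ideals $\fa$), disposes of non-domains via a zero-divisor, applies a single $h$-transform to reach the case where every $\pi_n$ is nonzero in the domain $\bk$, and then passes to the \emph{fraction field} $\bK$, where $\bD\otimes_{\bk}\bK\cong\bK[x]$ makes every ideal principal. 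Rescaling by a generator produces an isomorphic ideal $J$ containing a nonzero constant $c\in\bk$, and the exact sequence $0\to c\bD\to J\to \ol{J}\to 0$ with $\ol{J}\subset\bD/c\bD$ finishes by the inductive hypothesis. Your finite-$b_\bullet$ iteration of Proposition~\ref{gdpa:transcoh} is sound as far as it goes, but the generic-fibre argument, not a residue-field argument, is what closes the remaining case.
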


\begin{corollary}
If $\bk$ is noetherian then the classical divided power algebra
%\rohit{Fixed the mistype}
over $\bk$, as well as its $q$-analogs, are Gr\"obner-coherent (and thus coherent as well).
\end{corollary}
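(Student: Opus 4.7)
The plan is to leverage Proposition~\ref{gdpa:coh}, which handles the special case in which $\pi_n$ vanishes for infinitely many $n$, together with the transfer Proposition~\ref{gdpa:transcoh} and the tensor decomposition Proposition~\ref{gdpa:subalg}. I would first reduce to the case in which $\bk$ is noetherian local. Since $\bk$ is noetherian, for any finitely generated homogeneous ideal $I\subseteq \bD$ each graded piece $I_n\subseteq \bk x^{[n]}$ is a finitely generated ideal of $\bk$, and the question of whether a chosen finite subset of $I$ is a Gr\"obner basis is stable under faithfully flat base change by $\prod_{\fm}\bk_{\fm}$. Thus it suffices to prove the theorem after localizing $\bk$ at each of its maximal ideals.

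Assume therefore that $\bk$ is a noetherian local ring with maximal ideal $\fm$, and consider the divisible sequence $b_{\bullet}=b_{\fm,\bullet}$ attached to $\pi_{\bullet}$. The easy case is when $b_{\bullet}$ has finite length $r$. By Proposition~\ref{gdpa:elt}(b), for $n\ge 2$ we have $\pi^{[b_r]}_n=\pi_{b_rn}$ up to units, and $b_rn$ lies strictly above $b_r$ and hence outside $\{b_i\}$, so $\pi_{b_rn}$ is a unit. By Corollary~\ref{gdpa:poly}, $\bD^{[b_r]}\cong \bk[x]$, which is noetherian and hence trivially Gr\"obner-coherent; Proposition~\ref{gdpa:transcoh} (valid because $\pi_{b_r}\in \fm\subseteq \Jac(\bk)$) now transfers Gr\"obner-coherence back to $\bD$.

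The main obstacle is the case in which $b_{\bullet}$ is infinite, so that infinitely many $\pi_{b_i}$ lie in $\fm$ but none need vanish in $\bk$. The paradigmatic example is the classical divided power algebra over $\bZ_{(p)}$, where one checks directly that the iterated $h$-transforms reproduce the same algebra, so Proposition~\ref{gdpa:transcoh} alone does not reduce to an algebra to which Proposition~\ref{gdpa:coh} applies. My plan here is to work modulo $\fm$: the fiber $\bD\otimes_{\bk}\bk/\fm$ is a GDPA over a field with infinitely many $\pi_n$ vanishing, so Proposition~\ref{gdpa:coh} gives a finite Gr\"obner basis downstairs. The strategy is then to lift such a basis to $\bD$ using noetherianity of $\bk$, controlling the relations via the degree bounds established in \S\ref{ss:torbd} and \S\ref{ss:relbd}: these sections are advertised in the overview as providing bounds on the degrees in which relations can appear for ideals in $\bD$, and such bounds are precisely what is needed to guarantee that a finitely generated ideal has a finite Gr\"obner basis.

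The technical point that I expect to be the hardest is making the lift from $\bD/\fm\bD$ to $\bD$ effective in the absence of $\fm$-adic completeness of $\bk$: a naive Nakayama argument only works one degree at a time, but a Gr\"obner basis must capture leading terms in all degrees simultaneously. This is where the degree bounds of \S\ref{ss:relbd} become essential, since they turn the infinite problem of checking all degrees into the finite problem of checking degrees up to an explicit bound depending on the generators of $I$ and the noetherian ring $\bk$.
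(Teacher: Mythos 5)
The statement you are asked to prove is, in the paper, an immediate consequence of Theorem~\ref{thm:grobcoh}: the classical divided power algebra and its $q$-analogs are GDPAs over $\bk$ (Example~\ref{ex:gdpa}, via the admissibility checks in Examples~\ref{ex:classical-dgpa} and~\ref{ex:q-dgpa}), so the theorem applies verbatim and no further argument is needed. Your proposal instead sets out to reprove the theorem itself, and the route you sketch has two genuine gaps.

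First, the reduction to the local case is not free for Gr\"obner-coherence. The paper's own remark after this corollary glues local Gr\"obner-coherence into global Gr\"obner-coherence only under the \emph{additional} hypothesis that $\bD$ is already graded-coherent (via \cite[Proposition~5.1]{grobcoh}); your appeal to faithfully flat base change by $\prod_{\fm}\bk_{\fm}$ does not by itself show that a finitely generated inhomogeneous ideal of $\bD$ has a finite Gr\"obner basis because its localizations do. Second, and more seriously, the crucial case --- $\bk$ local with infinitely many $\pi_{b_i}\in\fm$ but no $\pi_n=0$ --- is left as a plan, and the tools you propose for it are the wrong ones: Theorem~\ref{thm:torbd} bounds $t_1(I/T(I);\bD)$ for \emph{homogeneous} $I$ and says nothing about initial ideals of inhomogeneous ideals, which is exactly the content of Gr\"obner-coherence beyond graded-coherence; the paper even states in \S\ref{ss:grobcoh}'s overview that these bounds are not needed for any coherence result when $\bk$ is noetherian. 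The paper's actual argument avoids the residue field entirely: it runs a noetherian induction on $\bk$, and in the key case ($\bk$ a domain with all $\pi_n\neq 0$ after an $h$-transform) passes to the fraction field $\bK$, where $\bD\otimes_{\bk}\bK\cong\bK[x]$ makes the ideal principal; after clearing denominators the auxiliary ideal $J$ contains a nonzero $c\in\bk$, and one concludes from Gr\"obner-coherence of $\bD/c\bD$ (the inductive hypothesis) using the sequence $0\to c\bD\to J\to \ol{J}\to 0$ and the identity $a\ini(I)=\ini(z)\ini(J)$. The lifting step you defer is precisely the part your proposal would need to supply, and it is not supplied.
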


\begin{remark}
In fact, we can conclude Gr\"obner-coherence for a larger class of coefficient rings using some of the basic properties of Gr\"obner-coherence. Two examples:
\begin{enumerate}
\item Suppose that $\bk$ is a direct limit of noetherian rings $\bk_i$ with flat transition maps, and let $\bD$ be a GDPA such that the $\pi_n$ belong to the initial ring $\bk_0$. Then $\bD$ is Gr\"obner-coherent. Indeed, if $\bD_i=\bD(\pi_{\bullet}; \bk_i)$ then $\bD_i$ is Gr\"obner-coherent by the theorem, and $\bD$ is the direct limit of the $\bD_i$ (and the transition maps are still flat), and thus Gr\"obner-coherent by \cite[Proposition~5.3]{grobcoh}. For example, if $\bk$ is a polynomial ring in infinitely many variables over a noetherian ring then the classical divided power algebra over $\bk$ is Gr\"obner-coherent.
\item Suppose that $\bk$ is locally noetherian (meaning $\bk_{\fm}$ is noetherian for all maximal ideals $\fm$) and $\bD$ is a GDPA over $\bk$ that is graded-coherent. Then $\bD$ is Gr\"obner-coherent. Indeed, $\bD_{\fm}$ is Gr\"obner-coherent by the theorem, and so $\bD$ is Gr\"obner-coherent by \cite[Proposition~5.1]{grobcoh}. For example, if $\bk$ is a Boolean ring then the classical divided power algebra
%\rohit{Fixed the mistype}
over $\bk$ is Gr\"obner-coherent. (The graded coherence of $\bD$ follows easily from Corollary~\ref{cor:grcohdp} in this case.) \qedhere
\end{enumerate}
\end{remark}

The heart of the proof is contained in the following lemma:

\begin{lemma}
Let $\bk$ be a domain and let $\bD=\bD(\bk, \pi_{\bullet})$ be a GDPA.
%\rohit{Changed ``admissible GDPA" to ``GDPA" as admissibility  is already included in the definition of a GDPA.}.
Suppose that $\pi_n \ne 0$ for all $n \ge 2$ and that $\bD/\fa \bD$ is Gr\"obner-coherent for all non-zero principal ideals $\fa$ of $\bk$. Then $\bD$ is Gr\"obner-coherent.
\end{lemma}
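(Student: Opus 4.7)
Since $\bk$ is a domain and $\pi_n \ne 0$ for all $n \ge 2$, each $\pi_n$ becomes a unit in $K := \operatorname{Frac}(\bk)$. By Corollary~\ref{gdpa:poly}, this gives $\bD_K := \bD \otimes_{\bk} K \cong K[x]$, which is a PID. A leading-term computation, using that $\bk$ is a domain and each $C(n,m)$ is a nonzero product of $\pi$'s, shows that $\bD$ is itself a domain, and that the canonical map $\bD \hookrightarrow K[x]$ is injective. The plan is to exploit this ``generic fiber'' picture together with the Gr\"obner-coherence of $\bD/\beta\bD$ to control finitely generated ideals of $\bD$.

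\textbf{Principal reduction.} Let $I = (f_1, \ldots, f_r)$ be a finitely generated ideal of $\bD$. The extended ideal $IK[x] \subseteq K[x]$ is principal, say $(h)$. Expressing $h$ as a $K[x]$-combination of the $f_i$ and clearing denominators produces a nonzero $\alpha \in \bk$ such that $g := \alpha h$ lies in $I$ and $gK[x] = IK[x]$. Writing each $f_i$ as a $K[x]$-multiple of $h$ and again clearing denominators yields a nonzero $\beta \in \bk$ with $\beta f_i \in g\bD$ for all $i$; thus, setting $J := g\bD \subseteq I$, we have $\beta \cdot I \subseteq J$, so $I/J$ is annihilated by $\beta$.

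\textbf{Embedding and assembly.} Since $\bD$ is a domain, the map $\bD \to J$, $d \mapsto dg$, is an isomorphism, and it induces an identification $\beta^{-1}J / J \cong \bD/\beta\bD$ (where $\beta^{-1}J = \{\xi \in K[x] : \beta\xi \in J\}$). The inclusion $\beta I \subseteq J$ gives $I \subseteq \beta^{-1}J$, so $I/J$ embeds as a finitely generated $\bD$-submodule of $\bD/\beta\bD$. By the Gr\"obner-coherence hypothesis on $\bD/\beta\bD$, $I/J$ is therefore finitely presented and admits a finite Gr\"obner basis $\bar e_1, \ldots, \bar e_s$. Since $J$ is free of rank one, the short exact sequence $0 \to J \to I \to I/J \to 0$ forces $I$ to be finitely presented; applied with $I$ homogeneous (where $h$ can be taken to be a monomial), this also yields graded-coherence of $\bD$. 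For the inhomogeneous Gr\"obner basis, I would lift each $\bar e_i$ to an element $e_i \in I$ and argue that $\{g, e_1, \ldots, e_s\}$ is a Gr\"obner basis of $I$ by reducing an arbitrary $f \in I$ first modulo the $\bar e_i$ inside $I/J \subseteq \bD/\beta\bD$, then finishing in $J = g\bD$ by a single division by $g$.

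\textbf{Main obstacle.} The crux of the argument is this last Gr\"obner-basis verification. The embedding $I/J \hookrightarrow \bD/\beta\bD$ involves a degree shift by $\deg g$, so the leading data of $f \in I$ (in $\bD$) does not transport naively to the leading data of its image in $\bD/\beta\bD$; one must keep track of this shift together with the $C$-factors that appear when multiplying by powers of $x^{[1]}$, and interleave reductions modulo the $e_i$ with subtractions of $\bD$-multiples of $g$. This is precisely the point at which the full Gr\"obner-coherence of $\bD/\beta\bD$ — rather than mere coherence — is indispensable, since only the Gr\"obner structure carries the initial-term information in a form that can be lifted back to $\bD$.
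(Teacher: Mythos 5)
Your argument for \emph{coherence} is correct and is essentially the paper's argument viewed from the other side: where you quotient $I$ by the principal subideal $J=g\bD$ and embed $I/J$ into $\bD/\beta\bD$, the paper multiplies $I$ by $\beta g^{-1}$ to obtain an isomorphic ideal $J'=(y_1,\dots,y_r)$ \emph{containing} the constant $\beta$, and then uses $0\to\beta\bD\to J'\to \ol{J'}\to 0$ with $\ol{J'}\subset\bD/\beta\bD$. These are the same exact sequence up to the isomorphism $I\cong J'$, so for finite presentation (and for graded-coherence, where $g$ can be taken homogeneous) your proof is complete.

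The genuine gap is the Gr\"obner-coherence part, which is the actual content of the lemma beyond coherence, and which you explicitly leave as a plan (``I would lift\dots and argue\dots'') together with an acknowledged ``main obstacle.'' The strategy of lifting a Gr\"obner basis of $I/J$ element-by-element and verifying the lifted set by an interleaved division algorithm is exactly the hard route: as you note, the leading data does not transport through the degree shift, and nothing in your write-up establishes that $\ini(I)$ is generated by $\ini(g),\ini(e_1),\dots,\ini(e_s)$, nor that the proposed reduction terminates. The paper sidesteps all of this by working with initial ideals rather than Gr\"obner bases of elements. Since $\pi_n\ne 0$ for all $n$ and $\bk$ is a domain, $\bD$ is a domain and initial terms are multiplicative, so the ideal identity $\beta I=g J'$ yields $\beta\,\ini(I)=\ini(g)\,\ini(J')$; one then gets finite generation of $\ini(J')$ from the exact sequence $0\to\beta\bD\to\ini(J')\to\ini(\ol{J'})\to 0$ and the Gr\"obner-coherence of $\bD/\beta\bD$, and transfers it back to $\ini(I)$ because multiplication by $\beta$ is injective. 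The difficulty you flag (tracking degree shifts and $C$-factors) disappears entirely in this formulation. To complete your proof you should replace your final paragraph with this initial-ideal computation; as written, the key claim of the lemma is asserted but not proved.
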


\begin{proof}
Let $I=(x_1, \ldots, x_n)$ be a finitely generated ideal of $\bD$, and let $\bK = \Frac(\bk)$.
%\rohit{Added the definition of $\bK$}.
Since $\bD \otimes_{\bk} \bK \cong \bK[x]$ (Corollary~\ref{gdpa:poly}), the ideal $I \otimes_{\bk} \bK$ is principal; let $z \in I$ be a generator. Write $x_i=\frac{zy_i}{a}$ with $y_i \in \bD$ and $a \in \bk$. Since the $x_i$ generate the ideal $(z)$ over $\bK$, we have an expression $z=\sum_{i=1}^n \frac{w_i x_i}{b}$ with $w_i \in \bD$ and $b \in \bk$. Expressing the $x_i$ in terms of the $y_i$, we see that the ideal $J=(y_1, \ldots, y_n)$ contains the non-zero element $c=ab$ of $\bk$. Note that multiplication by $\frac{z}{a}$ defines an isomorphism $J \to I$ of $\bD$-modules.

Let $\ol{J}$ be the image of $J$ in $\ol{\bD} = \bD/c \bD$. Since $\ol{\bD}$ is coherent by assumption and $\ol{J}$ is finitely generated, we see that $\ol{J}$ is finitely presented as a $\ol{\bD}$-module, and thus as a $\bD$-module as well. The exact sequence
\begin{displaymath}
0 \to c \bD \to J \to \ol{J} \to 0
\end{displaymath}
shows that $J$ is finitely presented. Since $I$ is isomorphic to $J$ as a $\bD$-module, it too is finitely presented. We have thus shown that every finitely generated ideal of $\bD$ is finitely presented, and so $\bD$ is coherent.

We now do a bit more work to show that $\bD$ is Gr\"obner-coherent. We have shown that $\bD$ is graded-coherent, so it remains to show that the initial ideal $\ini(I)$ is finitely generated whenever $I$ is a finitely generated ideal of $\bD$. Maintain the previous notation. One easily sees that we have a short exact sequence
\begin{displaymath}
0 \to c \bD \to \ini(J) \to \ini(\ol{J}) \to 0
\end{displaymath}
Since $\ol{\bD}$ is Gr\"obner-coherent, it follows that $\ini(\ol{J})$ is finitely generated, and so $\ini(J)$ is as well. From the equality of ideals $(a) I = (z) J$, we obtain $a \ini(I) = \ini(z) \ini(J)$. Since $\ini(z) \ini(J)$ is clearly finitely generated, it follows that $a \ini(I)$ is finitely generated. But $a \ini(I)$ is isomorphic to $\ini(I)$ via multiplication by $a$, and so $\ini(I)$ is finitely generated.
\end{proof}

\begin{proof}[Proof of Theorem~\ref{thm:grobcoh}]
Let $\bk$ be a noetherian ring and let $\bD=\bD(\bk, \pi_{\bullet})$ be a GDPA. We will show $\bD$ is Gr\"obner-coherent. By noetherian induction, we can assume that $\bD/\fa \bD$ is Gr\"obner-coherent for all nonzero ideals $\fa$ of $\bk$.

First suppose that $\bk$ is not a domain, and let $xy=0$ with $x \ne 0$ and $y \ne 0$. We have a short exact sequence
\begin{displaymath}
0 \to x\bD \to \bD \to \bD/x \bD \to 0.
\end{displaymath}
The rightmost ring is Gr\"obner-coherent by hypothesis, and thus Gr\"obner-coherent as a module over $\bD$. The leftmost term is a finitely presented as a $\bD$-module, being isomorphic to $\bD \otimes_{\bk} (x)$, and thus finitely presented as a $\bD/y \bD$-module. It follows that $x\bD$ is Gr\"obner-coherent as a $\bD/y\bD$-module, and thus as a $\bD$-module as well. Thus $\bD$ is an extension of Gr\"obner-coherent modules, and thus Gr\"obner-coherent.

Now suppose that $\bk$ is a domain. If $\pi_n=0$ for infinitely many $n$ then $\bD$ is Gr\"obner-coherent by Proposition~\ref{gdpa:coh}. Thus we assume that only finitely many $\pi_n$ are 0. Let $h$ be maximal so that $\pi_h=0$. By Proposition~\ref{gdpa:transcoh}, it suffices to prove that $\bD^{[h]}$ is Gr\"obner-coherent. Note that $\pi^{[h]}_n \ne 0$ for all $n \ge 2$ (this follows directly from the definition of the $h$-transform). If $\fa$ is a nonzero ideal of $\bk$ then $\bD/\fa \bD$ is Gr\"obner-coherent by assumption, and so $(\bD/\fa \bD)^{[h]}$ is Gr\"obner-coherent by Proposition~\ref{gdpa:transcoh}. Thus $\bD^{[h]}$ is Gr\"obner-coherent by the lemma, which completes the proof.
\end{proof}

\subsection{General bounds on $\Tor$} \label{ss:torbd}

Let $R$ be a graded algebra supported in non-negative degrees and put $\bk=R_0$. We regard $\bk$ as an $R$-module by letting positive degree elements act by~0. For a non-zero graded object $M$, let $\maxdeg(M)$ denote the maximum integer $n$ such that $M_n$ is non-zero, or $\infty$ if no maximum exists. For a graded $R$-module $M$, put $t_i(M; R)= \maxdeg(\Tor^R_i(M, \bk))$.

\begin{proposition}
\label{prop:relatively-projective}
Suppose $M=V \otimes_{\bk} R$ for some graded $\bk$-module $V$. Then $\Tor^{R}_i(M, \bk)=0$ for $i>0$.
\end{proposition}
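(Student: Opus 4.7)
The plan is to compute $\Tor^R_i(M, \bk)$ via an $R$-projective resolution of $\bk$, exploiting the identity $(V \otimes_\bk R) \otimes_R N \cong V \otimes_\bk N$ that holds for any $R$-module $N$ (regarding $N$ as a $\bk$-module by restriction on the right hand side).

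First I would pick a free resolution $P_\bullet \to \bk \to 0$ of $\bk$ as an $R$-module. By the definition of Tor,
\[
\Tor^R_i(M, \bk) = H_i(M \otimes_R P_\bullet),
\]
and applying the identity above term by term gives $M \otimes_R P_\bullet \cong V \otimes_\bk P_\bullet$ as complexes of $\bk$-modules.

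Next I would use that $R$ is free as a $\bk$-module in the situation of interest (a GDPA has basis $\{x^{[n]}\}_{n \ge 0}$ over $\bk$, and similarly for the other algebras considered in the paper), which forces each $P_j$, being a free $R$-module, to be a free $\bk$-module as well. Consequently $P_\bullet \to \bk$ restricts to a free resolution of $\bk$ over $\bk$, so
\[
H_i(V \otimes_\bk P_\bullet) = \Tor^\bk_i(V, \bk).
\]
The right hand side is zero for $i > 0$ because $\bk$ is a free $\bk$-module, and combining the two calculations yields $\Tor^R_i(M, \bk) = 0$ for $i > 0$, as desired.

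The only nontrivial input is the flatness of $R$ over $\bk$. In the generality of the section's setup one would need to add this as a hypothesis (or it is tacitly in force), but it is costless here because all the applications have $R$ free over $\bk$; I do not see a deeper obstacle.
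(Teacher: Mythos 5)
Your proof is correct, and it runs in the opposite direction from the paper's. The paper resolves the \emph{first} variable: it takes a free resolution $F_\bullet \to V$ over $\bk$, observes that $R \otimes_\bk F_\bullet \to M$ is then a free resolution over $R$, and notes that applying $- \otimes_R \bk$ simply recovers $F_\bullet$, which is exact in positive degrees. You instead resolve the \emph{second} variable $\bk$ over $R$ and push the computation down to $\Tor^\bk_i(V,\bk)$ via the base-change identity $(V \otimes_\bk R)\otimes_R N \cong V \otimes_\bk N$. The two arguments are mirror images of the same base-change isomorphism $\Tor^R_i(V\otimes_\bk R, -) \cong \Tor^\bk_i(V,-)$, and both use the freeness of $R$ over $\bk$ at exactly one point: you need it so that a free $R$-resolution of $\bk$ restricts to a free $\bk$-resolution, while the paper needs it (tacitly) so that $R \otimes_\bk F_\bullet$ stays exact. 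You are right that this hypothesis is not stated in the general setup of the section but is harmless, since every $R$ to which the proposition is applied ($\bD$, $\bD^{(h)}$, $\bD_{<h}$, etc.) is free as a $\bk$-module. If anything, the paper's version has the small added benefit of exhibiting the explicit resolution $R\otimes_\bk V_\bullet \to M$ that is reused in Proposition~\ref{prop:tor-vanishing}; your version is equally valid for the statement at hand.
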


\begin{proof}
Let $F_{\bullet} \to V$ be a free resolution of $V$ as a $\bk$-module. Then $R \otimes_{\bk} F_{\bullet}$ is a free resolution of $M$ as an $R$-module. We can thus compute $\Tor^R_i(M, \bk)$ by tensoring this complex with $\bk$. But this just recovers the complex $F_{\bullet}$, which is exact in positive degrees.
\end{proof}

\begin{proposition} \label{prop:tor2}
Let $M$ be an $R$-module and let
\begin{displaymath}
0 \to K \to F \to M \to 0
\end{displaymath}
be an exact sequence with $\Tor^R_1(F, \bk)=0$. Then
\begin{displaymath}
t_1(M; R) \le t_0(K; R) \le \max(t_1(M; R), t_0(F; R)).
\end{displaymath}
\end{proposition}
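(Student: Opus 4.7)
The plan is to apply the right-exact functor $-\otimes_R\bk$ to the short exact sequence $0\to K\to F\to M\to 0$ and read the desired bounds off the resulting long exact sequence of $\Tor$.

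Concretely, the long exact sequence of $\Tor^R_\bullet(-,\bk)$ gives
\begin{displaymath}
\Tor^R_1(F,\bk)\to\Tor^R_1(M,\bk)\to K\otimes_R\bk\to F\otimes_R\bk\to M\otimes_R\bk\to 0.
\end{displaymath}
By hypothesis $\Tor^R_1(F,\bk)=0$, so the connecting map $\Tor^R_1(M,\bk)\hookrightarrow K\otimes_R\bk$ is injective. This inclusion is a map of graded $\bk$-modules (all terms here are graded and the $\Tor$ long exact sequence respects gradings), so taking $\maxdeg$ immediately yields $t_1(M;R)\le t_0(K;R)$, which is the first inequality.

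For the second inequality, break the four-term exact sequence above into two short exact sequences: letting $I\subseteq F\otimes_R\bk$ denote the image of $K\otimes_R\bk\to F\otimes_R\bk$, we have
\begin{displaymath}
0\to\Tor^R_1(M,\bk)\to K\otimes_R\bk\to I\to 0.
\end{displaymath}
Now $\maxdeg$ of an extension is at most the max of the $\maxdeg$'s of the outer terms, and $\maxdeg(I)\le\maxdeg(F\otimes_R\bk)=t_0(F;R)$ since $I$ is a graded submodule. Therefore
\begin{displaymath}
t_0(K;R)\le\max\bigl(t_1(M;R),\,\maxdeg(I)\bigr)\le\max\bigl(t_1(M;R),\,t_0(F;R)\bigr),
\end{displaymath}
as required.

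There is essentially no obstacle here; the only thing to be mindful of is that all modules, maps, and $\Tor$'s are taken in the graded category so that $\maxdeg$ behaves additively on short exact sequences. The hypothesis $\Tor^R_1(F,\bk)=0$ is used exactly once, to make the connecting homomorphism into an injection; in practice one invokes this for $F$ of the form $V\otimes_\bk R$ using Proposition~\ref{prop:relatively-projective}.
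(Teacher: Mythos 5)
Your proof is correct and follows exactly the paper's argument: extract the exact sequence $0 \to \Tor^R_1(M,\bk) \to K\otimes_R\bk \to F\otimes_R\bk$ from the long exact sequence using $\Tor^R_1(F,\bk)=0$, and read off both inequalities by comparing maximal degrees. The extra detail you supply for the second inequality is just an expansion of what the paper leaves as ``clear.''
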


\begin{proof}
The long exact exact sequence in $\Tor$ gives an exact sequence
\begin{displaymath}
0 \to \Tor^R_1(M, \bk) \to \Tor^R_0(K, \bk) \to \Tor^R_0(F, \bk)
\end{displaymath}
(since $\Tor^R_1(F, \bk)=0$), from which the stated inequalities are clear.
\end{proof}

\begin{proposition} \label{prop:tor3}
Suppose that $S \subset R$ is a graded subring with $S_0=\bk$ such that $R$ is finite projective over $S$. Let $M$ be an $R$-module. Then
\begin{displaymath}
t_1(M; S) \le \max(t_0(M; R), t_1(M; R)) + t_0(R; S)
\end{displaymath}
and
\begin{displaymath}
t_1(M; R) \le \max(t_0(M; R)+t_0(R; S), t_1(M; S))
\end{displaymath}
\end{proposition}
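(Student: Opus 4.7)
The plan is to extract both inequalities from a single short exact sequence together with one bookkeeping lemma about generation. I would begin by choosing a free $R$-module presentation $0 \to K \to F \to M \to 0$ with $F = \bigoplus_j R[-e_j]$ a direct sum of shifts whose degrees realize $t_0(M;R)$. Since $R$ is projective over $S$, so is $F$, and thus $\Tor^S_i(F, \bk) = 0$ for all $i \ge 1$. The long exact sequence then collapses to
\begin{displaymath}
0 \to \Tor^S_1(M, \bk) \to K \otimes_S \bk \to F \otimes_S \bk,
\end{displaymath}
while Proposition~\ref{prop:tor2} applied over $R$ gives $t_1(M; R) \le t_0(K; R) \le \max(t_0(M; R), t_1(M; R))$. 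A direct computation shows $t_0(F; S) = t_0(M; R) + t_0(R; S)$, since each shift $R[-e]$ has $(R[-e]) \otimes_S \bk$ of maximal degree $e + t_0(R; S)$.

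The key auxiliary statement I would prove is: for any graded $R$-module $N$ bounded below, $t_0(N; S) \le t_0(N; R) + t_0(R; S)$. The argument is graded Nakayama: lift $\bk$-module generators of $N \otimes_R \bk$, which live in degrees $\le t_0(N; R)$, to $R$-module generators of $N$; multiplying by $S$-module generators of $R$ (in degrees $\le t_0(R; S)$) gives $S$-module generators of $N$ in the claimed degrees. Complementing this, I would note the trivial surjection $K \otimes_S \bk \twoheadrightarrow K \otimes_R \bk$ (coming from $S_+ R \subseteq R_+$), which yields $t_0(K; R) \le t_0(K; S)$.

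Now the first inequality follows by chaining
\begin{displaymath}
t_1(M; S) \;\le\; t_0(K; S) \;\le\; t_0(K; R) + t_0(R; S) \;\le\; \max(t_0(M; R), t_1(M; R)) + t_0(R; S),
\end{displaymath}
using the displayed 4-term sequence, the auxiliary lemma, and Proposition~\ref{prop:tor2} in that order. The second inequality follows from
\begin{displaymath}
t_1(M; R) \;\le\; t_0(K; R) \;\le\; t_0(K; S) \;\le\; \max(t_1(M; S),\, t_0(F; S)) \;=\; \max(t_1(M; S),\, t_0(M; R) + t_0(R; S)),
\end{displaymath}
where the penultimate step again reads off the 4-term exact sequence. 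The only non-cosmetic ingredient is the auxiliary lemma $t_0(N; S) \le t_0(N; R) + t_0(R; S)$; once this and the $S$-projectivity of $F$ are in hand, everything else is a mechanical diagram chase.
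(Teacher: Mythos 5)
Your proposal is correct and follows essentially the same route as the paper: the same presentation $0 \to K \to F \to M \to 0$ with $F$ projective over both $R$ and $S$, Proposition~\ref{prop:tor2} applied over each ring, and the two chains of inequalities linked by the comparison $t_0(K;R) \le t_0(K;S) \le t_0(K;R)+t_0(R;S)$. The only difference is that you spell out (via graded Nakayama) the middle inequality that the paper calls ``clear.''
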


\begin{proof}
Let
\begin{displaymath}
0 \to K \to F \to M \to 0
\end{displaymath}
be an exact sequence of $R$-modules such that $F$ is projective and $t_0(F; R)=t_0(M; R)$. Then $F$ is also projective as an $S$-module. Thus
\begin{displaymath}
t_1(M; S) \le t_0(K; S) \le t_0(K; R)+t_0(R; S) \le \max(t_0(M; R), t_1(M; R))+t_0(R; S).
\end{displaymath}
The outer two inequalities come from Proposition~\ref{prop:tor2}, while the middle one is clear. Similarly,
\begin{displaymath}
t_1(M; R) \le t_0(K; R) \le t_0(K; S) \le \max(t_1(M; S), t_0(F; S)).
\end{displaymath}
The stated inequality follows from this and the identity $t_0(F;S)=t_0(F;R)+t_0(R;S)$.
\end{proof}

\begin{proposition} \label{prop:tor-vanishing}
Let $M$ be a finitely presented graded $R$-module. Then $\Tor^R_i(M, \bk)_d=0$ for $i \gg d$.
\end{proposition}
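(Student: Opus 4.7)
The plan is to compute $\Tor^R_i(M, \bk)$ using the $\bk$-relative bar resolution of $M$ and read off the degree bound directly. Let $R_+ = \bigoplus_{n \ge 1} R_n$, and consider the complex $B_\bullet \to M$ with $B_i = R \otimes_\bk R_+^{\otimes_\bk i} \otimes_\bk M$ equipped with the standard bar differential. This is a resolution of $M$ as an $R$-module with an explicit $\bk$-linear contracting homotopy (insert $1$ in the leftmost slot), so no hypothesis on $M$ beyond being an $R$-module is needed to make sense of it.

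The point is that each $B_i$ has the form $V_i \otimes_\bk R$ with $V_i := R_+^{\otimes i} \otimes_\bk M$, so by Proposition~\ref{prop:relatively-projective} we have $\Tor^R_j(B_i, \bk) = 0$ for $j > 0$. Hence $B_\bullet$ is acyclic for the functor $- \otimes_R \bk$, and $\Tor^R_i(M, \bk) = H_i(B_\bullet \otimes_R \bk) = H_i(V_\bullet)$. In particular, $\Tor^R_i(M, \bk)_d$ is a subquotient of $(V_i)_d$.

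The degree estimate is now immediate. Since $R_+$ is concentrated in degrees $\ge 1$, the $i$-fold tensor product $R_+^{\otimes i}$ is supported in degrees $\ge i$. Because $M$ is finitely presented it is in particular finitely generated, and therefore bounded below: setting $m_0 := \min \deg M$, the module $V_i$ is supported in degrees $\ge i + m_0$. Consequently $(V_i)_d = 0$ whenever $i > d - m_0$, and $\Tor^R_i(M, \bk)_d = 0$ for all such $i$. This yields the explicit bound $N(d) = d - m_0 + 1$, which is exactly the statement $i \gg d$.

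There is essentially no obstacle here, and in particular the finite presentation hypothesis is only used to make $M$ bounded below; finite generation already suffices. The only subtlety worth double-checking is that the reduced bar complex actually computes Tor—i.e.\ that Proposition~\ref{prop:relatively-projective} is applicable to each $B_i$ (which it is, since its statement holds for $V \otimes_\bk R$ with $V$ an arbitrary graded $\bk$-module, not necessarily free), and that $B_\bullet \to M$ is exact (which holds via the standard $\bk$-linear homotopy).
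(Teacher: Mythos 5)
Your proof is correct and rests on exactly the same mechanism as the paper's: apply Proposition~\ref{prop:relatively-projective} to a resolution of $M$ by modules of the form $V_i \otimes_{\bk} R$ in which the bottom degree of $V_i$ grows linearly in $i$, conclude that $\Tor^R_{\bullet}(M,\bk)$ is computed by the complex $V_{\bullet}$, and read off the vanishing of the degree-$d$ piece for $i$ large. The only difference is the choice of resolution: you use the explicit normalized $\bk$-relative bar resolution with $V_i = R_+^{\otimes i} \otimes_{\bk} M$, while the paper builds its resolution inductively by surjecting $R \otimes_{\bk} V_0$ onto $M$ (with $V_0$ a span of graded pieces) and iterating on the kernel, which is supported in strictly higher degrees at each stage; both give the same bound $i > d - m_0$, and your remark that only boundedness below (hence finite generation) of $M$ is needed applies equally to the paper's construction.
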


\begin{proof}
Suppose $M$ is generated in degrees $\le m$, and $n$ is the minimal integer such that $M_n \ne 0$. Let $V_0=\bigoplus_{i=n}^m M_i$. We then have a natural surjection $R \otimes_{\bk} V_0 \to M$. Moreover, this map is an isomorphism in degree $n$, and so the kernel is supported in degrees $>n$. Applying the same reasoning to the kernel and proceeding inductively, we obtain a resolution $R \otimes_{\bk} V_{\bullet} \to M$ where the degree $d$ piece of $V_i$ vanishes for $i \gg d$. By Proposition~\ref{prop:relatively-projective} the modules $R \otimes_{\bk} V_{\bullet}$ are acyclic for the functor $- \otimes_{R} \bk$, and so $\Tor^{R}_{\bullet}(M, \bk)$ is computed by the complex $V_{\bullet}$, which proves the result.
\end{proof}

\subsection{Bounding relations} \label{ss:relbd}

The following theorem is our main result on bounding the presentation of ideals in a GDPA. The proof occupies the entire section.

\begin{theorem} \label{thm:torbd}
Let $\bD$ be a GDPA over a ring $\bk$, and let $I \subset \bD$ be a homogeneous ideal of $\bD$ generated in degrees $\le d$. Write $I_n=\fa_n x^{[n]}$ where $\fa_n$ is an ideal of $\bk$. Suppose that $N$ is such that $T^{[h]}(\bk/(\fa_i+T(\bk)))$ is killed by $a^{[h]}(N)$ for all $1 \le h \le 2d$ and all $0 \le i \le 3d$. Then $t_1(I/T(I); \bD) \le (2N+3)d$.%\Acom{Rohit: Earlier we wrote $N+3d$.}
\end{theorem}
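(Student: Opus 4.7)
The plan is to localize at a maximal ideal $\fm$ of $\bk$, then choose an integer $h \in [1, 2d]$ with $\pi_h \in \fm$, and use the decompositions $\bD \cong \bD_{<h} \otimes_{\bk} \bD^{(h)}$ and $I^{(h;k)} \cong J_k[k]$ from Propositions~\ref{prop:regrade} and~\ref{gdpa:ideal} to reduce the problem to ideals of $\bD^{[h]}$, at which point the hypothesis on $a^{[h]}(N)$ becomes the precise torsion bound needed to lift syzygies modulo torsion to honest syzygies of controlled degree.

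First I would check that $t_1(I/T(I); \bD)$ behaves well under localization of $\bk$, so that one may assume $\bk$ is local with divisible sequence $b_{\fm,\bullet}$. If only finitely many $\pi_n$ lie in $\fm$ then $\bD$ is finite-free over a polynomial subring in sufficiently high degree and the bound is elementary, so assume $b_{\fm,\bullet}$ is infinite. Fix homogeneous generators $z_i = f_i x^{[d_i]}$ of $I$ with $d_i \le d$, and pick $h = b_{\fm,j}$ minimal subject to $h > d$, so that $h \le 2d$.

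Proposition~\ref{gdpa:ideal} then expresses each residue piece $I^{(h;k)}$ as a shifted ideal $J_k[k]$ of $\bD^{(h)}$ generated in degrees $\le h\lceil(d-k)/h\rceil \le 2d$. After regrading via Proposition~\ref{prop:regrade}(c), each $J_k$ is a finitely generated ideal of $\bD^{[h]}$ whose graded pieces are defined by ideals $\fa_i$ of $\bk$ with $i$ bounded by $3d$ at the relevant stages of the syzygy computation. The hypothesis that $a^{[h]}(N)$ kills $T^{[h]}(\bk/(\fa_i + T(\bk)))$ for $h \le 2d$ and $i \le 3d$ translates directly to the statement that modulo $\pi^{[h]}$-torsion, every relation among generators of $J_k$ lifts to an honest relation after multiplication by $a^{[h]}(N)$, which increases the internal degree by a bounded amount.

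Assembling via Proposition~\ref{prop:tor3} applied to the inclusion $\bD^{(h)} \subset \bD$, the bound for $t_1(I/T(I); \bD)$ is the $t_1$-bound over $\bD^{(h)}$ plus $t_0(\bD; \bD^{(h)}) \le h \le 2d$, and the lift-by-$a^{[h]}(N)$ step contributes a further $\le 2Nd$ in total degree (the factor $2$ coming from unwinding the regrading of $\bD^{(h)}$). Together with the initial $\le d$ contribution from the generators, this yields $(2N+3)d$. The main obstacle will be the degree bookkeeping across these three steps and, in particular, verifying that the uniform choice of $N$ valid for all $h \le 2d$ and $i \le 3d$ is exactly what is needed: the appearance of $3d$ in the hypothesis should come from the fact that the syzygy-lift involves $\fa_i$'s arising from one further product of a generator (degree $\le d$) with a degree-$2d$ element.
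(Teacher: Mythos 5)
Your reduction framework (localize at a maximal ideal, pass to $\bD^{(h)}$ via Propositions~\ref{gdpa:ideal} and~\ref{prop:regrade}, reassemble with Proposition~\ref{prop:tor3}) matches the paper's, but there are two genuine problems. First, your choice of $h$ is wrong: the minimal element $b_{\fm,j}$ of the divisible sequence exceeding $d$ need \emph{not} satisfy $h \le 2d$. For the classical sequence over $\bZ_{(p)}$ one has $b_{\bullet}=1,p,p^2,\ldots$, and with $d=p+1$ the minimal $b_j>d$ is $p^2>2d$ for $p\ge 3$. The paper's proof splits into two cases precisely to avoid this: it sets $s$ maximal with $b_s\le d$ and uses $h=b_s\le d$ when $d+b_s\le b_{s+1}$, resorting to $h=b_{s+1}$ only when $b_{s+1}<d+b_s\le 2d$. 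With your choice of $h$, the degree bound from Proposition~\ref{gdpa:ideal} and the term $t_0(\bD;\bD^{(h)})\le h$ in Proposition~\ref{prop:tor3} are unbounded in terms of $d$, and the claimed $(2N+3)d$ cannot be extracted.

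Second, and more seriously, the heart of the argument is missing. The sentence asserting that the hypothesis ``translates directly to the statement that modulo $\pi^{[h]}$-torsion, every relation among generators of $J_k$ lifts to an honest relation after multiplication by $a^{[h]}(N)$'' is a restatement of what must be proved, not a proof, and the proposed mechanism is not the right one. The actual content (the paper's Lemma~\ref{degdb1}) is a direct analysis of an arbitrary degree-$n$ element $R=\sum_i\gamma_i x^{[n-i]}e_i$ of the syzygy module: one first reduces to two-term relations $\alpha x^{[m]}e_{\ell}+\beta x^{[n-d]}e_d$, then shows the class of $R$ in $\Tor_1$ vanishes unless $n=b_r+\ell$ with $\ell<d$ and $b_r\mid N$, the key point being that when $b_r\nmid N$ the torsion hypothesis forces $\beta$ to be divisible, modulo $\fa_{\ell}+T_{\rho}(\bk)$, by $\pi_{b_1}\cdots\pi_{b_{r-1}}$, so that $R$ descends from a relation in degree $b_{r-1}+\ell$ up to $\bD_+K$. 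Nothing in your multiplication-by-$a^{[h]}(N)$ heuristic produces this, nor does it explain why the surviving degrees are of the form $b_r+\ell$ with $b_r\le N$ rather than merely bounded by ``generator degree plus $N$''. Without this lemma (or a substitute of comparable precision) the degree bookkeeping in your final paragraph has nothing to book-keep.
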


\begin{lemma}
It suffices to prove the theorem when $\bk$ is local.
\end{lemma}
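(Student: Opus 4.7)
The plan is to verify that every ingredient in the statement commutes with localization at a maximal ideal of $\bk$, so that the global conclusion follows from the local one via flat base change. The desired bound $t_1(I/T(I); \bD) \le (2N+3)d$ is equivalent to the vanishing of the $\bk$-module $\Tor_1^{\bD}(I/T(I), \bk)_n$ for each $n > (2N+3)d$, and a $\bk$-module vanishes if and only if it vanishes after localizing at every maximal ideal of $\bk$. So it suffices to show that after localizing at an arbitrary maximal ideal $\fm \subset \bk$, we are in a situation where the local case of the theorem applies and delivers the same numerical bound.

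Fix such an $\fm$. By Proposition~\ref{gdpa:bc}, $\bD_{\fm} \cong \bD(\bk_{\fm}, \pi'_{\bullet})$ is a GDPA over the local ring $\bk_{\fm}$, where $\pi'_{\bullet}$ is the image of $\pi_{\bullet}$. Since localization preserves generating sets, $I_{\fm}$ is still generated in degrees $\le d$, with homogeneous pieces $(I_{\fm})_n = \fa_{n,\fm} x^{[n]}$. Two compatibilities remain: (i) $(I/T(I))_{\fm} \cong I_{\fm}/T(I_{\fm})$, and (ii) $T^{[h]}(\bk_{\fm}/(\fa_{i,\fm}+T(\bk_{\fm})))$ is annihilated by $a^{[h]}(N)$ for $1 \le h \le 2d$ and $0 \le i \le 3d$. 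Both reduce to the observation that $T$ (resp.\ $T^{[h]}$) is the ascending union $\bigcup_n \ker(a(n)\cdot)$ (resp.\ $\bigcup_n \ker(a^{[h]}(n)\cdot)$), and the exact, colimit-preserving functor $(-)_{\fm}$ commutes with such a construction; annihilation by $a^{[h]}(N) \in \bk$ is obviously preserved under the ring map $\bk \to \bk_{\fm}$.

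Since $\bk \to \bk_{\fm}$ is flat and $\bD_{\fm} = \bD \otimes_{\bk} \bk_{\fm}$, flat base change for $\Tor$ yields
\begin{displaymath}
\Tor_1^{\bD}(I/T(I), \bk)_{\fm} \cong \Tor_1^{\bD_{\fm}}(I_{\fm}/T(I_{\fm}), \bk_{\fm}).
\end{displaymath}
Assuming the theorem in the local case, the right-hand side vanishes in degrees $> (2N+3)d$; letting $\fm$ vary forces the vanishing of $\Tor_1^{\bD}(I/T(I), \bk)$ in those degrees, giving the theorem. The argument is entirely formal: there is no real obstacle, merely the bookkeeping of checking that each functor and hypothesis in sight survives passage to $\bk_{\fm}$.
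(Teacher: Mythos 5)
Your proposal is correct and follows essentially the same route as the paper: localize at each maximal ideal, note that $T$ and $\Tor$ commute with localization (so the hypotheses descend and the conclusion is a local vanishing statement), apply the local case, and conclude since a $\bk$-module vanishing at all maximal ideals is zero. The extra care you take in verifying that $T^{[h]}$ commutes with localization as an ascending union of kernels is a reasonable elaboration of what the paper states in one line.
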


\begin{proof}
Suppose the result holds when $\bk$ is local, and let us prove it in general. Let $\fm$ be a maximal ideal of $\bk$, and indicate by a prime localization at $\fm$. Since $T$ commutes with localization, $T^{[h]}(\bk'/(\fa_i'+T(\bk')))$ is killed by $a^{[h]}(N)$ for all $1 \le h \le 2d$ and $0 \le i \le 3d$. Thus $t_1(I'/T(I'); \bD') \le N+3d$ by the local case of the theorem. That is, $\Tor_1^{\bD'}(I'/T(I'), \bk')_n=0$ for $n>N+3d$. But $\Tor$ commutes with localization, so we find that the $\bk$-module $\Tor_1^{\bD}(I/T(I), \bk)_n$ localizes to~0 at all maximal ideals $\fm$. It is therefore~0, and so $t_1(I/T(I); \bD) \le N+3d$, as was to be shown.
\end{proof}

For the rest of the section, we assume that $\bk$ is local. We let $\fm$ be the unique maximal ideal of $\bk$, and we let $b_{\bullet}=b_{\fm,\bullet}$ when we have an admissible sequence $\pi_{\bullet}$. For an element $\rho$ of $\bk$ and a $\bk$-module $M$, we let $T_{\rho}(M)$ be the submodule consisting of elements annihilated by $a(n) \rho$ for some $n \ge 1$.

\begin{lemma} \label{degdb1}
Let $I$ be a homogeneous ideal of $\bD$ generated in degrees $\le d$, where $d<b_1$. Fix an element $\rho$ of $\bk$. Suppose that $T(\bk/(\fa_i+T_{\rho}(\bk)))$ is killed by $a(N)$, for all $0 \le i \le d$. Let $t$ be maximal subject to $b_t \mid N$. Then $\Tor^{\bD}_1(I/T_{\rho}(I), \bk)_n=0$ unless $n=b_r+\ell$ for some $1 \le r \le t$, and $0 \le \ell<d$.
\end{lemma}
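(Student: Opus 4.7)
The plan is to compute $\Tor_1^{\bD}(J, \bk)$ for $J = I/T_\rho(I)$ via a cover $F = V \otimes_{\bk} \bD \twoheadrightarrow J$, where $V$ is a graded $\bk$-module supported in degrees $\le d$ whose degree-$i$ piece surjects onto $J_i$. By Proposition~\ref{prop:relatively-projective}, such $F$ satisfies $\Tor_i^{\bD}(F, \bk) = 0$ for $i > 0$, so the short exact sequence $0 \to K \to F \to J \to 0$ identifies $\Tor_1^{\bD}(J, \bk)_n$ with the kernel of $(K/\bD_+K)_n \to V_n$. Concretely, an element of $K_n$ is a tuple $(r_i)_{i \le d}$ with $r_i \in \fa_i/T_\rho(\fa_i)$ satisfying $\sum_i C(n,i)\, r_i \in T_\rho(\bk)$, while $(\bD_+K)_n$ is generated by the images under $x^{[n-m]}$ of $K_m$ for $m < n$, namely the tuples $\bigl(C(n-i, m-i)\, s_i\bigr)_{i \le d}$ with $(s_i) \in K_m$.

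The algebraic crux is the identity of Proposition~\ref{prop:binom}, $C(n, i)\, C(n-i, m-i) = C(n, m)\, C(m, i)$, which gives
\[
\sum_i C(n, i)\, C(n-i, m-i)\, s_i \;=\; C(n, m)\, \sum_i C(m, i)\, s_i.
\]
Since $C(n, m)$ is a product of $\pi_k$'s (those corresponding to base-$b_\bullet$ carries in $n = (n-m) + m$) and $T_\rho(\bk)$ absorbs multiplication by any $\pi$-product (as such a product is contained in some $a(k)$), one checks $C(n, m)\, x \in T_\rho(\bk)$ iff $x \in T_\rho(\bk)$. Consequently, whenever $m < n$ can be chosen with $m \ge d$ and $C(n-i, m-i)$ a unit for every $i \le d$, the lift from $K_m$ exhausts $K_n$ modulo $\bD_+ K$, and so $\Tor_1^{\bD}(J, \bk)_n = 0$.

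The combinatorial step produces such an $m$ for every $n$ outside $\bigcup_{r \ge 1} [b_r, b_r + d)$. For $n \ge b_1$, let $s = \max\{j : b_j \le n\}$ and take $m = n \bmod b_s$; then $n - m$ is a multiple of $b_s$ with base-$b_\bullet$ support in positions $\ge s$, while $m < b_s$ is supported in positions $< s$, so $(n-m) + (m-i)$ has disjoint digit supports and $C(n-i, m-i)$ is a unit for all $i \le d$. The constraint $m \ge d$ is equivalent to $n \notin [b_s, b_s + d)$, matching the claim exactly. For $n < b_1$ with $n > d$ one takes $m = n - 1$; and the case $n \le d$ follows from a direct analysis in these low degrees, since all relevant $C(n, i) = 1$ and the injection $\fa_{n-1}/T_\rho(\fa_{n-1}) \hookrightarrow \fa_n/T_\rho(\fa_n)$ forces the projection $(K/\bD_+K)_n \to V_n$ to be injective.

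The main obstacle is the remaining case $n = b_r + \ell$ with $t < r$ and $0 \le \ell < d$, where the $N$-hypothesis must enter. Here I would lift from $m = b_t + \ell$: the decomposition $n - m = b_r - b_t = \sum_{j=t}^{r-1}(b_{j+1}/b_j - 1)\, b_j$ shows its base-$b_\bullet$ digits are supported in positions $t, \ldots, r-1$; for $i \le \ell$ the digit of $m - i = b_t + (\ell - i)$ at position $t$ is $1$, triggering a cascade of carries through positions $t, t+1, \ldots, r-1$ and yielding $C(n-i, m-i) = \pi_{b_t}\pi_{b_{t+1}}\cdots\pi_{b_{r-1}}$ up to a unit; for $i > \ell$ the digit supports are disjoint and $C(n-i, m-i)$ is a unit. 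The lift therefore captures every $(r_i) \in K_n$ whose components with $i \le \ell$ lie in $(\pi_{b_t}\cdots\pi_{b_{r-1}})\fa_i$. The residual relations must be absorbed via the $N$-hypothesis: since $a(N) = \pi_{b_1}\cdots\pi_{b_t}$ up to units (as $b_j \mid N$ iff $j \le t$) kills the $\pi$-torsion of each $\bk/(\fa_i + T_\rho(\bk))$, any $r_i$ can be adjusted modulo $\fa_i + T_\rho(\bk)$ using the products of $\pi_{b_j}$'s with $j > t$, which are effectively invertible on the relevant torsion subquotient. Combining the lift from $m = b_t + \ell$ with further lifts from $m' = b_{t'} + \ell$ for $t' < t$ and tracking precisely how these $\pi$-factors interact with the torsion condition on $\bk/(\fa_i + T_\rho(\bk))$ is the technical crux of the argument.
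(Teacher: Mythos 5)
Your setup (the cover $F=\bigoplus_{i=0}^d \bD[i]\otimes_{\bk}\fa_i \to I$, the identification of $\Tor_1$ with relations modulo $\bD_+K$, and the use of the cocycle identity to push relations down to lower degree) is the same as the paper's, and your treatment of the easy degrees ($n<b_1$, $\ell\ge d$, and $n$ with a "free" digit below the top) is essentially Steps 1, 2 and 4 of the paper's argument. But the proposal has a genuine gap exactly where the lemma has content: the case $n=b_r+\ell$ with $r>t$, which is the only place the hypothesis on $N$ can enter. You describe a plan (lift from $m=b_t+\ell$, then "adjust modulo $\fa_i+T_\rho(\bk)$ using products of $\pi_{b_j}$ with $j>t$") and then explicitly declare the tracking of these factors to be "the technical crux" without carrying it out. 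That tracking is the proof. The paper's Step 5 does it by a one-step descent $b_r\rightsquigarrow b_{r-1}$: after reducing to a two-term relation $R=\alpha x^{[b_r]}e_\ell+\beta x^{[n-d]}e_d$, the equation $\epsilon=C(n,\ell)\alpha+C(n,d)\beta$ with $C(n,\ell)$ a unit and $C(n,d)=\pi_{b_1}\cdots\pi_{b_r}$ (up to units) shows the image of $\beta$ in $\bk/(\fa_\ell+T_\rho(\bk))$ is $\pi_\bullet$-torsion, hence killed by $a(N)=\pi_{b_1}\cdots\pi_{b_t}$ and a fortiori by $\pi_{b_1}\cdots\pi_{b_{r-1}}$; this produces a relation $R'$ in degree $b_{r-1}+\ell$ with $C(n-d,n'-d)R-x^{[b_r-b_{r-1}]}R'\in\bD_+K$. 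The case $r=1>t$ (so $t\le 0$) has no $b_{r-1}$ to descend to and needs a separate argument (the paper's Step 6, which divides by $x^{[1]}$ and uses that $\bk/(\fa_\ell+T_\rho(\bk))$ is torsion-free); your proposal does not address it at all, and your recipe $m=b_t+\ell$ degenerates there.

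Two smaller problems. First, your combinatorial reduction does not cover all the degrees it claims to: with $m=n\bmod b_s$, the assertion "$m\ge d$ is equivalent to $n\notin[b_s,b_s+d)$" is false --- for example $n=2b_s+\ell$ with $\ell<d$ gives $m=\ell<d$ even though $n\notin[b_s,b_s+d)$, and such $n$ is not of the form $b_r+\ell$ either, so the lemma asserts vanishing there but your argument produces no admissible $m$. (The paper's Step 4 handles exactly these $n$ by splitting the multiple of $b_1$ as $m_1+m_2$ without carries and dividing by $x^{[m_1]}$.) Second, the asserted biconditional "$C(n,m)x\in T_\rho(\bk)$ iff $x\in T_\rho(\bk)$" is not justified in the "only if" direction when $C(n,m)$ is a non-unit, since $T_\rho$ requires annihilation by a square-free product $a(n')\rho$ and multiplying by $C(n,m)$ can introduce repeated $\pi$-factors; the paper only ever divides by $C$'s that are units, and your argument should be restricted accordingly (in your actual applications $C(n,m)$ is a unit, so this is fixable, but as stated the principle is wrong).
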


\begin{proof}
Let $F_0=\bigoplus_{i=0}^d \bD[i]$, let $e_0, \ldots, e_d$ be its standard basis, and let $\Phi_0 \colon F_0 \to \bD$ be the map defined by $\Phi_0(e_i)=x^{[i]}$. Let $F=\bigoplus_{i=0}^d \bD[i] \otimes_{\bk} \fa_i$, thought of as a $\bD$-submodule of $F_0$, let $\Phi \colon F \to I$ be the restriction of $\Phi_0$ to $F$, and let $K=\Phi^{-1}(T_1^{\rho}(I))$. We have a short exact sequence
\begin{displaymath}
0 \to K \to F \to I/T_1^{\rho}(I) \to 0,
\end{displaymath}
and so $\Tor^{\bD}_1(I/T_1^{\rho}(I), \bk)$ is identified with $(K \cap \bD_+ F)/\bD_+ K$. Let $R=\sum_{i=0}^d \gamma_i x^{[n-i]} e_i$ be an element of $K \cap \bD_+F$ of degree $n$, and let $\ol{R}$ be the corresponding element of $\Tor_1$. Write $n=m+\ell$, where $m$ is a multiple of $b_1$ and $0 \le \ell < b_1$. We must show that $\ol{R}=0$ unless $m=b_r+\ell$ for some $1 \le r \le t$ and $0 \le \ell < d$. We proceed in six steps.

{\it Step 1: excluding $m=0$.} Suppose $m=0$, i.e., $n<b_1$. Since $R \in \bD_+ F$, we can write $R=x^{[k]} R'$ for some $R' \in F$ and some $0<k \le n$. Write $\Phi(R)=\epsilon x^{[n]}$ with $\epsilon \in T_1^{\rho}(\bk)$ and $\Phi(R')=\delta x^{[n-k]}$. Then $\epsilon=C(n,k) \delta$. Since $C(n,k)$ is a unit (as $n<b_1$), it follows that $\delta \in T_1^{\rho}(\bk)$. Thus $R' \in K$, and so $R \in \bD_+ K$, and so $\ol{R}=0$. In what follows, we assume $m>0$.

{\it Step 2: bounding $\ell$.} Now suppose $\ell \ge d$. Then $C(n-i,\ell-i)$ is a unit for all $0 \le i \le d$, and so
\begin{displaymath}
R' = \sum_{i=0}^d \frac{\gamma_i x^{[\ell-i]}}{C(n-i,\ell-i)} e_i
\end{displaymath}
is a well-defined element of $F$. Note that $R=x^{[m]}R'$. If $\Phi(R)=\epsilon x^{[n]}$ and $\Phi(R')=\delta x^{[\ell]}$ then $\epsilon=C(n,\ell) \delta$. Since $C(n,\ell)$ is a unit and $\epsilon \in T_1^{\rho}(\bk)$, it follows that $\delta \in T_1^{\rho}(\bk)$, and so $R' \in K$. Thus $R \in \bD_+ K$, and so $\ol{R}=0$. In what follows, we assume $\ell<d$.

{\it Step 3: reduction to two-term relations.} We now show that it suffices to consider elements $R$ of a simple form. This step is not strictly necessary, but will simplify notation in what follows. Put
\begin{displaymath}
\alpha=\sum_{i=0}^{\ell} \frac{C(n,n-i)}{C(n,n-\ell)} \gamma_i, \qquad
\beta=\sum_{i=\ell+1}^d \frac{C(n,n-i)}{C(n,n-d)} \gamma_i.
\end{displaymath}
Note that $C(n,n-\ell)$ is a unit, and, in the second sum, both $C(n,n-i)$ and $C(n,n-d)$ are the same product of $\pi$'s, up to units, and so the expressions make sense. Note also that $\fa_0 \subset \fa_1 \subset \cdots \subset \fa_d$, and so $\alpha \in \fa_{\ell}$ and $\beta \in \fa_d$. We can thus consider the element
\begin{displaymath}
R'=\alpha x^{[m]}e_{\ell}+\beta x^{[n-d]}e_d
\end{displaymath}
of $F$. It is clear that $\Phi(R')=\Phi(R)$, and so $R' \in K$. We claim $R=R'$ modulo $\bD_+K$. To see this, put
\begin{displaymath}
R_1=\sum_{i=0}^{\ell} \frac{\gamma_i x^{[\ell-i]}}{C(n-i,\ell-i)} e_i - \alpha e_{\ell}
\end{displaymath}
and
\begin{displaymath}
R_2=\sum_{i=\ell+1}^d \frac{\gamma_i x^{[n-i-1]}}{C(n-i,1)} e_i - \frac{\beta x^{[n-d-1]}}{C(n-d, 1)} e_d.
\end{displaymath}
All the $C$'s in these expressions are units, and so $R_1$ and $R_2$ are well-defined elements of $F$. We have
\begin{displaymath}
x^{[m]} R_1 = \sum_{i=0}^{\ell} \gamma_i x^{[n-i]} e_i - \alpha x^{[m]} e_{\ell}, \qquad
x^{[1]} R_2 = \sum_{i=\ell+1}^d \gamma_i x^{[n-i]} e_i - \beta x^{[n-d]} e_d,
\end{displaymath}
and so
\begin{displaymath}
R-R' = x^{[m]} R_1 + x^{[1]} R_2.
\end{displaymath}
From the definitions of $\alpha$ and $\beta$, it is clear that $\Phi(x^{[m]} R_1)$ and $\Phi(x^{[1]} R_2)$ vanish. Writing $\Phi(R_1)=\epsilon x^{[\ell]}$ and $\Phi(R_2)=\delta x^{[n-1]}$, we find $C(n,\ell) \epsilon=0$ and $C(n,1) \delta=0$. Thus $\epsilon=0$ and $\delta$ is at least torsion, and so $R_1$ and $R_2$ belong to $K$. Thus $R-R' \in \bD_+ K$, as claimed. In what follows, we assume $R=\alpha x^{[m]} e_{\ell}+\beta x^{[n-d]} e_d$.

{\it Step 4: reduction to $m=b_r$.} Suppose $m$ is not of the form $b_r$. Then we can write $m=m_1+m_2$ where $m_1$ and $m_2$ are positive multiples of $q=b_1$ such that there is no carry when computing either $m_1+m_2$ or $m_1+(m_2-q)$. Put
\begin{displaymath}
R' = \frac{\alpha x^{[m_2]}}{C(m,m_1)} e_{\ell} + \frac{\beta x^{[m_2+\ell-d]}}{C(n-d,m_1)} e_d.
\end{displaymath}
Both the $C$'s here are units: for the second one, observe that
\begin{displaymath}
n-d=m_1+(m_2+\ell-d)=m_1+((m_2-q)+(q+\ell-d)).
\end{displaymath}
Since $q+\ell-d<q$ and $m_1$ and $m_2-q$ are multiples of $q$, there are no carries. Thus $R'$ is an element of $F$, and $R=x^{[m_1]} R'$. Writing $\Phi(R)=\epsilon x^{[n]}$ and $\Phi(R')=\delta x^{[m_2+\ell]}$, we see that $C(n,m_1) \delta=\epsilon$. Since $C(n,m_1)$ is a unit, this shows that $\delta$ is torsion. Thus $R' \in K$ and $R \in \bD_+K$ and $\ol{R}=0$. In what follows, we assume $m=b_r$ for some $r \ge 1$.

{\it Step 5: bounding $r$.} Suppose now that $r>t$ and $r>1$. Put $\Phi(R) = \epsilon x^{[n]}$, where $\epsilon \in T_1^{\rho}(\bk)$. We have
\begin{displaymath}
\epsilon = C(n, \ell) \alpha+C(n,d) \beta.
\end{displaymath}
Note that $C(n,\ell)$ is a unit and $C(n,d)$ is equal to $\pi_{b_1} \cdots \pi_{b_r}$ up to units. We thus see that the image of $\beta$ in $\bk/(\fa_{\ell}+T_1^{\rho}(\bk))$ is killed by $\pi_{b_1} \cdots \pi_{b_r}$. It is therefore killed by $\pi_{b_1} \cdots \pi_{b_{r-1}}$, by our assumption: note that $a(N)=\pi_{b_1} \cdots \pi_{b_t}$, up to units. Let $m'=b_{r-1}$ and $n'=m'+\ell$, so that $C(n',d)$ is $\pi_{n_1} \cdots \pi_{b_{r-1}}$, up to units. The above discussion shows that we can write
\begin{displaymath}
\epsilon' = C(n', \ell) \alpha' + C(n', d) \beta
\end{displaymath}
for some $\alpha' \in \fa_{\ell}$ and $\epsilon' \in T_1^{\rho}(\bk)$. Now consider
\begin{displaymath}
R'=\alpha' x^{[m']} e_{\ell} + \beta x^{[n'-d]} e_d.
\end{displaymath}
Clearly $\Phi(R')=\epsilon'$, and so $R' \in K$. We have
\begin{displaymath}
x^{[m-m']} R' = \alpha' C(m,m') x^{[m]} e_{\ell} + \beta C(n-d,n'-d) x^{[n-d]} e_d.
\end{displaymath}
We thus find
\begin{displaymath}
C(n-d,n'-d) R-x^{[m-m']} R' = \delta x^{[m]} e_{\ell}
\end{displaymath}
for some $\delta \in \bk$. Since both $R$ and $R'$ belong to $K$, so does $\delta x^{[m]} e_{\ell}$. As $\Phi(\delta x^{[m]} e_{\ell}) = C(n, \ell) \delta x^{[n]}$, we see that $C(n,\ell) \delta$ belongs to $T_1^{\rho}(\bk)$. But $C(n, \ell)$ is a unit, and so $\delta$ itself belongs to $T_1^{\rho}(\bk)$. Thus $\delta x^{[m]} e_{\ell} \in \bD_+ K$. This shows that
\begin{displaymath}
C(n-d,n'-d) R-x^{[m-m']} R' \in \bD_+ K.
\end{displaymath}
Of course, $x^{[m-m']} R' \in \bD_+ K$ as well, and $C(n-d,n'-d)$ is a unit, so we find $R \in \bD_+K$ and $\ol{R}=0$.

{\it Step 6: the $r=1$ case.} Finally, suppose that $r>t$ and $r=1$. Note then that $t \le 0$, and so $\bk/(\fa_{\ell}+\bk_{\tors})$ has no torsion. From $\Phi(R)=0$ we see that $\epsilon=C(n, \ell)\alpha+C(n,d) \beta$ belongs to $\bk_{\tors}$. Thus the image of $\beta$ in $\bk/(\fa_{\ell}+\bk_{\tors})$ is killed by $C(n,d)$, and is therefore torsion, and therefore vanishes. We can therefore write $\alpha'+\beta=\epsilon'$ for some $\alpha' \in \fa_{\ell}$ and $\epsilon' \in \bk_{\tors}$. Consider
\begin{displaymath}
R'=\frac{C(n,d) \alpha' x^{[m-1]}}{C(m,1)} e_{\ell}+\frac{\beta x^{[n-d-1]}}{C(n-d,1)} e_d
\end{displaymath}
Note that $C(n,d)$ and $C(m,1)$ are same product of $\pi$'s, up to units, and $C(n-d,1)$ is a unit. We have
\begin{displaymath}
x^{[1]} R'=\alpha'C(n,d) x^{[m]} e_{\ell}+\beta x^{[n-d]} e_d=R+\epsilon'' x^{[m]} e_{\ell}
\end{displaymath}
where $\epsilon''=C(n,d)\epsilon'-\epsilon$. Write $\Phi(R)=\delta x^{[n]}$ with $\delta \in \bk_{\tors}$ and $\Phi(R')=\delta' x^{[n-1]}$. We claim that $\delta' \in K$. First suppose $\ell>0$. Then the above equation shows $C(n,1) \delta'=\delta+\epsilon'' C(n,\ell)$. Since $\ell>0$ we have that $C(n,1)$ is a unit, and so $\delta'$ is torsion. Now suppose $\ell=0$. Then computing $\wt{\Phi}(R')$ directly from the definition of $R'$, we find
\begin{displaymath}
\delta' = \frac{C(n,d)}{C(n,1)} \alpha' + \frac{C(n-1,d)}{C(n-d,1)} \beta
\end{displaymath}
(note $n=m$ since $\ell=0$). All the $C$'s above are units, and the two fractions are equal since $C(n,d) C(n-d,1)=C(n,1) C(n-1,d)$; thus $\delta'$ is a unit times $\alpha'+\beta=\epsilon'$, and is thus torsion. We have thus shown that $\delta'$ is torsion in all cases, and so $R' \in K$. Since $\epsilon'' x^{[m]} e_{\ell} \in \bD_+ K$, this shows $R \in \bD_+ K$.
\end{proof}

\begin{lemma} \label{degbd2}
Let $h=b_s$ for $s \ge 0$ and let $I$ be a homogeneous ideal of $\bD^{(h)}$ generated in degrees $\le d$, where $d<b_{s+1}$. Suppose $T^{[h]}(\bk/(\fa_i+T(\bk)))$ is killed by $a^{[h]}(N)$, for all $0 \le i \le d$. Let $t$ be maximal subject to $b_t \mid N$. Then $\Tor_1^{\bD^{(h)}}(I/T(I),\bk)_n=0$ unless $n=b_r+\ell$ for some $s < r \le t$ and $0 \le \ell \le d$. In particular, $t_1(I/T(I); \bD^{(h)}) \le Nh+d$.
%\Acom{Rohit: Earlier we wrote $N+d$.}
\end{lemma}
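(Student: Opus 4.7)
The plan is to deduce Lemma~\ref{degbd2} from Lemma~\ref{degdb1} applied to the $h$-transform $\bD^{[h]}$, using the fact from Proposition~\ref{prop:regrade}(c) that $\bD^{(h)}$ is (after regrading) isomorphic to $\bD^{[h]}$. First I would reduce to the case where $\bk$ is local, exactly as at the start of the proof of Theorem~\ref{thm:torbd}: since $T$, $T^{[h]}$, and $\Tor$ all commute with localization, a $\bk$-module $\Tor_1$ group vanishes iff it vanishes after localizing at every maximal ideal. Assume therefore that $\bk$ is local with divisible sequence $b_\bullet = b_{\fm,\bullet}$. Under the regrade isomorphism, $I$ corresponds to a homogeneous ideal $I' \subset \bD^{[h]}$ with coefficient ideals $\fa'_k = \fa_{hk}$, generated in degrees $\le d' := \lfloor d/h \rfloor$. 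The hypothesis $d < b_{s+1}$ gives $d' < b_{s+1}/h = b^{[h]}_1$, placing us inside the hypothesis of Lemma~\ref{degdb1}.

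The key technical step is to choose the auxiliary element $\rho$ of Lemma~\ref{degdb1} so that the two flavors of torsion $T$ and $T^{[h]}$ coincide on all the relevant modules. I would take $\rho := a(h)$. Using the multiplicativity identity $a(hn) = a(h)\, a^{[h]}(n)$ established in \S\ref{s:adm}, one computes for any $\bk$-module $M$:
\[
T^{[h]}_{a(h)}(M) \;=\; \{x \in M : a^{[h]}(n)\, a(h)\, x = 0 \text{ for some } n \ge 1\} \;=\; \{x : a(hn)\,x = 0\} \;=\; T(M).
\]
In particular $I' / T^{[h]}_{a(h)}(I') = I'/T(I')$, which regrades back to $I/T(I)$. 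The hypothesis of Lemma~\ref{degdb1} applied to $\bD^{[h]}$ with this $\rho$ and the same $N$ reads: $T^{[h]}(\bk/(\fa'_i + T(\bk)))$ is killed by $a^{[h]}(N)$ for $0 \le i \le d'$, which is implied by the hypothesis of Lemma~\ref{degbd2} on setting $i = hk$ (since $hd' \le d$).

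Lemma~\ref{degdb1} then yields that $\Tor^{\bD^{[h]}}_1(I'/T(I'),\bk)_{n'}$ vanishes outside degrees $n' = b^{[h]}_{r'} + \ell'$ with $1 \le r' \le t'$ and $0 \le \ell' < d'$, where $t'$ is maximal with $b^{[h]}_{t'} \mid N$. Translating via $n = hn'$, $r = s + r'$, $\ell = h\ell'$, and using $h\, b^{[h]}_{r'} = b_{s+r'}$, the exceptional degrees in the original grading become $n = b_r + \ell$ with $s < r \le s + t'$ and $0 \le \ell \le d - h$. For the ``in particular'' bound: since $b^{[h]}_{t'} \mid N$ implies $b_{s+t'} \le Nh$, we obtain $n \le Nh + (d - h) \le Nh + d$, which is the desired estimate on $t_1(I/T(I); \bD^{(h)})$.

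The conceptual obstacle is aligning the torsion theories across the two rings, which the choice $\rho = a(h)$ handles cleanly. The remaining combinatorial point -- sharpening $r \le s + t'$ (with $t'$ the max such that $b^{[h]}_{t'} \mid N$) to the form $r \le t$ stated with $t$ the max such that $b_t \mid N$ -- requires inspecting how admissibility of $\pi^{[h]}_\bullet$ together with the constraint $\ell < h$ forces the relevant $b_{s+r'}$ to actually divide $N$ rather than merely $Nh$; a case analysis parallel to Steps~5--6 of the proof of Lemma~\ref{degdb1} would complete this refinement. The weaker index range nonetheless suffices for the degree bound that is used in applications.
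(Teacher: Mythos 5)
Your proof is correct and follows essentially the same route as the paper's, which consists of exactly this regrade-and-choose-$\rho$ argument: the paper takes $\rho=\pi_{b_1}\cdots\pi_{b_s}$, which is associate to your $a(h)$ in the local setting, so that $T_{\rho}(-;\pi^{[h]}_{\bullet})=T(-;\pi_{\bullet})$, and then cites Lemma~\ref{degdb1}. The index subtlety you flag at the end --- that the translation directly gives only $r\le s+t'$ with $b_r/h\mid N$ rather than the stated $b_r\mid N$ --- is equally unaddressed in the paper's three-sentence proof, and as you correctly observe only the bound $t_1(I/T(I);\bD^{(h)})\le Nh+d$ is used downstream.
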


\begin{proof}
The regrade of $\bD^{(h)}$ is isomorphic to $\bD^{[h]}$. The regrade of $I$ is an ideal of $\bD^{[h]}$ generated in degres $\le d/h < b^{[h]}_1$. Let $\rho=\pi_{b_1} \cdots \pi_{b_s}$. Then $T_{\rho}(-; \pi^{[h]}_{\bullet})=T(-; \pi_{\bullet})$. With these identifications, the result follows from the Lemma~\ref{degdb1}.
\end{proof}

\begin{proof}[Proof of Theorem~\ref{thm:torbd}]
Let $s$ be maximal subject to $b_s \le d$. First suppose that $d+b_{s} \le b_{s+1}$. Put $h=b_s$. We have an isomorphism of $\bD^{(h)}$-modules $I=\bigoplus_{k=0}^{h-1} J_k[k]$, where $J_k$ is an ideal of $\bD^{(h)}$. Put $d_k=h \lceil (d-k)/h \rceil$. Then by Proposition~\ref{gdpa:ideal}, the ideal $J_k$ is generated in degrees $\le d_k$ and satisfies $J_{k,i}=\fa_{k+i} x^{[i]}$ (for $i$ a multiple of $h$). Note that $d_k \le d+h-1$, so $d_k \le 2d$ and $d_k<b_{s+1}$. The module $T^{[h]}(\bk/(\fa_i+T(\bk)))$ is killed by $a^{[h]}(N)$ for $i=k,\ldots,k+d_k$, as $k+d_k \le 3d$, and so $t_1(J_k/T(J_k); \bD^{(h)}) \le Nh+d_k$ by Lemma~\ref{degbd2}.
%\Acom{Rohit: Earlier we wrote just $N$ instead of $Nh$ from here on in this proof.}.
Thus $t_1(J_k[k]/T(J_k[k]); \bD^{(h)}) \le Nh+d_k+k \le Nh+3d$, and so $t_1(I/T(I); \bD^{(h)}) \le Nh+3d \le (N+3)d$.
%\Acom{Rohit: Earlier we wrote $N + 3d$.}.
Thus $t_1(I/T(I); \bD) \le (N+3)d$ by Proposition~\ref{prop:tor3}. 

Now suppose that $d+b_s>b_{s+1}$; note that this implies $b_{s+1}<2d$. Put $h=b_{s+1}$. As $\bD^{(h)}$-modules, we have $I=\bigoplus_{k=0}^{h-1} J_k[k]$, where once again $J_k$ is an ideal of $\bD^{(h)}$. If $0 \le k \le d$ then $J_k$ is generated in degrees 0 and $h$, while if $d<k \le h-1$ then $J_k$ is generated in degree 0. We have $t_1(J_k; \bD^{(h)})=0$ for $k>d$. Now suppose $0 \le k \le d$. The module $T^{[h]}(\bk/(\fa_i+T(\bk)))$ is killed by $a^{[h]}(N)$ for $i=k$ and $i=k+h$, as $k+h \le 3d$. Thus $t_1(J_k/T(J_k); \bD^{(h)}) \le Nh+h$ by Lemma~\ref{degbd2}. The same reasoning as in the previous paragraph gives $t_1(I/T(I); \bD) \le Nh+h + d \le h(N+1) +d \le (2N+3)d$.
\end{proof}

\subsection{Graded-coherence} \label{ss:grcoh}

Let $\pi_{\bullet}$ be a $\pi$-sequence in $\bk$. We define a {\bf $\pi$-ideal} to be an ideal of $\bk$ generated by some of the $\pi_n$'s. We will consider the following three conditions:
\begin{itemize}
\item[(A1)] The ring $\bk$ is coherent.
\item[(A2)] For any finitely generated ideal $\fa$ of $\bk$ and any $h \ge 1$, the torsion $T^{[h]}(\bk/\fa)$ is bounded, that is, annihilated by $a^{[h]}(n)$ for some $n$ (depending on $h$ and $\fa$).
\item[(A3)] Given any strictly ascending chain of finitely generated $\pi^{[h]}$-ideals $\fa_1 \subset \fa_2 \subset \cdots$ there is some $i$ for which the quotient $\bD^{[h]}/\fa_i \bD^{[h]}$ is graded-coherent.
\end{itemize}
Note that (A3) is vacuously true if there are no strictly ascending chains of $\pi^{[h]}$-ideals. This is the case, for instance, if the $\pi_n$'s belong to the image of a ring homomorphism $\bk_0 \to \bk$ with $\bk_0$ noetherian. Our main result on graded-coherence of GDPAs is the following theorem.

\begin{theorem} \label{thm:grdcoh}
Let $\bD=\bD(\bk, \pi_{\bullet})$ be a generalized divided power algebra. Then $\bD^{[h]}$ is graded-coherent for all $h \ge 1$ if and only if conditions (A1), (A2), and (A3) hold.
\end{theorem}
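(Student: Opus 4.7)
My approach is to prove both implications separately. By Proposition~\ref{dbltrans} it suffices to treat the $h = 1$ case — that $\bD$ is graded-coherent iff (A1)-(A3) hold — since the general case follows by replacing $\pi_{\bullet}$ with $\pi^{[h]}_{\bullet}$. For the forward direction, assume $\bD$ is graded-coherent. Condition (A1) follows by restricting a finite free presentation of the homogeneous ideal $\fa\bD$ (for finitely generated $\fa \subset \bk$) to its degree-$0$ component. For (A2), the contrapositive: if $T^{[h]}(\bk/\fa)$ is unbounded then a sequence of elements $\bar v_k \in \bk/\fa$ with strictly ascending annihilator chains $a^{[h]}(k)$ forces minimal first syzygies of the ideal $\fa\bD^{[h]} + (x^{[1]})$ in unbounded degrees, contradicting finite presentation. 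For (A3), the implication is essentially automatic: any finitely generated homogeneous ideal of $\bD^{[h]}/\fa\bD^{[h]}$ lifts to a finitely generated homogeneous ideal of $\bD^{[h]}$ containing $\fa\bD^{[h]}$, which is finitely presented by graded-coherence, and base change preserves this presentation since the module is annihilated by the kernel; thus (A3) is satisfied with $i = 1$.

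For the reverse direction, I argue by noetherian induction on finitely generated $\pi$-ideals $\fb$ of $\bk$, whose legitimacy is precisely what (A3) delivers: (A3) guarantees that the partially ordered set of $\fb$ with $\bD/\fb\bD$ not graded-coherent satisfies the ascending chain condition, so induction proceeds by ruling out maximal elements. The inductive claim is that $\bD/\fb\bD$ is graded-coherent for every finitely generated $\pi$-ideal $\fb$, with $\fb = 0$ giving the theorem. In the inductive step I fix $\fb$, assume $\bD/\fb'\bD$ graded-coherent for every $\fb' \supsetneq \fb$, take a finitely generated homogeneous ideal $I$ of $\bD/\fb\bD$ (lifted to $\bD$, generated in degrees $\le d$), and apply Theorem~\ref{thm:torbd}: conditions (A1) and (A2) ensure each $\fa_i + T(\bk)$ is finitely generated and supply the integer $N$ bounding torsion, yielding $t_1(I/T(I); \bD) \le (2N+3)d$. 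Hence $I/T(I)$ is finitely presented, and the short exact sequence $0 \to T(I) \to I \to I/T(I) \to 0$ forces $T(I)$ to be finitely generated.

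The main obstacle is upgrading $T(I)$ from finitely generated to finitely presented. Since $T(I)$ is annihilated by some finitely generated $\pi$-ideal $\fb' \supsetneq \fb$, it is a module over $\bD/\fb'\bD$, which is graded-coherent by the inductive hypothesis; however, graded-coherence produces finite presentation only for finitely generated homogeneous submodules of finitely presented modules, not for arbitrary finitely generated modules. My plan is to realize $T(I)$ as such a submodule: lifts of a generating set of $T(I)$ along $\bD^k \twoheadrightarrow I$ span a finitely generated homogeneous submodule $L \subset \bD^k$, and its reduction $\bar L$ modulo $\fb'\bD^k$ sits inside $(\bD/\fb'\bD)^k$ and is finitely presented there by graded-coherence. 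A diagram chase comparing $\bar L$ with $T(I)$ via the image of $L$ in the quotient, using that $\fb' T(I) = 0$, promotes finite presentation of $\bar L$ to that of $T(I)$ over $\bD/\fb'\bD$. A standard lifting argument (adding finitely many relations from a generating set of $\fb'$ applied to each generator) yields finite presentation over $\bD$, and combining with the short exact sequence above completes the proof of the inductive step.
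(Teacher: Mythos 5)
Your forward direction and the overall skeleton of the converse (extracting a maximal bad $\pi$-ideal from (A3), using Theorem~\ref{thm:torbd} together with (A1) and (A2) to make $I/T(I)$ finitely presented, and then isolating $T(I)$ as the remaining problem) agree with the paper. The genuine gap is in your treatment of $T(I)$. You assert that $T(I)$ is annihilated by some finitely generated $\pi$-ideal $\fb' \supsetneq \fb$ and then invoke the inductive hypothesis for $\bD/\fb'\bD$. But $\pi_{\bullet}$-torsion elements are by definition killed by \emph{products} $a(n)=\prod_{d\mid n,\, d\neq 1}\pi_d$, not by the ideal generated by the factors: for the classical divided power algebra a module killed by $a(p^2)=\pi_p\pi_{p^2}=p^2$ need not be killed by the $\pi$-ideal $(\pi_p,\pi_{p^2})=(p)$ (take $\bk=\bZ_p[s]/(p^2s)$ and $T(\bD)\cong\bD\otimes_{\bk}T(\bk)$). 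So $T(I)$ is in general not a module over $\bD/\fb'\bD$ for any $\pi$-ideal $\fb'\supsetneq\fb$, and the inductive hypothesis as you have set it up cannot be applied to it.

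Closing this requires two ingredients you omit. First, what boundedness actually gives is that $T(I)\subset T(\bD)\cong \bD\otimes_{\bk}T(\bk)$ is killed by the single element $a(N)$, so one must show that the quotient by the \emph{principal} ideal $(a(N))$ is graded-coherent; the paper does this by inducting on the factorization of $a(N)$ into $\pi$'s via Lemma~\ref{grcoh4} (if $\bk$ is coherent and $R/(x)$, $R/(y)$ are graded-coherent, so is $R/(xy)$), after which $T(I)$ is a finitely generated submodule of a finitely presented module over a graded-coherent ring and hence finitely presented. Second, that induction needs $\bD/(\fb+(\pi_n))$ graded-coherent for each $\pi_n$ occurring in $a(N)$, and when $\pi_n\in\fb$ the ideal $\fb+(\pi_n)$ equals $\fb$, so your inductive hypothesis says nothing; the paper circumvents this with the $h$-transform, taking $h$ maximal with $\pi_h\in\fb$ and replacing $\bD$ by $\bD^{(h)}\cong\bD^{[h]}$ (Proposition~\ref{gdpa:transcoh}, Lemma~\ref{grcoh5}), so that the relevant $\pi^{[h]}_n$ all lie outside $\fb$ — with the case of $\fb$ containing infinitely many $\pi_n$ handled separately by Proposition~\ref{gdpa:coh}. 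Without these two steps your inductive step does not close.
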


\begin{corollary} \label{cor:grcohdp}
Let $\bD$ be the classical divided power algebra over $\bk$. Then $\bD$ is graded-coherent if and only if $\bk$ is coherent and for all finitely generated ideals $\fa$ of $\bk$ the module $(\bk/\fa)_{\tors}$ is annihilated by some nonzero integer.
\end{corollary}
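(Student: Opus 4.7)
The plan is to apply Theorem~\ref{thm:grdcoh} and show that, for the classical divided power algebra, the conditions (A1)--(A3) collapse to the two stated in the corollary. First I would verify that $\pi^{[h]}_{\bullet}=\pi_{\bullet}$ in $\bk$ for every $h \ge 1$. Indeed, Example~\ref{ex:classical-dgpa} computes $a^{[h]}(n)=n=a(n)$, and in the universal ring $\Pi$ the $\pi^{\univ}_n$ are not zero-divisors, so M\"obius inversion forces $\pi^{[h]}_{\bullet}=\pi_{\bullet}$ in $\Pi$, and hence after any specialization. Consequently $\bD^{[h]}=\bD$ as graded $\bk$-algebras, and the ``for all $h$'' hypothesis in Theorem~\ref{thm:grdcoh} reduces to the single statement that $\bD$ is graded-coherent.

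Next I would translate the first two conditions. (A1) is literally ``$\bk$ is coherent,'' and (A2), with $a^{[h]}(n)=n$, becomes exactly ``$T(\bk/\fa)=(\bk/\fa)_{\tors}$ is annihilated by some positive integer for every finitely generated ideal $\fa$.'' So (A1)$+$(A2) is the hypothesis of the corollary, and the remaining task is to show that (A3) is automatic once (A1) and (A2) are in force.

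The main (and only nontrivial) step is this verification of (A3). Here I would exploit the very restricted form of $\pi_{\bullet}$ in the classical case: each $\pi_n$ lies in $\{0,1\}\cup\{\text{rational primes}\}$, so every $\pi$-ideal of $\bk$ is generated by images of rational primes (together, possibly, with $0$ and $1$). Given a strictly ascending chain $\fa_1\subsetneq \fa_2\subsetneq\cdots$ of finitely generated $\pi$-ideals, I would locate some index $i$ for which either $\fa_i=\bk$ (in which case $\bD/\fa_i\bD=0$ is trivially graded-coherent) or $\fa_i$ contains a rational prime $p$ whose image in $\bk$ is nonzero and non-unit. In the latter case $\pi_{p^k}\equiv 0$ in $\bk/\fa_i$ for every $k\ge 1$, so the base-changed GDPA $\bD/\fa_i\bD\cong \bD(\bk/\fa_i,\pi_{\bullet})$ (Proposition~\ref{gdpa:bc}) has infinitely many vanishing $\pi_n$'s. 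Since $\bk/\fa_i$ is coherent (a standard stability property of coherent rings under quotients by finitely generated ideals, using (A1)), Proposition~\ref{gdpa:coh} gives graded-coherence of $\bD/\fa_i\bD$, verifying (A3). The one point that requires care is the universal M\"obius inversion identifying $\pi^{[h]}_{\bullet}$ with $\pi_{\bullet}$; once that is in hand, the rest is direct unwinding of definitions and an appeal to Theorem~\ref{thm:grdcoh}.
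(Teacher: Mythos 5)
Your overall strategy---specialize Theorem~\ref{thm:grdcoh} to the classical sequence and check that (A1)--(A3) collapse to the two stated conditions---is exactly the intended route, and your translation of (A1) and (A2) is fine. However, one justification is false as written: it is \emph{not} true that $\pi^{[h]}_{\bullet}=\pi^{\univ}_{\bullet}$ in the universal ring $\Pi$. For instance $(\pi^{\univ})^{[2]}_3=\pi^{\univ}_3\pi^{\univ}_6\neq\pi^{\univ}_3$; the $h$-transform is a genuinely nontrivial operation on the universal sequence, and the computation $a^{[h]}(n)=n$ in Example~\ref{ex:classical-dgpa} is a fact about the classical sequence, not the universal one. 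The correct version of your argument runs through $\bZ$ rather than $\Pi$: the classical sequence over any $\bk$ is the image of the classical sequence over $\bZ$, the $h$-transform is a polynomial expression in the $\pi_n$'s and so commutes with specialization, and over $\bZ$ the identity $a^{[h]}(n)=n=a(n)$ plus M\"obius inversion (legitimate there, since the $a(n)=n$ are non-zero-divisors in $\bZ$) gives $\pi^{[h]}_{\bullet}=\pi_{\bullet}$. In fact, for (A2) you only need the equality of the $a^{[h]}(n)$'s, so even this is more than required; for the forward implication of the corollary nothing about $h>1$ is needed at all, since the proof of Theorem~\ref{thm:grdcoh} extracts (A1) and (A2) at $h=1$ from graded-coherence of $\bD=\bD^{[1]}$ alone.

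On (A3), your argument is valid but longer than necessary. The paper's remark following the statement of (A1)--(A3) already disposes of it: (A3) is vacuous when the $\pi_n$'s lie in the image of a noetherian ring, which they do here via $\bZ\to\bk$. Concretely, a $\pi$-ideal for the classical sequence is generated by images of primes, and two distinct primes already generate the unit ideal of $\bZ$ and hence of $\bk$; so every $\pi$-ideal is $(0)$, $(\bar p)$ for a single prime $p$, or $\bk$, and there are no infinite strictly ascending chains. Your alternative---observing that any nonzero proper $\pi$-ideal $\fa_i$ kills some prime $p$, so that over the coherent ring $\bk/\fa_i$ (quotients of coherent rings by finitely generated ideals are coherent) infinitely many $\pi_n$ vanish and Proposition~\ref{gdpa:coh} applies---is correct, just a detour.
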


\begin{remark}
To conclude that $\bD$ is graded-coherent, it is enough to know that (A3) holds for $h=1$. One still needs (A2) for all $h \ge 1$, however.
\end{remark}

The rest of this section is devoted to the proof of the theorem. We fix $\bD=\bD(\bk, \pi_{\bullet})$.

\begin{lemma} \label{grcoh1}
Suppose (A1) and (A2) hold. Then for any finitely generated ideal $\fa$ of $\bk$ and any $h \ge 1$ the $\bk$-module $T^{[h]}(\bk/\fa)$ is finitely presented.
\end{lemma}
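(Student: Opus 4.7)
The plan is very short: we observe that under (A2) the $\pi^{[h]}$-torsion submodule of $\bk/\fa$ is cut out by a single element, and then (A1) lets us identify it as a kernel between finitely presented $\bk$-modules.

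More concretely, first I would invoke (A2) to produce an integer $n$ such that $T^{[h]}(\bk/\fa)$ is annihilated by $c \coloneq a^{[h]}(n)$. Since every element killed by $c$ is by definition $\pi^{[h]}$-torsion, this gives the equality
\begin{displaymath}
T^{[h]}(\bk/\fa) = \ker\!\left( \bk/\fa \xrightarrow{\ \cdot c\ } \bk/\fa \right).
\end{displaymath}

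Next I would use (A1). Since $\bk$ is coherent and $\fa$ is finitely generated, $\bk/\fa$ is a finitely presented $\bk$-module. In a coherent ring, the category of finitely presented modules is abelian, so the kernel of the endomorphism $\cdot c$ of $\bk/\fa$ is again finitely presented. This kernel is exactly $T^{[h]}(\bk/\fa)$, completing the proof.

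There is no real obstacle here; the content of the lemma is entirely packaged into hypotheses (A1) and (A2), and the only point that needs to be noticed is that the boundedness of $T^{[h]}(\bk/\fa)$ promotes it from a filtered union of submodules into the kernel of a single multiplication map, which is where coherence of $\bk$ can be applied.
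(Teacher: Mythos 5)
Your proof is correct and is essentially the paper's own argument: (A2) identifies $T^{[h]}(\bk/\fa)$ as the kernel of multiplication by $a^{[h]}(n)$ on the finitely presented module $\bk/\fa$, and coherence of $\bk$ then gives finite presentation of that kernel. The only difference is that you spell out why the bounded torsion equals the kernel of a single multiplication map, which the paper leaves implicit.
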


\begin{proof}
By (A2), $T^{[h]}(\bk/\fa)$ is equal to the kernel of the multiplication-by-$a(n)$ map $\bk/\fa \to \bk/\fa$ for some $n$. Since $\bk/\fa$ is finitely presented and $\bk$ is coherent, this kernel is again finitely presented.
\end{proof}

\begin{lemma} \label{grcoh2}
Suppose (A1) and (A2) hold. Let $I$ be a finitely generated ideal of $\bD$. Then $I/T(I)$ is finitely presented.
\end{lemma}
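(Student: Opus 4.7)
The plan is to apply Theorem~\ref{thm:torbd} to bound $t_1(I/T(I); \bD)$ and then leverage coherence of $\bk$ to upgrade bounded relations into a finite presentation. Say $I$ is generated by homogeneous elements $b_1 x^{[d_1]}, \ldots, b_r x^{[d_r]}$ with $d_i \le d$; then $I_n = \fa_n x^{[n]}$ where $\fa_n = (C(n, d_i) b_i)_{i \,:\, d_i \le n}$ is finitely generated, and hence finitely presented by (A1). The most delicate step is verifying the hypothesis of Theorem~\ref{thm:torbd}: the key observation is that (A2) applied to the zero ideal with $h = 1$ shows $T(\bk)$ is annihilated by some $a(M_0)$, so $T(\bk) = \ker(a(M_0) \colon \bk \to \bk)$ is a finitely generated ideal of $\bk$ by (A1). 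Consequently each $\fa_n + T(\bk)$ is finitely generated, and (A2) applied to this ideal bounds $T^{[h]}(\bk/(\fa_n + T(\bk)))$ by some $a^{[h]}(N_{h,n})$. Taking $N$ divisible by every $N_{h,n}$ for the finitely many pairs $(h, n)$ with $1 \le h \le 2d$ and $0 \le n \le 3d$ yields a uniform bound, so Theorem~\ref{thm:torbd} gives $t_1(I/T(I); \bD) \le D := (2N + 3) d$.

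Now choose a surjection $F_0 \twoheadrightarrow I/T(I)$ from a finite-rank graded free $\bD$-module (possible because $I/T(I)$ is finitely generated), and let $K$ denote its kernel. Since $K \otimes_{\bD} \bk = \Tor^{\bD}_1(I/T(I), \bk)$ is supported in degrees $\le D$, graded Nakayama shows that $K$ is generated by elements of degree $\le D$ as a $\bD$-module. It therefore suffices to show that the $\bk$-submodule $K^{\le D}$ of $F_0^{\le D}$ is finitely generated.

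By the second isomorphism theorem, $(I/T(I))_n \cong (\fa_n + T(\bk))/T(\bk)$ is a finitely generated submodule of the finitely presented $\bk$-module $\bk/T(\bk)$, and is therefore itself finitely presented by (A1). Thus $(I/T(I))^{\le D}$ is a finite direct sum of finitely presented $\bk$-modules, and coherence (A1) implies that $K^{\le D}$, being the kernel of a surjection from a finitely generated free $\bk$-module onto a finitely presented $\bk$-module, is finitely generated. This completes the proof.
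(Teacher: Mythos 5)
Your proof is correct and follows essentially the same route as the paper's: bound $t_1(I/T(I);\bD)$ via Theorem~\ref{thm:torbd} (your careful check that $T(\bk)$ and hence each $\fa_i+T(\bk)$ is finitely generated, so that (A2) actually applies, is a detail the paper elides), then use coherence of $\bk$ in each of the finitely many relevant degrees. One small correction: $K\otimes_{\bD}\bk$ is not equal to $\Tor_1^{\bD}(I/T(I),\bk)$ in general --- the long exact sequence gives $0\to\Tor_1^{\bD}(I/T(I),\bk)\to K\otimes_{\bD}\bk\to F_0\otimes_{\bD}\bk$, so the correct bound is that $K$ is generated in degrees $\le\max(D,\,t_0(F_0;\bD))$ (this is Proposition~\ref{prop:tor2}), which is still finite and leaves the remainder of your argument intact.
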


\begin{proof}
By (A2) and Theorem~\ref{thm:torbd}, $t_1(I/T(I); \bD) < \infty$. By (A1), $\Tor_1^{\bD}(I/T(I), \bk)_n$ is finitely presented as a $\bk$-module for each $n$. Combining these two statements, we see that $\Tor^1_{\bD}(I/T(I), \bk)$ is finitely presented as a $\bk$-module, and this implies that $I/T_1(I)$ is finitely presented as a $\bD$-module.
\end{proof}

\begin{lemma} \label{grcoh3}
Suppose that conditions (A1) and (A2) hold. Suppose also that for any $n \ge 2$ the quotient $\bD/(\pi_n)$ is graded-coherent. Then $\bD$ is graded-coherent.
\end{lemma}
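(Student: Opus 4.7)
The plan is to show every finitely generated homogeneous ideal $I$ of $\bD$ is finitely presented. Lemma~\ref{grcoh2}, which uses only (A1) and (A2), already gives that $I/T(I)$ is finitely presented; since $I$ is finitely generated, the exact sequence
\begin{displaymath}
0 \to T(I) \to I \to I/T(I) \to 0
\end{displaymath}
forces $T(I)$ to be finitely generated, so it suffices to prove $T(I)$ is finitely presented (then $I$ is an extension of finitely presented modules). Since $T(I)$ is finitely generated with each generator killed by some $a(n_j)$, it is annihilated by a product $\pi_{m_1} \cdots \pi_{m_k}$ of finitely many $\pi_n$'s. I would then prove the following claim by induction on $k$: any finitely generated graded submodule $M$ of a finite graded free $\bD$-module $F$ that is annihilated by a product of $k$ of the $\pi_n$'s is finitely presented over $\bD$. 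Applied with $M = T(I)$ and $F = \bD$, this completes the proof.

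For the base case $k=1$, suppose $\pi_n M = 0$, so $M$ is contained in $F[\pi_n]$, the kernel of multiplication by $\pi_n$ on $F$. Because $\bk$ is coherent, $\bk[\pi_n]$ is finitely presented, and since $\bD$ is free over $\bk$, tensoring yields $\bD[\pi_n] \cong \bk[\pi_n] \otimes_{\bk} \bD$, which is finitely presented over $\bD$ and, being killed by $\pi_n$, also over $\bD/(\pi_n)$. Thus $F[\pi_n]$ is a finitely presented graded $\bD/(\pi_n)$-module, and by the graded-coherence of $\bD/(\pi_n)$ the finitely generated graded submodule $M$ is finitely presented over $\bD/(\pi_n)$, hence over $\bD$.

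For the inductive step, factor the annihilator as $\pi' \cdot \pi_{m_k}$ with $\pi' = \pi_{m_1} \cdots \pi_{m_{k-1}}$, and consider
\begin{displaymath}
0 \to M[\pi'] \to M \xrightarrow{\pi'} \pi' M \to 0.
\end{displaymath}
The image $\pi' M \subset F$ is finitely generated and killed by $\pi_{m_k}$, so is finitely presented by the base case. The standard fact that the kernel of a surjection from a finitely generated module to a finitely presented module is again finitely generated shows $M[\pi']$ is finitely generated; since $M[\pi'] \subset F$ is killed by the length-$(k-1)$ product $\pi'$, the inductive hypothesis makes $M[\pi']$ finitely presented. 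Hence $M$ is a finitely presented extension.

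The main obstacle is the base case, where I must leverage the graded-coherence of $\bD/(\pi_n)$ even though $M$ is not given abstractly as a $\bD/(\pi_n)$-module but only as a finitely generated submodule of $\bD$. The key is to embed $M$ into the explicit finitely presented $\bD/(\pi_n)$-module $F[\pi_n]$, whose finite presentation ultimately rests on (A1) combined with the freeness of $\bD$ over $\bk$. The rest is bookkeeping, provided one structures the induction so that we remain inside a finite graded free $\bD$-module at each stage, which allows the base case to be reapplied.
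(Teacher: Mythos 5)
Your proof is correct. It shares the paper's opening move (Lemma~\ref{grcoh2} gives that $I/T(I)$ is finitely presented, so $T(I)$ is finitely generated and the problem reduces to showing $T(I)$ is finitely presented), but from there the mechanics diverge. The paper works at the level of rings: it invokes (A2) for $\fa=(0)$ to get that $T(\bk)$, hence $T(\bD)=\bD\otimes_{\bk}T(\bk)$, is killed by a single $a(N)$, proves via Lemma~\ref{grcoh4} that the quotient ring $\bD/(a(N))$ is graded-coherent (an induction on the factors of $a(N)$ carried out on quotient rings), and then realizes $T(I)$ as a finitely generated submodule of the finitely presented $\bD/(a(N))$-module $T(\bD)$, whose finite presentation rests on Lemma~\ref{grcoh1}. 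You instead work at the level of modules: you observe that finite generation of $T(I)$ alone forces it to be killed by a finite product of $\pi_n$'s, and you do d\'evissage on the module via the sequence $0\to M[\pi']\to M\to \pi'M\to 0$, reducing everything to the case of a single $\pi_n$, where the embedding $M\subset F[\pi_n]\cong \bk[\pi_n]\otimes_{\bk}\bD$ does the work. Your route buys a small economy: it bypasses Lemmas~\ref{grcoh1} and~\ref{grcoh4} entirely, and the only input from (A2) is the one already consumed by Lemma~\ref{grcoh2}; the finite presentation of the ambient module in your base case needs only coherence of $\bk$. The paper's route, by establishing graded-coherence of $\bD/(a(N))$ as a ring, produces a statement that is reused in spirit elsewhere (the product lemma is a natural standalone tool), whereas your module-level induction is more self-contained but single-purpose. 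Both arguments are sound.
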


\begin{proof}
Let $I$ be a finitely generated ideal of $\bD$. Then $I/T(I)$ is finitely presented by Lemma~\ref{grcoh2}. It follows that $T(I)$ is finitely generated, and to prove the proposition it is enough to show that it is finitely presented.

Suppose $T(\bk)$ is killed by $a(N)$, and let $\bD'=\bD/(a(N))$. The following lemma shows that $\bD'$ is graded-coherent. The natural map $\bD \otimes_{\bk} T(\bk) \to T(\bD)$ is an isomorphism. By Lemma~\ref{grcoh1}, $T(\bk)$ is finitely presented as a $\bk$-module, and so $T(\bD)$ is finitely presented as a $\bD$-module. Since $a(N)$ kills $T(\bD)$, it follows that $T(\bD)$ is finitely presented as a $\bD'$-module. As $T(I)$ is a finitely generated $\bD'$-submodule of $T(\bD)$, it is therefore finitely presented as a $\bD'$-module, and therefore as a $\bD$-module.
\end{proof}

\begin{lemma} \label{grcoh4}
Let $\bk$ be a coherent ring, let $R$ be a graded $\bk$-algebra that is flat over $\bk$, and let $x$ and $y$ be elements of $\bk$ such that $R/(x)$ and $R/(y)$ are graded-coherent. Then $R/(xy)$ is graded-coherent.
\end{lemma}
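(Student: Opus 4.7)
The plan is to use the short exact sequence of graded $R$-modules
\begin{displaymath}
0 \to (x)/(xy) \to R/(xy) \to R/(x) \to 0,
\end{displaymath}
show that both outside terms are graded-coherent as $R$-modules, and then conclude graded-coherence of $R/(xy)$ first as an $R$-module and then as a module over itself. The right-hand term is graded-coherent over $R/(x)$ by hypothesis; since the kernel $(x)$ of $R \twoheadrightarrow R/(x)$ is a finitely generated ideal, graded-coherence is preserved under restriction of scalars, so $R/(x)$ is graded-coherent as an $R$-module.

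The heart of the argument is the identification of the left-hand term. Because $R$ is flat over $\bk$, tensoring the $\bk$-module short exact sequence
\begin{displaymath}
0 \to x\bk/xy\bk \to \bk/(xy) \to \bk/(x) \to 0
\end{displaymath}
with $R$ recovers the sequence above, so $(x)/(xy) \cong R \otimes_{\bk} (x\bk/xy\bk)$. A direct calculation, starting from $x\bk \cong \bk/\mathrm{ann}_{\bk}(x)$ and quotienting by the image of multiplication by $y$, yields $x\bk/xy\bk \cong \bk/(y\bk + \mathrm{ann}_{\bk}(x))$, and hence
\begin{displaymath}
(x)/(xy) \;\cong\; R/(yR + \mathrm{ann}_{\bk}(x)\cdot R).
\end{displaymath}
Coherence of $\bk$ guarantees that $\mathrm{ann}_{\bk}(x)$ is finitely generated (the kernel of the map $\bk \xrightarrow{x} \bk$ whose cokernel $\bk/(x)$ is finitely presented). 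Thus $(x)/(xy)$ is the quotient of the graded-coherent ring $R/(y)$ by a finitely generated homogeneous ideal, which makes it graded-coherent as an $R/(y)$-module, hence as an $R$-module.

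Since extensions of graded-coherent $R$-modules are graded-coherent, it follows that $R/(xy)$ is graded-coherent as an $R$-module. To upgrade this to graded-coherence as a ring, I would invoke the standard fact that for a finitely generated ideal $I \subset R$, an $R/I$-module is (graded-)coherent over $R/I$ if and only if it is (graded-)coherent over $R$; here $I=(xy)$ is principal. The only nonroutine step is the identification of $(x)/(xy)$ in the second paragraph, which is where both the flatness of $R$ over $\bk$ and the coherence hypothesis on $\bk$ enter in an essential way; everything else is a formal assembly from standard stability properties of graded-coherent modules.
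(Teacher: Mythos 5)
Your proof is correct and is essentially the paper's argument with the roles of $x$ and $y$ interchanged: the paper tensors the four-term exact sequence $0 \to I \to \bk/(x) \stackrel{y}{\to} \bk/(xy) \to \bk/(y) \to 0$ with the flat algebra $R$, realizing $R/(xy)$ as an extension of $R/(y)$ by a quotient of $R/(x)$ by a finitely generated submodule, whereas you realize it as an extension of $R/(x)$ by the quotient $R/(yR+\operatorname{ann}_{\bk}(x)R)$ of $R/(y)$. In both versions coherence of $\bk$ supplies the needed finite generation (of $I$, respectively of $\operatorname{ann}_{\bk}(x)$) and flatness transfers the computation from $\bk$ to $R$, so the two arguments are the same in substance.
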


\begin{proof}
Consider the 4-term exact sequence
\begin{displaymath}
0 \to I \to \bk/(x) \stackrel{y}{\to} \bk/(xy) \to \bk/(y) \to 0
\end{displaymath}
Since $\bk$ is coherent the ideal $I$ is finitely generated. Tensoring up with $R$, we find
\begin{displaymath}
0 \to I \otimes_{\bk} R \to R/(x) \stackrel{y}{\to} R/(xy) \to R/(y) \to 0
\end{displaymath}
This shows that $R/(xy)$ maps onto the graded-coherent ring $R/(y)$ with finitely presented kernel (the left two terms gives a presentation for the kernel), and is thus graded-coherent.
\end{proof}

\begin{lemma} \label{grcoh5}
Suppose that conditions (A1) and (A2) hold. Suppose also that there is a positive integer $h$ such that $\pi_h$ belongs to the Jacobson radical of $\bk$ and the quotient $\bD/(\pi_n)$ is graded-coherent for all proper multiples $n$ of $h$. Then $\bD$ is graded-coherent.
\end{lemma}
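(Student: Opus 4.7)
The plan is to reduce to Lemma~\ref{grcoh3} applied not to $\bD$ itself but to its $h$-transform $\bD^{[h]}$. Since $\pi_h$ lies in the Jacobson radical of $\bk$, Proposition~\ref{gdpa:transcoh} already tells us that $\bD$ is graded-coherent if and only if $\bD^{[h]}$ is, so it suffices to establish graded-coherence of $\bD^{[h]}$.

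To apply Lemma~\ref{grcoh3} to the GDPA $\bD^{[h]}$ with its own sequence $\pi^{[h]}_{\bullet}$, I need to check (A1), (A2) for $\pi^{[h]}_{\bullet}$, and that $\bD^{[h]}/(\pi^{[h]}_n)$ is graded-coherent for every $n \ge 2$. Condition (A1) is unchanged because the base ring is still $\bk$. Condition (A2) for $\pi^{[h]}_{\bullet}$ demands that $T(\bk/\fa; (\pi^{[h]}_{\bullet})^{[h']})$ be bounded for every $h' \ge 1$ and every finitely generated ideal $\fa$; but Proposition~\ref{dbltrans} identifies this double transform with $\pi^{[hh']}_{\bullet}$, so the requirement follows immediately from (A2) for $\pi_{\bullet}$.

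The main step, and the only one that uses the specific hypothesis about proper multiples of $h$, is checking graded-coherence of $\bD^{[h]}/(\pi^{[h]}_n)\bD^{[h]}$ for every $n \ge 2$. Here I would invoke Proposition~\ref{gdpa:elt}(b) to conclude that $\pi^{[h]}_n$ is associate to $\pi_{hn}$, so that the ideals $(\pi^{[h]}_n)$ and $(\pi_{hn})$ of $\bk$ coincide. By Proposition~\ref{gdpa:bc} the quotient $\bD^{[h]}/(\pi_{hn})\bD^{[h]}$ is then nothing other than the $h$-transform of $\bD/(\pi_{hn})$, viewed as a GDPA over $\bk/(\pi_{hn})$. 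Since $hn$ is a proper multiple of $h$, $\bD/(\pi_{hn})$ is graded-coherent by hypothesis, and the image of $\pi_h$ remains in the Jacobson radical of $\bk/(\pi_{hn})$ because the Jacobson radical always maps into the Jacobson radical of a quotient. A second application of Proposition~\ref{gdpa:transcoh} then transfers graded-coherence from $\bD/(\pi_{hn})$ to its $h$-transform, as required.

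Assembling these pieces, Lemma~\ref{grcoh3} yields that $\bD^{[h]}$ is graded-coherent, and a final invocation of Proposition~\ref{gdpa:transcoh} promotes this back to $\bD$. There is no deep obstacle here; the only real care needed is the bookkeeping to ensure that the ``proper multiples of $h$'' hypothesis for $\bD$ translates cleanly, via the identification $(\pi^{[h]}_n) = (\pi_{hn})$, into the ``all $n \ge 2$'' hypothesis that Lemma~\ref{grcoh3} requires when applied to $\bD^{[h]}$.
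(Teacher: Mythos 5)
Your proposal is correct and follows essentially the same route as the paper's proof: reduce to $\bD^{[h]}$ via Proposition~\ref{gdpa:transcoh}, verify (A2) for $\pi^{[h]}_{\bullet}$ using Proposition~\ref{dbltrans}, identify $\pi^{[h]}_n$ with $\pi_{hn}$ up to units via Proposition~\ref{gdpa:elt}(b) so that the hypothesis on proper multiples of $h$ supplies graded-coherence of each $\bD^{[h]}/(\pi^{[h]}_n)$, and conclude with Lemma~\ref{grcoh3}. All the bookkeeping checks out.
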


\begin{proof}
It suffices to show that $\bD^{(h)}$ is graded-coherent. Of course, it is the same to show that its regrade is coherent, and this is isomorphic to $\bD^{[h]}$. Conditions (A2) still holds for $\pi^{[h]}_{\bullet}$, since $T^{[h']}(-, \pi^{[h]}_{\bullet})=T^{[hh']}(-, \pi_{\bullet})$ by Proposition~\ref{dbltrans}. Since $\bD/(\pi_{hn})$ is graded-coherent for $n>1$, it follows that $\bD^{(h)}/(\pi_{hn})$ is graded-coherent (by Proposition~\ref{gdpa:transcoh}) for $n>1$. Since $\pi^{[h]}_n$ is associate to $\pi_{hn}$ by Proposition~\ref{gdpa:elt}(b), it follows  that $\bD^{[h]}/(\pi^{[h]}_n)$ is graded-coherent for $n>1$ as well. Thus $\bD^{[h]}$ is graded-coherent by Lemma~\ref{grcoh3}.
\end{proof}

\begin{lemma} \label{grcoh6}
Suppose (A1), (A2), and (A3) hold. Then $\bD^{[h]}$ is graded-coherent for all $h \ge 1$.
\end{lemma}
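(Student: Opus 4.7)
Fix $h \ge 1$ and suppose, aiming for a contradiction, that $\bD^{[h]}$ is not graded-coherent. Let $\cS$ denote the collection of finitely generated $\pi^{[h]}$-ideals $\fa$ of $\bk$ for which $\bD^{[h]}/\fa\bD^{[h]}$ fails to be graded-coherent; the zero ideal lies in $\cS$, so $\cS \ne \emptyset$. The plan is to use (A3) to produce a maximal $\fa \in \cS$, descend to the GDPA $\ol{\bD^{[h]}} = \bD^{[h]}/\fa\bD^{[h]}$ over $\ol{\bk} = \bk/\fa$, and derive a contradiction by applying either Proposition~\ref{gdpa:coh} or Lemma~\ref{grcoh5} to $\ol{\bD^{[h]}}$.

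\textbf{Step 1: maximal bad ideal.} First, (A3) prevents any strictly ascending chain from lying entirely inside $\cS$, so every chain in $\cS$ has a top element; Zorn's lemma then produces a maximal $\fa \in \cS$. By Proposition~\ref{gdpa:bc} and Proposition~\ref{seq:adimg}, $\ol{\bD^{[h]}}$ is a GDPA over $\ol{\bk}$ with admissible image sequence $\ol{\pi^{[h]}_{\bullet}}$. Maximality of $\fa$ says that for every $n \ge 2$ with $\ol{\pi^{[h]}_n} \ne 0$, the ideal $\fa + (\pi^{[h]}_n)$ lies outside $\cS$, so $\ol{\bD^{[h]}}/(\ol{\pi^{[h]}_n})$ is graded-coherent.

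\textbf{Step 2: descend (A1), (A2).} Since $\fa$ is finitely generated over the coherent ring $\bk$, the quotient $\ol{\bk}$ inherits (A1). For (A2), the identity $a^{[h']}(n; \ol{\pi^{[h]}_{\bullet}}) = \ol{a^{[hh']}(n; \pi_{\bullet})}$ from Proposition~\ref{dbltrans} identifies $T^{[h']}(\ol{\bk}/\fb; \ol{\pi^{[h]}_{\bullet}})$ with $T^{[hh']}(\bk/\fb'; \pi_{\bullet})$, where $\fb'$ is the preimage of $\fb$ in $\bk$; so (A2) for $(\bk, \pi_{\bullet})$ passes to $(\ol{\bk}, \ol{\pi^{[h]}_{\bullet}})$.

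\textbf{Step 3: case split and contradiction.} Now I case on how many $\ol{\pi^{[h]}_n}$ vanish. If infinitely many vanish, Proposition~\ref{gdpa:coh} applied to the coherent ring $\ol{\bk}$ immediately shows $\ol{\bD^{[h]}}$ is coherent, hence graded-coherent, contradicting $\fa \in \cS$. Otherwise only finitely many vanish; by admissibility of $\ol{\pi^{[h]}_{\bullet}}$, any two vanishing indices share a maximal ideal and are therefore comparable under divisibility, so the vanishing set is a finite chain with a maximum $h'$ (take $h'=1$ if the set is empty, using the convention $\pi_1 = 0$). Then $\ol{\pi^{[h]}_{h'}} = 0$ lies in the Jacobson radical of $\ol{\bk}$, and for each proper multiple $n$ of $h'$ we have $n > h'$, so $\ol{\pi^{[h]}_n} \ne 0$ by maximality of $h'$ among vanishing indices; Step~1 then provides graded-coherence of $\ol{\bD^{[h]}}/(\ol{\pi^{[h]}_n})$. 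Lemma~\ref{grcoh5} (or Lemma~\ref{grcoh3} when $h' = 1$) now forces $\ol{\bD^{[h]}}$ to be graded-coherent, a final contradiction.

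The hard part is really a matter of finding the right perspective rather than a technical difficulty: one must recognize that (A3) is tailored to produce a maximal bad ideal, and that Lemma~\ref{grcoh5} is in turn tailored to absorb the remaining finitely many zeros of $\ol{\pi^{[h]}_{\bullet}}$ via its Jacobson-radical hypothesis, while maximality of $\fa$ supplies exactly the graded-coherence of quotients by the nonzero $\ol{\pi^{[h]}_n}$ that Lemma~\ref{grcoh5} demands. The most delicate bookkeeping is verifying that admissibility forces the vanishing indices to form a single divisibility chain, so that one element $h'$ suffices.
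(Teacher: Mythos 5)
Your proof is correct and follows essentially the same route as the paper: use (A3) to extract a maximal "bad" $\pi^{[h]}$-ideal $\fa$, note that $\fa$ can contain only finitely many $\pi^{[h]}_n$ (else Proposition~\ref{gdpa:coh} applies), take the largest vanishing index, and invoke Lemma~\ref{grcoh5} to contradict maximality. The only difference is cosmetic — the paper first reduces to $h=1$, whereas you work with general $h$ and explicitly verify that (A1) and (A2) descend to $\bk/\fa$, a step the paper leaves implicit.
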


\begin{proof}
It suffices to treat the $h=1$ case. Let $\Sigma$ be the set of $\pi$-ideals $\fa$ for which $\bD/\fa$ is not graded-coherent. It suffices to show that $\Sigma$ is empty, for then $\bD/\fa$ is graded-coherent for every $\pi$-ideal $\fa$, including $\fa=(0)$. Thus suppose for the sake of contradiction that $\Sigma$ is non-empty. Let $\fa$ be a maximal element of $\Sigma$, which exists by (A3), and put $\bD'=\bD/\fa \bD$. Now, $\fa$ contains only finitely many of the $\pi_n$'s, for otherwise $\bD'$ would be coherent by Proposition~\ref{gdpa:coh}. Let $h$ be the largest integer such that $\pi_h \in \fa$. (Note that $\pi_1=0$ is in $\fa$, so this makes sense.) If $n$ is a proper multiple of $h$ then $\pi_n$ does not belong to $\fa$, and so $\fa+(\pi_n)$ is not in $\Sigma$, and so $\bD'/\pi_n \bD'$ is graded-coherent. Thus $\bD'$ is graded-coherent by Lemma~\ref{grcoh5}, a contradiction. We thus see that $\Sigma$ is empty, which completes the proof.
\end{proof}

\begin{lemma} \label{grcoh7}
Suppose $\bD$ is graded-coherent. Then for any finitely generated ideal $\fa$ of $\bk$ the torsion $T(\bk/\fa)$ is bounded.
\end{lemma}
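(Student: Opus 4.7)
The plan is to reduce to the case $\fa=0$ and then extract boundedness of $T(\bk)$ from a finite presentation of the principal ideal $(x^{[1]})\subseteq\bD$. First I would replace $(\bk,\bD)$ by $(\bk/\fa,\bD/\fa\bD)$. By Proposition~\ref{gdpa:bc}, $\bD/\fa\bD$ is a GDPA over $\bk/\fa$ with structure sequence the image of $\pi_{\bullet}$, and $T(\bk/\fa;\pi_{\bullet})=T(\bk/\fa;\bar{\pi}_{\bullet})$ since each $a(n)$ is determined by the $\pi_n$'s. Because $\fa$ is finitely generated, $\fa\bD$ is a finitely generated homogeneous ideal, and the standard fact that a graded-coherent ring modulo a finitely generated homogeneous ideal is again graded-coherent shows that $\bD/\fa\bD$ is graded-coherent. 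Hence we may assume $\fa=0$ and need only show that $T(\bk)$ itself is bounded.

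Set $T_m=\{b\in\bk:a(m)b=0\}$ for $m\geq 1$, so $T(\bk)=\bigcup_m T_m$ and boundedness amounts to stabilization of this union. The ideal $I=(x^{[1]})\subseteq\bD$ is homogeneous and finitely generated, hence finitely presented by graded-coherence. A short calculation with the carry function $\epsilon_k$ gives $C(n+1,1)=a(n+1)$, since $\epsilon_k(n,1)=1$ precisely when $k\mid n+1$. Consequently the surjection $\varphi\colon\bD\to I$, $b\mapsto bx^{[1]}$, sends $bx^{[n]}$ to $b\,a(n+1)x^{[n+1]}$, so its kernel $K$ has graded pieces $K_n=T_{n+1}\,x^{[n]}$.

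By graded-coherence $K$ is finitely generated, say by homogeneous elements of degrees at most $N$. For $n\geq N$ this forces $K_n=\sum_{d=0}^{N}\bD_{n-d}K_d$, and the identity $x^{[n-d]}x^{[d]}=C(n,d)x^{[n]}$ translates this into the equality of ideals
\[
T_{n+1}=\sum_{d=0}^{N}T_{d+1}\cdot C(n,d).
\]
Set $M=(N+1)!$. For every $d\leq N$ we have $(d+1)\mid M$, hence $a(d+1)\mid a(M)$ and $T_{d+1}\subseteq T_M$. Since $T_M$ is an ideal, $T_{d+1}\cdot C(n,d)\subseteq T_M$, and so $T_{n+1}\subseteq T_M$ for every $n\geq N$; the corresponding inclusion for $m\leq N+1$ is automatic. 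Therefore $T(\bk)=T_M$, and $T(\bk)$ is annihilated by $a(M)$.

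The main obstacle is the explicit computation $C(n+1,1)=a(n+1)$ and the ensuing identification $K_n=T_{n+1}x^{[n]}$; once these are in hand, boundedness of $T(\bk)$ is extracted purely from the degree bound on generators of $K$.
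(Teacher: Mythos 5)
Your proof is correct and is essentially the paper's argument: both extract the boundedness of the torsion from the fact that graded-coherence forces the syzygy module of an ideal containing $x^{[1]}$ to be generated in degrees $\le N$, and both then use $a(N!)$ (or $a((N+1)!)$) as the uniform annihilator via the identity $x^{[n]}x^{[1]}=a(n+1)x^{[n+1]}$. The only difference is cosmetic: you first pass to $\bk/\fa$ and the principal ideal $(x^{[1]})$ (using that graded-coherence descends to quotients by finitely generated homogeneous ideals, a fact the paper also invokes elsewhere), whereas the paper works over $\bk$ directly with the ideal generated by $\fa x^{[0]}$ and $x^{[1]}$.
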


\begin{proof}
Let $\fa$ be given, and let $I$ be the ideal of $\bD$ generated by $\fa x^{[0]}$ and $x^{[1]}$. Use notation as in the first paragraph of the proof of Lemma~\ref{degdb1}. Since $\bD$ is graded-coherent, $K$ is finitely generated. Suppose it is generated in degrees $\le d$. Let $\ol{\beta} \in \bk/\fa$ be killed by $a(n)$. Let $\beta \in \bk$ be a lift of $\ol{\beta}$, and let $\alpha=-a(n) \beta$, so that $\alpha \in \fa$. Let $R=\alpha x^{[n]} e_0+\beta x^{[n-1]} e_1$. Then $\Phi(R)=0$, so $R \in K$. Since $K$ is generated in degrees $\le d$, we can write $R=\sum_{i=1}^d x^{[n-i]} R_i$, where $R_i$ is a degree $i$ element of $K$. (Note that $K_0=0$.) Write $R_i=\alpha_i x^{[i]} e_0+\beta_i x^{[i-1]} e_1$ with $\alpha_i \in \fa$. Since $\Phi(R_i)=0$, we find $\alpha_i+a(i) \beta_i=0$, so, writing $\ol{\beta}_i$ for the image of $\beta_i$ in $\bk/\fa$, we find $a(i) \ol{\beta}_i=0$. From the expression relating $R$ and the $R_i$, we find $\beta=\sum_{i=1}^d C(n-1,i-1) \beta_i$. Thus if $D$ is a common multiple of $1, \ldots, d$ (e.g., $D=d!$) then $a(D) \ol{\beta}=0$. Therefore, $T(\bk/\fa)$ is killed by $a(D)$, which completes the proof.
\end{proof}

\begin{proof}[Proof of Theorem~\ref{thm:grdcoh}]
If (A1), (A2), and (A3) hold then $\bD^{[h]}$ is graded-coherent for all $h \ge 1$ by Lemma~\ref{grcoh6}. Conversely, if $\bD^{[h]}$ is graded-coherent then (A1) necessarily holds, as does (A3), since any quotient of $\bD^{[h]}$ by a finitely generated homogeneous ideal is graded-coherent, and (A2) holds at $h$ by Lemma~\ref{grcoh7}. Thus if $\bD^{[h]}$ is graded-coherent for all $h \ge 1$ then (A2) holds as well.
\end{proof}

\subsection{Divided power algebras in multiple variables} \label{ss:two-variable}

Let $\bk$ be the classical divided power algebra over $\bZ_{(p)}$ in a single variable $y$, regarded as a non-graded ring, and let $\bD$ be the classical divided power algebra over $\bk$ in a single variable $x$. Note that as an non-graded ring, $\bD$ is just the classical divided power algebra over $\bk$ in $x$ and $y$. Let $\fa$ be the principal ideal of $\bk$ generated by $y^{[1]}$. Then the image of $y^{[p^r]}$ in $\bk/\fa$ is annihilated by $p^r$ but no smaller power of $p$. Thus condition (A2) fails, and so $\bD$ is not graded-coherent, and therefore not coherent either.

Here is an explicit example demonstrating the failure of coherence. Consider the ideal $I$ of $\bD$ generated by $y^{[1]}$ and $x^{[1]}$. Then
\begin{displaymath}
(y^{[p^r-1]} x^{[p^r]}) y^{[1]} - (y^{[p^r]} x^{[p^r-1]}) x^{[1]} = 0
\end{displaymath}
is a linear relation of between the generators, and does not come from lower degree relations. Thus $I$ is not finitely presented. Note that if we remember the grading on $\bk$ and regard $\bD$ as bigraded then $I$ is bihomogeneous. This shows that $\bD$ is not even graded-coherent with respect to its bigrading.

\section{Special resolutions} \label{s:sr}

\subsection{Statement of results}

Let $\bD$ be a GDPA over $\bk$. Let $M$ be a $\bD$-module. We say that $M$ is {\bf principal special} if it is isomorphic to a module of the form $(\bD/\fa \bD)^{(h)}$ for some integer $h \ge 1$ and ideal $\fa$ of $\bk$ containing $\pi_h$. We say that $M$ is {\bf special} if it admits a finite length filtration where the graded pieces are principal special. A {\bf special resolution} of a $\bD$-module $M$ is a resolution $S_{\bullet} \to M$ where each $S_i$ is special. The {\bf special dimension} of $M$, denoted $\sd(M; \bD)$ or just $\sd(M)$, is the minimum integer $n$ for which there exists a special resolution $S_{\bullet} \to M$ with $S_i=0$ for all $i>n$, or $\infty$ if no such resolution exists. Note that $\sd(M)=0$ if and only if $M$ is special.

\begin{theorem} \label{thm:sd}
Suppose $\bk$ is noetherian and $M$ is a finitely presented $\bD$-module. Then $\sd(M)<\infty$, that is, $M$ admits a finite length special resolution. Moreover, if $\bk$ has finite Krull dimension $d$ then $\sd(M) \le d+1$.
\end{theorem}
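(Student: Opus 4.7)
The plan is to induct on the Krull dimension $d = \dim \bk$. The general noetherian case reduces to $\bk$ local via a patching argument using that finite presentation, the class of special modules, and coherence of $\bD$ (Theorem~\ref{mainthm:coh}) are all compatible with localization on $\bk$; the bare finiteness assertion $\sd(M) < \infty$ for $\dim \bk = \infty$ then follows by writing the finite presentation of $M$ over a finite-dimensional noetherian subring of $\bk$ and transporting the bound from the finite-dimensional case.

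The base case $d = 0$, where $\bk$ is Artinian local, reduces via the nilpotent $\fm$-adic filtration on $\bk$ to the case $\bk = \kappa$ a field. There Proposition~\ref{prop:gdpa-field} identifies $\bD$ as either $\kappa[x]$ (in which case $\bD$ is a PID, $M$ admits a free resolution of length at most $1$, and free modules are principal special because $\pi_1 = 0$ lies in the trivial ideal) or $\bS(\kappa, b_\bullet)$ for a nonempty divisible sequence. In the latter case Proposition~\ref{prop:h-free} gives an isomorphism $M \cong \ol{M} \otimes_\kappa \bD^{(h)}$ with $\pi_h = 0$ and $\ol{M}$ a finite-dimensional $\bD_{<h}$-module; a composition series of $\ol{M}$ over the Artinian local ring $\bD_{<h}$ with simple quotients $\kappa$, tensored with $\bD^{(h)}$, exhibits $M$ as an iterated $\bD$-module extension of copies of the principal special $\bD^{(h)} = (\bD/\pi_h\bD)^{(h)}$, so $\sd(M) = 0 \le 1$.

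For the inductive step $d \ge 1$, pick a non-zero-divisor $\pi$ in the maximal ideal $\fm$ of $\bk$ by prime avoidance over the associated primes. A preliminary filtration of $M$ through its $\pi$-power torsion, whose exponent is bounded by coherence, reduces to the case where $\pi$ is $M$-regular: the graded pieces of that filtration are modules over various $\bD/\pi^i\bD$, each a GDPA over a base of Krull dimension $\le d - 1$ by Proposition~\ref{gdpa:bc}, so induction applies. Now take a free $\bD$-resolution $\cdots \to F_1 \to F_0 \to M \to 0$; since $\pi$ is regular on both $M$ and each $F_i$, reducing modulo $\pi$ yields a free $\ol{\bD}$-resolution of $\ol{M} := M/\pi M$, where $\ol{\bD} := \bD/\pi\bD$ is a GDPA over $\bk/\pi\bk$ of Krull dimension $\le d - 1$. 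By induction $\sd_{\ol{\bD}}(\ol{M}) \le d$, so the $d$-th syzygy $\ol{K}_d := K_d/\pi K_d$ is special over $\ol{\bD}$; the canonical identification $(\ol{\bD}/\ol{\fa}\,\ol{\bD})^{(h)} = (\bD/\fa\bD)^{(h)}$, for $\fa$ the preimage of $\ol{\fa}$ in $\bk$ (which automatically contains both $\pi_h$ and $\pi$), shows $\ol{K}_d$ is also special as a $\bD$-module.

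The final step, which I expect to be the main obstacle, is to deduce that $K_d$ itself satisfies $\sd_{\bD}(K_d) \le 1$; concatenating a two-term special resolution of $K_d$ with the initial segment of the free resolution would then yield $\sd(M) \le d + 1$ as claimed. Using the short exact sequence $0 \to K_d \xrightarrow{\pi} K_d \to \ol{K}_d \to 0$, $\pi$-regularity of $K_d$, and the principal-special filtration of $\ol{K}_d$, I would construct such a resolution by lifting the filtration of $\ol{K}_d$ to an ascending filtration of $K_d$ by $\bD$-submodules with principal-special graded pieces. The delicate point is lifting successive extension classes from $\Ext^1_{\ol{\bD}}$ to $\Ext^1_{\bD}$; I would attack this via the long exact sequence attached to multiplication by $\pi$, combined with the relative projectivity of induced modules (Proposition~\ref{prop:relatively-projective}) and the cyclic-over-$\bD^{(h)}$ structure of each principal special, exploiting that the resulting obstructions in $\Ext^2$ should vanish for this class of modules.
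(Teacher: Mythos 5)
Your strategy (induction on Krull dimension via reduction modulo a regular element) is genuinely different from the paper's, but it has several gaps, two of which are fatal as written. First, the very first reduction is unjustified: speciality and special resolutions are not local notions in any obvious sense, so knowing $\sd(M_{\fm};\bD_{\fm})\le d+1$ for every maximal ideal $\fm$ does not produce a special resolution over $\bD$ (there is no gluing or descent statement available for the class of special modules). The companion claim for $\dim\bk=\infty$ also fails: the subring of $\bk$ you need must contain \emph{all} of the infinitely many $\pi_n$ in addition to the presentation data, and such a subring need not be noetherian or finite-dimensional; moreover base change along a non-flat inclusion $\bk_0\hookrightarrow\bk$ does not preserve exactness of a resolution. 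Second, in the inductive step your regular element $\pi\in\fm$ need not exist: if $\bk$ is local of positive dimension but depth zero (e.g.\ $k[x,y]_{(x,y)}/(x^2,xy)$), then $\fm$ is an associated prime and prime avoidance gives nothing. Third --- and you flag this yourself --- the final step, deducing $\sd_{\bD}(K_d)\le 1$ from $\pi$-regularity of $K_d$ and speciality of $K_d/\pi K_d$ over $\bD/\pi\bD$, is only sketched; lifting the principal-special filtration is a nontrivial deformation problem and the vanishing of the $\Ext^2$ obstructions is asserted, not proved. (A smaller issue: your field case omits GDPA's $\bS(\kappa,b_\bullet)$ with $b_\bullet$ finite of length greater than one, where $\bD\cong\bD_{<h}\otimes_\kappa\kappa[y]$ is neither a PID nor covered by Proposition~\ref{prop:h-free}; the paper's Lemma~\ref{spfield} handles this by filtering a syzygy by $\fm^iI/\fm^{i+1}I$ and using the structure theorem over $\kappa[y]$.)

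For comparison, the paper sidesteps all three difficulties by running a noetherian induction on quotients of $\bk$ rather than an induction on dimension: the non-domain case is handled by filtering $M$ as $0\to xM\to M\to M/xM\to 0$ for a zero-divisor relation $xy=0$ (no regular element needed), and in the domain case one passes to $\bK=\Frac(\bk)$, where the first syzygy $M'$ becomes special by the field case, finds a special $\bD$-lattice $P$ with $M'\otimes\bK\cong P\otimes\bK$, and compares $M'$ with $P$; the kernel and cokernel have nonzero annihilator and are absorbed by the inductive hypothesis together with the three inequalities of Proposition~\ref{sp6}. If you want to salvage your approach, you would at minimum need to replace the localization step by the quotient-based noetherian induction, handle depth-zero local rings separately, and actually prove the lifting statement in your last paragraph.
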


The proof will take the entire section. The basic idea is as follows. First suppose $\bk$ is a field. If infinitely many $\pi_n$'s vanish then every finitely presented module is special. If not (e.g., if $\bD=\bk[x]$) this need not be the case, but at least every submodule of a finitely generated free module is special. Now suppose $\bk$ is a domain with fraction field $\bK$, let $M$ be a finitely presented $\bD$-module, and choose short exact sequence
\begin{displaymath}
0 \to N \to F \to M \to 0
\end{displaymath}
with $F$ finite free. Then, as stated above, $N \otimes_{\bk} \bK$ is special as a $\bD_{\bK}$-module. We show that there is a special $\bD$-module $N'$ such that $N \otimes_{\bk} \bK \cong N' \otimes_{\bk} \bK$. Scaling this isomorphism appropriately, we can assume $N' \subset N$ and that the quotient is a torsion $\bk$-module. The result now follows from noetherian induction and the fact that in a short exact sequence the special dimension of one term can be controlled by the that of the other two.

The notions of special module and special resolution can easily be adapted to the case of graded modules. The above theorem remains true in the graded case, and the proof we give applies without change. In fact, everything still goes through if the coefficient ring $\bk$ is equipped with a grading, though in this case some minor adjustments in our proof must be made (specifically when using the fraction field of $\bk$).

\subsection{Preliminary results}

We begin by proving some basic facts about special resolutions and special dimension.

\begin{lemma} \label{sp1}
Suppose that $f \colon M \to N$ is a surjection of principal special $\bD$-modules. Then $\ker(f)$ is special.
\end{lemma}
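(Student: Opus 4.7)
The plan is to factor $f$ through two intermediate principal special modules and then show that each of the three resulting kernels is special. Writing $M = (\bD/\fa\bD)^{(h)}$ and $N = (\bD/\fb\bD)^{(g)}$, after composing $f$ with an automorphism of $N$ (scaling by a unit of $\bk/\fb$) we may assume $f(1_M) = 1_N$; comparing graded pieces and checking well-definedness then force $h \mid g$ and $\fa \subseteq \fb$. Set $\fa' = \fa + (\pi_g)$. Since $\pi_h \in \fa \subseteq \fa'$ and $\pi_g \in \fa'$, both $P_1 = (\bD/\fa'\bD)^{(h)}$ and $P_2 = (\bD/\fa'\bD)^{(g)}$ are principal special, and $f$ decomposes as $M \xrightarrow{\alpha} P_1 \xrightarrow{\beta} P_2 \xrightarrow{\gamma} N$, each map sending $1 \mapsto 1$.

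Let $K_1, K_2, K_3$ denote the kernels of $\alpha, \beta, \gamma$. First, $K_1 = (\pi_g) \cdot M$ is the cyclic submodule generated by $\pi_g \cdot 1_M$; its annihilator in $\bD$ is the ideal of $r = \sum c_k x^{[k]}$ with $c_k \in (\fa : \pi_g)$ for all $h \mid k$, and this ideal contains $\pi_h$ because $\pi_h \pi_g \in \fa$, so $K_1 \cong (\bD/(\fa : \pi_g)\bD)^{(h)}$ is principal special. Next, $K_3 = (\fb/\fa') \cdot P_2$; filtering the $\bk$-module $\fb/\fa'$ by cyclic quotients $\bk/\fc_i$ (possible once $\fb$ is finitely generated, e.g.\ when $\bk$ is noetherian) lifts to a finite filtration of $K_3$ whose successive quotients are principal specials $(\bD/\fc_i\bD)^{(g)}$, each containing $\pi_g$ via $\fa' \subseteq \fc_i$. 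For the middle kernel $K_2 = \ker \beta$, write $m = g/h$ and filter by $\bD$-submodules $F_1 \supseteq F_2 \supseteq \cdots \supseteq F_m = 0$, where $F_j$ consists of those elements of $K_2$ supported in degrees $k$ whose residue class modulo $g$ lies in $\{jh, (j{+}1)h, \ldots, (m{-}1)h\}$. The stability of each $F_j$ under the $\bD$-action is the key computation: whenever multiplication by $x^{[\ell]}$ would send an element of $F_j$ into a forbidden residue class, the addition $k + \ell$ must produce a carry in the $g$-place, contributing a factor of $\pi_g$ to $C(k+\ell,\ell)$, which then vanishes modulo $\fa'$. The quotient $F_j/F_{j+1}$ is cyclic, generated by the class of $x^{[jh]}$, with $\bD$-action factoring through $\bD^{(g)}/\fa'\bD^{(g)}$, so it is isomorphic to a degree-shift of $P_2$ and hence principal special.

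Combining the three short exact sequences, $\ker f$ inherits a finite filtration with successive subquotients $K_1, K_2, K_3$, each of which is special; hence $\ker f$ itself is special. The main obstacle I anticipate is the handling of $K_2$: the naive residue-mod-$g$ filtration of $\ker f$ itself does not in general produce $\bD$-submodules, because the carry analysis requires $\pi_g$ to lie in the base ideal; introducing the intermediate modules $P_1, P_2$ via the enlarged ideal $\fa' = \fa + (\pi_g)$ is precisely the device that isolates the problem to a setting where the carry argument goes through.
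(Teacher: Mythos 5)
Your proof is correct, and at its core it uses the same two mechanisms as the paper's argument: filtering an ideal of $\bk$ by cyclic quotients to handle the ``coefficient'' part of the kernel, and filtering the ``period-changing'' part by residue classes modulo $g$, with a carry into the $g$-place contributing a factor of $\pi_g$ that dies in the quotient. The organization is genuinely different, though. The paper first reduces the base ring (replacing $\bk$ by $\bk/\fa$ and then by $\bk/\fb$, which automatically places $\pi_g$ in the base ideal) and then identifies the remaining kernel with the explicit ideal $(x^{[h]},\ldots,x^{[g-h]})$ via the fact that a surjective endomorphism of a finitely generated module is an isomorphism; your factorization through $\fa'=\fa+(\pi_g)$ achieves the same localization of the difficulty while staying over the original $\bk$, at the cost of the extra (easy) kernel $K_1=\pi_g M$. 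Your explicit check that the residue filtration $F_j$ is $\bD$-stable is precisely the content the paper compresses into ``one easily sees that $I$ has a finite length filtration with successive quotients $\bD^{(s)}$,'' so spelling it out is a genuine service; note only that the claim that $F_j/F_{j+1}$ is generated by $x^{[jh]}$ also needs the (localize-at-$\fm$, no-carries) argument showing $C(cg+jh,cg)$ is a unit modulo $\fa'$, as in Proposition~\ref{gdpa:ideal}. Finally, your parenthetical requiring $\fb$ finitely generated is the same implicit noetherian hypothesis the paper uses when it ``filters $\fb$ so that the graded pieces are cyclic,'' and is harmless in the context where the lemma is applied.
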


\begin{proof}
Suppose $M=(\bD/\fa \bD)^{(r)}$ and $N=(\bD/\fb \bD)^{(s)}$. We proceed by a number of reductions. First, since $f$ is surjective, every element of $N$ is annihilated by $\fa$. But the annihilator of $1 \in N$ is exactly $\fb$, and so $\fa \subset \fb$. Replacing $\bk$ by $\bk/\fa$, we may as well assume $\fa=0$. The map $f$ obviously factors as $M \to M/\fb M \stackrel{\ol{f}}{\to} N$ for some $\ol{f}$. We have a short exact sequence
\begin{displaymath}
0 \to \fb M \to \ker(f) \to \ker(\ol{f}) \to 0.
\end{displaymath}
The module $\fb M$ is special (filter $\fb$ so that the graded pieces are cyclic $\bk$-modules), and so it suffices to show that $\ker(\ol{f})$ is special. Thus we may as well replace $M$ with $M/\fb M$. Furthermore, replacing $\bk$ with $\bk/\fb$, we may assume $\fb=0$.

We have thus reduced to the case $M=\bD^{(r)}$ and $N=\bD^{(s)}$. We claim $s \ge r$. Indeed, suppose $s<r$. Then $x^{[s]}$ annihilates $M$, and therefore annihilates $N$. But this is not the case, since $x^{[s]} \cdot 1 \ne 0$ in $N$. Now, since $\pi_r=\pi_s=0$, we must have $r \mid s$ or $s \mid r$, and so, by the inequality $s \ge r$, we have $r \mid s$. We may as well regrade and replace $\bD^{(r)}$ with $\bD$. In other words, we may assume $M=\bD$ and $N=\bD^{(s)}$.

Now, $N=\bD/I$, where $I$ is the ideal $(x^{[1]}, \ldots, x^{[s-1]})$. Thus $f$ factors as $M \to M/I \stackrel{\ol{f}}{\to} N$ for some $\ol{f}$. Since $M/I$ is equal to $N$, we may as well regard $\ol{f}$ as an endomorphism of $N$. Since it is surjective and $N$ is finitely generated, it is necessarily an isomorphism \stacks{05G8}. Thus $\ker(f)=I$. One easily sees that $I$ has a finite length filtration with successive quotients $\bD^{(s)}$, and is therefore special.
\end{proof}

\begin{lemma} \label{sp2}
Let $M$ be a special $\bD$-module. Then there exists a finite free $\bD$-module $F$ and a surjection $f \colon F \to M$ such that $\ker(f)$ is special.
\end{lemma}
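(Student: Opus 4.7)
The approach is a two-step reduction: first reduce to the principal special case by induction on filtration length, then handle principal specials directly using Lemma~\ref{sp1}. For the reduction, take a filtration $0 = M_0 \subsetneq M_1 \subsetneq \cdots \subsetneq M_r = M$ with principal special successive quotients and induct on $r$. When $r = 1$, $M$ is principal special. For $r > 1$, set $M' = M_{r-1}$ and $M'' = M/M_{r-1}$, choose finite-free surjections $f' \colon F' \twoheadrightarrow M'$ and $f'' \colon F'' \twoheadrightarrow M''$ with special kernels $K'$ and $K''$ (the first by induction, the second from the principal case), lift $f''$ through $M \twoheadrightarrow M''$ to a map $\tilde f \colon F'' \to M$ (using freeness of $F''$), and form the surjection $F' \oplus F'' \to M$ sending $(a, b) \mapsto f'(a) + \tilde f(b)$. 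Its kernel sits in a short exact sequence $0 \to K' \to \ker \to K'' \to 0$ and so is special.

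For the principal case, write $M = (\bD/\fa\bD)^{(h)}$ with $\pi_h \in \fa$, take $F = \bD$, and define the rank-one surjection $f \colon \bD \to M$ by sending $x^{[n]}$ to its image in $M$ when $h \mid n$ and to $0$ otherwise. The $\bD$-linearity of $f$ is the only nontrivial point, and it reduces to checking that $f(x^{[j]} x^{[n]}) = x^{[j]} \cdot f(x^{[n]})$ in the case $h \nmid n$ but $h \mid n+j$; then $h \nmid j$, the base-$h$ addition $n + j$ has a carry at the $h$-place, so $\epsilon_h(n,j) = 1$ and $\pi_h$ divides $C(n+j, j)$. Since $\pi_h \in \fa$, this forces $C(n+j, j) \cdot x^{[n+j]} \equiv 0$ in $M$, matching $x^{[j]} \cdot 0 = 0$. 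The kernel $I$ of $f$ visibly contains $\fa\bD$, giving the short exact sequence $0 \to \fa\bD \to I \to I/\fa\bD \to 0$.

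Now the key observation: $I/\fa\bD$ is precisely the kernel of the induced surjection $\bD/\fa\bD \twoheadrightarrow M$, and both $\bD/\fa\bD = (\bD/\fa\bD)^{(1)}$ (using $\pi_1 = 0 \in \fa$) and $M = (\bD/\fa\bD)^{(h)}$ are principal special, so Lemma~\ref{sp1} gives that $I/\fa\bD$ is special. For $\fa\bD$, assuming $\fa$ is finitely generated (which is the case in the noetherian setting where the lemma will be applied), filter $\fa$ by ideals generated by initial segments of a generating set; the graded pieces are cyclic $\bk$-modules $\bk/\fb_i$, and tensoring with the flat $\bk$-module $\bD$ yields a filtration of $\fa\bD$ whose successive quotients $\bD/\fb_i\bD$ are all principal special (with $h=1$). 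Thus $\fa\bD$ is special, and $I$, as an extension of two specials, is special as well. The subtle step is recognizing $I/\fa\bD$ as a kernel of a surjection between two principal specials so that Lemma~\ref{sp1} applies; once this is observed, the rest is routine filtration bookkeeping.
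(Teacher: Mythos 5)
Your proof is correct and follows essentially the same route as the paper: induct on the length of the special filtration, produce finite free surjections onto the principal-special top quotient and the shorter sub, lift one of them, and splice the two special kernels by a short exact sequence. The only divergence is in the principal-special base case, where you re-derive by hand what Lemma~\ref{sp1} already gives directly: since $\bD=(\bD/(0))^{(1)}$ is itself principal special, applying Lemma~\ref{sp1} to the canonical surjection $\bD \to (\bD/\fa\bD)^{(h)}$ immediately yields a special kernel, without the separate analysis of $\fa\bD$ and of the linearity of the projection.
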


\begin{proof}
Let
\begin{displaymath}
0 \to M'' \to M \to M' \to 0
\end{displaymath}
be a short exact sequence where $M'$ is principal special and $M''$ is special and built out of fewer principal specials than $M$ (e.g., $M''$ is the last piece in the filtration that $M$ is required to have). By induction, we can find a short exact sequence
\begin{displaymath}
0 \to K'' \to F'' \to M'' \to 0
\end{displaymath}
with $F''$ finite free and $K''$ special. Let
\begin{displaymath}
0 \to K' \to F' \to M' \to 0
\end{displaymath}
be a short exact sequence with $F'$ finite free. Then $K'$ is special by the previous lemma. Lift $F' \to M'$ to a map $F' \to M$. The map $F' \oplus F'' \to M$ is then surjective. If $K$ denotes its kernel then we have a short exact sequence
\begin{displaymath}
0 \to K'' \to K \to K' \to 0,
\end{displaymath}
and so $K$ is special.
\end{proof}

\begin{lemma} \label{sp3}
Let $M$ be a $\bD$-module with $0<\sd(M)<\infty$. Then we can find a surjection $f \colon F \to M$ with $F$ finite free such that $\sd(\ker(f))=\sd(M)-1$.
\end{lemma}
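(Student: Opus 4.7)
The plan is to take $F$ from the first step of an optimal special resolution of $M$ and then control $\sd(\ker f)$ by a short exact sequence argument. Concretely, let $0 \to S_n \to \cdots \to S_0 \to M \to 0$ be a special resolution of minimal length $n = \sd(M)$. Apply Lemma~\ref{sp2} to $S_0$ to obtain a finite free module $F$ and a surjection $F \twoheadrightarrow S_0$ with kernel $K$ special, and let $f \colon F \to M$ be the composition with $S_0 \twoheadrightarrow M$. Writing $L = \ker(S_0 \to M)$, the kernel of $f$ is the preimage of $L$ under $F \twoheadrightarrow S_0$, which gives a short exact sequence $0 \to K \to \ker f \to L \to 0$; moreover, truncating the original resolution shows $\sd(L) \le n-1$.

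The crux is to show $\sd(\ker f) \le n-1$, which I would deduce from two auxiliary facts. First, in any short exact sequence $0 \to X \to Y \to Z \to 0$ with $Y$ special, one has $\sd(Z) \le \sd(X)+1$; this follows by concatenating a minimal special resolution of $X$ with the surjection $Y \to Z$. Second, in any short exact sequence $0 \to K' \to N' \to L' \to 0$ with $K'$ special, one has $\sd(N') \le \sd(L')$. I would prove the second by induction on $\sd(L')$: the base case is immediate since an extension of two special modules is special, and in the inductive step, picking a special $T \twoheadrightarrow L'$ whose kernel $L''$ has $\sd(L'') < \sd(L')$ and forming the pullback $P = N' \times_{L'} T$ produces two short exact sequences, one exhibiting $P$ as an extension of $K'$ by $T$ (hence special) and the other reading $0 \to L'' \to P \to N' \to 0$, so the first fact applies. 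Applying the second fact to $0 \to K \to \ker f \to L \to 0$ yields $\sd(\ker f) \le n-1$. For the reverse inequality, $F$ is itself special---it is a finite direct sum of copies of $\bD = (\bD/(0))^{(1)}$, and $\pi_1 = 0$ lies in $(0)$---so concatenating any special resolution of $\ker f$ with $F \twoheadrightarrow M$ produces a special resolution of $M$ of length $\sd(\ker f)+1$, forcing $n \le \sd(\ker f)+1$.

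The principal technical obstacle is the second auxiliary fact. One cannot apply the horseshoe lemma directly because special modules need not be projective, so a surjection from a special module onto $L'$ need not lift to $N'$; the pullback construction is the device that sidesteps this lifting problem, and combining it with the concatenation trick from the first auxiliary fact is what closes the induction.
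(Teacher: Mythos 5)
Your proof is correct and takes essentially the same route as the paper: the paper likewise applies Lemma~\ref{sp2} to the zeroth term $P_0$ of a minimal special resolution and then forms $Q'=\ker(F\oplus P_1\to P_0)$ --- which is precisely your pullback $\ker f\times_L T$ --- showing it is special via the extension $0\to Q\to Q'\to P_1\to 0$ and splicing the truncated resolution onto $Q'\to\ker(f)$ to get the upper bound, with the same easy concatenation argument for the reverse inequality. Your repackaging of this into the two auxiliary facts (in effect special cases of Proposition~\ref{sp6}(b) and (c), proved here without circularity via the pullback trick) is a harmless reorganization of the same argument.
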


\begin{proof}
Let
\begin{displaymath}
0 \to P_r \to \cdots \to P_0 \to M \to 0
\end{displaymath}
be a resolution with $P_i$ special for all $i$. Let
\begin{displaymath}
0 \to Q \to F \to P_0 \to 0
\end{displaymath}
be a short exact sequence with $F$ finite free and $Q$ special; this exists by the previous lemma. Consider the short exact sequence
\begin{displaymath}
0 \to K \to F \to M \to 0.
\end{displaymath}
Let $Q'$ be the kernel of the surjection $F \oplus P_1 \to P_0$. We have a short exact sequence
\begin{displaymath}
0 \to Q \to Q' \to P_1 \to 0,
\end{displaymath}
which shows that $Q'$ is special. The projection map $F \oplus P_1 \to F$ induces a surjection $Q' \to K$, and one easily verifies that
\begin{displaymath}
0 \to P_r \to \cdots \to P_2 \to Q' \to K \to 0
\end{displaymath}
is a resolution of $K$. Thus $\sd(K) \le \sd(M)-1$. From the obvious inequality $\sd(M) \le 1+\sd(K)$, we conclude $\sd(K)=\sd(M)-1$.
\end{proof}

\begin{proposition} \label{sp4}
Let $M$ be a $\bD$-module with $\sd(M) \le r < \infty$. Then there is a resolution
\begin{displaymath}
0 \to P \to F_{r-1} \to \cdots \to F_0 \to M \to 0
\end{displaymath}
where $F_i$ is finite free and $P$ is special.
\end{proposition}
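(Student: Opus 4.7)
The plan is to prove this by induction on $r$, using Lemmas~\ref{sp2} and~\ref{sp3} as the engine. The proposition is essentially a packaging of those two lemmas: Lemma~\ref{sp3} lets us peel off one finite free module at the cost of reducing $\sd$ by one, while Lemma~\ref{sp2} lets us do the same when $\sd(M)=0$ without making $\sd$ go negative.

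For the base case $r=0$, the hypothesis $\sd(M) \le 0$ says that $M$ itself is special, so we take $P=M$ and no $F_i$'s appear. For the inductive step with $r \ge 1$, we handle two cases based on whether $\sd(M)$ is positive. If $\sd(M) > 0$, apply Lemma~\ref{sp3} to produce a short exact sequence $0 \to K \to F_0 \to M \to 0$ with $F_0$ finite free and $\sd(K) = \sd(M)-1 \le r-1$. If instead $\sd(M)=0$, then $M$ is special, and Lemma~\ref{sp2} supplies a short exact sequence of the same shape with $K$ special, hence $\sd(K)=0 \le r-1$. In either case, the inductive hypothesis applied to $K$ (with $r-1$) yields a resolution
\begin{displaymath}
0 \to P \to F_{r-1} \to \cdots \to F_1 \to K \to 0
\end{displaymath}
with each $F_i$ finite free and $P$ special. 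Splicing this with $0 \to K \to F_0 \to M \to 0$ gives the desired resolution of $M$.

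There is no real obstacle once Lemmas~\ref{sp2} and~\ref{sp3} are in hand; those lemmas absorb all of the actual work (in particular, the non-obvious fact in Lemma~\ref{sp3} that one can simultaneously reduce $\sd$ by exactly one while also arranging for the kernel to sit inside a finite free module). The only minor care needed here is splitting into the two cases above so that the induction does not try to descend below $r=0$.
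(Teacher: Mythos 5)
Your proof is correct and is exactly the argument the paper intends (the paper in fact omits the proof of Proposition~\ref{sp4}, leaving it as the evident induction on $r$ from Lemmas~\ref{sp2} and~\ref{sp3}). The case split between $\sd(M)=0$ and $\sd(M)>0$ is the right way to handle the base of the descent, and the splicing step is standard.
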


\begin{lemma} \label{sp5}
Let $D$ be a ring, let $Q_{\bullet} \to M$ and $P_{\bullet} \to N$ be resolutions of the same length $r$ such that $Q_0, \ldots, Q_{r-1}$ are free. Then any map $f \colon M \to N$ can be lifted to a map of resolutions $Q_{\bullet} \to P_{\bullet}$, that is, we can fill in the following diagram:
\begin{displaymath}
\xymatrix{
0 \ar[r] & Q_r \ar[r] \ar@{..>}[d] & Q_{r-1} \ar[r] \ar@{..>}[d] & \cdots \ar[r] & Q_0 \ar[r] \ar@{..>}[d] & M \ar[d]^f \ar[r] & 0 \\
0 \ar[r] & P_r \ar[r] & P_{r-1} \ar[r] & \cdots \ar[r] & P_0 \ar[r] & N \ar[r] & 0 }
\end{displaymath}
\end{lemma}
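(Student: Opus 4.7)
The plan is to build the lift $f_\bullet\colon Q_\bullet\to P_\bullet$ inductively on $i$, starting from $f_{-1}=f$, exactly as in the standard comparison theorem for projective resolutions, and then to handle the last map by a separate argument that does not require $Q_r$ to be projective.

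For $0\le i\le r-1$ the procedure is routine: having constructed compatible maps $f_{-1},f_0,\dots,f_{i-1}$, the composite $Q_i\to Q_{i-1}\to P_{i-1}$ (or $Q_0\to M\to N$ when $i=0$) lands in the kernel of the next arrow of the bottom complex, and hence in the image of $P_i\to P_{i-1}$ by exactness. Since $Q_i$ is free by hypothesis, it is projective, so we may lift this map through the surjection of $P_i$ onto that image to obtain $f_i\colon Q_i\to P_i$ making the square commute. This uses only that the rows are complexes, that the bottom row is exact, and that $Q_i$ is projective for $i<r$.

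The one genuinely different step is the construction of $f_r\colon Q_r\to P_r$, since $Q_r$ need not be free. Here one observes that $P_r\to P_{r-1}$ is injective (because the bottom row is exact at $P_r$ and has a leading zero), so $P_r$ is identified with $\ker(P_{r-1}\to P_{r-2})$. Now the composite $Q_r\to Q_{r-1}\xrightarrow{f_{r-1}} P_{r-1}$ is killed by the next differential of $P_\bullet$, because the composite $Q_r\to Q_{r-1}\to Q_{r-2}$ is zero and $f_{r-2}$ intertwines the differentials; hence this composite lands in $\ker(P_{r-1}\to P_{r-2})=P_r$ and defines the required map $f_r$, uniquely and without any projectivity hypothesis on $Q_r$. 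Commutativity of the last square is automatic.

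The only thing to be careful about is the $i=r-2$ case, which is where one verifies that $f_{r-1}$ does intertwine with the differential $Q_{r-1}\to Q_{r-2}$ in the sense needed above; but this is precisely what the commutativity of the previously constructed square expresses. I do not expect any real obstacle here: the whole point of the lemma is that the trick of identifying $P_r$ with a kernel lets the final arrow be built ``for free'' while the preceding ones use projectivity in the usual way.
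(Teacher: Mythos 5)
Your proof is correct and is exactly the argument the paper intends: the standard comparison-theorem induction for $i<r$ using freeness of $Q_i$, and for the final step the observation that $P_r\hookrightarrow P_{r-1}$ identifies $P_r$ with $\ker(P_{r-1}\to P_{r-2})$, so $f_r$ is obtained by corestriction rather than by lifting through a surjection — precisely the paper's remark that ``one never needs to lift maps from $Q_r$.'' No gaps.
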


\begin{proof}
When $Q_r$ is also free, this is a standard result about free resolutions. The proof of that result applies here as well: the point is that one never needs to lift maps from $Q_r$.
\end{proof}

\begin{proposition} \label{sp6}
Suppose that
\begin{displaymath}
0 \to M_1 \to M_2 \to M_3 \to 0
\end{displaymath}
is a short exact sequence of $\bD$-modules. Then
\begin{enumerate}
\item $\sd(M_1) \le \max(\sd(M_2), \sd(M_3), 1)$.
\item $\sd(M_2) \le \max(\sd(M_1), \sd(M_3))$.
\item $\sd(M_3) \le \max(\sd(M_1), \sd(M_2))+1$.
\end{enumerate}
In particular, if two of the modules admit finite length special resolutions then so does the third.
\end{proposition}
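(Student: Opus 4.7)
My plan is to prove the three inequalities in the order (b), (c), (a), since (a) will rely on (b). For (b), I will apply Proposition~\ref{sp4} to choose resolutions of $M_1$ and $M_3$ of length $r = \max(\sd(M_1), \sd(M_3))$ in which the intermediate terms $F^{(*)}_i$ ($i<r$) are free and the final terms $P^{(*)}_r$ are special. A standard horseshoe construction---lifting the surjection $F^{(3)}_i \to K^{(3)}_{i-1}$ through $K^{(2)}_{i-1}$ at each intermediate stage, which is possible by projectivity of $F^{(3)}_i$---builds a resolution of $M_2$ whose $i$-th term is $F^{(1)}_i \oplus F^{(3)}_i$ for $i < r$ and an extension of $P^{(1)}_r$ by $P^{(3)}_r$ at position $r$. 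Finite free modules are principal special (with $h=1$ and $\fa = 0$), and extensions of special modules are special by definition, so this is a length-$r$ special resolution of $M_2$.

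For (c), set $r = \max(\sd(M_1), \sd(M_2))$ and choose resolutions of $M_1$ and $M_2$ as above via Proposition~\ref{sp4}. By Lemma~\ref{sp5} the inclusion $M_1 \hookrightarrow M_2$ lifts to a chain map $\phi \colon F^{(1)}_\bullet \to F^{(2)}_\bullet$. I will take $C_\bullet$ to be the mapping cone of $\phi$: its $n$-th term $F^{(1)}_{n-1} \oplus F^{(2)}_n$ is a direct sum of (free or special-at-the-tail) modules, hence special. The long exact sequence in homology, together with injectivity of $M_1 \hookrightarrow M_2$, forces $H_0(C_\bullet) = M_3$ and $H_i(C_\bullet) = 0$ for $i \ge 1$, so $C_\bullet$ is a special resolution of $M_3$ of length $r+1$.

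For (a), the approach is a pullback reduction to (b) followed by a free correction. Using Lemma~\ref{sp2} (if $\sd(M_3)=0$) or Lemma~\ref{sp3} (if $\sd(M_3) \ge 1$), choose a surjection $F \to M_3$ from a finite free module $F$ whose kernel $K$ satisfies $\sd(K) \le \sd(M_3)$. The pullback $P = M_2 \times_{M_3} F$ sits in two short exact sequences, $0 \to M_1 \to P \to F \to 0$ (which splits as $P \cong M_1 \oplus F$ by projectivity of $F$) and $0 \to K \to P \to M_2 \to 0$; applying (b) to the latter yields $\sd(M_1 \oplus F) \le \max(\sd(M_2), \sd(M_3))$. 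Given any length-$s$ special resolution $S_\bullet \to M_1 \oplus F$ with augmentation $\epsilon$, I will construct a special resolution of $M_1$ by keeping $S_i$ unchanged for $i \ne 1$, replacing $S_1$ by $S_1 \oplus F$, composing $\epsilon$ with the projection $M_1 \oplus F \to M_1$, and defining the new $d_1 \colon S_1 \oplus F \to S_0$ using the old $d_1$ on $S_1$ together with a lift $\iota \colon F \to S_0$ of the inclusion $F \hookrightarrow M_1 \oplus F$ through $\epsilon$ (which exists by projectivity of $F$). A direct check of exactness shows the new complex is a special resolution of $M_1$ of length $\max(s,1)$, giving $\sd(M_1) \le \max(\sd(M_2), \sd(M_3), 1)$. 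The main subtlety---and the reason for the stray ``$1$'' in (a)---is that the class of special modules is not closed under direct summands, so one cannot conclude $\sd(M_1) = 0$ directly from $\sd(M_1 \oplus F) = 0$; this correction step is precisely the workaround.
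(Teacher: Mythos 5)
Your proof is correct. Parts (b) and (c) coincide with the paper's argument: (b) is the horseshoe lemma, with the two special tails combining (via the snake lemma) into an extension, hence a special module, in the last spot; and (c) is the mapping cone of a lifted chain map, which is exactly the resolution $0 \to P_r \to P_{r-1}\oplus Q_r \to \cdots \to P_0 \oplus Q_1 \to Q_0 \to M_3 \to 0$ that the paper writes down. Part (a) is where you genuinely diverge. The paper lifts $M_2 \to M_3$ to a map of resolutions $P_\bullet \to Q_\bullet$ and takes the ``co-cone,'' producing $0 \to P_r \to \cdots \to P_1 \oplus Q_2 \to \ker(P_0 \oplus Q_1 \to Q_0) \to M_1 \to 0$; since that kernel is only known to be free when $Q_1$ is free, the paper must handle $r=1$ by a separate, more hands-on construction. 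Your pullback $M_2 \times_{M_3} F$ along a free cover of $M_3$ instead reduces (a) to the already-established (b), at the cost of the direct-summand issue you correctly flag ($\sd(M_1 \oplus F) \le s$ does not formally yield $\sd(M_1) \le s$, since special modules are not known to be closed under summands); your explicit surgery on the resolution --- inserting $F$ in homological degree~$1$ via a lift of the inclusion through the augmentation --- resolves this and transparently accounts for the extra ``$1$'' in (a). Both routes are valid; yours is more uniform (no case split on $r$), while the paper's stays entirely within the chain-map formalism.
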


\begin{proof}
(a) Let $r=\max(\sd(M_2), \sd(M_3), 1)$. First suppose $r \ge 2$. Let $P_{\bullet} \to M_2$ and $Q_{\bullet} \to M_3$ be special resolutions of length $r$ such that $P_0, \ldots, P_{r-1}$ and $Q_0, \ldots, Q_{r-1}$ are free. Lift the $M_2 \to M_3$ to a map of resolutions $P_{\bullet} \to Q_{\bullet}$. Then we have a resolution
\begin{displaymath}
0 \to P_r \to P_{r-1} \oplus Q_r \to \cdots \to P_1 \oplus Q_2 \to \ker(P_0 \oplus Q_1 \to Q_0) \to M_1 \to 0.
\end{displaymath}
Since $P_0 \oplus Q_1 \to Q_0$ is a surjection of free modules, its kernel is free, and so the above is a special resolution of $M_1$ of length at most $r$.

Now suppose $r=1$. Let
\begin{displaymath}
0 \to Q_1 \to Q_0 \to M_3 \to 0
\end{displaymath}
be a special resolution with $Q_0$ free. Let
\begin{displaymath}
0 \to F_1 \to F_0 \to M_2 \to 0
\end{displaymath}
be a special resolution with $F_0$ free. Lift $Q_0 \to M_3$ to a map $Q_0 \to M_2$ and lift $F_0 \to M_2 \to M_3$ to a map $F_0 \to Q_0$. Let $P_0=F_0 \oplus Q_0$, and let $P_1$ be the kernel of the natural surjection $P_0 \to M_2$. We have a short exact sequence
\begin{displaymath}
0 \to F_1 \to P_1 \to Q_0 \to 0,
\end{displaymath}
and so $P_1$ is special. The square
\begin{displaymath}
\xymatrix{
P_0 \ar[r] \ar[d] & Q_0 \ar[d] \\
M_2 \ar[r] & M_3 }
\end{displaymath}
commutes, and all maps are surjective. Thus $P_0 \to Q_0$ maps $P_1$ into $Q_1$. We thus have a resolution
\begin{displaymath}
0 \to P_1 \to \ker(P_0 \oplus Q_1 \to Q_0) \to M_1 \to 0.
\end{displaymath}
Since $P_0 \to Q_0$ is surjective, its kernel is free, and we have a short exact sequence
\begin{displaymath}
0 \to \ker(P_0 \to Q_0) \to \ker(P_0 \oplus Q_1 \to Q_0) \to Q_1 \to 0,
\end{displaymath}
and so $\ker(P_0 \oplus Q_1 \to Q_0)$ is special. Thus $\sd(M_1) \le 1$.

(b) Let $r=\max(\sd(M_1), \sd(M_3))$. If $r=0$ then $M_2$ is clearly special, so assume $r \ge 1$. Let $P_{\bullet} \to M_1$ and $Q_{\bullet} \to M_3$ be special resolutions of length $r$ such that $Q_0, \ldots, Q_{r-1}$ are free. Following the proof of the ``horseshoe lemma,'' we can build a partial resolution
\begin{displaymath}
0 \to K \to P_{r-1} \oplus Q_{r-1} \to \cdots \to P_0 \oplus Q_0 \to M_2 \to 0,
\end{displaymath}
where $K$ is by definition the kernel of the map $P_{r-1} \oplus Q_{r-1} \to P_{r-2} \oplus Q_{r-2}$ (or the map $P_{r-1} \oplus Q_{r-1} \to M_2$ if $r=1$). By the snake lemma, we have a short exact sequence
\begin{displaymath}
0 \to P_r \to K \to Q_r \to 0,
\end{displaymath}
and so $K$ is special. Thus the above is a special resolution of $M_2$, and so $\sd(M_2) \le r$.

(c) Let $r=\max(\sd(M_1), \sd(M_2))$, and let $P_{\bullet} \to M_1$ and $Q_{\bullet} \to M_2$ be special resolutions of length $r$ such that $P_0, \ldots, P_{r-1}$ are free. Lift the map $M_1 \to M_2$ to a map of resolutions $P_{\bullet} \to Q_{\bullet}$. Then we have a resolution
\begin{displaymath}
0 \to P_r \to P_{r-1} \oplus Q_r \to \cdots \to P_0 \oplus Q_1 \to Q_0 \to M_3 \to 0,
\end{displaymath}
and so $\sd(M_3) \le r+1$.
\end{proof}

\subsection{Special resolutions}

We now start on the proof of Theorem~\ref{thm:sd}.

\begin{lemma} 
Suppose $\bk$ is a field and infinitely many of the $\pi_n$ vanish. Then any finitely presented (graded) $\bD$-module is special.
\end{lemma}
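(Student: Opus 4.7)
The plan is to apply Proposition~\ref{prop:h-free}. Since $\bk$ is a field and infinitely many $\pi_n$ vanish, the hypothesis of that proposition is satisfied, so any finitely presented (graded) $\bD$-module $M$ takes the form $M \cong N \otimes_{\bk} \bD^{(h)}$ for some $h$ with $\pi_h = 0$ and some finitely presented (graded) $\bD_{<h}$-module $N$. The construction in the proof of that proposition produces $N$ as the cokernel of a map of free $\bD_{<h}$-modules, and one sees that the isomorphism respects the full $\bD$-module structure via the splitting $\bD \cong \bD_{<h} \otimes_{\bk} \bD^{(h)}$ from Proposition~\ref{gdpa:subalg}.

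Next I would exploit the structure of $\bD_{<h}$. As $\bk$ is a field, $\bD_{<h}$ is a finite-dimensional graded-local $\bk$-algebra whose augmentation ideal $\bigoplus_{0 < n < h} \bk x^{[n]}$ is the unique maximal graded ideal. Consequently the finitely generated graded $\bD_{<h}$-module $N$ is a finite-dimensional graded $\bk$-vector space, and by the graded Jordan--H\"older theorem it admits a finite filtration
\begin{displaymath}
0 = N_0 \subset N_1 \subset \cdots \subset N_r = N
\end{displaymath}
with graded quotients $N_i/N_{i-1} \cong \bk[j_i]$ for some integers $j_i$.

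Since $\bD^{(h)}$ is free over $\bk$, tensoring this filtration with $\bD^{(h)}$ over $\bk$ gives an increasing filtration of $M$ by graded $\bD$-submodules whose graded quotients are isomorphic to $\bD^{(h)}[j_i]$. Each such quotient is (a shift of) the principal special module $(\bD/(0)\bD)^{(h)} = \bD^{(h)}$; the required condition $\pi_h \in (0)$ is satisfied by our choice of $h$. Hence $M$ is special. The only subtle point is the bookkeeping of the grading shifts $[j_i]$: in the graded framework these should be allowed in the definition of principal special (or absorbed as an essentially trivial variant), so I do not expect any genuine obstacle—this is really just an application of Proposition~\ref{prop:h-free} combined with Jordan--H\"older over the local graded algebra $\bD_{<h}$.
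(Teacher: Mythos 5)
Your proof is correct and follows essentially the same route as the paper's: apply Proposition~\ref{prop:h-free} to write $M \cong N \otimes_{\bk} \bD^{(h)}$, filter $N$ over the (graded-)local artinian ring $\bD_{<h}$ so the graded pieces are killed by the maximal ideal, and tensor with the exact functor $-\otimes_{\bk}\bD^{(h)}$ to get a filtration of $M$ with (shifts of) free $\bD^{(h)}$-modules as quotients. The only cosmetic difference is that you refine the filtration to one-dimensional Jordan--H\"older quotients, whereas the paper is content with free $\bD^{(h)}$-pieces of possibly higher rank; both are manifestly special.
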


\begin{proof}
Let $M$ be a finitely presented $\bD$-module. By Proposition~\ref{prop:h-free}, there exists $h$ and a finitely generated $\bD_{<h}$-module $N$ such that $M = N \otimes_{\bk} \bD^{(h)}$ as a module over $\bD = \bD_{<h} \otimes_{\bk} \bD^{(h)}$. Since $\bD_{<h}$ is a local artinian ring, $N$ admits a finite filtration such that each graded piece gets annihilated by the maximal ideal. Tensoring with the exact funtor $- \otimes_{\bk} \bD^{(h)}$, we see that $M$ has a filtration such that each graded piece is a free $\bD^{(h)}$-module and so $M$ is special.
\end{proof}

\begin{lemma}
\label{spfield}
Suppose $\bk$ is a field and $F$ is a finite free $\bD$-module. Then any finitely generated (graded) submodule of $F$ is special.
\end{lemma}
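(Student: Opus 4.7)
The plan is to split on whether the divisible sequence $b_{\bullet}$ associated to $\bD$ by Proposition~\ref{prop:gdpa-field} is infinite or finite. If $b_{\bullet}$ is infinite, then infinitely many $\pi_n$ vanish, so $\bD$ is coherent by Proposition~\ref{gdpa:coh}, and hence the finitely generated submodule $M \subset F$ is finitely presented; the preceding lemma then yields that $M$ is special. So assume from now on that $b_{\bullet} = (1, b_1, \ldots, b_r)$ is finite, and set $h = b_r$.

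I would induct on the rank $m$ of $F$. The case $m = 0$ is trivial, and for $m \ge 2$, I would decompose $F = \bD \oplus F'$ with $F' \cong \bD^{m-1}$: the short exact sequence $0 \to M \cap \bD \to M \to M/(M \cap \bD) \to 0$ exhibits $M$ as an extension of a graded ideal (handled by the $m = 1$ case) and a graded submodule of $F'$ (handled by induction), so $M$ is special by Proposition~\ref{sp6}(b). The heart of the argument is thus the base case $m = 1$: every graded ideal $M \subset \bD$ is special. Since $\bk$ is a field and $\bD_n = \bk x^{[n]}$ is one-dimensional, $M$ is a monomial ideal, $M = \bigoplus_{n \in S} \bk x^{[n]}$ for some $S \subseteq \bZ_{\ge 0}$. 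Combining Propositions~\ref{prop:regrade}(c) and~\ref{gdpa:elt}(b) with Corollary~\ref{gdpa:poly}, one sees that $\bD^{(h)}$, after regrading, is the polynomial ring $\bk[z]$ with $z = x^{[h]}$, since $\pi^{[h]}_n$ is associate to $\pi_{hn}$ and $hn > b_r$ for $n \ge 2$.

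I would then filter $M$ by the submodules $M^{\ge i} = \bigoplus_{n \in S,\, n \bmod h \ge i} \bk x^{[n]}$ for $0 \le i \le h$, and argue that each is $\bD$-stable with principal special graded pieces. The key observation is that if $x^{[n]} \in M^{\ge i}$ and $x^{[\ell]} x^{[n]} \ne 0$, then $C(n + \ell, \ell)$ is a unit, so no carries occur in the base-$b_{\bullet}$ addition of $n$ and $\ell$; in particular no carry at the $b_r$-position, which forces $(n + \ell) \bmod h = (n \bmod h) + (\ell \bmod h) \ge i$. The same analysis shows that any element of $\fm_{<h}$ either annihilates $x^{[n]}$ or shifts its residue mod $h$ strictly upward, whence $\fm_{<h}$ acts trivially on each quotient $M^{\ge i}/M^{\ge i+1}$. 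Thus $M^{\ge i}/M^{\ge i+1} \cong M \cap \bD^{(h;i)}$ is a $\bD^{(h)}$-submodule of $\bD^{(h;i)} \cong \bD^{(h)}[i]$ (free of rank one by Proposition~\ref{prop:regrade}(a)); since $\bD^{(h)}$ is a PID, this is cyclic, isomorphic to some $\bD^{(h)}[n_i^*]$, and so coincides as a $\bD$-module with the principal special module $(\bD/0)^{(h)}[n_i^*]$. The main obstacle is this base-$b_{\bullet}$ carry analysis when $b_{\bullet}$ has several nontrivial terms, which rests essentially on the admissibility of $\pi_{\bullet}$ to force $C = 0$ whenever any carry occurs at any position.
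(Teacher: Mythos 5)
Your argument for the \emph{graded} case is correct, and in the finite-sequence branch it is genuinely different from the paper's. Where you reduce to rank one, observe that a graded submodule of $\bD$ is a monomial ideal, and filter by the residue of the degree modulo $h=b_r$ (so that the successive quotients become graded $\bD^{(h)}$-submodules of the rank-one free modules $\bD^{(h;i)}$, hence shifts of $\bD^{(h)}$), the paper instead sets $R=\bD_{<h}$, writes $\bD\cong R\otimes_{\bk}\bD^{(h)}=R[y]$, filters the submodule $I$ by the powers $\fm^iI$ of the maximal ideal of the local artinian ring $R$, and identifies each quotient $\fm^iI/\fm^{i+1}I$ as a finitely generated torsion-free module over the PID $\bk[y]$, hence free by the structure theorem. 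Your carry analysis at the $b_r$-position is sound: $C(n+\ell,\ell)\ne 0$ forces $\epsilon_{b_r}(n,\ell)=0$, hence $(n+\ell)\bmod h=(n\bmod h)+(\ell\bmod h)$, which is exactly what makes your filtration $\bD$-stable and kills the action of $x^{[\ell]}$, $h\nmid\ell$, on the quotients.

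The gap is that your base case covers only graded submodules. The lemma, per the paper's parenthetical convention, is asserted for ungraded submodules as well, and that is the version actually invoked in the proof of Theorem~\ref{thm:sd} (where it is applied to $M'\otimes_{\bk}\bK\subset F\otimes_{\bk}\bK$ for an arbitrary finitely presented, not necessarily graded, module $M'$). For a non-homogeneous ideal the decomposition $M=\bigoplus_{n\in S}\bk x^{[n]}$ fails — already $(x^{[1]}-1)\subset\bk[x]$ is not monomial — and your filtration by residue classes of the degree has no meaning, so the heart of your argument does not apply. The first branch (infinitely many $\pi_n=0$, via coherence and Proposition~\ref{prop:h-free}) and the induction on the rank of $F$ both go through ungraded; it is only the rank-one base case in the finite branch that needs a different idea. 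The paper's $\fm$-adic filtration together with the PID structure theorem is insensitive to gradings and handles both cases uniformly; to complete your proof you would need to supply an ungraded argument of that kind (or restrict the claim, and hence its use in Theorem~\ref{thm:sd}, to the graded setting).
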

\begin{proof}
The case when infinitely many of the $\pi_n$ vanish follows from the previous lemma and so we may assume that only finitely many of the $\pi_n$ are zero. Let $h$ be the largest such that $\pi_h = 0$ and let $R = \bD_{<h}$. Then $\bD$ is isomorphic to $R \otimes_{\bk} \bD^{(h)} =R[y]$ where $y$ is an indeterminate of degree $h$. Let $I$ be a submodule of $F$. Since $R$ is a local artinian ring (say with maximal ideal $\fm$) there exists a finite filtration of $I$ such that the graded pieces are of the form $\tfrac{\fm^iI}{\fm^{i+1}I}$.  Clearly, each such graded piece is a torsion-free $\bk[y]$-module. Since $\bk[y]$ is a principal ideal domain the result follows from the structure theorem of finitely generated modules over a principal ideal domain.
\end{proof}

\begin{lemma}
Let $\bk$ be a domain and let $\bK=\Frac(\bk)$.
%\rohit{Fixed the mistype}.
Then every special $\bD \otimes_{\bk} \bK$-module $P$ has the form $Q \otimes_{\bk} \bK$ for some special $\bD$-module $Q$.
\end{lemma}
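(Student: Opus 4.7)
The plan is to induct on the length $n$ of a special filtration $0 = P_0 \subset P_1 \subset \cdots \subset P_n = P$ with principal special graded pieces. After discarding trivial graded pieces, each $P_i/P_{i-1}$ is a nonzero principal special $\bD_{\bK}$-module; over the field $\bK$, the only ideals are $0$ and $\bK$, so such a module must be of the form $\bD_{\bK}^{(h_i)}$ for some $h_i \ge 1$ with $\pi_{h_i}^{\bK} = 0$. Since $\bk$ is a domain, $\pi_{h_i} = 0$ in $\bk$ as well, and $\bD^{(h_i)}$ is then a principal special $\bD$-module satisfying $\bD^{(h_i)} \otimes_{\bk} \bK \cong P_i/P_{i-1}$. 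This takes care of the base case $n \leq 1$.

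For the inductive step, I would construct $Q$ as a $\bD$-submodule of $P$ generated by suitable lifts $e_i \in P_i$ of the distinguished generator $1 \in P_i/P_{i-1} = \bD_{\bK}^{(h_i)}$. Let $Q_i := \sum_{j \leq i} \bD \cdot e_j \subset P$, so $Q := Q_n$ comes equipped with a filtration. Because $Q_i \otimes_{\bk} \bK = \sum_{j \leq i} \bD_{\bK} \cdot e_j = P_i$, one automatically has $Q \otimes_{\bk} \bK = P$. The remaining task is to arrange that each cyclic quotient $Q_i/Q_{i-1} \cong \bD/K_i$ (where $K_i = \{d \in \bD : d e_i \in Q_{i-1}\}$) equals the principal special module $\bD^{(h_i)} = \bD/J_{h_i}$, with $J_{h_i} := (x^{[k]} : h_i \nmid k)$ the annihilator of $1$ in $\bD^{(h_i)}$. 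One containment is automatic: any $d \in K_i$ sends $e_i$ into $P_{i-1}$, hence annihilates $1 \in \bD_{\bK}^{(h_i)}$, i.e., lies in $J_{h_i} \bD_{\bK} \cap \bD = J_{h_i}$ (using $\pi_{h_i} = 0$ for the last identification).

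To achieve the reverse containment $K_i \supseteq J_{h_i}$, I would scale the lifts $e_i$ sequentially. By Proposition~\ref{prop:regrade} (applied with $\pi_{h_i} = 0$, hence in the Jacobson radical), $J_{h_i}$ is finitely generated---explicitly by $x^{[1]}, \ldots, x^{[h_i-1]}$. For each such generator $x^{[k]}$, the element $x^{[k]} e_i$ lies in $P_{i-1} = Q_{i-1} \otimes_{\bk} \bK$, so there exists $t_k \in \bk \setminus \{0\}$ with $t_k \cdot x^{[k]} e_i \in Q_{i-1}$. Setting $s := \prod_k t_k$ and replacing $e_i$ by $s e_i$ gives $J_{h_i} \cdot (s e_i) \subseteq Q_{i-1}$, forcing $K_i = J_{h_i}$. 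The scaling preserves $Q_i \otimes_{\bk} \bK = P_i$ since $s \in \bK^\times$. Iterating for $i = 1, \ldots, n$ produces a special $Q \subseteq P$ with $Q \otimes_{\bk} \bK = P$.

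The main obstacle is the scaling step: it is essential that $J_{h_i}$ be finitely generated, for otherwise a single $s$ would not annihilate all of $J_{h_i} e_i$ modulo $Q_{i-1}$. The hypothesis that $\bk$ is a domain plays a crucial role here, since it lets us pass from $\pi_{h_i}^{\bK} = 0$ to $\pi_{h_i} = 0$ in $\bk$ itself, thereby placing us in the regime of Proposition~\ref{prop:regrade}.
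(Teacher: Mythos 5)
Your proof is correct and follows essentially the same strategy as the paper's: induct along the special filtration, lift a lattice for each principal special quotient, and clear denominators in $\bK$ so that the lift is compatible with the lattice already built below it. The only cosmetic difference is that you rescale the lifted generator $e_i$ using the explicit finite presentation $\bD^{(h_i)}\cong\bD/(x^{[1]},\ldots,x^{[h_i-1]})$, whereas the paper rescales the lower lattice $Q'$ and invokes finite presentation of the quotient lattice abstractly.
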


\begin{proof}
By a ``lattice'' in a $\bD \otimes_{\bk} \bK$ module $P$, we mean a $\bD$-submodule $Q$ of $P$ such that $Q \otimes_{\bk} \bK = P$. Thus the lemma states that every special $\bD \otimes_{\bk} \bK$-module admits a special lattice, that is, one that is special as a $\bD$-module.

First suppose $P$ is principal special. Then $P \cong (\bD \otimes_{\bk} \bK)^{(h)}$ for some $h$ with $\pi_h=0$. One can then take $Q=\bD^{(h)}$. Thus the lemma holds in this case.

Now suppose $P$ is an arbitrary special module. Choose an exact sequence
\begin{displaymath}
0 \to P' \to P \to P'' \to 0
\end{displaymath}
where $P''$ is principal special and $P'$ is special and built out of fewer principal specials than $P$. Let $Q'' \subset P''$ be a special lattice, which exists by the previous paragraph, and let $Q' \subset P'$ be a special lattice, which exists by induction. Let $\wt{Q}'' \subset P$ be a finitely generated $\bD$-submodule of $P$ that surjects onto $Q''$. (One can construct $\wt{Q}''$ by lifting generators of $Q''$ and taking the $\bD$-module they span.) Then $\wt{Q}'' \cap P'$ is a finitely generated $\bD$-submodule of $P'$, as it coincides with $\ker(\wt{Q}'' \to Q'')$ and $Q''$ is finitely presented. Since $Q'$ is a lattice in $P'$, it follows that $\wt{Q}'' \cap P'$ is contained in $\alpha Q'$ for some nonzero $\alpha \in \bK$. Let $Q=\alpha Q' + \wt{Q}''$. Then we have a short exact sequence
\begin{displaymath}
0 \to Q' \stackrel{\alpha}{\to} Q \to Q'' \to 0,
\end{displaymath}
and so $Q$ is special. It is clear that $Q$ is a lattice in $P$, and so the result follows.
\end{proof}

\begin{proof}[Proof of Theorem~\ref{thm:sd}]
We first show that $\sd(M)$ is finite. We proceed by noetherian induction, so we assume the result holds if $M$ has nonzero annihilator in $\bk$. Note that if $P$ is special as a $\bD/\fa \bD$-module then it is also special as a $\bD$-module, and so if $M$ is annihilated by $\fa \subset \bk$ then $\sd(M; \bD) \le \sd(M; \bD/\fa \bD)$. We now consider two cases.

{\it Case 1: $\bk$ is not a domain.} Let $xy=0$ with $x,y \ne 0$. We have an exact sequence
\begin{displaymath}
0 \to xM \to M \to M/xM \to 0,
\end{displaymath}
and so by Proposition~\ref{sp6}(b), we have
\begin{equation}
\label{eqsd1}
\sd(M) \le \max(\sd(xM), \sd(M/xM))
\end{equation}
Since $xM$ and $M/xM$ have non-zero annihilators, the right side is finite by the inductive hypothesis, and so $\sd(M)<\infty$.

{\it Case 2: $\bk$ is a domain.} Let $\bK=\Frac(\bk)$. Let
\begin{displaymath}
0 \to M' \to F \to M \to 0
\end{displaymath}
be a short exact sequence with $F$ finite free. Then $M' \otimes_{\bk} \bK \subset F \otimes_{\bk} \bK$, and so $M' \otimes_{\bk} \bK$ is special by the Lemma~\ref{spfield}. We can therefore find a special $\bD$-module $P$ and an isomorphism $M' \otimes_{\bk} \bK \to P \otimes_{\bk} \bK$. Scaling this isomorphism, we can assume that $M'$ maps into $P$. Let $N$ and $N'$ be the kernel and cokernel of the map $M' \to P$, so that we have a 4-term exact sequence
\begin{displaymath}
0 \to N \to M' \to P \to N' \to 0.
\end{displaymath}
Breaking this up into two short exact sequences and applying Proposition~\ref{sp6}, we find
\begin{displaymath}
\sd(M') \le \max(\sd(N), \sd(N'), 1),
\end{displaymath}
and so
\begin{equation}
\label{eqsd2}
\sd(M) \le \max(\sd(N), \sd(N'), 1)+1.
\end{equation}
But $N$ and $N'$ have nonzero annihilator in $\bk$, and so have finite special dimension by the inductive hypothesis. Thus $\sd(M)<\infty$.

Now suppose $\bk$ has finite Krull dimension $d$, and let us show $\sd(M) \le d+1$. We proceed again by noetherian induction. If $\bk$ is not a domain, then with notation as in Case~1, we have $\sd(xM) \le d+1$ and $\sd(M/xM) \le d+1$ by the inductive hypothesis, and so \eqref{eqsd1} gives $\sd(M) \le d+1$. Now suppose $\bk$ is a domain, and use notation as in Case~2. If $d=0$, i.e., $\bk$ is a field, then $M'$ is special and $\sd(M) \le 1$. Now suppose $d \ge 1$. Then the support of $N$ and $N'$ has dimension strictly less than $d$, and so $\sd(N) \le d$ and $\sd(N') \le d$ by the inductive hypothesis, and so \eqref{eqsd2} gives $\sd(M) \le d+1$.
\end{proof}

\section{Grothendieck groups} \label{s:groth}

\subsection{Notation}

For a coherent ring $D$, we let $\Mod_D^{\fpres}$ denote the category of finitely presented $D$-modules, and we let $\rK(D)$ be the Grothendieck group of $\Mod_D^{\fpres}$. If $D$ is graded then we let $\uMod_D^{\fpres}$ denote the category of finitely presented graded $D$-modules, and we let $\uK(D)$ be its Grothendieck group. We also apply these definitions to non-graded rings by regarding them as graded and concentrated in degree~0. The group $\uK(D)$ is naturally a module over the ring $R=\bZ[t,t^{-1}]$ via $t[M]=[M[1]]$. We define $R_n$ to be $\frac{1}{t^n-1} R$, thought of as an $R$-submodule of $\Frac(R)$, and put $R_{\infty}=\bigcup_{n \ge 1} R_n$.

\subsection{Overview}

Fix a GDPA $\bD$. The purpose of \S \ref{s:groth} is to study $\rK(\bD)$ and $\uK(\bD)$. The existence of special resolutions gives us a spanning set for both of these groups, and the main difficulty lies in understanding the relations these classes satisfy. For this, we need to construct interesting maps out of the Grothendieck group. We first concentrate on the graded case. In \S \ref{ss:groth-ratl} we construct the most obvious map out of $\uK(\bD)$, the Hilbert series $\rH$. It turns out that this is enough to obtain a description of $\uK(\bD)$ up to $R$-torsion. Based on the nature of the Hilbert series, in \S \ref{ss:groth-conj} we formulate a plausible description of $\uK(\bD)$ (Conjecture~\ref{conj:k}). The Hilbert series is not powerful enough to prove this conjecture, so in \S \ref{ss:grothL} we define a subtler invariant, denoted $\rL$. In \S \ref{ss:groth-proof}, we manage to prove Conjecture~\ref{conj:k} under a certain hypothesis using $\rH$ and $\rL$ (Theorem~\ref{thm:k}). In \S \ref{ss:groth-compare}, we show that $\rK(\bD)$ can be obtained from $\uK(\bD)$ in a straightforward manner, and thus Conjecture~\ref{conj:k} also predicts the structure of $\rK(\bD)$. In \S \ref{ss:groth-classical} we apply the results to the classical divided power algebra. We show that the hypothesis of Theorem~\ref{thm:k} is met, and thus deduce a complete description of the Grothendieck group. Finally, in \S\ref{ss:Kq}, we show that the hypothesis of Theorem~\ref{thm:k} is not met for the $q$-divided power algebra over $\bZ[q]$, and so our results do not give a full description of the Grothendieck group in this case.

\subsection{The classes $[M(\fa,h)]$}

We begin by recording a useful spanning set for the Grothendieck group and some obvious relations they satisfy. For an ideal $\fa$ of $\bk$ containing $\pi_h$, let $M(\fa,h)$ be the $\bD$-module $(\bD/\fa \bD)^{(h)}$. Note that any ideal contains $\pi_1=0$, and $M(\fa,1)=\bD/\fa \bD$.

\begin{proposition} \label{prop:mah}
Let $\fa$ be an ideal of $\bk$ containing $\pi_h$. Then:
\begin{enumerate}
\item The classes $[M(\fa,h)]$ span both $\rK(\bD)$ and $\uK(\bD)$.
\end{enumerate}
Now suppose $k \mid h$ and $\fa$ also contains $\pi_k$ (e.g., $k=1$). Then:
\begin{enumerate}
\setcounter{enumi}{1}
\item We have $\frac{h}{k} [M(\fa,h)]=[M(\fa,k)]$ in $\rK(\bD)$.
\item We have $\frac{1-t^h}{1-t^k} [M(\fa,h)]=[M(\fa,k)]$ in $\uK(\bD)$.
\end{enumerate}
\end{proposition}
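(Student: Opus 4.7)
The plan for (a) is to invoke Theorem~\ref{thm:sd}. In either the graded or ungraded setting, any finitely presented $\bD$-module $M$ admits a finite special resolution $0 \to S_r \to \cdots \to S_0 \to M \to 0$, giving $[M] = \sum_i (-1)^i[S_i]$ in the Grothendieck group. Since each $S_i$ is by definition a finite extension of principal special modules $M(\fa, h)$ (possibly shifted in degree by some $n$, contributing factors $t^n$ in the graded case), each $[S_i]$---and hence $[M]$---is an integer combination (resp.\ an $R$-linear combination) of the classes $[M(\fa, h)]$.

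For parts (b) and (c) I would construct a $\bD$-module filtration of $M(\fa, k)$ whose successive quotients are the graded shifts $M(\fa, h)[ik]$. Let $r = h/k$, and for $0 \le j < r$ put $M_j := \bigoplus_{q \ge 0} (\bk/\fa) x^{[qh + jk]}$, so that $M(\fa, k) = M_0 \oplus M_1 \oplus \cdots \oplus M_{r-1}$ as $\bk/\fa$-modules. Defining $L_i := M_i \oplus M_{i+1} \oplus \cdots \oplus M_{r-1}$ for $0 \le i \le r$ gives $L_0 = M(\fa, k)$, $L_r = 0$, and $L_i/L_{i+1} \cong M_i$ as $\bk/\fa$-modules.

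The substantive content is to verify (i) each $L_i$ is a $\bD$-submodule of $M(\fa, k)$, and (ii) $L_i/L_{i+1} \cong M(\fa, h)[ik]$ as a graded $\bD$-module. Both will follow from carry analysis in the base $b_{\fm, \bullet}$ for an arbitrary maximal ideal $\fm \supset \fa$; since $\pi_k, \pi_h \in \fa \subset \fm$, we have $k = b_s$ and $h = b_t$ for some $s \le t$. For (i), multiplication by $x^{[m]}$ with $k \nmid m$ acts as zero on $M(\fa, k)$, and for $m = \ell k$, writing $j + \ell = ar + b$ with $0 \le b < r$, the product $x^{[\ell k]} \cdot x^{[qh + jk]}$ lies in $M_b$ with coefficient $C((q+a)h + bk, \ell k)$. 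A short check shows $\epsilon_h(qh + jk, \ell k) = 1$ precisely when $b < j$, which covers exactly the cases $b < i \le j$ where the product would otherwise escape $L_i$; there $\pi_h \in \fa$ divides the coefficient and the product vanishes. For (ii), an analogous comparison shows $C((q+\ell')h + ik, \ell' h) \equiv u \cdot C((q+\ell')h, \ell' h) \pmod{\fa}$ with $u$ a unit, because the digits of $ik$ sit strictly below position $t$ and cannot contribute carries at positions $\ge t$; so $L_i/L_{i+1}$ is cyclic on $x^{[ik]}$ with annihilator $\fa\bD + (x^{[m]} : h \nmid m)$, which is the same presentation as $M(\fa, h)$ after a degree shift by $ik$.

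Granting (i) and (ii), part (c) is immediate: in $\uK(\bD)$ we compute
\begin{displaymath}
[M(\fa, k)] = \sum_{i=0}^{r-1} [M(\fa, h)[ik]] = \Bigl(\sum_{i=0}^{r-1} t^{ik}\Bigr)[M(\fa, h)] = \frac{1-t^h}{1-t^k}[M(\fa, h)],
\end{displaymath}
and (b) follows by forgetting the grading, in which case each summand contributes one copy of $[M(\fa, h)]$, giving $r = h/k$ copies in total. The main obstacle is the $\epsilon_h$ computation in (i); everything else follows from the intrinsic results on divisible sequences and carries developed in \S\ref{s:adm}.
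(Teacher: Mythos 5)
Your proof is correct and follows essentially the same route as the paper: part (a) from Theorem~\ref{thm:sd}, and parts (b), (c) from filtering $M(\fa,k)$ by the $\bD$-submodules $L_i$ with successive quotients $M(\fa,h)[ik]$ for $0 \le i < h/k$. The only difference is that the paper obtains this filtration immediately from the decomposition $\bD/\fa\bD \cong (\bD/\fa\bD)_{<h} \otimes_{\bk} (\bD/\fa\bD)^{(h)}$ of Propositions~\ref{prop:regrade} and~\ref{gdpa:subalg} (applicable since $\pi_h=0$ in $\bk/\fa$), which packages exactly the carry computations you redo by hand in steps (i) and (ii).
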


\begin{proof}
(a) Follows immediately from the theorem on special resolutions (Theorem~\ref{thm:sd}). For (b) and (c), note that
\begin{displaymath}
M(\fa,k)=(\bD/\fa \bD)^{(k)}=(\bD/\fa \bD)_{<h}^{(k)} \otimes_{\bk} (\bD/\fa \bD)^{(h)}
\end{displaymath}
and $(\bD/\fa \bD)_{<h}^{(k)}$ admits a filtration where the successive quotients are $\bk[k], \ldots, \bk[(h-1)k]$.
\end{proof}

\subsection{Hilbert series} \label{ss:groth-ratl}

Let $M$ be a finitely presented graded $\bD$-module. Define the {\bf Hilbert series} of $M$, denoted $\rH_M(t)$, by
\begin{displaymath}
\rH_M(t) = \sum_{n \in \bZ} [M_n] t^n,
\end{displaymath}
where $[M_n]$ denotes the class of the $\bk$-module $M_n$ in $\rK(\bk)$. It is clear that the Hilbert series construction factors through the Grothendieck group and defines an $R$-linear map
\begin{displaymath}
\rH \colon \rK(\bD) \to \rK(\bk) \lpp t \rpp.
\end{displaymath}
Our main result about the Hilbert series is the following proposition:

\begin{proposition} \label{prop:hilb}
Let $M$ be a finitely presented graded $\bD$-module. Then there there exists $r \in \bN$ and elements $a_1(t), \ldots, a_r(t) \in R \otimes \rK(\bk)$ such that
\begin{displaymath}
\rH_M(t) = \sum_{i=1}^r \frac{a_i(t)}{1-t^i}.
\end{displaymath}
\end{proposition}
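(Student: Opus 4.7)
The plan is to reduce the statement to a direct computation on the spanning set provided by the special resolution theorem. Since the Hilbert series construction is additive on short exact sequences, it descends to an $R$-linear map $\rH \colon \uK(\bD) \to \rK(\bk)\lpp t\rpp$; thus it suffices to verify the claimed shape of $\rH_M(t)$ on a set of $R$-module generators of $\uK(\bD)$. By Proposition~\ref{prop:mah}(a) (which in turn relies on Theorem~\ref{thm:sd}), the classes $[M(\fa,h)]$ generate $\uK(\bD)$ as an abelian group, and hence certainly as an $R$-module.

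Next I would compute $\rH_{M(\fa,h)}(t)$ directly. By definition, $M(\fa,h) = (\bD/\fa\bD)^{(h)}$ is free over $\bk/\fa$ with basis $x^{[hn]}$ for $n \ge 0$, so its degree $m$ piece is $\bk/\fa$ when $h \mid m$ and $0$ otherwise. Therefore
\begin{displaymath}
\rH_{M(\fa,h)}(t) = [\bk/\fa] \sum_{n \ge 0} t^{hn} = \frac{[\bk/\fa]}{1-t^h}.
\end{displaymath}
This already has the desired shape, with $a_h(t) = [\bk/\fa] \in \rK(\bk) \subset R \otimes \rK(\bk)$ and all other $a_i(t) = 0$.

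Now let $M$ be an arbitrary finitely presented graded $\bD$-module. By Theorem~\ref{thm:sd}, $M$ admits a finite resolution $0 \to S_N \to \cdots \to S_0 \to M \to 0$ by special modules, so $[M] = \sum_{j=0}^N (-1)^j [S_j]$ in $\uK(\bD)$. Each $S_j$ in turn has a finite filtration whose graded pieces are shifts of principal specials $M(\fa,h)[k]$, and a shift by $k$ multiplies the Hilbert series by $t^k$, which lies in $R$. Combining these facts, $\rH_M(t)$ is a finite $\bZ$-linear combination of terms of the form $t^k \cdot \frac{[\bk/\fa]}{1-t^h}$, hence a finite sum $\sum_{h} \frac{b_h(t)}{1-t^h}$ with $b_h(t) \in R \otimes \rK(\bk)$. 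Taking $r$ to be the largest $h$ that appears (and padding the remaining $a_i$ with zero) yields the required expression.

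There is no serious obstacle here: the special resolution theorem does all the heavy lifting, and the remaining work is bookkeeping with Hilbert series and shifts. The only minor care needed is to make sure the filtration pieces of a special module contribute terms whose numerators lie in $R \otimes \rK(\bk)$ rather than in some completion, which is immediate since a special module has a \emph{finite} filtration and each filtration piece contributes a single shift by an integer.
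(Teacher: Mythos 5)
Your proposal is correct and follows essentially the same route as the paper: reduce via the $R$-linearity of $\rH$ and Proposition~\ref{prop:mah}(a) to the spanning classes $[M(\fa,h)]$, then compute $\rH_{M(\fa,h)}(t)=[\bk/\fa]/(1-t^h)$ directly. The extra bookkeeping you include about special resolutions and filtrations is exactly what underlies Proposition~\ref{prop:mah}(a), so nothing is missing and nothing is genuinely different.
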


\begin{proof}
By Proposition~\ref{prop:mah}(a), it suffices to check this for the module $M=M(\fa, h)$. We have
\begin{displaymath}
M_n = \begin{cases}
\bk/\fa & \text{if $h \mid n$} \\
0 & \text{if $h \nmid n$} \end{cases}
\end{displaymath}
Thus $\rH_M(t)=\frac{[\bk/\fa]}{1-t^h}$, and the result follows.
\end{proof}

We now describe $\uK(\bD)$ up to $R$-torsion. Consider the map
\begin{displaymath}
\varphi_0 \colon \rK(\bk) \to \rK(\bD), \qquad [M] \mapsto (t-1) [\bD \otimes_{\bk} M]
\end{displaymath}
Let $S$ be the $R$-subalgebra of $\Frac(R)$ generated by $\frac{1}{t^n-1}$ for $n \ge 1$. We then have:

\begin{proposition} \label{prop:kgp}
The map $\varphi \colon \rK(\bk) \otimes_{\bZ} S \to \rK(\bD) \otimes_R S$ induced by $\varphi_0$ is an isomorphism.
\end{proposition}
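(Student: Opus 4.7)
The plan is to establish this as two separate claims: surjectivity, which will follow from the spanning set provided by Proposition~\ref{prop:mah}, and injectivity, which I will handle by post-composing with the Hilbert series. (The tensor product $\uK(\bD) \otimes_R S$ requires an $R$-action on the Grothendieck group, so I read the target as $\uK(\bD)$.)

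For surjectivity, Proposition~\ref{prop:mah}(a) shows that the classes $[M(\fa,h)]$ generate $\uK(\bD)$, and applying part (c) of that proposition with $k=1$ (which is valid since $\pi_1=0$ lies in every ideal) gives $(1-t^h)[M(\fa,h)] = (1-t)[M(\fa,1)] = -\varphi_0([\bk/\fa])$ in $\uK(\bD)$. Since $1-t^h$ becomes invertible in $S$, this rearranges in $\uK(\bD) \otimes_R S$ to the identity $[M(\fa,h)] \otimes 1 = \varphi\bigl([\bk/\fa] \otimes (-1/(1-t^h))\bigr)$, exhibiting each generator as lying in the image of $\varphi$.

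For injectivity, I will use the Hilbert series. Multiplication by $1-t^n$ acts invertibly on $\rK(\bk)\lpp t \rpp$ (with inverse $1+t^n+t^{2n}+\cdots$), so the $R$-linear map $\rH$ extends to an $S$-linear map $\rH_S \colon \uK(\bD) \otimes_R S \to \rK(\bk) \lpp t \rpp$. A short calculation gives $\rH(\varphi_0([M])) = (t-1) \cdot \tfrac{[M]}{1-t} = -[M]$, so $\rH_S \circ \varphi$ equals the negative of the tautological map $\iota \colon \rK(\bk) \otimes_\bZ S \to \rK(\bk)\lpp t \rpp$ induced by the inclusion $S \hookrightarrow \bZ\lpp t \rpp$. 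Injectivity of $\varphi$ therefore reduces to injectivity of $\iota$.

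The main (and only) obstacle is verifying that $\iota$ is injective, which is a flatness argument. Given a finite sum $\xi = \sum_i [M_i] \otimes s_i$ with $\iota(\xi) = 0$, let $N \subset \rK(\bk)$ be the finitely generated $\bZ$-submodule generated by the $[M_i]$; then $\xi$ comes from $N \otimes_\bZ S$. Both $S$ and $\bZ\lpp t \rpp$ are $\bZ$-torsion-free, hence $\bZ$-flat since $\bZ$ is a PID, yielding injections $N \otimes_\bZ S \hookrightarrow \rK(\bk) \otimes_\bZ S$ and $N \otimes_\bZ \bZ\lpp t \rpp \hookrightarrow \rK(\bk) \otimes_\bZ \bZ\lpp t \rpp$. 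Moreover $N \otimes_\bZ \bZ\lpp t \rpp = N\lpp t \rpp$ since $N$ is finitely generated, and $N\lpp t \rpp \hookrightarrow \rK(\bk)\lpp t \rpp$ because $N$ injects into $\rK(\bk)$. Chasing $\xi$ through these maps, $\iota(\xi) = 0$ forces $\xi = 0$ already in $N \otimes_\bZ S$, and hence in $\rK(\bk) \otimes_\bZ S$, completing the proof.
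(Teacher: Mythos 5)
Your overall strategy is the paper's: the spanning set from Proposition~\ref{prop:mah} gives surjectivity, and the Hilbert series gives injectivity. The paper packages this slightly differently: using Proposition~\ref{prop:hilb} it observes that $\rH_M(t)$ already lies in the subgroup $\rK(\bk)\otimes_{\bZ}S$ of $\rK(\bk)\lpp t \rpp$, so that $\rH$ defines a map $\psi\colon \uK(\bD)\otimes_R S\to \rK(\bk)\otimes_{\bZ}S$ which is checked directly to be a two-sided inverse of $\varphi$; this sidesteps any discussion of the map you call $\iota$. Your surjectivity argument and the computation $\rH_S\circ\varphi=-\iota$ are correct (and your reading of the target as the graded Grothendieck group is the right one).

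There is one unjustified step at the end of your injectivity argument. The injections you establish are $N\otimes_{\bZ}S\hookrightarrow \rK(\bk)\otimes_{\bZ}S$ and $N\lpp t\rpp\hookrightarrow \rK(\bk)\lpp t\rpp$, but the diagram chase also needs injectivity of $N\otimes_{\bZ}S\to N\otimes_{\bZ}\bZ\lpp t\rpp$, and this does not follow from flatness of $S$ and $\bZ\lpp t\rpp$ over $\bZ$: flatness lets you tensor the injection $N\hookrightarrow \rK(\bk)$ with a fixed flat module, not tensor the injection $S\hookrightarrow \bZ\lpp t\rpp$ with $N$. Since $N$ is a finitely generated abelian group that may well have torsion (e.g.\ $\rK(\bk)$ contains the class group when $\bk$ is a Dedekind domain), what you actually need is that $\bZ\lpp t\rpp/S$ is torsion-free, i.e.\ that $S\cap n\bZ\lpp t\rpp=nS$ for every $n\geq 1$. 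This is true and quick to check: if $f\in\bZ\lpp t\rpp$ and $nf=p(t)/\prod_i(t^{n_i}-1)$ with $p\in R$, then $f\cdot\prod_i(t^{n_i}-1)=p(t)/n$ is simultaneously a Laurent series with integer coefficients and a Laurent polynomial with rational coefficients, hence lies in $R$, so $f\in S$. With this purity statement inserted, your proof is complete.
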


\begin{proof}
Define $\psi \colon \rK(\bD) \otimes_R S \to \rK(\bk) \otimes_{\bZ} S$ by $\psi([M])=\rH_M(t)$. This map is well-defined by the previous proposition. We claim that $\psi$ is inverse to the map $\varphi$ in the statement of the theorem. We first check that $\psi \circ \varphi$ is the identity. Thus let $M$ be a finitely generated $\bk$-module. Then $\varphi([M])=(1-t) [M \otimes_{\bk} \bD]$. We have $\rH_{M \otimes_{\bk} D}(t)=\frac{[M]}{1-t}$, and so $\psi(\varphi([M]))=[M]$. We now check that $\varphi \circ \psi$ is the identity. It suffices to check $\varphi(\psi([M]))=[M]$ when $M=M(\fa,h)$, since these span $\rK(\bD)$. We have $\psi([M])=\frac{1}{1-t^h} [\bk/\fa]$, and so $\varphi(\psi([M]))=\frac{1-t}{1-t^h} [\bD/\fa \bD]$, which equals $[M]$ by Proposition~\ref{prop:mah}(c). This completes the proof.
\end{proof}

\subsection{Conjectural description of $\uK(\bD)$} \label{ss:groth-conj}

Suppose $M$ is a finitely presented graded $\bD$-module then, by Proposition~\ref{prop:hilb}, we can write
\begin{displaymath}
\rH_M(t) = \sum_{n \ge 1} \frac{a_n(t)}{1-t^n}
\end{displaymath}
where $a_n(t) \in R \otimes_{\bZ} \rK(\bk)$. If we express $[M]$ as an $R$-linear combination of classes of the form $[M(\fa,h)]$ then only those classes with $n \mid h$ can contribute to $a_n$ (assuming we do not artifically insert canceling factors into the numerator and denominator). That is, $a_n$ is an $R$-linear combination of classes of the form $[N]$, where $N$ belongs to the category $\cC_n$ of finitely generated $\bk$-modules supported on $\bigcup_{n \mid h} V(\pi_h)$. This suggests that $a_n$ might be well-defined in $R \otimes \rK_0(\cC_n)$. We conjecture that this is the case, and that, moreover, it completely explains the structure of $\uK(\bD)$. We now give a precise statement.

For notational ease, let $\cK_n=\rK_0(\cC_n)$. For $M \in \cC_n$ we write $[M]_n$ for its class in $\cK_n$. Note that $\cC_1$ is the category of all finitely generated $\bk$-modules, and for $n \mid m$ we have an inclusion $\cC_m \subset \cC_n$. We define $\ul{\sK}$ to be the quotient of the $R$-module
\begin{displaymath}
\bigoplus_{n \ge 1} \cK_n \otimes_{\bZ} R_n
\end{displaymath}
by the relations
\begin{displaymath}
\frac{[M]_m}{1-t^n} = \frac{[M]_n}{1-t^n}
\end{displaymath}
for $M \in \cC_m$ and $n \mid m$, where here the left side belongs to the $m$th summand and the right side to the $n$th summand. (Note that $\frac{1}{1-t^n} \in R_m$, so that the left side above does indeed belong to the $m$th summand.)

We now define an $R$-linear map
\begin{equation} \label{eq:phi}
\varphi \colon \ul{\sK} \to \uK(\bD).
\end{equation}
Suppose $n \mid h$ and $\fa$ is an ideal containing $\pi_h$. We then put
\begin{displaymath}
\varphi \left( \frac{[\bk/\fa]_n}{1-t^n} \right) = \frac{1-t^h}{1-t^n} [M(\fa,h)].
\end{displaymath}
Since $\cK_n$ is spanned by the classes $[\bk/\fa]$ as above, this specifies $\varphi$ uniquely. We leave it to the reader to verify that $\varphi$ is well-defined. It follows from Proposition~\ref{prop:mah}(a) that $\varphi$ is surjective. Our conjectural description of $\uK(\bD)$ is:

\begin{conjecture} \label{conj:k}
The map $\varphi$ is an isomorphism.
\end{conjecture}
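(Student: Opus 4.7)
Since $\varphi$ is already surjective by Proposition~\ref{prop:mah}(a) --- the classes $[M(\fa,h)]$ generate $\uK(\bD)$ and each is the $\varphi$-image of $\frac{[\bk/\fa]_h}{1-t^h}$ --- the task reduces to proving injectivity. My plan is to build enough invariants on $\uK(\bD)$ to reconstruct a class in $\ul{\sK}$ from its image.

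The first invariant is the Hilbert series $\rH$ of Proposition~\ref{prop:hilb}. Its composition with $\varphi$ sends $\frac{[M]_n}{1-t^n}$ to $\frac{[M]}{1-t^n} \in \rK(\bk)\lpp t \rpp$, where $[M]$ is the image of $[M]_n$ under the forgetful map $\cK_n \to \rK(\bk)$. By Proposition~\ref{prop:kgp} this composite is already essentially an isomorphism after inverting all the $1-t^m$, so the residual obstruction to injectivity comes entirely from classes whose images agree in $\rK(\bk)$ but differ as refined classes in some $\cK_n$: concretely, $\rH$ forgets which $\cK_n$ a class originates in, and that is what must be recovered.

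To recover it, I would introduce, for each $n$ such that $\pi_h$ vanishes somewhere on $\mathrm{Spec}\,\bk$ for some $n \mid h$, an $R$-linear invariant
\begin{displaymath}
\rL_n \colon \uK(\bD) \longrightarrow \cK_n \otimes_{\bZ} R_n
\end{displaymath}
characterized by $\rL_n([M(\fa,h)]) = \frac{1-t^h}{1-t^n}[\bk/\fa]_n$ whenever $n \mid h$ and $\pi_h \in \fa$, and vanishing on free $\bD$-modules. The natural candidate is a derived construction, for instance the graded Euler characteristic of $M \otimes^{L}_{\bD} \bD/\pi_h \bD$ for a suitable $h$ divisible by $n$; reducing modulo $\pi_h$ moves the support into $V(\pi_h)$, so the output naturally belongs to $\cC_n$. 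One must verify: (i) the construction is bounded and valued in finitely presented modules, so that it descends to the Grothendieck group; (ii) the result lands in the correct summand of $\ul{\sK}$; and (iii) it genuinely remembers the refined class $[M]_n$, not merely its image in $\rK(\bk)$. With these in hand, the combined map $(\rH, \{\rL_n\}_n)$ separates the generators of $\ul{\sK}$ modulo its defining relations, and injectivity of $\varphi$ follows.

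The main obstacle will be the construction of $\rL_n$ itself. For the output to truly lie in $\cK_n$ and not merely in $\rK(\bk)$, the loci $V(\pi_h)$ for $n \mid h$ must be ``independent enough'' on $\mathrm{Spec}\,\bk$. When they overlap intricately --- as in the $q$-divided power algebra over $\bZ[q]$, whose $\pi_h = \Phi_h(q)$ share primes in uncontrolled patterns --- I expect no $\rL_n$ of the required strength to exist. This matches the paper's announcement that Conjecture~\ref{conj:k} will be proved only under a hypothesis (Theorem~\ref{thm:k}) which fails precisely in such examples (\S\ref{ss:Kq}). In well-behaved cases such as the classical divided power algebra (\S\ref{ss:groth-classical}), I expect $\rL_n$ built from reduction modulo $\pi_h$ to work as described; in the general case a genuinely new idea beyond $\rH$ and $\rL$ appears to be needed, or the conjecture may itself require refinement.
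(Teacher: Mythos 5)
The statement you are attempting is a conjecture that the paper itself does not prove: it is established only conditionally, under the hypothesis $(\ast)$ that each map $\cK_n \to \cK_+$ is injective (Theorem~\ref{thm:k}), and that hypothesis is shown to fail for the $q$-divided power algebra over $\bZ[q]$ (\S\ref{ss:Kq}). Your reduction to injectivity, your use of $\rH$ on the ``$n=1$'' part, and your diagnosis that $\rH$ alone only controls $\uK(\bD)$ modulo $R$-torsion all match the paper. The difference lies in how the torsion part is handled: the paper constructs a single invariant $\rL$ valued in $R_{\infty}/R_1 \otimes \cK_+$ (an Euler characteristic of $\Tor^{\bD}_{\bullet}(M,\bk)$, with degreewise finiteness supplied by Proposition~\ref{prop:tor-vanishing}), and then invokes $(\ast)$ to identify the quotient $\ul{\sK}/\ul{\sK}^1$ with a submodule of $R_{\infty}/R_1 \otimes \cK_+$ on which $\rL \circ \varphi$ is the identity. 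Your proposed family $\{\rL_n\}$ valued in $\cK_n \otimes R_n$ is strictly stronger and would yield the conjecture unconditionally --- which is exactly why it is out of reach: the paper explicitly lists the construction of such refined invariants as an open problem in \S\ref{s:prob}.

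The genuine gap is therefore the existence of the $\rL_n$. Your candidate, the Euler characteristic of $M \stackrel{\rL}{\otimes}_{\bD} \bD/\pi_h \bD$, is not developed enough to assess: you would need to specify how it is independent of (or assembled over) the choice of $h$ with $n \mid h$, why the alternating sum converges in each degree, and --- most seriously --- why the resulting class is well defined in $\cK_n$ rather than merely in $\rK(\bk/(\pi_h))$ or $\rK(\bk)$. This last point is precisely where the interaction of the loci $V(\pi_h)$ enters, and the $q$-divided power algebra gives an actual obstruction to any naive version ($\cK_{39} \to \cK_+$ fails to be injective), so no argument of this shape can succeed in full generality without new input. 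Your write-up is an accurate account of the strategy and of the obstruction, but it is a research program rather than a proof. To produce a complete argument for something the paper actually proves, either assume $(\ast)$ and carry out the two-invariant argument of Theorem~\ref{thm:k}, or specialize to the classical divided power algebra, where $(\ast)$ holds by the Chinese remainder theorem (Proposition~\ref{prop:classast}).
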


\subsection{The $\rL$ invariant} \label{ss:grothL}

Let $\cC_+$ be the category of finitely generated $\bk$-modules supported on $\bigcup_{n \ge 2} V(\pi_n)$, and let $\cK_+=\rK_0(\cC_+)$. For $M \in \cC_+$, let $[M]_+$ be the class of $M$ in $\cK_+$. We extend this notation to all $\bk$-modules $M$ by putting $[M]_+=0$ if $M \not\in \cC_+$. For a graded $\bk$-module $M$, we define $\lbb M \rbb$ as $\sum_{n \in \bZ} [M_n]_+ t^n$. For a finitely presented graded $\bD$-module $M$, we define
\begin{displaymath}
\rL^0_M(t) = \sum_{i=0}^{\infty} (-1)^i [\Tor^{\bD}_i(M, \bk)]_+ \in \cK_+ \lpp t \rpp.
\end{displaymath}
For any fixed $d$ we have $\Tor^{\bD}_i(M, \bk)_d=0$ for $i \gg 0$ (Proposition~\ref{prop:tor-vanishing}). It follows that in the above sum, any fixed power of $t$ occurs only finitely many times, and so the sum is well-defined. We also put
\begin{displaymath}
\rL_M(t) = \frac{\rL^0_M(t)}{1-t}.
\end{displaymath}
Our main result on $\rL$ is the following:

\begin{proposition} \label{prop:L}
We have a well-defined $R$-module homomorphism
\begin{displaymath}
\rK(\bD) \to R_{\infty}/R_1 \otimes_{\bZ} \cK_+, \qquad [M] \mapsto \rL_M(t)
\end{displaymath}
\end{proposition}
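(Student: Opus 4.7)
The plan is to verify four things in order: (i) $\rL^0_M$ and $\rL_M$ are well-defined elements of $\cK_+\lpp t\rpp$; (ii) the assignment $M \mapsto \rL_M(t)$ is additive on short exact sequences, hence descends to the Grothendieck group; (iii) the image actually lies in $R_\infty \otimes \cK_+$ modulo $R_1 \otimes \cK_+$; and (iv) the resulting map respects the $R$-action (i.e.\ grading shift). Steps (i), (ii), (iv) are direct; step (iii) is the real work.

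For step (i), fix $n \in \bZ$. By Proposition~\ref{prop:tor-vanishing}, $\Tor^\bD_i(M,\bk)_n = 0$ for $i \gg n$, so each coefficient of $\rL^0_M$ is a finite alternating sum in $\cK_+$. Dividing by $1-t$ (a unit in $\cK_+\lpp t \rpp$) gives $\rL_M$. For step (ii), I first observe that $\cC_+$ is a Serre subcategory of finitely generated $\bk$-modules: the support condition $\supp(M) \subset \bigcup_{n\ge 2} V(\pi_n)$ is preserved by submodules, quotients, and extensions since $\supp(M) = \supp(M') \cup \supp(M'')$ in a short exact sequence. Thus $[-]_+$ (extended by $0$) is additive. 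Now given a short exact sequence $0 \to M' \to M \to M'' \to 0$ of $\bD$-modules, the long exact sequence of $\Tor^\bD(-,\bk)$ restricted to each graded degree is a finite exact sequence, and the Euler characteristic identity in $\cK_+$ yields $\rL^0_{M'}(t) - \rL^0_M(t) + \rL^0_{M''}(t) = 0$ as elements of $\cK_+\lpp t \rpp$; hence the same for $\rL$. For step (iv), a shift $M \mapsto M[1]$ uniformly shifts the grading of each $\Tor^\bD_i(M,\bk)$, so $\rL_{M[1]}(t) = t \rL_M(t)$, matching the $R$-action on the target.

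The hard part is step (iii). By additivity and Proposition~\ref{prop:mah}(a) (itself a consequence of the existence of special resolutions, Theorem~\ref{thm:sd}), it is enough to show $\rL_M \in R_\infty \otimes \cK_+$ for $M = M(\fa,h) = (\bD/\fa\bD)^{(h)}$ with $\pi_h \in \fa$. When $h = 1$ we have $M(\fa,1) = \bD \otimes_\bk \bk/\fa$, so base-changing a $\bk$-free resolution of $\bk/\fa$ gives a $\bD$-free resolution of $M(\fa,1)$; hence $\Tor^\bD_i(M(\fa,1), \bk) = \Tor^\bk_i(\bk/\fa, \bk)$ is concentrated in degree $0$. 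Thus $\rL_{M(\fa,1)} = \frac{1}{1-t} \sum_i (-1)^i [\Tor^\bk_i(\bk/\fa, \bk)]_+ \in R_1 \otimes \cK_+$, so its class in $R_\infty/R_1 \otimes \cK_+$ is zero. For general $h$ with $\pi_h \in \fa$, I would build an explicit resolution of $M(\fa,h) = \bD/J$, where $J$ is the $\bD$-ideal generated by $\fa$ and the $x^{[k]}$ with $h \nmid k$. Using Proposition~\ref{prop:regrade} to identify $\bD^{(h)}$ (after regrading) with $\bD^{[h]}$, together with the $\bD^{(h)}$-module decomposition $\bD = \bigoplus_{k<h} \bD^{(h)} x^{[k]}$, one reduces the Tor computation to Tor over $\bD^{[h]}$ of $(\bD^{[h]}/\fa\bD^{[h]})$-type modules. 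The outcome is that $\rL^0_{M(\fa,h)}$ is a rational function in $t$ whose denominator divides a product of factors $(1-t^{hk})$, showing $\rL_{M(\fa,h)} \in R_{\mathrm{lcm}} \otimes \cK_+ \subset R_\infty \otimes \cK_+$.

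The main obstacle is the last computation: constructing a small enough resolution of $M(\fa,h)$ as a $\bD$-module to explicitly verify rationality of $\rL^0_{M(\fa,h)}$. A clean way is to exploit the $\bD$-module isomorphism obtained from Proposition~\ref{prop:regrade} together with the short exact sequence $0 \to \ker \to \bD/\fa\bD \to M(\fa,h)\otimes_\bk \bD_{<h}^{\vee}\text{-like object} \to 0$; however the cleanest route is probably to resolve $\bk/\fa$ over $\bk$ and tensor up, then replace the resulting free resolution of $\bD/\fa\bD$ by a quasi-isomorphic complex adapted to the grading mod $h$. Once this explicit denominator control is in place, additivity propagates the $R_\infty$-membership to all of $\uK(\bD)$, the quotient $R_\infty/R_1 \otimes \cK_+$ is an $R$-module, and the preceding steps assemble into the asserted $R$-linear homomorphism.
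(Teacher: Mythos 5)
There is a genuine gap in your step (ii), and it is precisely the point the proposition is designed around. The extension of $[-]_+$ by zero to all finitely generated $\bk$-modules is \emph{not} additive on short exact sequences, even though $\cC_+$ is a Serre subcategory: additivity fails whenever some terms of a short exact sequence lie in $\cC_+$ and others do not. Concretely, for the classical divided power algebra over $\bk=\bZ$, the sequence $0 \to p\bD \to \bD \to \bD/p\bD \to 0$ gives $\rL^0_{p\bD}=\rL^0_{\bD}=0$ (since $[\bZ]_+=0$) but $\rL^0_{\bD/p\bD}=[\bZ/p]_+\neq 0$. So your claimed identity $\rL^0_{M'}-\rL^0_M+\rL^0_{M''}=0$ in $\cK_+\lpp t\rpp$ is false, and the map $[M]\mapsto \rL_M(t)$ is \emph{not} well defined into $\cK_+\lpp t\rpp$; only its composite with the quotient by $R_1\otimes\cK_+$ is. To get even that, you must show the defect $\delta^0=\rL^0_{M_1}-\rL^0_{M_2}+\rL^0_{M_3}$ is a (Laurent) polynomial in $t$, so that $\delta=\delta^0/(t-1)$ lands in $R_1\otimes\cK_+$. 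This requires knowing that $\Tor_i^{\bD}(M,\bk)$ lies in $\cC_+$ for $i\gg 0$ (the paper's Lemma~\ref{lem:L1}, proved via special resolutions): in degrees $d$ above the finitely many degrees occupied by the low-index Tor groups, the long exact sequence is genuinely a sequence in $\cC_+$ and the Euler characteristic vanishes, while in the remaining finitely many degrees one absorbs the error into $R_1$. Your argument supplies neither this finiteness input nor the bookkeeping.

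Your step (iii) is also incomplete where it matters: for $h\ge 2$ you only sketch a strategy for resolving $M(\fa,h)$ and explicitly defer the computation. The paper avoids any explicit resolution by using the relation $\frac{1-t^h}{1-t}[M(\fa,h)]=[\bD/\fa\bD]$ in $\rK(\bD/\fa\bD)$ together with the fact that additivity of $\rL$ is \emph{exact} (no $\delta$) when all modules in the sequence are supported on $\bigcup_{n\ge 2}V(\pi_n)$; combined with $\rL^0_{\bD/\fa\bD}=[\bk/\fa]_+$ this yields $\rL_{M(\fa,h)}=[\bk/\fa]_+/(1-t^h)$ directly. I recommend reorganizing along these lines: first prove the eventual $\cC_+$-membership of the Tor groups, then the additivity-up-to-$R_1$ statement (with the exact case under the support hypothesis), and then the closed formula for $\rL_{M(\fa,h)}$.
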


We need some lemmas before proving this. In what follows, all $\bD$-modules are finitely presented and graded.

\begin{lemma} \label{lem:L1}
Let $M$ be a $\bD$-module. Then $\Tor_i^{\bD}(M, \bk)$ belongs to $\cC_+$ for $i \gg 0$.
\end{lemma}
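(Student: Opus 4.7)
The plan is to show that for every $i \ge 2$ (which is stronger than the stated ``$i \gg 0$'') the module $\Tor^\bD_i(M, \bk)$ is both finitely generated as a $\bk$-module and supported on $\bigcup_{n \ge 2} V(\pi_n)$; together these two properties are exactly the defining condition of $\cC_+$, so this will finish the lemma.

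For the finite generation statement I would repeat the construction in the proof of Proposition~\ref{prop:tor-vanishing}, keeping careful track of sizes at each step. Take $V_0 = \bigoplus_{d=n}^m M_d$, form the surjection $\bD \otimes_\bk V_0 \twoheadrightarrow M$, and iterate on kernels. Because $\bk$ is noetherian and $M$ is finitely presented, each graded piece of $M$, and more generally of every iterated syzygy, is finitely generated over $\bk$, so each $V_i$ is finitely generated over $\bk$; coherence of $\bD$ keeps the syzygies finitely presented so the iteration goes through. By Proposition~\ref{prop:relatively-projective}, $\bD \otimes_\bk V_\bullet$ is acyclic for $- \otimes_\bD \bk$, so $\Tor^\bD_i(M, \bk)$ is canonically the homology $H_i(V_\bullet)$, which is a subquotient of the finitely generated $\bk$-module $V_i$ and is therefore itself finitely generated over $\bk$ by noetherianity of $\bk$.

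For the support claim I would localize. Let $\fp$ be a prime of $\bk$ that does not lie in $\bigcup_{n \ge 2} V(\pi_n)$; then every $\pi_n$ with $n \ge 2$ is a unit in $\bk_\fp$, so Corollary~\ref{gdpa:poly} identifies $\bD_\fp$ with the polynomial ring $\bk_\fp[x]$. Over this polynomial ring, $\bk_\fp$ admits the length-one Koszul resolution $0 \to \bk_\fp[x] \stackrel{x}{\to} \bk_\fp[x] \to \bk_\fp \to 0$, so $\Tor^{\bD_\fp}_i(M_\fp, \bk_\fp) = 0$ for every $i \ge 2$. Since Tor commutes with localization, $\Tor^\bD_i(M, \bk)_\fp = 0$ at every such $\fp$, placing the support of $\Tor^\bD_i(M, \bk)$ inside $\bigcup_{n \ge 2} V(\pi_n)$. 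Combined with the previous paragraph this gives $\Tor^\bD_i(M, \bk) \in \cC_+$ for every $i \ge 2$. I do not anticipate any serious obstacle; the only point that requires care is confirming that the resolution constructed in Proposition~\ref{prop:tor-vanishing} can be arranged with each $V_i$ finitely generated over $\bk$, which is just coherence of $\bD$ combined with noetherianity of $\bk$.
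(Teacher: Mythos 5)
Your proof is correct, but it takes a genuinely different route from the paper's. The paper deduces the lemma from the special resolution theorem (Theorem~\ref{thm:sd}): it reduces to the principal special modules $M(\fa,h)$, observing that $\Tor_i^{\bD}(\bD/\fa\bD,\bk)$ vanishes for $i>0$ by Proposition~\ref{prop:relatively-projective}, while for $h\ge 2$ every $\Tor_i^{\bD}(M(\fa,h),\bk)$ is annihilated by $\pi_h$ and hence lies in $\cC_+$; the general case then follows by running long exact sequences along a finite special resolution. You instead argue directly: finite generation over $\bk$ comes from refining the resolution of Proposition~\ref{prop:tor-vanishing} so that each $V_i$ is finitely generated (which is legitimate — graded-coherence of $\bD$ from Theorem~\ref{thm:grobcoh} keeps every syzygy finitely presented, hence generated in bounded degrees, and noetherianity of $\bk$ handles the graded pieces), and the support condition comes from localizing at a prime $\fp$ outside $\bigcup_{n\ge 2}V(\pi_n)$, where Proposition~\ref{gdpa:bc} and Corollary~\ref{gdpa:poly} turn $\bD_{\fp}$ into $\bk_{\fp}[x]$ and the Koszul resolution kills $\Tor_i$ for $i\ge 2$. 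Your argument buys a uniform, explicit bound ($i\ge 2$ suffices for every finitely presented $M$, versus a bound depending on the special dimension) and is independent of the special resolution machinery; the paper's version is shorter in context, since Theorem~\ref{thm:sd} and the classes $[M(\fa,h)]$ are already the organizing tools of the section, and it records the finer fact that the $\Tor$'s of $\bD/\fa\bD$ actually vanish in positive degrees. The only caution is that your finite-generation step really does need $M$ finitely presented (it fails, e.g., for $M=\bk$ over $\bF_p$, where $\Tor_1$ is infinite-dimensional); this hypothesis is in force by the standing convention of the section, but it is worth stating explicitly since the lemma as written just says ``$\bD$-module.''
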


\begin{proof}
If $M=\bD/\fa \bD$ then the $\Tor$ in question vanishes for $i>0$ (Proposition~\ref{prop:relatively-projective}). If $M=M(\fa,h)$ for some $h \ge 2$ then all the $\Tor$'s are annihilated by $\pi_h$. The result now follows from the theorem on special resolutions (Theorem~\ref{thm:sd}).
\end{proof}

\begin{lemma} \label{lem:L2}
Suppose that
\begin{displaymath}
0 \to M_1 \to M_2 \to M_3 \to 0
\end{displaymath}
is a short exact sequence of $\bD$-modules. Then $\rL_{M_1}(t)+\rL_{M_3}(t)=\rL_{M_2}(t)+\delta$ for some $\delta \in R_1 \otimes \cK_+$. If each $M_i$ supported on $\bigcup_{n \ge 2} V(\pi_n)$ as a $\bk$-module then $\delta=0$.
\end{lemma}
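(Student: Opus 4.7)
The plan is to show that $c := \rL^0_{M_1}(t) - \rL^0_{M_2}(t) + \rL^0_{M_3}(t)$ belongs to $R \otimes \cK_+$ (that is, is a Laurent polynomial in $t$ with coefficients in $\cK_+$); dividing by $1-t$ then produces the required $\delta = c/(1-t) \in R_1 \otimes \cK_+$. The key subtlety is that the invariant $[-]_+$ is not additive on short exact sequences whose middle term falls outside $\cC_+$, so one cannot naively take Euler characteristics of the long exact sequence in Tor. The strategy is to show that for all but finitely many $n$ the degree-$n$ restriction of this long exact sequence lands entirely in $\cC_+$, where additivity is restored and the usual Euler-characteristic identity forces the coefficient of $t^n$ in $c$ to vanish.

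The short exact sequence yields a long exact sequence of graded $\bk$-modules
$$\cdots \to \Tor_i^\bD(M_1, \bk) \to \Tor_i^\bD(M_2, \bk) \to \Tor_i^\bD(M_3, \bk) \to \Tor_{i-1}^\bD(M_1, \bk) \to \cdots$$
For each fixed integer $n$, restricting to the degree-$n$ piece gives a long exact sequence of finitely generated $\bk$-modules with only finitely many non-zero terms by Proposition~\ref{prop:tor-vanishing}; when every one of these terms happens to lie in the Serre subcategory $\cC_+$ of finitely generated $\bk$-modules, the alternating sum of their classes vanishes in $\cK_+$, which is exactly the coefficient of $t^n$ in $c$.

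The main step is thus to verify this all-in-$\cC_+$ condition for $n$ outside a bounded range. On the lower end, each $M_j$ is a finitely presented graded module over the non-negatively graded ring $\bD$ and so is supported in degrees $\ge d$ for some integer $d$; any resolution of $M_j$ by finitely generated free $\bD$-modules can be taken to be bounded below by $d$ as well, whence $\Tor_i^\bD(M_j, \bk)_n = 0$ for $n < d$, uniformly in $i$ and $j$. On the upper end, Lemma~\ref{lem:L1} supplies an $N_0$ with $\Tor_i^\bD(M_j, \bk) \in \cC_+$ for every $i \ge N_0$ and every $j \in \{1,2,3\}$, while for each of the finitely many remaining pairs $(i,j)$ with $i < N_0$, coherence of $\bD$ lets us compute $\Tor_i^\bD(M_j, \bk)$ from a resolution by finitely generated free $\bD$-modules, so this Tor is a subquotient of a finitely generated $\bk$-module and hence supported in a bounded range of degrees. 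Choosing $N$ larger than all these upper bounds forces every non-zero $\Tor_i^\bD(M_j, \bk)_n$ with $n > N$ to satisfy $i \ge N_0$, and so to lie in $\cC_+$. This gives the coefficient of $t^n$ in $c$ equal to zero for $n < d$ or $n > N$, establishing $c \in R \otimes \cK_+$.

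For the final assertion, assume each $M_j$ is supported on $\bigcup_{n \ge 2} V(\pi_n)$ as a $\bk$-module. Since Tor commutes with localization, for any prime $\mathfrak{p}$ of $\bk$ outside $\bigcup_{n \ge 2} V(\pi_n)$ the localization $(M_j)_\mathfrak{p}$ vanishes and hence so does $\Tor_i^\bD(M_j, \bk)_\mathfrak{p}$; consequently every $\Tor_i^\bD(M_j, \bk)$ is supported on $\bigcup_{n \ge 2} V(\pi_n)$ and therefore lies in $\cC_+$. The Euler-characteristic argument now applies at every degree $n$, forcing $c = 0$ and therefore $\delta = 0$.
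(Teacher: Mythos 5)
Your proposal is correct and follows essentially the same route as the paper's proof: take the long exact sequence in $\Tor$, use Lemma~\ref{lem:L1} to place all sufficiently high $\Tor$'s in $\cC_+$, bound the degrees in which the finitely many remaining $\Tor$'s can be non-zero, and conclude that the alternating sum of classes in $\cK_+$ vanishes in all sufficiently large degrees (and in all degrees for the final assertion). You additionally make explicit the lower degree bound and the coherence argument showing the low $\Tor$'s live in bounded degrees, details the paper leaves implicit.
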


\begin{proof}
Let $N$ be such that $\Tor^{\bD}_i(M_j, \bk)$ belongs to $\cC_+$ for all $i \ge N$ and all $j \in \{1,2,3\}$. This exists by Lemma~\ref{lem:L1}. Let $d_0$ be the maximal degree occurring in the groups $\Tor^{\bD}_i(M_j, \bk)$ for $0 \le i < N$ and $j \in \{1,2,3\}$. Consider the degree $d>d_0$ piece of the long exact sequence in $\Tor$:
\begin{displaymath}
\cdots \to \Tor^{\bD}_{i+1}(M_3, \bk)_d \to \Tor^{\bD}_i(M_1, \bk)_d \to \Tor^{\bD}_i(M_2, \bk)_d \to \Tor^{\bD}_i(M_3, \bk)_d \to \cdots
\end{displaymath}
This is an exact sequence in the category $\cC_+$, and so the alternating sum of the classes in $\cK_+$ is zero. It follows that the coefficient of $t^d$ in $\delta^0 = \rL^0_{M_1}-\rL^0_{M_2}+\rL^0_{M_3}$ vanishes for all $d>d_0$, and so $\delta = \frac{\delta^0}{t-1} \in R_1$. If each $M_i$ is supported on $\bigcup_{n \ge 2} V(\pi_n)$ then all the $\Tor$'s in question belong to $\cC_+$, so the above reasoning applies to all $d \in \bZ$, and so $\delta=0$.
\end{proof}

\begin{lemma} \label{lem:L3}
Let $M=M(\fa,h)$ with $h \ge 1$. Then
\begin{displaymath}
\rL_M(t)=\frac{[\bk/\fa]_+}{1-t^h}.
\end{displaymath}
\end{lemma}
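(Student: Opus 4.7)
The plan is to handle the case $h=1$ by a direct $\Tor$ computation and then reduce $h \ge 2$ to it via a short exact sequence coming from the tensor decomposition of $\bD/\fa\bD$.

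For $h=1$, the module $M = \bD/\fa\bD$ has the form $(\bk/\fa) \otimes_{\bk} \bD$, so Proposition~\ref{prop:relatively-projective} gives $\Tor^\bD_i(M,\bk)=0$ for $i>0$ and $\Tor^\bD_0(M,\bk)=\bk/\fa$ in degree $0$. Hence $\rL^0_M = [\bk/\fa]_+$ and $\rL_M = [\bk/\fa]_+/(1-t)$, matching the claim.

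For $h \ge 2$, I would set $\widetilde\bk = \bk/\fa$ and $\widetilde\bD = \bD/\fa\bD$. Since $\pi_h \in \fa$ we have $\widetilde\pi_h=0$, so Proposition~\ref{gdpa:subalg} supplies a tensor decomposition $\widetilde\bD = \widetilde\bD_{<h} \otimes_{\widetilde\bk} \widetilde\bD^{(h)}$. The $\widetilde\bD$-module map $\phi \colon M(\fa,1) = \widetilde\bD \to M(\fa,h) = \widetilde\bD^{(h)}$ sending $1 \mapsto 1$ is surjective, and from the tensor decomposition its kernel equals $K = J \otimes_{\widetilde\bk} M(\fa,h)$, where $J = (\widetilde\bD_{<h})_+$. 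I would filter $K$ by the $\widetilde\bD$-submodules $K_n := J_{\ge n} \otimes_{\widetilde\bk} M(\fa,h)$, with $J_{\ge n} = \bigoplus_{k \ge n} \widetilde\bk \cdot x^{[k]}$, and verify that the successive quotient $K_n/K_{n+1}$ is isomorphic to $M(\fa,h)[n]$ for $n=1,\ldots,h-1$.

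All modules in sight are annihilated by $\fa \supseteq (\pi_h)$ with $h \ge 2$, hence supported on $\bigcup_{n \ge 2} V(\pi_n)$, so Lemma~\ref{lem:L2} applies with $\delta=0$. Iterating additivity along the filtration gives
\[
\rL_K \;=\; \sum_{n=1}^{h-1} t^n \rL_{M(\fa,h)} \;=\; \frac{t-t^h}{1-t}\,\rL_{M(\fa,h)},
\]
and combined with the short exact sequence $0 \to K \to M(\fa,1) \to M(\fa,h) \to 0$ this yields $\frac{1-t^h}{1-t}\rL_{M(\fa,h)} = \rL_{M(\fa,1)} = [\bk/\fa]_+/(1-t)$, which solves to give $\rL_{M(\fa,h)} = [\bk/\fa]_+/(1-t^h)$. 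The main technical point is verifying that the successive quotients really are shifts of $M(\fa,h)$: the key observation is that positive-degree elements of $\widetilde\bD_{<h}$ raise the degree of $x^{[n]}$ into $J_{\ge n+1}$, so $\widetilde\bD_{<h}$ acts on each quotient through its augmentation, which is exactly the way it acts on $M(\fa,h)$ itself.
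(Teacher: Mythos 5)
Your proof is correct and follows essentially the same route as the paper's: the paper invokes Proposition~\ref{prop:mah}(c) to get $\frac{1-t^h}{1-t}[M(\fa,h)]=[\bD/\fa\bD]$ in $\rK(\bD/\fa\bD)$ and then transfers this to $\rL$ via Lemma~\ref{lem:L2} (with $\delta=0$) and the $\Tor$ computation of Proposition~\ref{prop:relatively-projective}. Your explicit filtration of the kernel is exactly the content of the proof of Proposition~\ref{prop:mah}(c), so you have simply unpacked that citation.
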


\begin{proof}
We have $\frac{1-t^h}{1-t} [M] = [\bD/\fa \bD]$ in $\rK(\bD/\fa \bD)$ by Proposition~\ref{prop:mah}(c), and so $\frac{1-t^h}{1-t} \rL_M(t) = \rL_{\bD/\fa \bD}(t)$ (by the previous lemma if $h \ge 2$, and is trivially true for $h=1$). By Proposition~\ref{prop:relatively-projective}, the group $\Tor^{\bD}_i(\bD/\fa\bD, \bk)$ vanishes for $i>0$ and equals $\bk/\fa$ for $i=0$, and so $\rL^0_{\bD/\fa\bD}(t)=[\bk/\fa]_+$. The result follows.
\end{proof}

\begin{proof}[Proof of Proposition~\ref{prop:L}]
By Lemma~\ref{lem:L2}, $\rL$ gives a well-defined homomorphism $\rK(\bD) \to \bZ\lpp t \rpp/R_1 \otimes \cK_+$. Proposition~\ref{prop:mah}(a) and Lemma~\ref{lem:L3} show that the image is contained in $R_{\infty}/R_1 \otimes \cK_+$.
\end{proof}

\subsection{Conditional proof of Conjecture~\ref{conj:k}} \label{ss:groth-proof}

For $n \ge 2$, we have an inclusion $\cC_n \subset \cC_+$, and thus an induced homomorphism $\cK_n \to \cK_+$.

\begin{theorem} \label{thm:k}
Assume the following condition holds:
\begin{itemize}
\item[$(\ast)$] For all $n \ge 2$ the map $\cK_n \to \cK_+$ is injective.
\end{itemize}
Then Conjecture~\ref{conj:k} is true.
\end{theorem}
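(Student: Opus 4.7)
The map $\varphi$ is surjective by construction (the remark following \eqref{eq:phi}); we focus on injectivity. The plan is to combine the two invariants $\rH$ (Hilbert series) and $\rL$ (from Proposition~\ref{prop:L}) to detect elements of $\ul{\sK}$. A direct calculation using Proposition~\ref{prop:mah}(c) and Lemma~\ref{lem:L3} will evaluate the compositions on a generator $\tfrac{[\bk/\fa]_n}{1-t^n}$:
\begin{displaymath}
\rH(\varphi(\tfrac{[\bk/\fa]_n}{1-t^n})) = \tfrac{[\bk/\fa]}{1-t^n} \in \rK(\bk) \otimes R_\infty,
\end{displaymath}
\begin{displaymath}
\rL(\varphi(\tfrac{[\bk/\fa]_n}{1-t^n})) = \tfrac{[\bk/\fa]_+}{1-t^n} \in \cK_+ \otimes R_\infty/R_1,
\end{displaymath}
and the $\rL$-value vanishes when $n = 1$ since $\tfrac{1}{1-t} \in R_1$.

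Let $\ul{\sK}_1$ denote the image of $\cK_1 \otimes R_1 = \rK(\bk) \otimes R_1$ in $\ul{\sK}$, and $\ul{\sK}_{\ge 2}$ the image of $\bigoplus_{n \ge 2}\cK_n \otimes R_n$. Given $\xi \in \ker(\varphi)$, pick any decomposition $\xi = \xi_1 + \eta$ with $\xi_1 \in \ul{\sK}_1$, $\eta \in \ul{\sK}_{\ge 2}$. The plan is to first use $\rL$ to absorb $\eta$ into $\xi_1$, and then use $\rH$ to show $\xi_1 = 0$. The crucial ingredient for the first step is the following claim, whose proof constitutes the main obstacle: under $(\ast)$, the natural map
\begin{displaymath}
\iota \colon \bigoplus_{n \ge 2}\cK_n \otimes R_n \longrightarrow \cK_+ \otimes R_\infty,
\end{displaymath}
induced on each summand by $\cK_n \hookrightarrow \cK_+$ and $R_n \hookrightarrow R_\infty$, respects the defining relations of $\ul{\sK}$ and the induced map $\ul{\sK}_{\ge 2} \to \cK_+ \otimes R_\infty$ is injective.

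Granting this, $\rL \circ \varphi|_{\ul{\sK}_{\ge 2}}$ equals $\iota$ followed by the projection $\cK_+ \otimes R_\infty \twoheadrightarrow \cK_+ \otimes R_\infty/R_1$, so $\rL(\varphi(\eta)) = 0$ forces $\iota(\eta) \in \cK_+ \otimes R_1$. Using the decomposition $\cK_+ = \sum_{n \ge 2}\cK_n$ as subgroups (via primary decomposition of modules in $\cC_+$), I will exhibit an $\eta' \in \ul{\sK}_1$ with $\iota(\eta') = \iota(\eta)$; the identification $\tfrac{[M]_m}{1-t} = \tfrac{[M]_1}{1-t}$ for $M \in \cC_m$ combined with injectivity of $\iota$ on $\ul{\sK}_{\ge 2}$ will then give $\eta = \eta'$ in $\ul{\sK}$, and we may absorb $\eta$ into $\xi_1$. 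We are reduced to $\xi = \xi_1 \in \ul{\sK}_1$. Finally, $\rH \circ \varphi|_{\ul{\sK}_1}$ is the canonical map $\rK(\bk) \otimes R_1 \to \rK(\bk) \otimes R_\infty$, which is injective (as $R_\infty/R_1$ is $\bZ$-torsion-free, verifiable by partial fractions in $\Frac(R)$). Hence $\rH(\varphi(\xi_1)) = 0$ gives $\xi_1 = 0$.

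The hard part, as noted, is the injectivity of $\iota$ on $\ul{\sK}_{\ge 2}$: the relations defining $\ul{\sK}$ must be shown to capture exactly the kernel of $\bigoplus_{n \ge 2}\cK_n \otimes R_n \to \cK_+ \otimes R_\infty$. The hypothesis $(\ast)$ is essential here---without it, classes from different $\cK_n$ could coincide unpredictably in $\cK_+$, producing hidden kernel elements of $\iota$ not captured by the defining relations of $\ul{\sK}$, and the method would break down.
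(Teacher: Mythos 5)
Your overall strategy is the same as the paper's: use $\rL$ to control the $n \ge 2$ part of a kernel element and $\rH$ to finish off the $n=1$ part, with condition $(\ast)$ entering through an embedding of the ``higher'' part of $\ul{\sK}$ into something built from $\cK_+$. But the crucial claim as you formulate it is false: the map $\iota$ does not descend to the subgroup $\ul{\sK}_{\ge 2} \subset \ul{\sK}$ when the target is $\cK_+ \otimes R_\infty$, because the defining relations of $\ul{\sK}$ with $n=1$ identify elements of $\ul{\sK}_{\ge 2}$ that have different images under $\iota$. Concretely, take $\bk=\bZ$ and the classical divided power algebra, and $M=\bZ/p \in \cC_p$: in $\ul{\sK}$ one has $\frac{[\bZ/p]_p}{1-t} = \frac{[\bZ/p]_1}{1-t} = 0$, since $[\bZ/p]_1=0$ in $\cK_1=\rK(\bZ)$, yet $\iota$ would send this element to $\frac{[\bZ/p]_+}{1-t} \ne 0$ in $\cK_+ \otimes R_\infty$. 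So ``injectivity of $\iota$ on $\ul{\sK}_{\ge 2}$'' is not available even in principle, and the step where you deduce $\eta=\eta'$ from $\iota(\eta)=\iota(\eta')$ collapses.

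The repair is to note that the discrepancy above (and in general) always lies in $\cK_+ \otimes R_1$, so the composite of $\iota$ with the projection to $\cK_+ \otimes R_\infty/R_1$ kills $\ul{\sK}_1$ and descends to the quotient $\ul{\sK}/\ul{\sK}_1$; condition $(\ast)$ is then what makes this induced map injective, identifying $\ul{\sK}/\ul{\sK}_1$ with the submodule of $R_\infty/R_1 \otimes \cK_+$ of elements $\sum_{n \ge 2} a_n(t)/(t^n-1)$ with $a_n \in \cK_n \otimes R$. This is exactly how the paper proceeds: it sets $\ul{\sK}^2 = \ul{\sK}/\ul{\sK}^1$, makes this identification, checks that $\rL \circ \varphi$ induces the identity on $\ul{\sK}^2$ and that $\rH \circ \varphi$ is the identity on $\ul{\sK}^1$, and concludes. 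Once you replace your subgroup $\ul{\sK}_{\ge 2}$ and target $\cK_+ \otimes R_\infty$ by the quotient $\ul{\sK}/\ul{\sK}_1$ and target $\cK_+ \otimes R_\infty/R_1$, the remainder of your plan (absorb $\eta$ into the $n=1$ part, then apply $\rH$) is the paper's argument.
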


\begin{proof}
Let $\ul{\sK}^1 \subset \ul{\sK}$ be the $R$-span of classes of the form $\frac{[M]}{t-1}$ with $M$ a finitely generated $\bk$-module. Then $\ul{\sK}^1$ is identified with $\rK(\bk) \otimes R_1$. Let $\ul{\sK}^2$ be the quotient of $\ul{\sK}$ by $\ul{\sK}^1$. Then $\ul{\sK}^2$ is identified with the submodule of $R_{\infty}/R_1 \otimes \cK_+$ consisting of elements of the form $\sum_{n \ge 2} \frac{a_n(t)}{t^n-1}$ where $a(n) \in \cK_n \otimes R$ (this identification uses $(\ast)$).

We first claim that the map
\begin{displaymath}
\ul{\sK}^2 \to R_{\infty}/R_1 \otimes \cK_+, \qquad f \mapsto \rL_{\varphi(f)}(t)
\end{displaymath}
is well-defined and simply the identity. If $f \in \ul{\sK}^1$ then $\rL_{\varphi(f)}(t)=0$ by the definition of $\rL$. Thus the above map is well-defined. The module $\ul{\sK}^2$ is generated by elements of the form $f=\frac{[\bk/\fa]_+}{t^n-1}$ where $\fa$ is an ideal containing $\pi_h$ and $n \mid h$. By definition, we have $\varphi(f)=\frac{1-t^h}{1-t^n} [M(\fa,h)]$. Thus, by Lemma~\ref{lem:L3}, we have
\begin{displaymath}
\rL_{\varphi(f)}(t)=\frac{1-t^h}{1-t^n} \cdot \rL_{M(\fa,h)}(t) = \frac{1-t^h}{1-t^n} \cdot \frac{[\bk/\fa]_+}{1-t^h} = f,
\end{displaymath}
and the claim is proved.

We next claim that the map
\begin{displaymath}
\ul{\sK}^1 \to R_1 \otimes \rK(\bk), \qquad f \mapsto \rH_{\varphi(f)}(t)
\end{displaymath}
is the identity. The module $\ul{\sK}^1$ is spanned by elements of the form $f=\frac{[\bk/\fa]}{t-1}$. We have $\varphi(f)=[\bD/\fa \bD]$ by definition. Thus
\begin{displaymath}
\rH_{\varphi(f)}(t) = \rH_{\bD/\fa \bD}(t) = \frac{[\bk/\fa]}{t-1} = f,
\end{displaymath}
and the claim is proved.

We now prove the theorem. It suffices to show that $\varphi$ is injective. Thus suppose $\varphi(x)=0$. Let $\ol{x}$ be the image of $x$ in $\ul{\sK}^2$. Then $\ol{x}=\rL_{\varphi(x)}(t)=0$ by the first paragraph. Thus $x \in \ul{\sK}^1$. But then $x=\rH_{\varphi(x)}(t)=0$ by the second paragraph. This completes the proof.
\end{proof}

\subsection{Comparison of $\uK(\bD)$ and $\rK(\bD)$} \label{ss:groth-compare}

The forgetful functor $\uMod_{\bD}^{\fpres} \to \Mod_{\bD}^{\fpres}$ induces a map $\uK(\bD) \to \rK(\bD)$ that obvious kills $(t-1) \uK(\bD)$. In fact:

\begin{proposition} \label{prop:Kgrade}
The natural map $\uK(\bD)/(t-1)\uK(\bD) \to \rK(\bD)$ is an isomorphism.
\end{proposition}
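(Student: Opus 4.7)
Forgetting the grading identifies $[M[1]]$ with $[M]$ in $\rK(\bD)$, so $(t-1)\uK(\bD)$ is in the kernel of $\uK(\bD)\to\rK(\bD)$, and the claimed map from $\uK(\bD)/(t-1)\uK(\bD)$ is induced. I would prove surjectivity directly and then construct an explicit inverse.

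For \emph{surjectivity}, Theorem~\ref{thm:sd} produces for each finitely presented ungraded $\bD$-module $M$ a finite special resolution $0\to P_r\to\cdots\to P_0\to M\to 0$. Each $P_i$ is an iterated ungraded extension of principal specials $M(\fa,h)=(\bD/\fa\bD)^{(h)}$, and every such principal special carries a canonical graded structure. Building a graded $\tilde P_i$ as an iterated graded extension along the same filtration data yields a graded lift whose class $[\tilde P_i^u]\in\rK(\bD)$ agrees with $[P_i]$, since in $\rK(\bD)$ both equal the sum of classes of the principal-special subquotients. Hence $[M]=\sum_i(-1)^i[\tilde P_i^u]$ lies in the image of $\uK(\bD)\to\rK(\bD)$.

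For \emph{injectivity}, the plan is to define the inverse $\psi\colon\rK(\bD)\to\uK(\bD)/(t-1)\uK(\bD)$ by $\psi([M]):=\sum_i(-1)^i[\tilde P_i]\bmod(t-1)\uK(\bD)$, using any special resolution and graded lifts as above. The checks required are: (a) any two graded lifts of a principal special $M(\fa,h)$ differ by a degree shift, and shifts act trivially on $\uK(\bD)/(t-1)\uK(\bD)$; (b) two filtrations of the same ungraded special $P_i$ by principal specials give the same class modulo $(t-1)\uK(\bD)$, using the identity $(1-t^h)/(1-t^k)\equiv h/k\pmod{t-1}$ to match Proposition~\ref{prop:mah}(c) to (b); (c) two special resolutions of $M$ give equal alternating sums modulo $(t-1)\uK(\bD)$, via a Schanuel/horseshoe comparison lifted to graded lifts; (d) $\psi$ is additive on ungraded short exact sequences, via the horseshoe lemma applied to special resolutions of the three terms and (c). That $\psi$ is then inverse to the natural map is immediate on the spanning classes $[M(\fa,h)]$ of Proposition~\ref{prop:mah}(a).

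The \emph{main obstacle} is step (b): verifying that the class in $\uK(\bD)/(t-1)\uK(\bD)$ obtained from an ungraded special module does not depend on the filtration chosen. The plan is to reduce to two-step filtrations of the same ungraded extension and exhibit the discrepancy between competing graded realizations as an element of $(t-1)\uK(\bD)$; the ambiguity in the choice of graded extension data is absorbed by grading shifts, whose classes vanish in the quotient, and the numerical content is precisely what is captured by the congruence $(1-t^h)/(1-t^k)\equiv h/k\pmod{t-1}$ matching Proposition~\ref{prop:mah}(c) to Proposition~\ref{prop:mah}(b).
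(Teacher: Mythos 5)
The reduction to constructing an inverse $\psi$ is reasonable in outline, and the surjectivity half is fine (indeed it is immediate from Proposition~\ref{prop:mah}(a), since the spanning classes $[M(\fa,h)]$ are classes of graded modules). The genuine gap is in the well-definedness of $\psi$, i.e.\ your steps (b) and (c), and I do not think the plan you sketch can close it. The core difficulty is this: your construction assigns to an ungraded special module $P$ (with a chosen filtration by principal specials) a graded module $\tilde P$ whose class in $\rK(\bD)$ agrees with $[P]$, but $\tilde P$ is in general \emph{not} ungraded-isomorphic to $P$ --- the ungraded extension classes gluing the filtration steps of $P$ need not be homogeneous, so there is no graded module realizing the same extensions. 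Consequently the Schanuel/horseshoe comparison in (c) has nothing to act on: special modules are not projective, so one cannot lift maps between two special resolutions of $M$ in the first place, and even where one can (by arranging all but the last term to be free, as in Proposition~\ref{sp4} and Lemma~\ref{sp5}), the resulting ungraded isomorphism $P\oplus F\cong P'\oplus F'$ relates the actual modules $P,P'$, not your chosen lifts $\tilde P,\tilde P'$. What you would then need is precisely the statement that the class of a graded special module in $\uK(\bD)/(t-1)$ depends only on the image of its underlying ungraded module in $\rK(\bD)$. But that is an instance of the injectivity you are trying to prove (two graded modules with equal ungraded $\rK$-classes must be congruent mod $t-1$), and neither (b) nor (c) offers an independent argument for it; the congruence $(1-t^h)/(1-t^k)\equiv h/k\pmod{t-1}$ only handles the one explicit family of relations in Proposition~\ref{prop:mah}, whereas the relations in $\rK(\bD)$ coming from arbitrary ungraded isomorphisms and short exact sequences are exactly what is not understood a priori.

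For comparison, the paper sidesteps resolutions entirely: it runs the ``fundamental theorem of K-theory'' argument for the graded ring $\bD[u]$ with $\deg(u)=1$. One identifies $\uMod^{\fpres}_{\bD[u,u^{-1}]}$ with $\Mod^{\fpres}_{\bD}$, shows $\uK(\bD)\to\uK(\bD[u])$ is an isomorphism (here special resolutions are used, but only to get joint surjectivity of the maps $f'_h$), and then the localization sequence $\uK^0(\bD[u])\to\uK(\bD[u])\to\uK(\bD[u,u^{-1}])\to 0$ together with d\'evissage and the computation that the first map is multiplication by $1-t$ gives the result. If you want to salvage an explicit-inverse approach, you would need an a priori invariant of ungraded finitely presented modules valued in $\uK(\bD)/(t-1)\uK(\bD)$ that is visibly additive and isomorphism-invariant; the localization-sequence argument is, in effect, the machine that produces such an invariant without choices.
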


The proof closely follows the proof of the so-called fundamental theorem of K-theory, as presented in \cite[\S 5]{srinivas}. We will make extensive use of the ring $\bD[u]$, which we grade using the usual grading on $\bD$ and setting $\deg(u)=1$. Note that if $\bD=\bD(\bk,\pi_{\bullet})$ then (ignoring the grading) $\bD[u]=\bD(\bk[u], \pi_{\bullet})$, and so $\bD[u]$ is coherent since $\bk[u]$ is noetherian.

\begin{lemma} \label{lem:Kgr1}
We have an equivalence of categories $\uMod_{\bD[u,u^{-1}]}^{\fpres} = \Mod_{\bD}^{\fpres}$.
\end{lemma}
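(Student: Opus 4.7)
The plan is to exhibit explicit quasi-inverse functors between the two categories, built from the observation that $u$ is an invertible central element of degree one.

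First I would identify the degree-zero subring $\bD[u,u^{-1}]_0$ with $\bD$ viewed as an ungraded ring. Concretely, the map sending a homogeneous element $a \in \bD_k$ to $a u^{-k} \in \bD[u,u^{-1}]_0$ (extended additively) is a ring isomorphism, since $(a u^{-k})(b u^{-\ell}) = (ab) u^{-(k+\ell)}$ and $ab \in \bD_{k+\ell}$. Moreover, multiplication by $u^n$ defines a $\bD[u,u^{-1}]_0$-linear isomorphism $\bD[u,u^{-1}]_0 \to \bD[u,u^{-1}]_n$, so that $\bD[u,u^{-1}] = \bigoplus_{n \in \bZ} \bD[u,u^{-1}]_0 \cdot u^n$ is free (hence flat) as a module over $\bD[u,u^{-1}]_0 \cong \bD$.

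Next I would define the two functors. The functor $F \colon \uMod_{\bD[u,u^{-1}]}^{\fpres} \to \Mod_{\bD}^{\fpres}$ sends a graded module $N$ to its degree-zero piece $N_0$, regarded as a $\bD$-module via the identification above. In the other direction, the functor $G \colon \Mod_{\bD}^{\fpres} \to \uMod_{\bD[u,u^{-1}]}^{\fpres}$ sends $M$ to $M \otimes_{\bD} \bD[u,u^{-1}]$, graded by placing $M \otimes u^n$ in degree $n$. The composition $F \circ G$ satisfies $F(G(M)) = M \otimes u^0 = M$. For the other composition, the key point is that for any graded $\bD[u,u^{-1}]$-module $N$, multiplication by $u^n$ gives a $\bD$-linear isomorphism $N_0 \to N_n$ (with inverse given by $u^{-n}$). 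Assembling these yields a natural isomorphism $G(F(N)) = N_0 \otimes_{\bD} \bD[u,u^{-1}] \to N$ of graded $\bD[u,u^{-1}]$-modules.

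Finally I would check that both functors preserve finite presentation. For $G$: a finite presentation $\bD^a \to \bD^b \to M \to 0$ tensored with the flat module $\bD[u,u^{-1}]$ becomes a finite graded presentation $\bD[u,u^{-1}]^a \to \bD[u,u^{-1}]^b \to G(M) \to 0$, with degree-preserving differentials because the matrix entries come from $\bD = \bD[u,u^{-1}]_0$. For $F$: given a finite graded presentation $\bigoplus_i \bD[u,u^{-1}][d_i] \to \bigoplus_j \bD[u,u^{-1}][e_j] \to N \to 0$, taking the degree-zero part is exact and yields $\bigoplus_i \bD \to \bigoplus_j \bD \to N_0 \to 0$, using that $\bD[u,u^{-1}][d]_0 = \bD[u,u^{-1}]_{-d} \cong \bD$ via multiplication by $u^{-d}$.

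The main obstacle is just the bookkeeping in identifying $\bD[u,u^{-1}]_0$ with $\bD$ and tracking how the grading shifts interact with this identification; once those are set up carefully, the equivalence is a standard consequence of the presence of the invertible degree-one central element $u$, and each of the remaining verifications is routine.
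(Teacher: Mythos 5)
Your proposal is correct and follows essentially the same route as the paper: identify $\bD$ with $\bD[u,u^{-1}]_0$ via $a \mapsto a u^{-\deg(a)}$, take degree-zero pieces in one direction and spread out along powers of $u$ in the other. The paper's proof is terser (it leaves the finite-presentation check implicit), but the construction is identical.
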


\begin{proof}
Let $\varphi \colon \bD \to \bD[u,u^{-1}]_0$ be the map given by $\varphi(x)=u^{-\deg(x)} x$ on homogeneous elements $x \in \bD$. Then $\varphi$ is an isomorphism. If $M$ is a graded $\bD[u,u^{-1}]$-module then its degree~0 piece is a module over $\bD[u,u^{-1]}]_0$ and thus, via $\phi$, over $\bD$. Conversely, if $N$ is a $\bD[u,u^{-1}]_0$-module, then $\bigoplus_{k \in \bZ} N u^k$ is a graded $\bD[u,u^{-1}]$-module, with $u$ acting in the obvious manner, and $x \in \bD$ acting by $(u^{-\deg(x)} x) u^{\deg(x)}$. One easily sees that these constructions are inverse to each other.
\end{proof}

Let $\alpha \colon \uK(\bD) \to \uK(\bD[u])$ be the map induced by the functor $M \mapsto M \otimes_{\bD} \bD[u]$, and let $\beta \colon \uK(\bD[u]) \to \uK(\bD)$ be the map induced by the functor $M \mapsto M \stackrel{\rL}{\otimes}_{\bD[u]} \bD$, where here $\bD$ is thought of as a $\bD[u]$-module by $\bD=\bD[u]/(u)$. We note that $\bD$ has projective dimension~1 as a $\bD[u]$-module, and so the left-derived functor of $- \otimes_{\bD[u]} \bD$ does indeed induce a map on K-theory.

\begin{lemma} \label{lem:Kgr2}
The maps $\alpha$ and $\beta$ are mutually inverse.
\end{lemma}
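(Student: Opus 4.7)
For $\beta\alpha = \text{id}_{\uK(\bD)}$, the argument is direct. Given $M \in \uMod_\bD^{\fpres}$, I note that $\alpha(M) = M \otimes_\bD \bD[u] \cong M \otimes_\bk \bk[u]$ is free as a $\bk[u]$-module, so $u$ acts on $\alpha(M)$ as a non-zero-divisor. Computing $\beta\alpha(M) = \alpha(M) \otimes^L_{\bD[u]} \bD$ via the Koszul resolution $0 \to \bD[u][1] \xrightarrow{u} \bD[u] \to \bD \to 0$, the $\Tor_1$ term vanishes and I find $\beta\alpha(M) = \alpha(M)/u\alpha(M) = M$ at the chain level, hence in $\uK(\bD)$.

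For $\alpha\beta = \text{id}_{\uK(\bD[u])}$, I plan to first establish a universal identity. The four-term exact sequence $0 \to (N[u])[1] \to N[1] \xrightarrow{u} N \to N/uN \to 0$ of graded $\bD[u]$-modules yields $(1-t)[N] = [N/uN] - t[N[u]]$ in $\uK(\bD[u])$. This is combined with the resolution $0 \to \alpha(M)[1] \xrightarrow{u} \alpha(M) \to \iota(M) \to 0$, where $\iota \colon \uMod_{\bD}^{\fpres} \to \uMod_{\bD[u]}^{\fpres}$ is the exact pushforward along $\bD[u] \twoheadrightarrow \bD$; this relation shows that $\iota_* = (1-t)\alpha_*$ on K-theory. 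Together these give the universal identity $(1-t)[N] = (1-t)\,\alpha\beta(N)$ for every $N \in \uMod_{\bD[u]}^{\fpres}$.

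The remaining step is to upgrade this from ``$(1-t)$-torsion vanishing'' to honest equality. My plan is to invoke Theorem~\ref{thm:sd} applied to $\bD[u] = \bD(\bk[u],\pi_\bullet)$ as a GDPA over the noetherian ring $\bk[u]$: every finitely presented graded $\bD[u]$-module has a finite special resolution, so by $R$-linearity it suffices to verify $\alpha\beta(N) = [N]$ on principal specials $(\bD[u]/\fb\bD[u])^{(h)}$ with $\fb \subset \bk[u]$ containing $\pi_h$. When $\fb$ is extended from an ideal of $\bk$, the computation is immediate since $\bD[u]/\fb\bD[u] \cong (\bD/\bar{\fb}\bD) \otimes_\bk \bk[u]$ already lies in the image of $\alpha$. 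The main obstacle is the case of ideals $\fb$ genuinely involving $u$, such as $(u-a)$ or $(u^k)$: here one must exhibit explicit short exact sequences of finitely presented graded $\bD[u]$-modules connecting $\alpha\beta(N)$ to $N$ in order to conclude the equality.
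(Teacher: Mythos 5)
Your first half ($\beta\alpha=\id$) is fine and matches the paper, which simply declares it clear. The second half, however, has a genuine gap, and you have flagged it yourself without closing it. Your universal identity $(1-t)[N]=(1-t)\,\alpha\beta([N])$ is correct, but you cannot cancel the factor $1-t$: nothing guarantees that $\uK(\bD[u])$ is $(1-t)$-torsion-free, and indeed these Grothendieck groups do carry $R$-torsion (cf.\ Example~\ref{ex:ktors}). Your fallback — reduce via Theorem~\ref{thm:sd} to principal specials $(\bD[u]/\fb\bD[u])^{(h)}$ over $\bk[u]$ — is the right start, but the case you call ``the main obstacle,'' where $\fb$ genuinely involves $u$ (e.g.\ $\fb=(u-a)$ or $(u^k)$), is exactly where all the content lies, and ``one must exhibit explicit short exact sequences'' is a placeholder, not an argument. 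The missing ingredient is the fundamental theorem of K-theory for the coefficient ring: for a noetherian ring $\bk_0$ (here $\bk_0=\bk/(\pi_h)$), extension of scalars induces an isomorphism $\uK(\bk_0)\to\uK(\bk_0[u])$ (Lemma~\ref{lem:Krg3}, via \cite[Proposition~5.4]{srinivas}). This is precisely what shows that, in K-theory, every class of a finitely generated graded $\bk_0[u]$-module — including those cut out by ideals involving $u$ — is an $R$-combination of classes extended from $\bk_0$, so every special class over $\bD[u]$ lies in the image of $\alpha$.

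The paper's route is worth noting because it avoids verifying $\alpha\beta=\id$ on generators altogether: combining the joint surjectivity of the maps $f'_h\colon\uK(\bk/(\pi_h)[u])\to\uK(\bD[u])$ (from special resolutions) with the isomorphisms $i_h\colon\uK(\bk/(\pi_h))\to\uK(\bk/(\pi_h)[u])$ and the commuting square with $\alpha$, one concludes that $\alpha$ is \emph{surjective}; then $\beta\alpha=\id$ gives $\alpha\beta\alpha=\alpha$, hence $\alpha\beta=\id$ by surjectivity. If you import Lemma~\ref{lem:Krg3}, either this formal trick or your generator-by-generator computation will close the argument; without it, neither does.
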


\begin{proof}
It is clear that $\beta\alpha = \id$, so we must show $\alpha\beta = \id$. For $h \ge 1$, consider the following diagram
\begin{displaymath}
\xymatrix{
\uK(\bD) \ar[r]^{\alpha} & \uK(\bD[u]) \\
\uK(\bk/(\pi_h)) \ar[u]^{f_h} \ar[r]^{i_h} & \uK(\bk/(\pi_h)[u]) \ar[u]_{f'_h} }
\end{displaymath}
The map $f_h$ is induced by the functor $M \mapsto (M \otimes_{\bk} \bD)^{(h)}$, and $f'_h$ is defined similarly. The map $i_h$ is extension of scalars. The theorem on special resolutions (Theorem~\ref{thm:sd}) states that the maps $f'_h$ are jointly surjective, that is, the sum of their images is the entire $\uK(\bD[u])$. The map $i_h$ is an isomorphism by the following lemma. It follows from this that $\alpha$ is surjective. Since $\beta\alpha=\id$, we have $\alpha\beta\alpha=\alpha$, and thus $\alpha\beta=\id$ since $\alpha$ is surjective.
\end{proof}

\begin{lemma} \label{lem:Krg3}
Let $\bk$ be a non-graded ring, and regard $\bk[u]$ as graded by $\deg(u)=1$. Then extension of scalars induces an isomorphism $\uK(\bk) \to \uK(\bk[u])$.
\end{lemma}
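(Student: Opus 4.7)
The plan is to identify $\bk[u]$ (graded with $\deg u = 1$) as the GDPA $\bD(\bk, \pi_\bullet)$ having $\pi_n = 1$ for all $n \geq 2$, via $x^{[n]} \leftrightarrow u^n$ (Example~\ref{ex:gdpa}(a)); this $\pi$-sequence is trivially admissible. Since $\bk$ is noetherian, Theorem~\ref{thm:sd} supplies every finitely presented graded $\bk[u]$-module $M$ with a finite special resolution $0 \to P_r \to \cdots \to P_0 \to M \to 0$.

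First I would establish surjectivity of the extension-of-scalars map $\alpha \colon \uK(\bk) \to \uK(\bk[u])$. For $\bD = \bk[u]$, a principal special module $(\bD/\fa \bD)^{(h)}$ requires $\pi_h \in \fa$; since $\pi_h = 1$ is a unit for all $h \geq 2$, only $h = 1$ contributes nontrivially, yielding the ``extended'' modules $(\bk/\fa) \otimes_\bk \bk[u]$. Hence every special $\bk[u]$-module has a finite filtration whose subquotients are (possibly shifted) extended modules, so its class in $\uK(\bk[u])$ is an $R$-linear combination of classes of the form $\alpha([\bk/\fa])$ and lies in the image of $\alpha$. Summing via the resolution gives $[M] = \sum_{i=0}^r (-1)^i [P_i]$, also in the image of $\alpha$.

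For injectivity, I would construct a left inverse $\beta \colon \uK(\bk[u]) \to \uK(\bk)$ via derived tensor product with $\bk$ over $\bk[u]$. The Koszul-type resolution $0 \to \bk[u][1] \xrightarrow{u} \bk[u] \to \bk \to 0$ shows that $\bk$ has projective dimension at most one over $\bk[u]$, so $M \otimes^\rL_{\bk[u]} \bk$ is a two-term complex with homology $M/uM$ and $M[u][1]$, where $M[u]$ denotes the $u$-torsion submodule. Taking Euler characteristics yields the well-defined $R$-linear map $\beta([M]) = [M/uM] - [M[u][1]]$. For a finitely presented graded $\bk$-module $N$, the module $N \otimes_\bk \bk[u]$ is $\bk$-free on $\{u^i\}_{i \geq 0}$, hence $u$-torsion free, with quotient $N$; thus $\beta\alpha([N]) = [N]$, so $\beta\alpha = \mathrm{id}$. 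Combined with surjectivity this forces $\alpha$ to be an isomorphism.

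The key step is recognizing $\bk[u]$ as a GDPA whose only nontrivial principal specials are the extended modules; after this, both surjectivity (via Theorem~\ref{thm:sd}) and the construction of the one-sided inverse $\beta$ (via the Koszul resolution) are routine, and no serious obstacle remains.
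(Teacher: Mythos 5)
Your proof is correct, but it takes a genuinely different route from the paper. The paper disposes of this lemma in one line by citing the graded form of the fundamental theorem of K-theory (\cite[Proposition~5.4]{srinivas}), which gives $\rK(\bk)\otimes_{\bZ} R \cong \uK(\bk[u])$ directly, together with the identification $\uK(\bk)=\rK(\bk)\otimes_{\bZ} R$ for a non-graded $\bk$. You instead give a self-contained argument internal to the paper's own machinery: viewing $\bk[u]$ as the GDPA with $\pi_n=1$ for $n\ge 2$, you get surjectivity of $\alpha$ from Theorem~\ref{thm:sd} (since the only nonzero principal specials are shifted extended modules $(\bk/\fa)\otimes_{\bk}\bk[u]$), and injectivity from the left inverse $\beta([M])=[M/uM]-t[M[u]]$ built from the Koszul resolution of $\bk$ over $\bk[u]$ --- which is in fact the same $\beta$ the paper uses one level up in Lemma~\ref{lem:Kgr2}. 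Both arguments implicitly require $\bk$ noetherian (you for Theorem~\ref{thm:sd} and for finite generation of $M[u]$; the paper because Srinivas's result is stated in the noetherian setting), which is harmless since the lemma is only applied to quotients of the noetherian base ring. The paper's citation is shorter; your argument has the merit of being self-contained and of making visible why the result is a degenerate case of the surrounding analysis of $\uK(\bD[u])$.
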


\begin{proof}
The map $\rK(\bk) \otimes_{\bZ} R \to \uK(\bk[u])$ induced by extension of scalars is an isomorphism by \cite[Proposition~5.4]{srinivas}. However, we have an obvious identification $\rK(\bk) \otimes_{\bZ} R=\uK(\bk)$ since $\bk$ is non-graded.
\end{proof}

\begin{proof}[Proof of Proposition~\ref{prop:Kgrade}]
Let $\uMod_{\bD[u]}^0$ denote the category of finitely presented graded $\bD[u]$-modules annihilated by a power of $u$, and let $\uK^0(\bD[u])$ be its Grothendieck group. We then have the localization sequence
\begin{displaymath}
\uK^0(\bD[u]) \to \uK(\bD[u]) \to \uK(\bD[u,u^{-1}]) \to 0.
\end{displaymath}
By d\'evissage, $\uK^0(\bD[u])=\uK(\bD)$. Combining this and Lemmas~\ref{lem:Kgr1} and~\ref{lem:Kgr2} with the above sequence gives the diagram
\begin{displaymath}
\xymatrix{
\uK^0(\bD[u]) \ar[r] & \uK(\bD[u]) \ar[r] & \uK(\bD[u,u^{-1}]) \ar[r] & 0 \\
\uK(\bD) \ar@{=}[u] \ar[r] & \uK(\bD) \ar@{=}[u] \ar[r] & \rK(\bD) \ar@{=}[u] \ar[r] & 0 }
\end{displaymath}
We claim that the bottom left map is multiplication by $1-t$, which will complete the proof. Thus let $M \in \uMod_{\bD}$. Starting with $[M]$ in the bottom left group and going up and right, we obtain the class in $\uK(\bD[u])$ obtained by treating $M$ as a $\bD[u]$-module with $u$ acting by~0. We now want to move this class down under $\beta$. By definition, $\beta([M])$ is the class of $M \stackrel{\rL}{\otimes}_{\bD[u]} \bD$. Using the resolution $\bD[u][1] \stackrel{u}{\to} \bD[u]$ of $\bD$ (where $[1]$ indicates shift in grading), we see that $\beta([M])$ is the class of the complex $M[1] \to M$, i.e., $(1-t)[M]$.
\end{proof}

Combining the above result with Proposition~\ref{prop:kgp} yields a rational description of $\rK(\bD)$:

\begin{proposition}
The map $\rK(\bk) \otimes \bQ \to \rK(\bD) \otimes \bQ$ taking $[M]$ to $[M \otimes_{\bk} \bD]$ is an isomorphism.
\end{proposition}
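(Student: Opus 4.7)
\medskip
\noindent\textbf{Proof proposal.} The plan is to combine the spanning set from Proposition~\ref{prop:mah} with a Hilbert-series-based left inverse, using Proposition~\ref{prop:Kgrade} to pass between the graded and ungraded Grothendieck groups.

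Surjectivity is immediate. By Proposition~\ref{prop:mah}(a), $\rK(\bD)$ is spanned by classes of the form $[M(\fa,h)]$, and by Proposition~\ref{prop:mah}(b) we have $h[M(\fa,h)] = [\bD/\fa\bD] = [\bD \otimes_{\bk}(\bk/\fa)]$. After tensoring with $\bQ$, each spanning class lies in the image.

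For injectivity, I will construct a left inverse. Given a finitely presented graded $\bD$-module $M$, Proposition~\ref{prop:hilb} writes $\rH_M(t) = \sum_{i=1}^r \frac{a_i(t)}{1-t^i}$ with $a_i(t) \in R \otimes \rK(\bk)$, so $\rH_M(t)$ has at most a simple pole at $t=1$ when viewed inside $\rK(\bk) \otimes_{\bZ} \bQ(t)$ (which embeds in $\rK(\bk)_{\bQ} \lpp t \rpp$, since a rational function regular at $t=0$ is determined by its power-series expansion). Consequently $(1-t)\rH_M(t)$ is regular at $t=1$, and we may define
\begin{displaymath}
\rH'_M \;=\; \lim_{t \to 1} (1-t)\, \rH_M(t) \;\in\; \rK(\bk) \otimes \bQ.
\end{displaymath}
Since $\rH$ is additive in short exact sequences, so is $\rH'$, giving a $\bQ$-linear map $\rH' \colon \uK(\bD) \otimes \bQ \to \rK(\bk) \otimes \bQ$. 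A shift of grading multiplies $\rH_M(t)$ by $t$, and so $\rH'_{M[1]} = \rH'_M$; hence $(t-1)$ kills the image of $\rH'$, and therefore $\rH'$ factors through $(\uK(\bD)/(t-1)\uK(\bD)) \otimes \bQ$, which by Proposition~\ref{prop:Kgrade} is $\rK(\bD) \otimes \bQ$.

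Finally, the composition $\rK(\bk) \otimes \bQ \to \rK(\bD) \otimes \bQ \xrightarrow{\rH'} \rK(\bk) \otimes \bQ$ is the identity: for $[M] \in \rK(\bk)$, we have $\rH_{M \otimes_{\bk} \bD}(t) = \frac{[M]}{1-t}$, so $(1-t)\rH_{M \otimes \bD}(t) = [M]$, whose value at $t=1$ is $[M]$. Thus the map of the proposition has a left inverse, completing the proof. The main point requiring care is the well-definedness of the evaluation-at-$t=1$ construction, which is where Proposition~\ref{prop:hilb} (and the rationality it provides) is essential; everything else is bookkeeping.
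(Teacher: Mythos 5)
Your proof is correct and amounts to the paper's own argument made explicit: the paper deduces the proposition by combining Proposition~\ref{prop:Kgrade} with the Hilbert-series isomorphism of Proposition~\ref{prop:kgp}, and your residue-at-$t=1$ map $\rH'$ is exactly the left inverse that this combination produces (it is the paper's $\psi=\rH$ followed by evaluation of the simple pole at $t=1$), with surjectivity coming from Proposition~\ref{prop:mah} in both treatments. The only microscopic imprecision is that, since the $a_i(t)$ lie in $\bZ[t,t^{-1}]\otimes\rK(\bk)$ and graded modules may live in negative degrees, the ambient ring should be Laurent series $\rK(\bk)_{\bQ}\lpp t \rpp[t^{-1}]$ rather than power series; this changes nothing in the argument.
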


We also give a conjecutral description of $\rK(\bD)$. Define $\sK$ to be the quotient of $\bigoplus_{n \ge 1} \cK_n$ by the relations $\frac{m}{n} [M]_m = [M]_n$ for $M \in \cC_m$ and $n \mid m$, where the left side belongs to the $m$th summand and the right to the $n$th summand. Then $\ul{\sK}/(t-1) \sK$ is identified with $\sK$, and so the map \eqref{eq:phi} induces a natural map $\sK \to \rK(\bD)$. Thus Proposition~\ref{prop:Kgrade} gives us:

\begin{proposition}
Suppose Conjecture~\ref{conj:k} holds for $\bD$. Then the natural map $\sK \to \rK(\bD)$ is an isomorphism.
\end{proposition}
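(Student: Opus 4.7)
The plan is a short diagram chase invoking Conjecture~\ref{conj:k} and Proposition~\ref{prop:Kgrade}, pivoting on the identification $\ul{\sK}/(t-1)\ul{\sK} \cong \sK$ asserted in the paragraph preceding the proposition.

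First I would verify this identification. Each summand $\cK_n \otimes_{\bZ} R_n$ becomes $\cK_n$ after reducing modulo $(t-1)$, via the identification $R_n/(t-1)R_n \cong \bZ$ that sends the generator $\tfrac{1}{1-t^n}$ to $1$. The one computation that matters is that for $n \mid m$ we have
\[ \tfrac{1}{1-t^n} \;=\; \tfrac{1+t^n+t^{2n}+\cdots+t^{m-n}}{1-t^m} \quad\text{in } R_m, \]
and so $\tfrac{1}{1-t^n}$ reduces to $m/n$ under $R_m/(t-1)R_m \cong \bZ$. Consequently, the defining relation $\tfrac{[M]_m}{1-t^n} = \tfrac{[M]_n}{1-t^n}$ of $\ul{\sK}$ (LHS in the $m$-th summand, RHS in the $n$-th) descends modulo $(t-1)$ to $\tfrac{m}{n}[M]_m = [M]_n$, which is precisely the defining relation of $\sK$.

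Granting this identification, the proof is immediate. Since $\varphi\colon \ul{\sK} \to \uK(\bD)$ is an $R$-linear isomorphism by Conjecture~\ref{conj:k}, it sends $(t-1)\ul{\sK}$ onto $(t-1)\uK(\bD)$ and hence induces an isomorphism
\[ \sK \;\cong\; \ul{\sK}/(t-1)\ul{\sK} \xrightarrow{\overline{\varphi}} \uK(\bD)/(t-1)\uK(\bD). \]
Composing with the isomorphism $\uK(\bD)/(t-1)\uK(\bD) \cong \rK(\bD)$ furnished by Proposition~\ref{prop:Kgrade} yields an isomorphism $\sK \xrightarrow{\sim} \rK(\bD)$. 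This composite agrees with the natural map $\sK \to \rK(\bD)$ constructed just before the proposition, since that map is by definition the one induced from $\varphi$ after quotienting by $(t-1)$.

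The only non-formal input is the opening computation showing how $\tfrac{1}{1-t^n} \in R_m$ reduces modulo $(t-1)$; this is a one-line verification, and I do not anticipate any real obstacle beyond routine bookkeeping.
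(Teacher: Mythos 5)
Your proposal is correct and is essentially the paper's own (largely implicit) argument: identify $\ul{\sK}/(t-1)\ul{\sK}$ with $\sK$ via the computation that $\tfrac{1}{1-t^n}\in R_m$ reduces to $m/n$ modulo $(t-1)$, then reduce the isomorphism of Conjecture~\ref{conj:k} modulo $(t-1)$ and compose with Proposition~\ref{prop:Kgrade}. Your explicit verification of the identification is a welcome elaboration of a step the paper states without proof.
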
 

\subsection{The classical divided power algebra} \label{ss:groth-classical}

We now assume that $\bD$ is the classical divided power algebra over $\bk$.

\begin{proposition} \label{prop:classast}
The condition $(\ast)$ of Theorem~\ref{thm:k} holds.
\end{proposition}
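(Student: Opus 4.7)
The plan is to exploit the specific shape of $\pi_{\bullet}$ for the classical divided power algebra (namely $\pi_h=p$ when $h$ is a power of a prime $p$, and $\pi_h=1$ otherwise, for $h\ge 2$) to identify the categories $\cC_n$ and $\cC_+$ concretely, and then to produce an exact retraction $\cC_+\to\cC_n$ at the categorical level; this will induce a splitting of $\cK_n\to\cK_+$ and thereby yield the required injectivity.

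First I would do the case analysis on $n\ge 2$. Since $V(\pi_h)=V(p)$ when $h$ is a power of the prime $p$ and is empty otherwise, if $n$ has at least two distinct prime factors then no $h$ divisible by $n$ has $V(\pi_h)$ nonempty, so $\cC_n=0$ and the statement is vacuous. Otherwise $n=p^s$ for some prime $p$ and $s\ge 1$, and $\cC_n$ coincides with the category $\cC_p$ of finitely generated $\bk$-modules supported on $V(p)$---equivalently (for $\bk$ noetherian), those annihilated by a power of $p$. I would then identify $\cC_+$ as the category of finitely generated $\bk$-modules annihilated by some positive integer. The nonobvious direction is that for finitely generated $M$ with $\operatorname{Supp}(M)=V(\operatorname{ann}(M))\subset\bigcup_q V(q)$, the intersection $\operatorname{ann}(M)\cap\bZ$ must be nonzero, for otherwise one could localize $\bk$ at $\bZ\setminus\{0\}$ to produce a prime in $\operatorname{Supp}(M)$ meeting no $V(q)$.

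The heart of the proof is to define $\rho_p\colon\cC_+\to\cC_p$ by $\rho_p(M)=M[p^\infty]=\{m\in M:p^k m=0\text{ for some }k\}$, with the tautological action on morphisms, and to verify it is exact. The only nontrivial point is the surjectivity of $M[p^\infty]\to M''[p^\infty]$ for a short exact sequence $0\to M'\to M\to M''\to 0$ in $\cC_+$. Given an element of $M''[p^\infty]$, I would pick $N=p^a q$ with $(p,q)=1$ that annihilates all three modules and with $p^a$ killing the given element, use Bezout to write $1=up^a + vq$, and observe that for any lift $m\in M$, the element $vq\cdot m$ lies in $M[p^\infty]$ (since $p^a\cdot vq = vN$ annihilates $m$) and still maps down to the given element in $M''$ (since that element equals $(up^a+vq)\cdot m''=vq\cdot m''$). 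Since $\rho_p$ restricts to the identity functor on $\cC_p\subset\cC_+$, the induced map $\cK_+\to\cK_p$ on Grothendieck groups splits the natural map $\cK_p\to\cK_+$, establishing $(\ast)$.

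The main obstacle is precisely the exactness of $\rho_p$, but as sketched above this reduces to a one-line Bezout manipulation; everything else is categorical bookkeeping driven by the explicit form of $\pi_{\bullet}$.
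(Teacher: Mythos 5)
Your proposal is correct and is essentially the paper's argument: the paper identifies $\cC_+$ as the finitely generated $\bk$-modules that are torsion over $\bZ$, notes $\cC_n=0$ unless $n$ is a prime power $p^s$ (in which case $\cC_n$ is the modules killed by a power of $p$), and invokes the Chinese remainder theorem to see that $\cC_n$ is a summand of $\cC_+$ — which is precisely the exact $p$-primary-part retraction $\rho_p$ you construct and verify via Bezout. Your write-up just makes that one-line CRT step explicit.
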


\begin{proof}
By definition, $\cC_+$ is the category of finitely generated $\bk$-modules that are torsion as abelian groups. If $n \ge 2$ is a power of the prime $p$ then $\cC_n$ is the category of finitely generated $\bk$-modules that are annihilated by a power of $p$. By the Chinese remainder theorem, $\cC_n$ is a summand of $\cC_+$, and so $\cK_n \to \cK_+$ is injective. If $n \ge 2$ is not a prime power then $\cC_n=0$.
\end{proof}

For a prime number $p$, let $R_{p^{\infty}}=\bigcup_{n \ge 1} R_{p^n}$.

\begin{corollary} \label{cor:groth-class}
We have short exact sequences
\begin{displaymath}
0 \to R_1 \otimes \rK(\bk) \to \uK(\bD) \to \bigoplus_p R_{p^{\infty}}/R_1 \otimes \rK(\bk/p \bk) \to 0
\end{displaymath}
and
\begin{displaymath}
0 \to \rK(\bk) \to \rK(\bD) \to \bigoplus_p \bQ_p/\bZ_p \otimes \rK(\bk/p \bk) \to 0,
\end{displaymath}
where in both lines the sum is over all prime numbers $p$.
\end{corollary}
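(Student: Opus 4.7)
The plan is to establish both sequences in one stroke by analyzing the graded version via Theorem~\ref{thm:k} and descending to the ungraded one via Proposition~\ref{prop:Kgrade}. Proposition~\ref{prop:classast} verifies hypothesis $(\ast)$ of Theorem~\ref{thm:k}, so $\varphi\colon\ul{\sK}\to\uK(\bD)$ is an isomorphism, and it suffices to compute $\ul{\sK}$. For the classical divided power algebra, $\pi_h=p$ when $h=p^k$ is a prime power and $\pi_h=1$ otherwise, so $\cC_n=0$ for $n\ge 2$ not a prime power, while $\cC_{p^k}$ is the category of finitely generated $\bk$-modules supported on $V(p)$; Quillen's d\'evissage theorem (via the $p$-adic filtration) identifies $\cK_{p^k}\cong\rK(\bk/p\bk)$ canonically and independently of $k\ge 1$.

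To unwind $\ul{\sK}$, I would handle the defining relations in two stages. First, for each prime $p$, the relations with $m=p^k$, $n=p^j$, $1\le j\le k$ identify elements across the $p$-power summands; using $R_{p^\infty}=\bigcup_{k\ge 1} R_{p^k}$, the quotient of $\bigoplus_{k\ge 1}\cK_{p^k}\otimes R_{p^k}$ by these relations is identified with $\rK(\bk/p\bk)\otimes R_{p^\infty}$. Second, the relations at $n=1$ further identify, for each prime $p$, the subgroup $\rK(\bk/p\bk)\otimes R_1\subset \rK(\bk/p\bk)\otimes R_{p^\infty}$ with the image of the forgetful map $\rK(\bk/p\bk)\to\rK(\bk)$ tensored with $R_1$ inside $\rK(\bk)\otimes R_1$. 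A routine diagram chase on the resulting quotient yields the exact sequence
\begin{displaymath}
0\to R_1\otimes\rK(\bk)\to\ul{\sK}\to\bigoplus_p R_{p^\infty}/R_1\otimes\rK(\bk/p\bk)\to 0,
\end{displaymath}
where injectivity of the first arrow uses the $\bZ$-torsion-freeness of $R_{p^\infty}/R_1$, which follows via Gauss's lemma from the primitivity of $1+t+\cdots+t^{p^k-1}$ in $\bZ[t]$.

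For the ungraded sequence, I apply $-\otimes_R R/(t-1)$ to the first and invoke Proposition~\ref{prop:Kgrade} to identify the middle term with $\rK(\bD)$. Since $R_1$ is $R$-cyclic, $R_1\otimes_R R/(t-1)=\bZ$, so the left term becomes $\rK(\bk)$ with map $[M]\mapsto[\bD\otimes_\bk M]$. To compute $R_{p^\infty}\otimes_R R/(t-1)$, I view $R_{p^\infty}$ as the colimit of $R_{p^k}\cong R$ under the inclusions $R_{p^k}\hookrightarrow R_{p^{k+1}}$, each of which corresponds to multiplication by $c_k=1+t^{p^k}+\cdots+t^{(p-1)p^k}$ in $R$; modulo $t-1$ these transition maps become multiplication by $p$, giving $\bZ[1/p]$ in the colimit. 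The image of $R_1$ corresponds to the standard $\bZ\subset\bZ[1/p]$ (since $\frac{1}{t-1}=(1+t+\cdots+t^{p^k-1})e_k$ reduces mod $t-1$ to $p^k\cdot e_k$, representing $1$ in $\bZ[1/p]$), yielding $R_{p^\infty}/R_1\otimes_R R/(t-1)\cong\bZ[1/p]/\bZ\cong\bQ_p/\bZ_p$. The main obstacle is left exactness, which reduces to $\Tor_1^R(R_{p^\infty}/R_1,R/(t-1))=0$, i.e., the injectivity of multiplication by $t-1$ on $R_{p^\infty}/R_1$: if $f\in R_{p^\infty}$ satisfies $(t-1)f\in R_1$, then writing $(t-1)f=h/(t-1)$ and $f=g/(t^{p^k}-1)$ yields $h(1+t+\cdots+t^{p^k-1})=g(t-1)$ in $R$; evaluating at $t=1$ forces $h(1)=0$, so $h=(t-1)h'$ and $f=h'/(t-1)\in R_1$, completing the argument.
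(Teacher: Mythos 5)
Your handling of the first (graded) sequence is essentially the paper's argument: verify $(\ast)$ via Proposition~\ref{prop:classast}, invoke Theorem~\ref{thm:k}, identify $\cK_{p^k}\cong\rK(\bk/p\bk)$ by d\'evissage, and unwind the presentation of $\ul{\sK}$; your observation that injectivity of $R_1\otimes\rK(\bk)\to\ul{\sK}$ rests on the $\bZ$-torsion-freeness of $R_{p^\infty}/R_1$ is correct and is a point the paper's identification of $\ul{\sK}^1$ with $R_1\otimes\rK(\bk)$ uses silently. Your computations of $R_1\otimes_R R/(t-1)\cong\bZ$ and $(R_{p^\infty}/R_1)\otimes_R R/(t-1)\cong\bQ_p/\bZ_p$ are also fine.

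The gap is in the left-exactness step for the ungraded sequence. The third term of the graded sequence is $\bigoplus_p (R_{p^\infty}/R_1)\otimes_{\bZ}\rK(\bk/p\bk)$, so what must vanish is $\Tor_1^R\bigl((R_{p^\infty}/R_1)\otimes_{\bZ}B,\,R/(t-1)\bigr)$ with $B=\rK(\bk/p\bk)$, not $\Tor_1^R(R_{p^\infty}/R_1,R/(t-1))$, and these differ. Your Gauss's-lemma argument does show $t-1$ acts injectively on $A=R_{p^\infty}/R_1$, but $A/(t-1)A\cong\bQ_p/\bZ_p$ is not $\bZ$-torsion-free, so tensoring $0\to A\xrightarrow{t-1}A\to\bQ_p/\bZ_p\to 0$ with $B$ over $\bZ$ (using that $A$ itself is $\bZ$-flat) gives
\begin{displaymath}
\Tor_1^R\bigl(A\otimes_{\bZ}B,\,R/(t-1)\bigr)=\ker\bigl((t-1)\otimes\id_B\bigr)\cong\Tor_1^{\bZ}(\bQ_p/\bZ_p,B)=B[p^{\infty}],
\end{displaymath}
the $p$-power torsion of $\rK(\bk/p\bk)$. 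Your argument therefore only proves left-exactness when each $\rK(\bk/p\bk)$ has no $p$-torsion.

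Moreover, the gap is not merely technical: unwinding the presentation $\sK=\ul{\sK}/(t-1)\ul{\sK}$, the kernel of $\rK(\bk)\to\rK(\bD)$ is $\sum_p f_p\bigl(\rK(\bk/p\bk)[p^{\infty}]\bigr)$, where $f_p\colon\rK(\bk/p\bk)\to\rK(\bk)$ is the forgetful (pushforward) map, and this need not vanish. For instance, take $\bk=\bF_2[E\setminus O]$ with $E/\bF_2$ an elliptic curve having a rational point $P$ of order~$2$: then $[\kappa(P)]$ is a nonzero $2$-torsion class in $\rK(\bk)=\bZ\oplus E(\bF_2)$, while $[\bD/\fm_P\bD]=2[M(\fm_P,2)]$ by Proposition~\ref{prop:mah}(b) and $2[M(\fm_P,2)]=0$ in $\rK(\bD)$ because $2[\kappa(P)]=0$ already in $\rK(\bk/2\bk)$ and $-\otimes_{\bk}\bD^{(2)}$ is exact. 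So the second sequence as stated requires an additional hypothesis on $\bk$; the paper's own one-line proof (``compute $\sK$'') elides exactly the same point, but your reduction to the wrong $\Tor$ group is where your write-up goes astray.
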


\begin{proof}
These sequences simply come from computing $\ul{\sK}$ and $\sK$. In the notation of the proof of Theorem~\ref{thm:k}, the first short exact sequence is simply
\begin{displaymath}
0 \to \ul{\sK}^1 \to \uK(\bD) \to \ul{\sK}^2 \to 0. \qedhere
\end{displaymath}
\end{proof}

\begin{example} \label{ex:ktors}
Let $\bk=\bZ_p$, let $\fp$ be the maximal ideal of $\bk$, and let $h>1$ be a power of $p$. Note that $\pi_h \in \fp$. We have $[\bD/\fp \bD]=[\bD]-[\bD]=0$, and so $[M(\fp, h)]$ is killed by $\frac{1-t^h}{1-t}$ by Proposition~\ref{prop:mah}(c). However, we have $\rL_{M(\fp,h)}(t)=\frac{[\bF_p]_+}{1-t^h}$ and $[\bF_p]_+$ is not zero in $\cK_+$; thus $\rL_{M(\fp,h)}(t)$ is non-zero in $R_{\infty}/R_1 \otimes \cK_+$, and so $[M(\fp,h)]$ is non-zero in $\rK(\bD)$. We therefore have an example of a non-zero class in $\rK(\bD)$ that is killed by a non-zero element of $R$.
\end{example}

\begin{remark} \label{rmk:dercat}
Let $\cD$ be the bounded derived category of finitely presented non-graded $\bD$-modules. Let $\cF$ be the full subcategory on objects that can be represented by bounded complexes whose terms have the form $M \otimes_{\bk} \bD$ with $M$ a finitely presented $\bk$-module. Let $\cD^t$ be the full subcategory of $\cD$ on objects 
%\rohit{Replaced ``on objects" twice in this paragraph by ``of objects". That's what the referee suggested.}
represented by bounded complexes of finitely presented $\bD$-modules that are torsion as $\bZ$-modules. Then $\rK(\cF) = \rK(\bk)$ and
\begin{displaymath}
\rK(\cD^t/(\cF \cap \cD^t)) = \bigoplus_p \bQ_p/\bZ_p \otimes \rK(\bk/p \bk).
\end{displaymath}
Thus Corollary~\ref{cor:groth-class} suggests an equivalence
\begin{displaymath}
\frac{\cD}{\cF} \cong \frac{\cD^t}{\cD^t \cap \cF}.
\end{displaymath}
Such an equivalence also seems plausible due to the form of special resolutions. (This remark is not specific to the classical divided power algebra, and should apply to any GDPA.)
\end{remark}

\subsection{The $q$-divided power algebra} \label{ss:Kq}

Let $\bD$ be the $q$-divided power algebra
%\rohit{Fixed the mistype}
over $\bZ[q]$. We now study the groups $\cK_n$ and $\cK_+$, and ultimately show that condition $(\ast)$ of Theorem~\ref{thm:k} does not hold. For an integer $n \ge 1$, we write $\bZ[\zeta_n]$ for the quotient $\bZ[q]/(\Phi_n)$, where $\zeta_n$ is the image of $q$, a primitive $n$th root of unity. The group $\rK(\bZ[\zeta_n])$ canonically decomposes as $\bZ \oplus \Cl(\bQ(\zeta_n))$, where $\bZ$ is generated by the class of $\bZ[\zeta_n]$ itself, and $\Cl(\bQ(\zeta_n))$ is the class group of the number field $\bQ(\zeta_n)$, which is finite. Now, it is a general fact that if $\fa$ and $\fb$ are ideals in a noetherian ring $\bk$ then we have an exact sequence
\begin{displaymath}
\rK(\bk/(\fa+\fb)) \to \rK(\bk/\fa) \oplus \rK(\bk/\fb) \to \rK(\bk/\fa \fb) \to 0.
\end{displaymath}
Taking $\fa=(\Phi_n)$ and $\fb=(\Phi_m)$ yields
\begin{displaymath}
\rK(\bZ[q]/(\Phi_n,\Phi_m)) \to \rK(\bZ[\zeta_n]) \oplus \rK(\bZ[\zeta_m]) \to \rK(\bZ[q]/(\Phi_n \Phi_m)) \to 0.
\end{displaymath}
Applying the above sequence repeatedly and taking a direct limit yields a presentation
\begin{displaymath}
\bigoplus_{t<s, n \mid t,s} \rK(\bZ[q]/(\Phi_t, \Phi_s)) \stackrel{\alpha}{\to} \bigoplus_{n \mid t} \rK(\bZ[\zeta_t]) \to \cK_n \to 0.
\end{displaymath}
Now, observe that $(\Phi_t,\Phi_s)$ is a height~2 prime of $\bZ[q]$, and so the image of $\alpha$ lands in the sum of the class groups. In particular, after tensoring with $\bQ$ we see that $\alpha$ is 0. Thus $\cK_n \otimes \bQ$ has a natural basis indexed by the multiples of $n$. A similar analysis holds for $\cK_+ \otimes \bQ$: it has a natural basis indexed by integers $\ge 2$. We thus find:

\begin{proposition} \label{prop:qdprat}
The condition $(\ast)$ of Theorem~\ref{thm:k} holds rationally. Thus the map $\ul{\sK} \otimes \bQ \to \uK(\bD) \otimes \bQ$ induced from the map $\varphi$ in Conjecture~\ref{conj:k} is an isomorphism.
\end{proposition}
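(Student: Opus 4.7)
The plan is to make precise the rational form of condition $(\ast)$ using the presentations of $\cK_n$ and $\cK_+$ already sketched in the paragraph preceding the proposition, and then rerun the proof of Theorem~\ref{thm:k} with $\bQ$-coefficients. The only point of real content is the claim that the boundary map $\alpha$ in the presentation of $\cK_n$ vanishes rationally; once this is established, the isomorphism assertion falls out of existing machinery.

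First I would verify that $\cK_n \otimes \bQ$ is free on the classes $\{[\bZ[\zeta_t]]_n\}_{n \mid t}$. The boundary map
\begin{displaymath}
\alpha \colon \bigoplus_{n \mid t,\, n \mid s,\, t<s} \rK(\bZ[q]/(\Phi_t,\Phi_s)) \to \bigoplus_{n \mid t} \rK(\bZ[\zeta_t])
\end{displaymath}
lands in the class-group summands $\Cl(\bQ(\zeta_t))$, because for $t \ne s$ the ideal $(\Phi_t,\Phi_s)$ is maximal of height~$2$, so $\bZ[q]/(\Phi_t,\Phi_s)$ is a finite field, and its image in $\rK(\bZ[\zeta_t])$ therefore has rank zero. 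Each $\Cl(\bQ(\zeta_t))$ is a finite group, so $\im(\alpha)$ is torsion and vanishes after $-\otimes \bQ$. The same reasoning, applied uniformly to $\cC_+$, identifies $\cK_+ \otimes \bQ$ as the free $\bQ$-vector space on $\{[\bZ[\zeta_t]]_+\}_{t \ge 2}$.

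Next I would observe that the natural map $\cK_n \otimes \bQ \to \cK_+ \otimes \bQ$ sends $[\bZ[\zeta_t]]_n$ to $[\bZ[\zeta_t]]_+$, so it is the inclusion of a sub-basis of the target basis, hence injective. This is the rational version of condition $(\ast)$ for the $q$-divided power algebra.

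To obtain the isomorphism statement, I would then tensor the proof of Theorem~\ref{thm:k} with $\bQ$. Condition $(\ast)$ enters that proof only in the identification of $\ul{\sK}^2$ with an explicit $R$-submodule of $R_\infty/R_1 \otimes \cK_+$ via the maps $\cK_n \to \cK_+$; the rational injectivity established above yields the corresponding identification for $\ul{\sK}^2 \otimes \bQ$. The remaining arguments--using the Hilbert-series map $\rH$ and the $\rL$-invariant to show $\ol{x}=\rL_{\varphi(x)}$ and $x=\rH_{\varphi(x)}$ on the respective pieces--are already $R$-linear and pass through $-\otimes \bQ$ unchanged. The main (mild) obstacle is really the vanishing of $\im(\alpha)$ rationally, which is essentially just the rank-zero observation above; the rest is bookkeeping already carried out in the proof of Theorem~\ref{thm:k}.
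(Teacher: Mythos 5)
Your argument is correct and follows the paper's own route essentially verbatim: the same presentation of $\cK_n$ via the Mayer--Vietoris sequences, the same observation that $\im(\alpha)$ lands in the (finite) class-group summands because $(\Phi_t,\Phi_s)$ has height~$2$, hence vanishes rationally, and the same appeal to the proof of Theorem~\ref{thm:k} tensored with $\bQ$. (One cosmetic point: $(\Phi_t,\Phi_s)$ is the unit ideal unless $s/t$ or $t/s$ is a prime power, and need not be maximal even then, but all you need is that the quotient is finite so the classes have rank zero, which is exactly how the paper argues.)
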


For $n \mid t$, let $\rN_t \colon \rK(\bZ[\zeta_t]) \to \rK(\bZ[\zeta_n])$ be the norm map. On the $\bZ$ summand, this is multiplication by the degree $[\bQ(\zeta_t) \colon \bQ(\zeta_n)]$, while on the class group summand it is the usual norm map on ideals. Summing these maps yields a map
\begin{displaymath}
\wt{\rN} \colon \bigoplus_{n \mid t} \rK(\bZ[\zeta_t]) \to \rK(\bZ[\zeta_n]).
\end{displaymath}
We claim that $\wt{\rN}$ kills the image of $\alpha$. Thus let $t<s$ be multiples of $n$. The ideal $(\Phi_t,\Phi_s)$ is the unit ideal unless $s=tp^r$ for some prime $p$, so assume this is the case. Let $\fm$ be a maximal ideal of $\bZ[q]$ containing $(\Phi_t,\Phi_s)$, and let $\fm_t$ and $\fm_s$ be the images of $\fm$ in $\bZ[\zeta_t]$ and $\bZ[\zeta_s]$. Then $\fm_t$ is totally ramified in $\bZ[\zeta_s]$, and $\fm_s$ is the unique prime above it. Thus the norm of $\fm_s$ down to $\bZ[\zeta_t]$ is $\fm_t$. By the compatibility of norms in towers, we see that $\fm_t$ and $\fm_s$ have the same norm down to $\bZ[\zeta_n]$. This establishes the claim. We thus see that $\wt{\rN}$ induces a map
\begin{displaymath}
\rN \colon \cK_n \to \rK(\bZ[\zeta_n]).
\end{displaymath}
It follows immediately from the definition that $\rN$ is left-inverse to the canonical map $\rK(\bZ[\zeta_n]) \to \cK_n$, and so we find:

\begin{proposition} \label{prop:qdpinj}
The canonical map $\rK(\bZ[\zeta_n]) \to \cK_n$ is a split injection.
\end{proposition}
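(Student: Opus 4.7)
The plan is to assemble a proof from the material established in the paragraph preceding the proposition. The hard work has already been done in constructing the norm map $\rN \colon \cK_n \to \rK(\bZ[\zeta_n])$ and checking that it is well-defined; what remains is to verify that $\rN$ is a one-sided inverse to the canonical map $\iota \colon \rK(\bZ[\zeta_n]) \to \cK_n$.

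First, I would briefly recall the construction: the presentation
\begin{displaymath}
\bigoplus_{t<s,\, n \mid t,s} \rK(\bZ[q]/(\Phi_t,\Phi_s)) \stackrel{\alpha}{\to} \bigoplus_{n \mid t} \rK(\bZ[\zeta_t]) \to \cK_n \to 0
\end{displaymath}
exhibits $\cK_n$ as a quotient, and the map $\wt{\rN}=\sum_{n \mid t} \rN_t$ assembled from the cyclotomic norm maps has already been shown to annihilate the image of $\alpha$. The key geometric input was that any maximal ideal $\fm \supset (\Phi_t,\Phi_s)$ forces $s=tp^r$, and the corresponding primes $\fm_t \subset \bZ[\zeta_t]$ and $\fm_s \subset \bZ[\zeta_s]$ are related by total ramification, so the tower formula for norms gives $N_{\bZ[\zeta_s]/\bZ[\zeta_n]}(\fm_s)=N_{\bZ[\zeta_t]/\bZ[\zeta_n]}(\fm_t)$. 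Together with the analogous identity on $\bZ$-summands (which follows from multiplicativity of degree in towers), this ensures $\wt{\rN}$ descends to the desired map $\rN \colon \cK_n \to \rK(\bZ[\zeta_n])$.

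The left-inverse property $\rN \circ \iota = \id$ is then immediate: the canonical map $\iota$ is by definition the composite of the inclusion of the $t=n$ summand $\rK(\bZ[\zeta_n]) \hookrightarrow \bigoplus_{n \mid t} \rK(\bZ[\zeta_t])$ with the quotient projection to $\cK_n$, while $\rN$ is induced by $\wt{\rN}$, whose $t=n$ component is the norm map $\rN_n \colon \rK(\bZ[\zeta_n]) \to \rK(\bZ[\zeta_n])$ for the trivial extension, i.e., the identity. Hence $\rN \circ \iota = \rN_n = \id$, so $\iota$ is a split injection with splitting $\rN$.

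The main obstacle is really verifying that $\wt{\rN}$ kills the image of $\alpha$, but this was already done in the paragraph above the statement. My writeup would therefore just make the two-line verification of the splitting identity, pointing back to that discussion. I see no additional difficulty: the proposition is essentially a packaging of observations already made.
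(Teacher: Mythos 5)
Your proposal is correct and is essentially the paper's own argument: the proposition is stated as an immediate consequence of the construction of $\rN$ in the preceding paragraph, and your verification that the $t=n$ component of $\wt{\rN}$ is the identity is exactly the "follows immediately from the definition" step the paper leaves implicit.
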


We now come to the main point:

\begin{proposition}
The condition $(\ast)$ of Theorem~\ref{thm:k} does not hold. In fact, the map $\cK_{39} \to \cK_+$ is not injective.
\end{proposition}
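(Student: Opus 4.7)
The plan is to exhibit an element of $\cK_{39}$ represented by a prime of $\bZ[\zeta_{39}]$ above $13$ that is forced to vanish in $\cK_+$ by an extra Mayer--Vietoris relation involving $\bZ[\zeta_3]$, yet is nonzero in $\cK_{39}$ because $\Cl(\bQ(\zeta_{39}))$ is nontrivial. The relevant pair of cyclotomic indices is $(t,s)=(3,39)$. Using $q^{39}-1=(q^3-1)^{13}$ in $\bF_{13}[q]$ together with $\Phi_{13}(q)\equiv(q-1)^{12}\pmod{13}$, a short computation gives $\Phi_{39}(q)\equiv\Phi_3(q)^{12}\pmod{13}$; since $13\equiv 1\pmod 3$, the polynomial $\Phi_3$ splits over $\bF_{13}$, so $\bF_{13}[q]/(\Phi_3)\cong\bF_{13}\oplus\bF_{13}$. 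This yields two height-two primes $\fm$ of $\bZ[q]$ above $13$ containing both $\Phi_3$ and $\Phi_{39}$, each with residue field $\bF_{13}$.

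For such an $\fm$, set $\fm_3=\fm\cap\bZ[\zeta_3]$ and $\fm_{39}=\fm\cap\bZ[\zeta_{39}]$. The analogue for $\cK_+$ of the Mayer--Vietoris presentation used before Proposition~\ref{prop:qdprat} (now summed over all pairs $2\le t<s$) gives the identification
\[
[\bZ[\zeta_3]/\fm_3]\;=\;[\bZ[\zeta_{39}]/\fm_{39}] \quad\text{in }\cK_+,
\]
a relation absent from $\cK_{39}$ because $39\nmid 3$. Under the decomposition $\rK(\bZ[\zeta_3])=\bZ\oplus\Cl(\bQ(\zeta_3))$, the class $[\bZ[\zeta_3]/\fm_3]=-[\fm_3]$ vanishes because $\bZ[\zeta_3]$ is a principal ideal domain. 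Hence $[\bZ[\zeta_{39}]/\fm_{39}]=0$ in $\cK_+$. On the other hand, Proposition~\ref{prop:qdpinj} says that $\rK(\bZ[\zeta_{39}])\to\cK_{39}$ is a split injection, so to finish it suffices to show that $-[\fm_{39}]$ is nonzero in $\Cl(\bQ(\zeta_{39}))$, i.e., that $\fm_{39}$ is a non-principal prime of $\bZ[\zeta_{39}]$.

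The hard part will be confirming the non-principality of $\fm_{39}$. It is classical that $h(\bQ(\zeta_{39}))=2$, so $\Cl(\bQ(\zeta_{39}))\cong\bZ/2\bZ$. I would verify that some prime of $\bZ[\zeta_{39}]$ above $13$ represents the nontrivial class, either by consulting standard tables of cyclotomic class groups or by a direct Minkowski-bound computation (a prime of norm $13$ is principal iff it is generated by an element of norm $\pm 13$, which can be ruled out by norm descent). Should it happen that all primes above $13$ are principal, the parallel argument with $(t,s)=(13,39)$, a prime of $\bZ[\zeta_{39}]$ above $3$, and the PID $\bZ[\zeta_{13}]$ applies instead; since the only finite primes of $\bQ$ that ramify in $\bQ(\zeta_{39})$ are $3$ and $13$, genus theory for the abelian extension $\bQ(\zeta_{39})/\bQ$ gives conceptual reason to expect the nontrivial class to be detected by one of these two tame primes, which is exactly what makes this counterexample work.
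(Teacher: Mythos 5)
Your argument is essentially the paper's: both take the $\bZ[q]$-module $\bF_{13}$ with $q$ acting by a primitive cube root of unity $\eta$, observe that its class dies in $\cK_+$ because the corresponding prime $\fp=(13,\zeta_3-\eta)$ of the PID $\bZ[\zeta_3]$ is principal, and that its class survives in $\cK_{39}$ via the split injection $\rK(\bZ[\zeta_{39}])\hookrightarrow\cK_{39}$ of Proposition~\ref{prop:qdpinj}, provided the prime of $\bZ[\zeta_{39}]$ above $\fp$ is non-principal. That non-principality is exactly the arithmetic input the paper outsources to \cite{mathoverflow}, so the one step you leave open is indeed true; note, however, that your genus-theory fallback is not a rigorous substitute, since nothing guarantees a priori that $\Cl(\bQ(\zeta_{39}))$ is generated by classes of primes lying over the ramified rational primes.
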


\begin{proof}
Fix a primitive cube root of unity $\eta \in \bF_{13}$, and give $M=\bF_{13}$ the structure of a $\bZ[q]$-module by letting $q$ act by $\eta$. The module $M$ is killed by $\Phi_3$ and $\Phi_{39}$, and can thus be regarded as a module over $\bZ[\zeta_3]$ and $\bZ[\zeta_{39}]$. Let $\fp$ be the prime of $\bZ[\zeta_3]$ over~13 corresponding to $\eta$, i.e., $\fp=(13, \zeta_3-\eta)$, and let $\fq$ be the unique prime of $\bZ[\zeta_{39}]$ over $\fp$. Then $M \cong \bZ[\zeta_3]/\fp$ as a $\bZ[\zeta_3]$-module and $M \cong \bZ[\zeta_{39}]/\fq$ as a $\bZ[\zeta_{39}]$-module. Now, the ideal $\fp$ is principal since $\bQ(\zeta_3)$ has class number~1, and so the class of $M$ in $\rK(\bZ[\zeta_3])$ is zero. It follows that the class of $M$ vanishes in $\cK_3$, and thus in $\cK_+$ as well. On the other hand, $\fq$ is non-principal (see \cite{mathoverflow}). Thus the class of $M$ in $\rK(\bZ[\zeta_{39}])$ is non-zero, and therefore the class of $M$ in $\cK_{39}$ is non-zero by Proposition~\ref{prop:qdpinj}. This class is therefore a non-zero element of the kernel of $\cK_{39} \to \cK_+$.
\end{proof}

\section{Injective dimension and duality} \label{s:inj}

\subsection{Self-injectivity}

Throughout this section, $\bk$ denotes a field and $\bD$ a GDPA over $\bk$. Our goal is to prove the following theorem:

\begin{theorem} \label{thm:selfinj}
The injective dimension of $\bD$ as a $\bD$-module is at most~1, in both the graded and ungraded cases. More precisely:
\begin{enumerate}
\item If infinitely many of the $\pi_n$ vanish then $\bD$ is injective as a graded $\bD$-module.
\item If infinitely many of the $\pi_n$ vanish then $\bD$ has injective dimension~1 in the category of all $\bD$-modules, and $\Ext^i_{\bD}(M, \bD)=0$ for all finitely presented $M$ and all $i>0$.
\item If only finitely many of the $\pi_n$ vanish then $\bD$ has injective dimension~1 in both the graded and ungraded categories.
\end{enumerate}
\end{theorem}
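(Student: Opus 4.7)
By Proposition~\ref{prop:gdpa-field}, a GDPA over a field $\bk$ is isomorphic to $\bS(\bk, b_\bullet)$ for a unique divisible sequence $b_\bullet$, infinite in cases (a),(b) and of finite length in case (c). Each truncation $R_n := \bD_{<b_n}$ is a finite-dimensional graded local complete intersection, hence a graded Frobenius algebra, and Proposition~\ref{gdpa:subalg} gives $\bD \cong R_n \otimes_\bk \bD^{(b_n)}$ as graded $\bk$-algebras whenever $\pi_{b_n} = 0$. The plan is to treat the three parts separately while uniformly exploiting the Frobenius structure of the $R_n$.

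\textbf{Part (a).} I would prove graded injectivity via Baer's criterion. Given a finitely generated graded ideal $I \subset \bD$ and a graded map $f \colon I \to \bD$ of some degree, pick $n$ large enough that $I \subset R_n$ and $f(I) \subset R_n$, which is possible since $b_\bullet$ is infinite. Self-injectivity of the Frobenius ring $R_n$ extends $f|_I$ to some $\tilde f \colon R_n \to R_n$, and the decomposition $\bD = R_n \otimes_\bk \bD^{(b_n)}$ then lets me extend by $\tilde f \otimes \mathrm{id}$ to a $\bD$-linear endomorphism of $\bD$. Because $I$ decomposes compatibly as $(I \cap R_n) \otimes_\bk \bD^{(b_n)}$ under this isomorphism, the restriction-equals-$f$ check is immediate from $\bD$-linearity of $f$. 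The passage from finitely generated to arbitrary graded ideals is a standard Zorn-style argument, using coherence of $\bD$ (Proposition~\ref{gdpa:coh}) at each extension step.

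\textbf{Part (b).} For the Ext-vanishing on finitely presented modules, I combine part (a) with Theorem~\ref{thm:sd}. Over a field, the latter produces, for each finitely presented $M$, a length-one special resolution $0 \to P_1 \to P_0 \to M \to 0$ with $P_0$ free and $P_1$ special. Bootstrapping along a filtration of $P_1$ by principal specials reduces the claim to $\Ext^i_\bD(\bD^{(h)}, \bD) = 0$ for $i > 0$ and $\pi_h = 0$; writing $\bD^{(h)} = \bD \otimes_{\bD_{<h}} \bk$ and using flatness of $\bD$ over $\bD_{<h}$, change of rings yields
\[
 \Ext^i_\bD(\bD^{(h)}, \bD) \;\cong\; \Ext^i_{\bD_{<h}}(\bk, \bD),
\]
and this vanishes for $i > 0$ since $\bD$ is a free --- hence injective, by self-injectivity of the noetherian ring $\bD_{<h}$ --- $\bD_{<h}$-module. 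Bridging from this Ext-vanishing on finitely presented modules to the upper bound of~$1$ on the \emph{ungraded} injective dimension --- which requires $\Ext^2(M, \bD) = 0$ for arbitrary $\bD$-modules $M$, not just fp ones --- is the main technical obstacle, since arbitrary modules admit no short special resolution. My plan is to deduce it from graded injectivity (part (a)) via a Baer-type argument in the ungraded category, approximating an arbitrary ungraded ideal by its finitely generated graded pieces and absorbing the degree-mixing obstruction into a single additional term of the injective coresolution. Non-vanishing of $\Ext^1$ in the ungraded sense, placing the dimension at exactly~$1$, is witnessed by a concrete non-split ungraded extension built from an ideal whose generators mix degrees.

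\textbf{Part (c).} When $b_\bullet = (1, b_1, \dots, b_r)$ is finite, the regrade of $\bD^{(b_r)}$ is $\bD^{[b_r]}$ by Proposition~\ref{prop:regrade}(c), and by Proposition~\ref{gdpa:elt}(b) each $\pi^{[b_r]}_n$ for $n \ge 2$ equals $\pi_{b_r n}$ up to units --- a unit since $b_r n > b_r$ lies outside the finite sequence $b_\bullet$. Corollary~\ref{gdpa:poly} then identifies $\bD^{[b_r]} \cong \bk[x]$, so $\bD^{(b_r)} \cong \bk[y]$ with $\deg(y) = b_r$, and Proposition~\ref{gdpa:subalg} yields $\bD \cong R[y]$ for $R := \bD_{<b_r}$ a finite-dimensional Frobenius algebra over $\bk$. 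Since $R$ is $0$-dimensional Gorenstein, the polynomial extension $R[y]$ is $1$-dimensional Gorenstein, so its injective dimension over itself is exactly~$1$ in both the graded and ungraded senses. The lower bound is immediate from the non-split exact sequence $0 \to \bD \stackrel{y}{\to} \bD \to R \to 0$: applying $\Hom_\bD(-, \bD)$ gives $\Ext^1_\bD(R, \bD) \cong R \ne 0$.
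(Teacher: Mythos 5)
Your reductions for the finitely presented part of (b) (special resolutions over a field plus the change-of-rings isomorphism $\Ext^i_{\bD}(\bD^{(h)},\bD)\cong\Ext^i_{\bD_{<h}}(\bk,\bD)$, with $\bD$ injective over the noetherian self-injective ring $\bD_{<h}$) and for (c) (identifying $\bD\cong R[y]$ and quoting the Gorenstein/injective-amplitude fact for polynomial extensions) are correct and are legitimate alternatives to the paper's arguments, which instead extend maps $I\to\bD$ directly and use the coinduction functor $\Hom_B(\bD,-)$ respectively. However, there are two genuine gaps.

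First, in (a), the passage from finitely generated homogeneous ideals to arbitrary homogeneous ideals is not a ``standard Zorn-style argument using coherence.'' Since $\bD$ is not noetherian, Baer's criterion really does require extending maps from non-finitely-generated homogeneous ideals, and coherence says nothing about those. The mechanism that actually works is \emph{uniqueness} of the extension: $\uHom_{\bD}(\bD,\bD)_d$ is at most one-dimensional over the field $\bk$, and a nonzero degree-$d$ endomorphism of $\bD$ does not annihilate any nonzero homogeneous ideal; hence the extension of $\varphi\vert_{I_1}$ to $\bD$ (for $I=\bigcup_n I_n$ with $I_n$ finitely generated, $I_1\neq 0$) automatically restricts to $\varphi$ on every $I_n$ and therefore on $I$. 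Without some such compatibility statement your finitely-many-generators extensions need not glue.

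Second, and more seriously, the bridge in (b) from $\Ext^{>0}_{\bD}(M,\bD)=0$ for finitely presented $M$ to injective dimension $\le 1$ is not proved: ``approximating an arbitrary ungraded ideal by its finitely generated graded pieces and absorbing the degree-mixing obstruction into a single additional term of the injective coresolution'' is a hope, not an argument (an arbitrary ungraded ideal has no graded pieces, and Baer's criterion tests injectivity, not injective dimension). The idea you are missing is a countability argument: $\bD$ has countable dimension over $\bk$, so every ideal $I$ is countably generated; writing $\bD/I=\varinjlim M_n$ with $M_n$ finitely presented, one has $\rR\Hom_{\bD}(\bD/I,\bD)=\rR\varprojlim\,\rR\Hom_{\bD}(M_n,\bD)$, and since each $\rR\Hom_{\bD}(M_n,\bD)$ is concentrated in degree $0$ and $\rR^i\varprojlim=0$ for $i>1$ on countable systems, $\Ext^i_{\bD}(\bD/I,\bD)=0$ for $i>1$; testing on cyclic modules suffices to bound the injective dimension. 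Likewise the exactness of the bound (some $\Ext^1\neq 0$) requires an explicit non-extendable map from a non-finitely-generated ideal, such as multiplication by the formal sum $\sum_i x^{[b_i-1]}$ on $\bD_+$; your ``ideal whose generators mix degrees'' points in the wrong direction, since the obstruction is infinite generation rather than inhomogeneity.
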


\begin{lemma} \label{lem:dualfree}
(In this lemma, $\bk$ can be any ring.) Suppose $\pi_h=0$. Then $\Hom_{\bk}(\bD_{<h}, \bk)$ is free of rank one as a $\bD_{<h}$-module.
\end{lemma}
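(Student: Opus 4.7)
The plan is to exhibit an explicit generator. Define the $\bk$-linear form
\begin{displaymath}
\phi \colon \bD_{<h} \to \bk, \qquad \phi(x^{[i]}) = \delta_{i, h-1},
\end{displaymath}
and consider the $\bD_{<h}$-linear map $\Psi \colon \bD_{<h} \to \Hom_{\bk}(\bD_{<h}, \bk)$ sending $y$ to the functional $z \mapsto \phi(yz)$. Since both sides are free of rank $h$ as $\bk$-modules, it is enough to show that $\Psi$ carries the $\bk$-basis $\{x^{[k]}\}_{0 \le k < h}$ of $\bD_{<h}$ to a $\bk$-basis of $\Hom_{\bk}(\bD_{<h},\bk)$.

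First I would compute $\Psi(x^{[k]})$ on the basis element $x^{[i]}$. By the definition of multiplication in $\bD$, one has $x^{[k]} x^{[i]} = C(i+k,k) x^{[i+k]}$ when $i+k < h$, and $x^{[k]} x^{[i]} = 0$ when $i+k \ge h$ (since $\pi_h = 0$ divides the relevant binomial coefficient, as in Proposition~\ref{gdpa:subalg}). Applying $\phi$ and using that $\phi$ picks out the coefficient of $x^{[h-1]}$, we find
\begin{displaymath}
\Psi(x^{[k]})(x^{[i]}) = \begin{cases} C(h-1, k) & \text{if } i = h-1-k,\\ 0 & \text{otherwise.} \end{cases}
\end{displaymath}
Thus $\Psi(x^{[k]})$ is the unit scalar $C(h-1,k)$ times the dual basis vector to $x^{[h-1-k]}$. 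Consequently $\Psi$ is an isomorphism provided each $C(h-1,k)$ is a unit in $\bk$ for $0 \le k \le h-1$.

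The main step is therefore the following claim: if $\pi_h = 0$ and $\pi_{\bullet}$ is admissible, then $C(h-1,k)$ is a unit in $\bk$ for all $0 \le k \le h-1$. Being a unit is a local condition, so passing to $\bk/\fm$ for a maximal ideal $\fm$ (using Proposition~\ref{seq:adimg} to preserve admissibility, and noting $\pi_h = 0$ persists), I may assume $\bk$ is a field. In this case the non-units among the $\pi_n$'s are precisely those with $n = b_i$ for some $i$ in the divisible sequence $b_{\bullet} = b_{(0),\bullet}$. By hypothesis $h = b_j$ for some $j$. From the formula $C(h-1,k) = \prod_p \pi_p^{\epsilon_p(h-1-k, k)}$, it suffices to check that $\epsilon_{b_i}(h-1-k, k) = 0$ for every $i$, i.e.\ there is no base $b_{\bullet}$ carry in the addition $k + (h-1-k) = h-1$.

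For this I would observe that the base $b_{\bullet}$ expansion of $h-1 = b_j - 1$ has digit $b_{i+1}/b_i - 1$ at position $i$ for $i < j$ and digit $0$ at position $i \ge j$, while both summands $k$ and $h-1-k$ lie in $[0, b_j)$, so their digits at positions $\ge j$ vanish. Working upward by induction on the position, a carry into position $i+1$ would force the sum of the two digits at position $i$ to be at least $b_{i+1}/b_i$, but their sum is at most $2(b_{i+1}/b_i - 1) < 2b_{i+1}/b_i - 1$, which rules out a carry when the target digit is $b_{i+1}/b_i - 1$ (for $i < j$) or $0$ (for $i \ge j$, where both inputs are $0$). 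Hence no carry occurs, $C(h-1,k)$ is a unit, and the proof is complete. The main obstacle is precisely this carry computation, which hinges on the admissibility of $\pi_{\bullet}$ via the divisible sequence $b_{\bullet}$.
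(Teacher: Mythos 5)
Your proposal is correct and follows essentially the same route as the paper: both exhibit the "top" dual basis vector $\lambda_{h-1}$ (your $\phi$) as a free generator and reduce to showing that $C(h-1,k)=u$ with $x^{[k]}x^{[h-1-k]}=ux^{[h-1]}$ is a unit, which is checked at each maximal ideal via the observation that $h$ lies in the divisible sequence $b_{\fm,\bullet}$ (since $\pi_h=0$) and the addition $k+(h-1-k)$ produces no base $b_{\fm,\bullet}$ carries. Your carry computation is just a more detailed version of the one-line argument in the paper.
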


\begin{proof}
As a $\bk$-module, $\bD_{<h}$ is free with basis $x^{[0]}, \ldots, x^{[h-1]}$. Thus $\Hom_{\bk}(\bD_{<h}, \bk)$ is also free. Let $\lambda_0, \ldots, \lambda_{h-1}$ be the dual basis, so that $\lambda_i(x^{[j]})=\delta_{i,j}$. We have $x^{[i]} \lambda_{h-1}=u \lambda_{h-1-i}$, where $u \in \bk$ is given by $x^{[i]} x^{[h-1-i]} = u x^{[h-1]}$. We claim that $u$ is a unit. Indeed, if $\fm$ is any maximal ideal of $\bk$ then $h=b_{\fm,i}$ for some $i$, and so there are no carries in the sum $i+(h-1-i)$ when working in base $b_{\fm,\bullet}$. It follows that $u$ is a unit in $\bk_{\fm}$ for all $\fm$, and thus a unit. This proves the claim. It now follows that $\lambda_{h-1}$ is a basis for $\Hom_{\bk}(\bD_{<h}, \bk)$ as a $\bD_{<h}$-module.
\end{proof}

\begin{lemma} \label{lem:truncinj}
Suppose $\pi_n=0$. Then $\bD_{<n}$ is injective as a module over itself, both in the graded and non-graded settings.
\end{lemma}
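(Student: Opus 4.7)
The plan is to reduce to a Frobenius-type argument, with the heavy lifting already done in Lemma~\ref{lem:dualfree}. Set $R := \bD_{<n}$, a finite-dimensional $\bk$-algebra. First I will check that the $\bk$-linear dual $R^\vee := \Hom_\bk(R,\bk)$ is an injective $R$-module, and then I will use Lemma~\ref{lem:dualfree} to identify $R^\vee$ with $R$ itself (up to a shift, in the graded case).

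For injectivity of $R^\vee$, the standard Hom--tensor adjunction gives a natural isomorphism
\begin{displaymath}
\Hom_R(M, \Hom_\bk(R,\bk)) \;\cong\; \Hom_\bk(M \otimes_R R, \bk) \;=\; \Hom_\bk(M,\bk).
\end{displaymath}
The right-hand side is exact in $M$ because $\bk$ is a field, so $\Hom_R(-,R^\vee)$ is exact, i.e., $R^\vee$ is injective in $\Mod_R$. In the graded setting the same adjunction with $\uHom$ in place of $\Hom$ shows that $\uHom_\bk(R,\bk)$ is injective in $\uMod_R$.

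Now Lemma~\ref{lem:dualfree} produces a $\bk$-basis vector $\lambda_{n-1}$ of $R^\vee$ that generates $R^\vee$ freely as an $R$-module, and hence gives an $R$-module isomorphism $R \cong R^\vee$. In the graded case $\lambda_{n-1}$ is homogeneous (it is the dual basis element paired with $x^{[n-1]}$), so the isomorphism of Lemma~\ref{lem:dualfree} is a graded isomorphism between $R$ and an appropriate shift of $R^\vee$; since graded-injectivity is preserved by shifts, $R$ is graded-injective as well.

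I do not anticipate any real obstacle: everything reduces to Lemma~\ref{lem:dualfree} plus the standard fact that $\Hom_\bk(R,\bk)$ is an injective cogenerator for $\Mod_R$ when $\bk$ is a field and $R$ is a $\bk$-algebra. The only bookkeeping is to observe that the isomorphism supplied by Lemma~\ref{lem:dualfree} respects the grading up to a degree shift, which is immediate from the explicit formula $x^{[i]}\cdot\lambda_{n-1} = u\,\lambda_{n-1-i}$ appearing in that lemma's proof.
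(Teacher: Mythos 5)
Your proof is correct and follows essentially the same route as the paper: both arguments combine the adjunction $\Hom_R(M,\Hom_{\bk}(R,\bk)) \cong \Hom_{\bk}(M,\bk)$, the exactness of $\Hom_{\bk}(-,\bk)$ over the field $\bk$, and the identification $R \cong \Hom_{\bk}(R,\bk)$ from Lemma~\ref{lem:dualfree}. The remark about the grading shift is a fine (and correct) piece of bookkeeping that the paper leaves implicit.
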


\begin{proof}
Let $D=\bD_{<n}$, and let $M$ be an arbitrary $D$-module. We have
\begin{displaymath}
\Hom_D(M, D) \cong \Hom_D(M, \Hom_{\bk}(D, \bk)) = \Hom_{\bk}(M, \bk).
\end{displaymath}
The first isomorphism comes from Lemma~\ref{lem:dualfree}, while the second is a standard adjunction. Since the functor $\Hom_{\bk}(-, \bk)$ is exact, it follows that $D$ is injective as an $D$-module. The same proof works in the graded case.
\end{proof}

\begin{proof}[Proof of Theorem~\ref{thm:selfinj}(b)]
We first prove the $\Ext$ vanishing statement. By dimension shifting, it suffices to treat the $i=1$ case. By d\'evissage, it suffices to treat the case where $M=\bD/I$ with $I$ a finitely generated ideal. For this, it is enough to show that a given map $\varphi \colon I \to \bD$ extends to a map $\psi \colon \bD \to \bD$. Let $f_1, \ldots, f_r$ be generators for $I$. Pick $n \gg 0$ such that $\pi_n=0$ and the elements $f_i$ and $\varphi(f_i)$ all belong to $\bD_{<n}$. Let $I_0 \subset \bD_{<n}$ be the ideal generated by the $f$'s, and let $\varphi_0 \colon I_0 \to \bD_{<n}$ be the restriction of $\varphi$. We have an isomorphism $\bD=\bD_{<n} \otimes_{\bk} \bD^{(n)}$, under which $I$ corresponds to $I_0 \otimes \bD^{(n)}$ and $\varphi$ corresponds to $\varphi_0 \otimes \id$. Since $\bD_{<n}$ is injective as a module over itself by Lemma~\ref{lem:truncinj}, the map $\varphi_0$ extends to a map $\psi_0 \colon \bD_{<n} \to \bD_{<n}$. Then $\psi=\psi_0 \otimes \id$ is an extension of $\varphi$.

We now show that the injective dimension of $\bD$ is at most one. This follows from the previous paragraph by general considerations, as follows. First, suppose that $M=\bD/I$ is a cyclic $\bD$-module. Since $\bD$ has countable dimension over $\bk$, the ideal $I$ is countably generated. Write $I=\bigcup_{n \ge 1} I_n$ with $I_n$ finitely generated, and put $M_n=\bD/I_n$ so that $M$ is the direct limit of the $M_n$ and each $M_n$ is finitely presented. We have
\begin{displaymath}
\rR \Hom_{\bD}(M, \bD)=\rR \varprojlim \rR \Hom_{\bD}(M_i, \bD).
\end{displaymath}
Since $\rR^i \Hom_{\bD}(M_i, \bD)=0$ for $i>0$ and $\rR^i \varprojlim=0$ for $i>1$, we find that $\rR^i \Hom_{\bD}(M, \bD)=0$ for $i>1$. It follows from this that $\Ext^i_{\bD}(M, \bD)=0$ for $i>1$ for any $\bD$-module $M$ (see \stacks{0A5T}), and so $\bD$ has injective dimension at most one. Example~\ref{ex:ext} below shows that the injective dimension is exactly~1.
\end{proof}

\begin{remark}
The first two paragraphs of the above argument can be easily adapted to prove the following statement. Let $\{A_i\}_{i \in I}$ be a family of finite dimensional (not necessarily commutative) $\bk$-algebras such that for any finite subset $J \subset I$ the algebra $\bigotimes_{i \in J} A_i$ is self-injective, and suppose $\# I =\aleph_r$.  Then the algebra $\bigotimes_{i \in I} A_i$ has injective dimension at most $r+1$ over itself. For instance, if $\{G_i\}_{i \in I}$ is a family of finite groups and $G \subset \prod_{i \in I} G_i$ is the subgroup where all but finitely many coordinates are the identity then $\bk[G]$ has injective dimension at most $r+1$.
\end{remark}

\begin{example} \label{ex:ext}
Suppose infinitely many $\pi_n$ vanish and let $I$ be the ideal $(x^{[1]}, x^{[2]}, \ldots)$ of $\bD$. We show that $\Ext^1_{\bD}(\bk, \bD) \ne 0$ in the ungraded category, where $\bk=\bD/I$. To do this, it suffices to construct a homomorphism $\varphi \colon I \to \bD$ that does not extend to $\bD$. Let $b_1, b_2, \ldots$ be a strictly increasing sequence with $\pi_{b_i}=0$ for all $i$, and define
\begin{displaymath}
g = \sum_{i \ge 1} x^{[b_i-1]},
\end{displaymath}
regarded as a formal sum. For any $n \ge 1$ we have $x^{[n]} x^{[b_i-1]}=0$ for all $i \gg 0$; in fact, as soon as $b_i>n$ there is necessarily a carry when computing $n+(b_i-1)$ in base $b_{\bullet}$, and then the product will vanish. It follows that the product $gx$ is a well-defined element of $\bD$ for all $x \in I$, and one easily sees that putting $\varphi(x)=gx$ gives a well-defined module homomorphism $I \to \bD$. Suppose that $\varphi$ extended to $\bD$. Then we would have $gx=\varphi(x)=hx$ for all $x \in I$, where $h=\varphi(1)$ is an actual element of $\bD$. But this is clearly impossible, since $gx^{[b_{i+1}]}$ is a sum of $i$ monomials, while the number of monomials in $hx^{[b_{i+1}]}$ is bounded as $i$ varies.
\end{example}

\begin{proof}[Proof of Theorem~\ref{thm:selfinj}(a)]
Let $I$ be a non-zero finitely generated homogeneous ideal and let $\varphi \colon I[d] \to \bD$ be a map of graded modules. The argument in the first paragraph of the proof of Theorem~\ref{thm:selfinj}(b) applies and shows that $\varphi$ extends to a map $\psi \colon \bD[d] \to \bD$. Since the space $\uHom(\bD, \bD)_d$ is at most one-dimensional, it follows that $\psi$ is unique. It follows from this that if $J$ is a finitely generated homogeneous ideal containing $I$ then $\varphi$ extends uniquely to $J$; indeed, any two extensions to $J$ would further extend to $\bD$, and therefore coincide.

Now suppose $I$ is an arbitrary non-zero homogeneous ideal and let $\varphi \colon I[d] \to \bD$ be a map of graded modules. Write $I=\bigcup_{n \ge 1} I_n$ with $I_n$ finitely generated and $I_1 \ne 0$. Let $\psi \colon \bD[d] \to \bD$ be an extension of $\varphi \vert_{I_1}$. Then $\varphi \vert_{I_n}$ and $\psi \vert_{I_n}$ are two extensions of $\varphi \vert_{I_1}$ to $I_n$, and therefore must coincide. It follows that $\psi \vert_I = \varphi$, and so $\varphi$ extends to $\bD$. By a graded version of Baer's criterion, it follows that $\bD$ is injective as a graded $\bD$-module.
\end{proof}

\begin{proof}[Proof of Theorem~\ref{thm:selfinj}(c)]
We give the proof in the non-graded case, the argument in the graded case being identical. Let $n$ be maximal such that $\pi_n=0$. Then $\bD=A \otimes_{\bk} B$, where $A=\bD_{<n}$ is finite dimensional and $B=\bD^{(n)}\cong \bk[x]$. For a $B$-module $M$, let $F(M)=\Hom_B(\bD, M)$. This functor is right adjoint to the forgetful functor from $\bD$-modules to $B$-modules, and therefore takes injectives to injectives. Since $\bD$ is projective as a $B$-module, the functor $F$ is exact. We have
\begin{displaymath}
F(M) = \Hom_B(A \otimes B, M) = \Hom_{\bk}(A, M) = \Hom_{\bk}(A, \bk) \otimes_{\bk} M \cong A \otimes_{\bk} M,
\end{displaymath}
where in the first step we used an adjunction, in the second we used that $A$ is finite dimensional, and in the third we used Lemma~\ref{lem:dualfree}. In particular, we see that $F(B) \cong \bD$. Now, it is well-known that $B$ has injective dimension~1 as a module over itself. Let
\begin{displaymath}
0 \to B \to I_0 \to I_1 \to 0
\end{displaymath}
be an injective resolution. Applying $F$, we obtain a length one injective resolution of $\bD$ as a $\bD$-module, and so $\bD$ has injective dimension at most one. On the other hand,
\begin{displaymath}
0 \to B \stackrel{x}{\to} B \to B/(x) \to 0
\end{displaymath}
is a non-split extension, and is easily seen to remain non-split after applying $- \otimes_{\bk} A$, and so the injective dimension is exactly one.
\end{proof}

\begin{remark} \label{rmk:noninj-free}
Suppose infinitely many of the $\pi_n$ vanish. Then $\bD$ is not noetherian, and so the Bass--Papp theorem asserts that some infinite direct sum of injective $\bD$-modules is non-injective. We give an explicit example in the graded case. Let $n_1<n_2<\cdots$ be an infinite divisible sequence with $\pi_{n_i}=0$ for all $i$. Let $I$ be the ideal generated by the $x^{[i]}$ with $i \ge 1$. Let $M=\prod_{i \ge 1} \bD[1-n_i]$ and $M_0 \subset M$ be the direct sum. Define $\varphi \colon \bD \to M$ by using multiplication by $x^{[n_i-1]}$ on the $i$th factor. One then verifies the following that $\varphi \vert_I$ maps into $M_0$ but that $\varphi \vert_I$ cannot be extended to a map $\bD \to M_0$. Thus $M_0$ is not injective. Therefore, not all free $\bD$-modules are injective.
\end{remark}

\subsection{Injective dimension}

Let $R$ be a ring. Let $\rD(R)$ be the derived category of $R$-modules, and let $\rD^{\rb}_{\rm fp}(R)$ be the full subcategory of complexes $M$ such that $\rH^i(M)$ is finitely presented for all $i$ and non-zero for only finitely many $i$. We say that an object of $\rD(R)$ has {\bf injective amplitude} $[a, b]$ if it is isomorphic in $\rD(R)$ to a complex of injectives $I^{\bullet}$ with $I^i=0$ for $i \not\in [a,b]$. This is equivalent to $\Ext^i_R(N, M)=0$ for all (or even just cyclic) modules $N$ and $i \not\in [a,b]$ \stacks{0A5T}. We say that $M$ has {\bf finite injective dimension} if it has injective amplitude $[a,b]$ for some $a,b$. Similar definitions apply in the graded case.

Suppose $\bk$ is a noetherian ring and $E_0$ is a complex of $\bk$-modules with injective amplitude $[a,b]$. Then $E=E_0 \otimes_{\bk} \bk[x]$ has injective amplitude $[a,b+1]$ as a complex of $\bk[x]$-modules, see \stacks{0A6J}. The following theorem generalizes this statement to GDPA's:

\begin{theorem} \label{thm:injdim}
Let $\bk$ be a noetherian ring and let $\bD$ be a GDPA over $\bk$. Let $E_0$ be a complex of $\bk$-modules with finitely generated cohomology and injective amplitude $[a,b]$, and put $E=\bD \otimes_{\bk} E_0$. Then:
\begin{enumerate}
\item If $M$ is a finitely presented $\bD$-module then $\Ext^i_{\bD}(M, E)=0$ for $i \not\in [a,b+1]$.
\item $E$ has injective amplitude $[a,b+2]$ as a complex of $\bD$-modules.
\end{enumerate}
The same statements hold in the graded case.
\end{theorem}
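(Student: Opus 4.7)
The lower bound $\Ext^i_\bD(M, E) = 0$ for $i < a$ in part (a) is automatic: since $E$ has cohomological amplitude $[a, b]$ (inherited from its injective amplitude over $\bk$), any projective resolution of $M$ produces a cochain complex computing $\Ext$ that vanishes below degree $a$. For the upper bound in (a), I would induct on $b - a$. For the inductive step, the distinguished triangle $\tau^{\le a} E_0 \to E_0 \to \tau^{>a} E_0$ tensored with $\bD$ gives a long exact sequence of $\Ext$ that reduces the desired bound $[a, b+1]$ to the analogous bounds on the two outer terms (of injective amplitudes $\{a\}$ and $[a+1, b]$). This reduces us to the base case $E_0 = I$, a single $\bk$-injective in degree $0$, where the claim becomes $\Ext^i_\bD(M, I \otimes_\bk \bD) = 0$ for $i \ge 2$.

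For the base case, my plan is to handle principal specials $M' = (\bD/\fa\bD)^{(h)}$ first. Base-changing to $\bk' = \bk/\fa$ one has $\pi_h = 0$, so $\bD/\fa\bD = A' \otimes_{\bk'} B'$ with $A' = (\bD/\fa\bD)_{<h}$ (Proposition~\ref{gdpa:subalg}), and $M' = B'$ as $\bD/\fa\bD$-module with $A'^{+}$ acting trivially. Since $A'$ is Frobenius over $\bk'$ (Lemma~\ref{lem:dualfree}), the module $A' \otimes_{\bk'} V$ is $A'$-isomorphic to the coinduced $\Hom_{\bk'}(A', V)$, so the adjunction $\rR\Hom_{A'}(\bk', \Hom_{\bk'}(A', V)) \simeq V$ applies and gives
\[
\rR\Hom_\bD(M', I \otimes_\bk \bD) \simeq \Hom_\bk(\bk/\fa, I) \otimes_{\bk'} B',
\]
concentrated in degree $0$ because $I$ is $\bk$-injective. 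Passing from principal specials to arbitrary finitely presented $M$ via the special resolution of Theorem~\ref{thm:sd}, combined with the inclusion $I \otimes_\bk \bD \hookrightarrow \Hom_\bk(\bD, I)$ into the coinduced $\bD$-injective module, should yield the desired vanishing.

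For part (b), I would use \stacks{0A5T} to reduce to showing $\Ext^i_\bD(\bD/J, E) = 0$ for $i > b+2$ and all ideals $J \subset \bD$. The key observation is that $\bk$ is noetherian and $\bD$ has countable rank as a $\bk$-module, so every $\bk$-submodule of $\bD$ (in particular every ideal $J$) is countably generated. Writing $J = \bigcup_n J_n$ as a countable union of finitely generated ideals, one has $\bD/J = \varinjlim_n \bD/J_n$ and
\[
\rR\Hom_\bD(\bD/J, E) = \rR\varprojlim_n \rR\Hom_\bD(\bD/J_n, E).
\]
By part (a) each term has cohomology in $[a, b+1]$; the derived inverse limit of a countable tower has cohomological dimension at most $1$ (via the Mittag-Leffler $\lim^1$ sequence), so the cohomology of the homotopy limit lies in $[a, b+2]$, finishing (b). The main obstacle is the transition from principal specials to arbitrary finitely presented modules in the base case of (a): the naive hyperext spectral sequence from a special resolution of length $r = \sd(M)$ only gives a bound of $[a, b+r]$, so one must exploit additional structure (such as the coinduction exact sequence $0 \to I \otimes_\bk \bD \to \Hom_\bk(\bD, I) \to C \to 0$) to sharpen this to $[a, b+1]$ uniformly in $r$.
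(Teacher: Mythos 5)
Your proposal has a genuine gap, and it is the one you flag yourself at the end: the passage from principal special modules to an arbitrary finitely presented $M$. A special resolution of length $r=\sd(M)$ only yields vanishing of $\Ext^i_{\bD}(M,E)$ for $i$ beyond $b+1+r$ (the hyperext spectral sequence shifts the upper bound up by the length of the resolution), and $r$ can be as large as $\dim\bk+1$, so no uniform bound $[a,b+1]$ comes out. The coinduction sequence $0\to I\otimes_\bk\bD\to\Hom_\bk(\bD,I)\to C\to 0$ does not obviously repair this, since the cokernel $C$ is not under control; and $I\otimes_\bk\bD$ genuinely fails to be injective over $\bD$ (cf.\ Remark~\ref{rmk:almost-inj}), so some inductive mechanism other than resolving $M$ is required. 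There is also a problem earlier in the argument: the canonical truncations of $E_0$ do not have the injective amplitudes you assign them. One has $\tau^{\le a}E_0\cong H^a(E_0)[-a]$, which has injective amplitude $\{a\}$ only if $H^a(E_0)$ is injective; already $E_0=\bZ$ over $\bk=\bZ$ has injective amplitude $[0,1]$ and is concentrated in one cohomological degree without being injective, so your induction on $b-a$ does not terminate in the base case you describe. If you instead use stupid truncations of a complex of injectives, the pieces do have the right amplitude but lose finitely generated cohomology, and the resulting base case $\Ext^i_{\bD}(M,I\otimes_\bk\bD)=0$ for a non-finitely-generated injective $I$ is precisely the kind of statement (cf.\ Theorem~\ref{thm:almost-inj}) that in this paper needs extra hypotheses and a separate argument.

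The paper runs the induction on $\bk$ rather than on $E_0$ or on a resolution of $M$. By noetherian induction one assumes (a) for all proper quotients $\bk/\fa$; adjunction together with \stacks{0A6A} then disposes of any $M$ with nonzero annihilator, and a zero-divisor splitting reduces to the case that $\bk$ is a domain. For $\bk$ a domain and $i>b+1$, the four-term sequence $0\to M[x]\to M\stackrel{x}{\to} M\to M/xM\to 0$, combined with the vanishing already known for $M[x]$ and $M/xM$, shows that multiplication by any nonzero $x\in\bk$ is surjective on the finitely generated module $\Ext^i_{\bD}(M,E)$, hence bijective; therefore this group is a $\Frac(\bk)$-vector space and may be computed after base change to $\Frac(\bk)$, where $E$ becomes a bounded complex of finite free modules in degrees $[a,b]$ over a GDPA of self-injective dimension at most $1$ (Theorem~\ref{thm:selfinj}). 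That is exactly where the ``$+1$'' comes from. Your treatment of part (b) — countably generated ideals, $\rR\varprojlim$, and the $\rR^1\varprojlim$ term accounting for the further ``$+1$'' — does match the paper's argument, but it rests on part (a), which your proposal does not establish.
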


\begin{corollary}
Suppose that $\bk$ is a Gorenstein noetherian ring of finite Krull dimension and let $\bD$ be a GDPA over $\bk$. Then $\bD$ has finite injective dimension as a (graded or ungraded) $\bD$-module.
\end{corollary}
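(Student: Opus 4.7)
The plan is to apply Theorem~\ref{thm:injdim} directly, taking $E_0 = \bk$ viewed as a complex concentrated in degree~$0$. Since $\bk$ is a Gorenstein noetherian ring of finite Krull dimension $d$, a standard result asserts that $\bk$ has finite injective dimension as a module over itself, equal to $d$; equivalently, $\bk$ has injective amplitude $[0,d]$ as a complex of $\bk$-modules. The cohomology of this complex is $\bk$ in degree~$0$, which is finitely generated, so the hypotheses of Theorem~\ref{thm:injdim} are satisfied.

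Now take $E = \bD \otimes_{\bk} E_0 = \bD \otimes_{\bk} \bk = \bD$. Theorem~\ref{thm:injdim}(b) then gives that $E = \bD$ has injective amplitude $[0,d+2]$ as a complex of $\bD$-modules; in particular, $\bD$ has injective dimension at most $d+2$ over itself. The same argument applied to the graded version of Theorem~\ref{thm:injdim} handles the graded case, using that a Gorenstein noetherian ring (viewed trivially as graded in degree~$0$) has finite injective dimension in the category of graded modules as well.

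There is essentially no obstacle: the corollary is just a direct specialization of the theorem to the complex $E_0 = \bk$. The only thing to verify carefully is the input, namely that a Gorenstein ring of finite Krull dimension $d$ really does have injective amplitude $[0,d]$; this is a standard characterization of Gorenstein rings, which can be cited from \stacks{0AVK} or any reference on dualizing complexes.
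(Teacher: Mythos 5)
Your proof is correct and is exactly the intended argument: the paper states this corollary immediately after Theorem~\ref{thm:injdim} without further proof, the point being precisely the specialization $E_0=\bk$, which has finite injective amplitude because a Gorenstein noetherian ring of finite Krull dimension has finite injective dimension over itself. Your handling of the graded case (with $\bk$ concentrated in degree $0$) is also fine.
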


\begin{proof}[Proof of Theorem~\ref{thm:injdim}]
Statement~(a) implies statement~(b) using an argument similar to that in the proof of Theorem~\ref{thm:selfinj}(b). Thus it is enough to prove~(a). We assume, by noetherian induction, that the statement is true for every proper quotient of $\bk$. We now prove it for $\bk$, in several steps.

{\it Step 1.} We first prove the result assuming $M$ has non-zero annihilator $\fa$. We have
\begin{displaymath}
\rR \Hom_{\bD}(M, E)=\rR \Hom_{\bD/\fa \bD}(M, \rR \Hom_{\bD}(\bD/\fa \bD, E)).
\end{displaymath}
By \stacks{0A6A}, we have
\begin{displaymath}
\rR \Hom_{\bD}(\bD/\fa \bD, E) = \bD \otimes_{\bk} \rR \Hom_{\bk}(\bk/\fa \bk, E_0).
\end{displaymath}
Let $E'_0=\rR \Hom_{\bk}(\bk/\fa \bk, E_0)$. Then $E_0'$ has injective amplitude $[a, b]$ and finitely generated cohomology. Let $E'=\bD/\fa \bD \otimes_{\bk/\fa} E_0'$. By the above, we have
\begin{displaymath}
\rR \Hom_{\bD}(M, E) = \rR \Hom_{\bD/\fa \bD}(M, E').
\end{displaymath}
By the inductive hypothesis, the right side only has cohomology in degrees $[a, b+1]$, and so the same is true of the left side.

{\it Step 2.} We now prove the result assuming $\bk$ is not a domain. Let $x, y \in \bk$ be non-zero such that $xy=0$. We have a short exact sequence
\begin{displaymath}
0 \to xM \to M \to M/xM \to 0,
\end{displaymath}
and thus a triangle
\begin{displaymath}
\rR \Hom_{\bD}(M/xM, E) \to \rR \Hom_{\bD}(M, E) \to \rR \Hom_{\bD}(xM, E) \to
\end{displaymath}
Both $xM$ and $M/xM$ are finitely presented and have non-zero annihilator, and so the outside terms only have cohomology in degrees $[a, b+1]$ by Step~1. Thus the same is true of the middle term.

{\it Step 3.} We now prove the result assuming $\bk$ is a domain. Fix $i>b+1$. Let $x \in \bk$ be non-zero. Consider the exact sequence
\begin{displaymath}
0 \to M[x] \to M \stackrel{x}{\to} M \to M/xM \to 0.
\end{displaymath}
Both $M[x]$ and $M/xM$ are finitely presented with non-zero annihilator, and so $\Ext^j_{\bD}(M[x], E)$ and $\Ext^j_{\bD}(M/xM, E)$ vanish for $j \not\in [a, b+1]$ by Step~1. It follows that the map
\begin{displaymath}
x \colon \Ext^i_{\bD}(M, E) \to \Ext^i_{\bD}(M, E)
\end{displaymath}
is surjective, and an isomorphism if $i>b+2$. However, $\Ext^i_{\bD}(M,E)$ is a finitely generated $\bD$-module, and so a surjective endomorphism of it is necessarily an isomorphism \stacks{05G8}; thus multiplication by $x$ is an isomorphism for $i=b+2$ as well. We have thus shown that every non-zero element of $\bk$ acts invertibly on $\Ext^i_{\bD}(M,E)$, that is to say, $\Ext^i_{\bD}(M,E)$ is in fact a vector space over $\bK=\Frac(\bk)$. We thus have
\begin{displaymath}
\Ext^i_{\bD}(M,E)=\bK \otimes_{\bk} \Ext^i_{\bD}(M,E)=\Ext^i_{\bD'}(M',E'),
\end{displaymath}
where the primes denote extension to $\bK$. We have $E'=\bD' \otimes_{\bK} E_0'$, and $E_0'$ is (represented by) a complex of finite dimensional $\bK$ vector spaces in degrees $[a,b]$. Thus $E'$ is a complex of finitely generated free $\bD'$-modules in degrees $[a,b]$. Since $\bD'$ has injective dimension at most~1 (Theorem~\ref{thm:selfinj}), it follows that $E'$ has injective amplitude $[a,b+1]$. Thus $\Ext^i_{\bD'}(M',E')=0$, and the result is proved.
\end{proof}

As stated, the above theorem holds in the graded case as well. However, in that case we can prove a stronger result with additional assumptions:

\begin{theorem} \label{thm:injdim2}
Let $\bk$, $\bD$, $E_0$, and $E$ be as in Theorem~\ref{thm:injdim}, though now $E_0$ is graded. Assume that every maximal ideal of $\bk$ contains infinitely many of the $\pi_n$'s. Then:
\begin{enumerate}
\item If $M$ is a finitely presented graded $\bD$-module then $\uExt^i_{\bD}(M, E)=0$ for $i \not\in [a,b]$.
\item $E$ has injective amplitude $[a,b+1]$ as a complex of graded $\bD$-modules.
\end{enumerate}
\end{theorem}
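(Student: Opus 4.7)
Plan.

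The plan is to adapt the proof of Theorem~\ref{thm:injdim}, proceeding by Noetherian induction on $\bk$. As in that theorem, statement (b) follows from (a) via the $\rR\varprojlim$ argument used in the proof of Theorem~\ref{thm:selfinj}(b), and the vanishing $\uExt^i(M,E) = 0$ for $i < a$ is automatic since a graded projective resolution of $M$ lives in non-positive homological degrees while $E$ has cohomological support in degrees $\ge a$. Thus the task is to show $\uExt^i_\bD(M, E) = 0$ for $i > b$, and I assume inductively that the theorem holds over every proper quotient $\bk/\fa$ with $\fa \ne 0$, noting that the hypothesis on the $\pi_n$'s is inherited by such quotients.

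Steps 1 and 2 of the proof of Theorem~\ref{thm:injdim} carry over verbatim, reducing to the case where $\bk$ is a domain. If $M$ has nonzero annihilator $\fa$, then $\rR\uHom_\bD(M, E) = \rR\uHom_{\bD/\fa\bD}(M, E')$ where $E' = \bD/\fa\bD \otimes_{\bk/\fa} \rR\uHom_\bk(\bk/\fa, E_0)$ has injective amplitude $[a, b]$ over $\bk/\fa$, and the inductive hypothesis applied to $\bk/\fa$ gives the sharpened vanishing $\uExt^i = 0$ for $i > b$. If $\bk$ is not a domain, a relation $xy = 0$ with $x, y \ne 0$ together with the short exact sequence $0 \to xM \to M \to M/xM \to 0$ reduces to the case just treated. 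The improvement over Theorem~\ref{thm:injdim} is that the induction now delivers the sharper vanishing range $i > b$ (rather than $i > b + 1$).

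Now assume $\bk$ is a domain, and split into two cases. If $\bk$ is a field, the hypothesis ``every maximal ideal contains infinitely many $\pi_n$'s'' says exactly that infinitely many $\pi_n$ vanish in $\bk$, and Theorem~\ref{thm:selfinj}(a) makes $\bD$ graded self-injective; hence $E$, which is a bounded complex of direct sums of shifts of $\bD$ in cohomological degrees $[a,b]$, has graded injective amplitude $[a,b]$, yielding the desired vanishing. Otherwise $\bk$ is a non-field domain. For any nonzero $x \in \bk$, the 4-term exact sequence $0 \to M[x] \to M \xrightarrow{x} M \to M/xM \to 0$, combined with the inductive vanishing $\uExt^j(M[x], E) = \uExt^j(M/xM, E) = 0$ for $j > b$, shows multiplication by $x$ on $\uExt^i(M, E)$ is surjective for $i = b+1$ and an isomorphism for $i > b+1$. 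The crucial new input, made possible by working in the graded category, is that each graded piece $\uExt^i(M, E)_n$ is finitely generated as a $\bk$-module: graded-coherence of $\bD$ (Theorem~\ref{thm:grobcoh}) allows $M$ to be resolved by finite-rank graded free $\bD$-modules, and the finitely-generated-cohomology hypothesis lets $E_0$ be represented by a bounded complex of finitely generated graded $\bk$-modules, so each graded piece of $\uHom_\bD(F_i, E)$ is finitely generated over Noetherian $\bk$. Then \stacks{05G8} upgrades surjectivity to isomorphism for $i = b+1$, so multiplication by every nonzero $x \in \bk$ acts invertibly on each $\uExt^i(M, E)_n$ for $i > b$; equivalently, each such graded piece is a module over $\bK = \Frac(\bk)$. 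A standard integrality argument shows $\bK$ is not finitely generated as a $\bk$-module when $\bk$ is not a field, so a $\bk$-module that is both finitely generated over $\bk$ and a $\bK$-module must vanish, yielding $\uExt^i(M, E) = 0$ for $i > b$.

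The main technical obstacle I anticipate is verifying the finite generation of $\uExt^i(M, E)_n$ over $\bk$ and, relatedly, selecting an appropriate representative of $E_0$ with finitely generated graded pieces of bounded grading. This finite generation is the essential payoff of the graded setting and is precisely what allows the field base case to be handled via graded self-injectivity (gaining one degree of sharpness) rather than only via graded injective dimension $\le 1$.
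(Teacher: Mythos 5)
Your proposal is correct and follows essentially the same route as the paper: reduce (b) to (a) and to the sharpened vanishing $i>b$, run the noetherian induction with Steps 1--2 of Theorem~\ref{thm:injdim} unchanged, handle the field case via graded self-injectivity of $\bD$ (Theorem~\ref{thm:selfinj}(a)), and in the non-field domain case use the multiplication-by-$x$ argument together with the fact that each graded piece $\uExt^i_{\bD}(M,E)_n$ is simultaneously a finitely generated $\bk$-module and a $\Frac(\bk)$-vector space. Your added justification of the finite generation of the graded pieces (via graded-coherence and a finite free graded resolution of $M$) is a detail the paper leaves implicit, but it is the same argument.
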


\begin{proof}
As in the proof of Theorem~\ref{thm:injdim}, it suffices to prove~(a), which we again do by noetherian induction. Steps~1 and~2 from that proof apply here, so it suffices to treat the case where $\bk$ is a domain. We consider two cases.

{\it Case~1: $\bk$ is a field.} We then have that $\bD$ is injective as a graded $\bD$-module (Theorem~\ref{thm:selfinj}(a)). We can represent $E_0$ by a complex of finite dimensional $\bk$-vector spaces concentrated in degrees $[a,b]$, and so we find that $E$ is a complex of injective $\bD$-modules concentrated in the same degrees. Thus $E$ has injective amplitude $[a,b]$, which certainly implies the required $\uExt$ vanishing.

{\it Case 2: $\bk$ is not a field.} Fix $i>b$. As in Step~3 of the proof of Theorem~\ref{thm:injdim}, we find that every non-zero element of $\bk$ acts bijectively on $\uExt^i(M,E)$. It follows that each graded piece $\uExt^i(M,E)_n$ is both a $\Frac(\bk)$ vector space and a finitely generated $\bk$-module, and thus vanishes. This proves the theorem.
\end{proof}

\begin{remark}
Theorems~\ref{thm:injdim} and~\ref{thm:injdim2} both require $E_0$ to have finitely generated cohomology. We are not sure if this is necessary. Also, in part~(b) of both theorems, the upper bound on the amplitude increases by~1; we do not know if this is necessary either.
\end{remark}

The following is a slight variant of Theorem~\ref{thm:injdim} where we relax the finiteness condition on $E_0$ at the expense of adding a different hypothesis. We state only the ungraded version, though it is also true in the graded case.

\begin{theorem} \label{thm:almost-inj}
Let $\bD$ be a GDPA over a noetherian ring $\bk$. Let $\fp$ be a prime ideal of $\bk$ containing infinitely many of the $\pi_n$'s. Let $E_0$ be the injective envelope of $\bk/\fp$ and let $E=\bD \otimes_{\bk} E_0$. Then $\Ext^n_{\bD}(M, E)=0$ for all $n \ge 1$ and all finitely presented $\bD$-modules $M$. Thus $E$ has injective dimension at most~1.
\end{theorem}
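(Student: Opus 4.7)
The plan is to run a noetherian induction on $\bk$ that parallels the proof of Theorem~\ref{thm:injdim}, but invokes the stronger self-injectivity result Theorem~\ref{thm:selfinj}(b) at the base case so as to obtain vanishing all the way down to degree~$1$.

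To set up, I first reduce to the case in which $\bk$ is local with maximal ideal $\fp$. Since $E_0$ is $\fp$-primary as a $\bk$-module, so is $E = \bD \otimes_\bk E_0$; and for any finitely presented $\bD$-module $M$, choosing a resolution by finitely generated free $\bD$-modules shows that $\Ext^n_\bD(M,E)$ is also $\fp$-primary, so its vanishing can be checked after localizing at $\fp$. This localization does not affect $E_0$ (already a $\bk_\fp$-module) and preserves the hypothesis that $\fp$ contains infinitely many $\pi_n$'s.

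With $\bk$ local, I carry out the noetherian induction, assuming the theorem for all proper quotients. When $M$ is annihilated by a nonzero $\fa \subset \bk$, Step~1 of the proof of Theorem~\ref{thm:injdim} applies: the adjunction $\rR\Hom_\bD(M,E) = \rR\Hom_{\bD/\fa\bD}(M,\, \bD/\fa\bD \otimes_{\bk/\fa} E_0[\fa])$, together with the identification of $E_0[\fa] = \Hom_\bk(\bk/\fa, E_0)$ as the injective envelope of $\bk/\fp$ over $\bk/\fa$, reduces the claim to the inductive hypothesis on $\bk/\fa$. The non-domain case is then handled by the $xy = 0$ device of Step~2. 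In the field case ($\fp = 0$) we have $E = \bD$ and the hypothesis forces infinitely many $\pi_n$'s to vanish in $\bk$, so Theorem~\ref{thm:selfinj}(b) directly yields $\Ext^n_\bD(M,\bD) = 0$ for $n \geq 1$ and $M$ finitely presented.

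The substantive case is when $\bk$ is a local domain that is not a field. Following Step~3 of Theorem~\ref{thm:injdim}, the four-term exact sequence $0 \to M[x] \to M \xrightarrow{x} M \to M/xM \to 0$ combined with the annihilated case for $M[x]$ and $M/xM$ shows that multiplication by any nonzero $x \in \bk$ acts bijectively on $\Ext^n_\bD(M,E)$ for $n \geq 2$; since these Ext modules are $\fp$-primary and $\bK \otimes_\bk E = 0$ for $\bK = \Frac(\bk)$, this forces vanishing for $n \geq 2$. The hard part will be pushing this argument down to $n = 1$, where the four-term sequence a priori only yields surjectivity of multiplication by $x$. To close this gap I would exploit the filtration $E = \varinjlim_k \Hom_\bk(\bk/\fp^k, E)$, which commutes with $\Ext^1_\bD(M,-)$ because $\bD$ is coherent; each term in the filtration is a module over $\bD/\fp^k\bD$, and a change-of-rings spectral sequence together with the inductive hypothesis on $\bk/\fp^k$ should eliminate the remaining obstruction. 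Once Ext vanishing on finitely presented $M$ is established, the injective-dimension-at-most-$1$ conclusion follows by the same countable-limit argument used in the proof of Theorem~\ref{thm:selfinj}(b): writing a cyclic $\bD/I$ as a filtered colimit of finitely presented quotients and using that $\rR\varprojlim$ on countable diagrams is concentrated in degrees $\leq 1$.
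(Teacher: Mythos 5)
Your overall architecture---reduce to the local case, run a noetherian induction, dispose of modules with nonzero annihilator via the adjunction $\rR\Hom_{\bD}(M,E)=\rR\Hom_{\bD/\fa\bD}(M,\bD/\fa\bD\otimes_{\bk/\fa}E_0[\fa])$, handle the non-domain case with a zero-divisor, and quote Theorem~\ref{thm:selfinj} in the field case---is exactly the paper's. One small correction to the first reduction: the reason localizing at $\fp$ suffices is not that $\Ext^n_{\bD}(M,E)$ is $\fp$-power torsion (a $\fp$-power-torsion module over a non-local ring can localize to zero at $\fp$ without vanishing, e.g.\ $\bk[x,y]/(x,y)$ at $\fp=(x)$), but that $E_0$, hence $E$, hence $\Ext^n_{\bD}(M,E)$ computed from a finite free resolution, is already a $\bk_{\fp}$-module and therefore equals its own localization at $\fp$.

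The genuine gap is exactly where you flag it, at $n=1$ in the domain case, and your proposed repair does not close it. The paper's route is to get multiplication by $x$ \emph{injective} (not merely surjective) on $\Ext^1_{\bD}(M,E)$ from the four-term sequence, and then to observe that every element of $E=\bD\otimes_{\bk}E_0$---hence of any subquotient of $\Hom_{\bD}(F_i,E)$---is killed by a power of $\fm$, so an injective action of $x\in\fm$ forces vanishing. Surjectivity alone is genuinely useless here: $\bQ_p/\bZ_p$ over $\bZ_p$ is $p$-power torsion with $p$ acting surjectively. The injectivity you are missing is equivalent to surjectivity of the restriction map $\Hom_{\bD}(M,E)\to\Hom_{\bD}(M[x],E)$ (equivalently, vanishing of the connecting map $\Hom_{\bD}(M[x],E)\to\Ext^1_{\bD}(xM,E)$), and that is the one point that must actually be argued. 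Your alternative---writing $E$ as the colimit of its $\fp^k$-torsion and invoking a change-of-rings spectral sequence over $\bk/\fp^k$---hits a concrete obstruction: for $N$ killed by $\fp^k$, the identity $\rR\Hom_{\bD}(M,N)=\rR\Hom_{\bD/\fp^k\bD}(M\stackrel{\rL}{\otimes}_{\bD}\bD/\fp^k\bD,\,N)$ contributes to cohomological degree $1$ not only $\Ext^1_{\bD/\fp^k\bD}(M/\fp^kM,N)$, which the inductive hypothesis kills, but also $\Hom_{\bD/\fp^k\bD}(\Tor_1^{\bD}(M,\bD/\fp^k\bD),N)$, about which the inductive hypothesis says nothing; since $M$ need not be flat over $\bk$, this term has no reason to vanish. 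So as written the $n=1$ case, which is the entire content of the theorem beyond Theorem~\ref{thm:injdim}, is not established.
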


\begin{proof}
We just sketch a proof. First one reduces to the case where $\bk$ is local and $\fp=\fm$ is the unique maximal ideal. Next, as in the proof of Theorem~\ref{thm:injdim}, one can reduce to the case where $\bk$ is a domain and one already knows the result for modules with non-zero annihilators. If $\bk$ is a field, the result follows from Theorem~\ref{thm:selfinj}. Otherwise, let $x$ be a non-zero element of $\fm$. As in Step~3 of the proof of Theorem~\ref{thm:injdim}, we see that multiplication by $x$ is bijective on $\Ext^i_{\bD}(M,E)$ for $i \ge 1$.  Let $F_{\bullet} \to M$ be a resolution of $M$ by finitely generated free $\bD$-modules. (Here is the one place we use that $M$ is finitely presented.) Then $\Ext^i_{\bD}(M, E)$ is a subquotient of $\Hom_{\bD}(F_i, E)$, which is isomorphic to a finite direct sum of $E_0$'s. Since every element of $E_0$ is killed by some power of $\fm$, multiplication by $x$ cannot act bijectively on any nonzero subquotient of $\Hom_{\bD}(F_i, E)$. We thus find that $\Ext^i_{\bD}(M, E)$ vanishes for $i \ge 1$, which proves the theorem.
\end{proof}

\begin{remark} \label{rmk:almost-inj}
In fact, the module $E$ in Theorem~\ref{thm:almost-inj} is not injective in general. To see this let $\bD$ be the classical divided power algebra over $\bZ_p$ and let $I$ be the ideal generated by elements of positive degree. Then we have a $\bD$-module homomorphism $I[-1] \to \bD \otimes_{\bZ_p} \bQ_p/\bZ_p$ (note that the injective envelope of $\bZ_p/p\bZ_p$ is $\bQ_p/\bZ_p$) given by $x^{[i]} \mapsto x^{[i-1]} \otimes \tfrac{1}{i}$. Clearly this map does not extend to $\bD[-1]$ because there are no nonzero maps from $\bD[-1] \to \bD \otimes_{\bZ_p} \bQ_p/\bZ_p$.
\end{remark}

\subsection{Duality}

Let $R$ be a coherent ring. We say that a complex $\omega_R \in \rD(R)$ is a {\bf dualizing complex} if:
\begin{enumerate}
\item $\omega_R$ has finite injective dimension;
\item $\rH^i(\omega_R)$ is finitely presented for all $i$; and
\item the natural map $R \to \rR \Hom_R(\omega_R, \omega_R)$ is a quasi-isomorphism.
\end{enumerate}
Dualizing complexes are discussed in \stacks{0A7A} under the assumption that $R$ is noetherian. Some of the basic results remain true in our more general setting: for example, if $\omega_R$ is a dualizing complex then $\rR \Hom_R(-, \omega_R)$ gives a duality of the category $\rD^{\rb}_{\rm fp}(R)$ (the proof given in \stacks{0A7C} applies). We note that if $R$ is noetherian and admits a dualizing complex then it has finite Krull dimension \stacks{0A80}.

Our main result on duality for GDPA's is the following theorem. We treat only the ungraded case, but the graded case goes through in exactly the same manner.

\begin{theorem} \label{thm:duality}
Let $\bk$ be a noetherian ring with dualizing complex $\omega_{\bk}$ and let $\bD$ be a GDPA over $\bk$. Then $\omega_{\bD}=\omega_{\bf} \otimes_{\bk} \bD$ is a dualizing complex for $\bD$.
\end{theorem}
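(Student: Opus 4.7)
The plan is to verify directly the three axioms for a dualizing complex spelled out before the theorem: finite injective dimension, finitely presented cohomology in each degree (with only finitely many non-vanishing), and the biduality identity $\bD \xrightarrow{\sim} \rR\Hom_\bD(\omega_\bD,\omega_\bD)$.

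For finite injective dimension, recall that $\omega_\bk$ is a bounded complex with finitely generated cohomology. Say it has injective amplitude $[a,b]$ as a complex of $\bk$-modules. Because $\bk$ is noetherian and $\omega_\bk \in \rD^\rb_{\rm fp}(\bk)$, Theorem~\ref{thm:injdim} applies to $E_0=\omega_\bk$ and gives that $\omega_\bD=\omega_\bk\otimes_\bk\bD$ has injective amplitude at most $[a,b+2]$ as a complex of $\bD$-modules. In particular it has finite injective dimension. For finite presentation of cohomology, note that $\bD$ is free (hence flat) as a $\bk$-module, so $\rH^i(\omega_\bD)=\rH^i(\omega_\bk)\otimes_\bk\bD$. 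Since $\rH^i(\omega_\bk)$ is a finitely presented $\bk$-module and vanishes outside a bounded range, $\rH^i(\omega_\bk)\otimes_\bk\bD$ is a finitely presented $\bD$-module and vanishes outside the same range.

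For biduality, the strategy is to move the computation of $\rR\Hom_\bD(\omega_\bD,\omega_\bD)$ to the base via tensor-hom adjunction and then commute $\rR\Hom_\bk(\omega_\bk,-)$ past the flat base change $-\otimes_\bk\bD$. Concretely, since $\omega_\bD=\omega_\bk\otimes_\bk\bD$ is the extension of scalars, one has the natural identification
\begin{displaymath}
\rR\Hom_\bD(\omega_\bk\otimes_\bk\bD,\,\omega_\bk\otimes_\bk\bD)\;\cong\;\rR\Hom_\bk(\omega_\bk,\,\omega_\bk\otimes_\bk\bD).
\end{displaymath}
Because $\bk$ is noetherian and $\omega_\bk\in\rD^\rb_{\rm fp}(\bk)$, one may choose a resolution $P^\bullet\to\omega_\bk$ by a bounded-above complex of finite free $\bk$-modules. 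For such $P^\bullet$, there is a natural termwise isomorphism $\Hom_\bk(P^i,\omega_\bk\otimes_\bk\bD)=\Hom_\bk(P^i,\omega_\bk)\otimes_\bk\bD$, and since $\bD$ is flat over $\bk$ (and $\omega_\bk$ is bounded, so everything converges), this passes to cohomology to yield
\begin{displaymath}
\rR\Hom_\bk(\omega_\bk,\omega_\bk\otimes_\bk\bD)\;\cong\;\rR\Hom_\bk(\omega_\bk,\omega_\bk)\otimes_\bk\bD\;\cong\;\bk\otimes_\bk\bD=\bD,
\end{displaymath}
using that $\omega_\bk$ is dualizing in the last step.

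The main obstacle is the final step: one must check that the composite of these isomorphisms is induced by the natural map $\bD\to\rR\Hom_\bD(\omega_\bD,\omega_\bD)$ (given by left multiplication), and that the various bounded-/unbounded-complex technicalities in commuting $\rR\Hom$ with flat base change are harmless here. Both points are standard once one fixes $P^\bullet$ bounded above with finite free terms and uses that $\omega_\bk$ (hence $\omega_\bk\otimes_\bk\bD$) is quasi-isomorphic to a bounded complex of injectives: this ensures $\Hom^\bullet_\bk(P^\bullet,\omega_\bk\otimes_\bk\bD)$ genuinely computes $\rR\Hom_\bk$, and compatibility of the resulting chain of quasi-isomorphisms with the unit map $\bD\to\rR\Hom_\bD(\omega_\bD,\omega_\bD)$ is formal from the tensor-hom adjunction.
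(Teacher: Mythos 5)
Your proof is correct and follows essentially the same route as the paper: Theorem~\ref{thm:injdim} for finite injective dimension, flatness of $\bD$ over $\bk$ for finitely presented cohomology, and the flat base change isomorphism $\rR\Hom_{\bk}(\omega_{\bk},\omega_{\bk})\otimes_{\bk}\bD \cong \rR\Hom_{\bD}(\omega_{\bD},\omega_{\bD})$ for biduality. The only difference is that the paper cites \stacks{0A6A} for that last isomorphism, whereas you prove it directly via adjunction and a bounded-above finite free resolution of $\omega_{\bk}$; both are fine.
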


\begin{proof}
By Theorem~\ref{thm:injdim}, $\omega_{\bD}$ has finite injective dimension, and so condition (a) holds. Since $\rH^i(\omega_{\bD})=\bD \otimes_{\bk} \rH^i(\omega_{\bk})$ and $\rH^i(\omega_{\bk})$ is a finitely generated $\bk$-module, condition (b) holds. Finally, by \stacks{0A6A}, the natural map
\begin{displaymath}
\bD \otimes_{\bk} \rR \Hom_{\bk}(\omega_{\bk}, \omega_{\bk}) \to \rR \Hom_{\bD}(\omega_{\bD}, \omega_{\bD})
\end{displaymath}
is an isomorphism. Since the left side is isomorphic to $\bD$ via the natural map, we conclude that (c) holds. This completes the proof.
\end{proof}

\begin{corollary}
In the context of the theorem, the category $\rD^{\rb}_{\rm fp}(\bD)$ is self-dual (i.e., equivalent to its opposite).
\end{corollary}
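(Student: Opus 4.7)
The plan is to deduce the corollary from Theorem~\ref{thm:duality} by the same argument used for noetherian rings possessing a dualizing complex (Stacks~0A7C). Explicitly, I define the contravariant functor
\begin{displaymath}
D \colon \rD^{\rb}_{\rm fp}(\bD) \to \rD^{\rb}_{\rm fp}(\bD)^{\op}, \qquad M \mapsto \rR\Hom_{\bD}(M, \omega_{\bD}),
\end{displaymath}
and show that $D$ is well-defined and that the biduality map $M \to D(D(M))$ is an isomorphism for every $M \in \rD^{\rb}_{\rm fp}(\bD)$. Combined, these give the required self-duality.

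To see that $D$ is well-defined, I first note that $\omega_{\bD}$ has finite injective dimension (property~(a) of a dualizing complex), so $\rR\Hom_{\bD}(M,\omega_{\bD})$ is a bounded complex whenever $M$ is. The key point is then that its cohomology is finitely presented. Since $\bD$ is coherent (Theorem~\ref{mainthm:coh} applies, as $\bk$ is noetherian), the category of finitely presented $\bD$-modules is abelian and stable under kernels and cokernels; combined with the boundedness of $M$, a standard way-out argument (using the truncation triangles $\tau_{\le n}M \to M \to \tau_{\ge n+1}M$) reduces this to the case where $M$ is a single finitely presented $\bD$-module concentrated in one degree. Replacing $M$ by a resolution by finitely generated free modules (which exists by coherence) and using that $\rH^i(\omega_{\bD})$ is finitely presented for all $i$, one sees that $\rR\Hom_{\bD}(M,\omega_{\bD})$ has finitely presented cohomology.

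For the biduality isomorphism, I will again run an induction on the size of the cohomological support. For $M \in \rD^{\rb}_{\rm fp}(\bD)$ with $\rH^i(M) = 0$ for $i \notin [a,b]$, the distinguished triangle
\begin{displaymath}
\rH^a(M)[-a] \to M \to \tau_{\ge a+1} M \to
\end{displaymath}
together with the five-lemma (applied to biduality) reduces the claim to the case of a single finitely presented module $M$ concentrated in one degree. Resolving such an $M$ by a complex $F_{\bullet}$ of finite free $\bD$-modules and using that $D$ sends direct sums of shifts of $\bD$ to direct sums of shifts of $\omega_{\bD}$, the question is further reduced to biduality for $\bD$ itself, that is, to the quasi-isomorphism $\bD \to \rR\Hom_{\bD}(\omega_{\bD},\omega_{\bD})$, which is precisely condition~(c) of the dualizing complex $\omega_{\bD}$ established in Theorem~\ref{thm:duality}.

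The main technical obstacle is the first step, namely verifying that the way-out reduction goes through in the coherent (as opposed to noetherian) setting; here one must be careful that the intermediate modules encountered (kernels, cokernels, cohomologies) remain finitely presented. This is supplied precisely by Gr\"obner-coherence of $\bD$ (Theorem~\ref{thm:grobcoh}), which guarantees that $\uMod_{\bD}^{\fpres}$ is abelian. After that point, the argument is the classical one and goes through verbatim.
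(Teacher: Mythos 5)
Your proposal is correct and follows essentially the same route as the paper: the paper deduces the corollary by invoking the standard duality argument for dualizing complexes (the proof of Stacks Tag~0A7C, as noted in the paragraph defining dualizing complexes for coherent rings), which is exactly the argument you spell out — well-definedness of $\rR\Hom_{\bD}(-,\omega_{\bD})$ plus biduality via way-out reductions to $\bD$ itself, with coherence supplying the finite free resolutions and the abelian-ness of the category of finitely presented modules. (A minor note: plain coherence of $\bD$ suffices for that last point; Gr\"obner-coherence is not needed.)
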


\begin{corollary}
Suppose $\bk$ is a regular noetherian ring of finite Krull dimension. Then $\bD$ is a dualizing complex for $\bD$.
\end{corollary}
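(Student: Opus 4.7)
The plan is to reduce this immediately to Theorem~\ref{thm:duality} by exhibiting $\bk$ itself as a dualizing complex over $\bk$ under the stated hypotheses. Once that is done, the previous theorem gives $\omega_{\bD} = \omega_{\bk} \otimes_{\bk} \bD = \bk \otimes_{\bk} \bD = \bD$, which is the desired conclusion.

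So the real content is the standard claim that a regular noetherian ring $\bk$ of finite Krull dimension $d$ is a dualizing complex for itself. I would verify the three axioms directly. For (b), $\rH^0(\bk) = \bk$ is finitely presented (finitely generated over itself) and all other cohomology vanishes. For (c), $\rR\Hom_{\bk}(\bk, \bk) = \bk$ tautologically. The only point requiring justification is (a), that $\bk$ has finite injective dimension over itself. Since $\bk$ is regular of Krull dimension $d$, its global dimension equals $d$, so every finitely generated $\bk$-module $M$ has projective dimension $\le d$, and by the Auslander--Buchsbaum--Serre characterization (or directly from the fact that regular rings of finite Krull dimension are Gorenstein), the injective dimension of $\bk$ over itself is at most $d$. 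Equivalently, one can compute $\Ext^i_{\bk}(\bk/\fm, \bk)$ at each maximal ideal $\fm$: since $\bk_{\fm}$ is a regular local ring of dimension $\le d$, this $\Ext$ vanishes for $i > d$, which by \stacks{0A5T} gives the bound on injective dimension.

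Having checked that $\omega_{\bk} = \bk$ is a dualizing complex, I apply Theorem~\ref{thm:duality} to conclude that $\omega_{\bD} = \bk \otimes_{\bk} \bD = \bD$ is a dualizing complex for $\bD$. No step in this argument is genuinely hard; the only potential subtlety is ensuring the definition of dualizing complex used here (which does not require noetherianity on the ring) is compatible with the standard one, but since $\bk$ is noetherian this is immediate, and for $\bD$ the previous theorem has already done all the work.
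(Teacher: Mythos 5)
Your proof is correct and follows exactly the paper's route: the paper's entire argument is the observation that under these hypotheses one may take $\omega_{\bk}=\bk$ and then invoke Theorem~\ref{thm:duality}. Your additional verification that a regular noetherian ring of finite Krull dimension has finite injective dimension over itself is the standard fact the paper leaves implicit.
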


\begin{proof}
Under the hypotheses on $\bk$, we can take $\omega_{\bk}=\bk$.
\end{proof}

\section{Torsion and finitely presented modules} \label{s:tor}
\subsection{Torsion in finitely presented modules}

An element $m$ of a $\bD$-module $M$ is {\bf torsion} if $x^{[n]} m =0$ for $n \gg 0$. A $\bD$-module is {\bf torsion} if all of its elements are, and {\bf torsion-free} if it has no nonzero torsion element.

\begin{theorem} \label{notors}
Let $\bD=\bD(\bk, \pi_{\bullet})$ be a GDPA over the noetherian ring $\bk$. The following are equivalent:
\begin{enumerate}
\item Every finitely presented $\bD$-module (graded or not) is torsion-free.
\item Every maximal ideal of $\bk$ contains infinitely many $\pi_n$'s.
\end{enumerate}
\end{theorem}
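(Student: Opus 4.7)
My plan is to use Proposition~\ref{prop:h-free} in both directions, combined with the coherence of $\bD$ from Theorem~\ref{thm:grobcoh}. The implication (a)$\Rightarrow$(b) will be established contrapositively through an explicit cyclic quotient, while (b)$\Rightarrow$(a) will reduce to a Nakayama-plus-dimension-counting argument after replacing an arbitrary torsion element by the cyclic $\bD$-module it generates.

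For (a)$\Rightarrow$(b): suppose some maximal ideal $\fm$ contains only finitely many $\pi_n$, and let $h \ge 1$ be the largest integer with $\pi_h \in \fm$. Admissibility forces every $k$ with $\pi_k \in \fm$ to satisfy $k \mid h$ (since $h \mid k$ with $k > h$ would contradict maximality of $h$). Let $M = \bD/(\fm\bD + x^{[h]}\bD)$; this is finitely presented by coherence. The degree-zero component of $M$ is $\bk/\fm \ne 0$, so the class of $1$ is nonzero. To see that $1$ is torsion, it suffices to show $x^{[n]} \in \fm\bD + x^{[h]}\bD$ for all $n \ge h$. From $x^{[h]} x^{[n-h]} = C(n,h)\,x^{[n]}$, this reduces to showing $C(n,h)$ is a unit modulo $\fm$. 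In the formula $C(n,h) = \prod_k \pi_k^{\epsilon_k(n-h,h)}$, any potentially non-unit $\pi_k$ (mod $\fm$) has $k \mid h$; but for such $k$ the carry $\epsilon_k(n-h,h)$ vanishes because $h$ is a multiple of $k$, so the product involves only unit factors.

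For (b)$\Rightarrow$(a): let $M$ be finitely presented and $m \in M$ torsion, taken homogeneous in the graded case (each homogeneous component of a torsion element is torsion). Fix $N_0$ with $x^{[n]} m = 0$ for $n \ge N_0$; then $\bD m$ is $\bk$-spanned by $\{x^{[k]} m : 0 \le k < N_0\}$, hence finitely generated over $\bk$, and finitely presented over $\bD$ as a finitely generated $\bD$-submodule of $M$ by coherence. Replacing $M$ by $\bD m$, I assume $M$ is cyclic, $\bD$-finitely-presented, and $\bk$-finitely-generated, and must show $M = 0$. If $M \ne 0$, pick a maximal ideal $\fm$ with $M_\fm \ne 0$ and localize; WLOG $\bk$ is local. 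By Nakayama $M/\fm M \ne 0$. Hypothesis (b) gives infinitely many $\pi_n \in \fm$, so Proposition~\ref{prop:h-free} applies to $M/\fm M$ over $\bD/\fm\bD$, yielding some $h$ with $\pi_h \in \fm$ and a finitely generated $\bD_{<h}/\fm\bD_{<h}$-module $V$ such that $M/\fm M \cong V \otimes_{\bk/\fm}(\bD/\fm\bD)^{(h)}$. By Proposition~\ref{prop:regrade}, $(\bD/\fm\bD)^{(h)}$ is the regrade of the GDPA $\bD(\bk/\fm,\pi_\bullet^{[h]})$ and is therefore infinite-dimensional over $\bk/\fm$. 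But $M/\fm M$ is finite-dimensional over $\bk/\fm$, so $V = 0$, giving $M/\fm M = 0$, which contradicts Nakayama.

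The principal obstacle lies in (b)$\Rightarrow$(a): one must first pass from $M$ to the cyclic, $\bk$-finitely-generated submodule $\bD m$, since only after this reduction does Proposition~\ref{prop:h-free} yield a usable dimension contradiction. The torsion hypothesis on $m$ is precisely what makes $\bD m$ finite as a $\bk$-module, so the strategy hinges on this simple but essential observation.
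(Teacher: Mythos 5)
Your proof is correct and follows essentially the same route as the paper: for (a)$\Rightarrow$(b) the paper exhibits the same counterexample module (it describes $\bD/\fm\bD$ as $A\otimes_{\bk}B$ with $A$ finite-dimensional and takes $A$ with the polynomial variable acting by zero, which is exactly your $\bD/(\fm\bD+x^{[h]}\bD)$), and for (b)$\Rightarrow$(a) the paper likewise reduces to the finitely presented torsion submodule $\bD m$ and kills it via Proposition~\ref{prop:h-free} over each residue field together with Nakayama. No gaps.
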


\begin{proof}
Suppose (b) does not hold, and let $\fm$ be a maximal ideal containing only finitely many of the $\pi_n$. Then $\bD/\fm \bD$ has the form $A \otimes_{\bk} B$ where $A$ is a finite dimensional algebra over $\bk/\fm$ and $B$ is a polynomial ring in one variable over $\bk/\fm$. This algebra certainly has finitely presented modules with nonzero torsion; for example, $A$ itself (with the variable in $B$ acting by 0). Thus (a) does not hold.

Now suppose (b) holds, and let $M$ be a finitely presented $\bD$-module. We must show that $M$ is torsion-free. Suppose that $m \in M$ is torsion, and let $T$ be the $\bD$-submodule of $M$ generated by $m$. Then $T$ is finitely presented, since it is a finitely generated submodule of a finitely presented module, and torsion. Thus $T=0$ by the following lemma, which completes the proof.
\end{proof}

\begin{lemma}
Suppose condition (b) of the Theorem holds, and let $T$ be a finitely presented torsion $\bD$-module. Then $T=0$.
\end{lemma}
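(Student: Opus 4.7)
The plan is to reduce to the case where $\bk$ is a field, at which point Proposition~\ref{prop:h-free} makes the claim transparent.

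First I would observe that finitely-generated torsion implies \emph{uniform} annihilation: if $m_1,\dots,m_r$ generate $T$ and each is killed by $x^{[n]}$ for $n\gg 0$, then by commutativity of $\bD$ there exists an $N$ with $x^{[n]}T=0$ for all $n\ge N$. The $\bk$-submodule $J_N=\bigoplus_{n\ge N}\bk\, x^{[n]}$ is an ideal of $\bD$ (immediate from the multiplication rule), the quotient $\bD/J_N$ is free of rank $N$ as a $\bk$-module, and so $T$ is in fact finitely generated as a $\bk$-module.

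Next I would localize. For each maximal ideal $\fm\subset\bk$, the quotient $T/\fm T$ is a finitely presented torsion module over $\bD(\bk/\fm,\bar\pi_\bullet)$, and hypothesis~(b) is inherited by $\bk/\fm$ since $\pi_n\in\fm$ for infinitely many $n$. Once the field case is settled, $T/\fm T=0$ for every $\fm$; Nakayama applied to the finitely generated $\bk_\fm$-module $T_\fm$ then gives $T_\fm=0$ for all $\fm$, whence $T=0$.

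For the field case, applying Proposition~\ref{prop:h-free} to $T$ and using the freedom to enlarge $h$ in that proposition, I would pick $h\ge N$ with $\pi_h=0$ and obtain an isomorphism $T\cong \bD^{(h)}\otimes_{\bk}N'$ of $\bD^{(h)}$-modules for some $\bD_{<h}$-module $N'$. Since $\bD^{(h)}$ is a free $\bk$-module with basis $\{x^{[hm]}\}_{m\ge 0}$, in particular $x^{[h]}$ is one of the basis elements, so the condition $x^{[h]}T=0$ translates into $x^{[h]}\otimes n=0$ for every $n\in N'$, forcing $N'=0$ and hence $T=0$. The only subtlety worth flagging is that $h$ must simultaneously be chosen large enough to guarantee $x^{[h]}T=0$, but the proof of Proposition~\ref{prop:h-free} permits any sufficiently large $h$ with $\pi_h=0$, so this is not a genuine obstacle.
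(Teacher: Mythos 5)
Your proof is correct and follows essentially the same route as the paper: reduce to the field case via Nakayama (using that $T$ is finitely generated over $\bk$), then apply Proposition~\ref{prop:h-free} over a field to force $T=0$. The only difference is cosmetic — you make explicit the uniform-annihilation step and the $x^{[h]}T=0$ argument that the paper compresses into ``$T$ is finite dimensional over $\bk$, so it has rank~$0$ over $\bD^{(h)}$.''
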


\begin{proof}
First suppose $\bk$ is a field. By Proposition~\ref{prop:h-free}, there is an $h \gg 0$ with $\pi_h=0$ such that $T$ is free over $\bD^{(h)}$. Since $T$ is finite dimensional over $\bk$, it follows that $T$ must have rank~0 over $\bD^{(h)}$, and so $T=0$.

We now treat the general case. If $\fm$ is a maximal ideal of $\bk$ then $T/\fm T=0$ by the previous paragraph, and so $T_{\fm}=0$ by Nakayama's lemma (again, $T$ is finitely generated as a $\bk$-module). Since $T_{\fm}=0$ for all maximal ideals $\fm$, it follows that $T=0$.
\end{proof}

\subsection{Extensions between finitely presented and torsion modules} \label{ss:ext-torsion}

The purpose of this section is to prove the following theorem.

\begin{theorem} \label{thm:ext-torsion}
Let $\bD$ be a GDPA over the noetherian ring $\bk$. Assume that $\bk$ is complete with respect to an ideal $\fI$ containing infinitely many of the $\pi_n$. Let $M$ and $T$ be graded $\bD$-modules, with $M$ finitely presented and $T$ torsion. Then $\uExt^n_{\bD}(T, M)=0$ for all $n \ge 0$.
\end{theorem}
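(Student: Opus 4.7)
For $n=0$: since $\bk$ is $\fI$-adically complete, $\fI$ lies in the Jacobson radical of $\bk$, so every maximal ideal of $\bk$ contains infinitely many $\pi_n$. By Theorem~\ref{notors}, $M$ is torsion-free, and any graded $\bD$-linear map $T\to M$ sends torsion to torsion; hence $\uHom_\bD(T,M)=0$. For the higher $\uExt$, I first reduce to the case of finitely generated $T$: writing $T=\varinjlim_i T_i$ as the filtered colimit of its finitely generated (still torsion) submodules, we have
\[
\rR\uHom_\bD(T,M)\;=\;\rR\varprojlim_i\rR\uHom_\bD(T_i,M),
\]
so the problem reduces to finitely generated torsion $T$. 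Any finite generating set of such a $T$ is annihilated by $x^{[m]}$ for all $m$ exceeding some $N$, so $T$ is a module over the quotient ring $\bD/\bD_{>N}$. Since this quotient is a finitely generated $\bk$-module (with $\bk$-basis $x^{[0]},\dots,x^{[N]}$), $T$ is itself a finitely generated $\bk$-module, supported in a bounded range of degrees, and each graded piece is therefore $\fI$-adically complete.

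Next, apply Theorem~\ref{thm:sd} to obtain a finite special resolution of $M$; breaking it into short exact sequences and using the long exact sequence of $\uExt$ iteratively, together with dévissage along the filtration defining a special module, it suffices to prove $\uExt^n_\bD(T,P)=0$ when $P=(\bD/\fa\bD)^{(h)}$ is a principal special module with $\pi_h\in\fa$. Replacing $\bk$ by $\bk/\fa$ (still noetherian, still $\fI$-adically complete, the image of $\fI$ still containing infinitely many $\pi_n$) we may assume $\fa=0$. In the case $h=1$ we have $P=\bD$ and this is the core computation below. In the case $h\ge 2$ we have $\pi_h=0$ in the new $\bk$, and by Propositions~\ref{gdpa:subalg} and~\ref{prop:regrade}, $P=\bD^{(h)}$ is a quotient of $\bD$ fitting in a short exact sequence $0\to J\to \bD\to P\to 0$ with $J=(x^{[1]},\ldots,x^{[h-1]})$ finitely presented and torsion-free; by induction on $h$ using the finite module-structure of $J$ over $\bD^{(h)}$ supplied by Proposition~\ref{prop:regrade}(a)--(b), the desired vanishing for $P$ is implied by the same vanishing for $\bD$ (and for $\bD^{(h)}$, which is itself a GDPA over $\bk$ satisfying the hypotheses, so one iterates within a transfinite descent on the divisible sequence).

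The entire theorem is thus reduced to the essential vanishing $\uExt^n_\bD(T,\bD)=0$, where $T$ is a finitely generated torsion $\bD$-module; this is the main obstacle and is precisely where $\fI$-adic completeness is used substantively. The plan is to represent a class $\xi\in\uExt^n_\bD(T,\bD)$ by a Yoneda cocycle and inductively trivialize it modulo each power $\fI^k\bD$, then pass to the limit. The key point is that $T$ is annihilated by $x^{[m]}$ for $m>N$, while infinitely many such $m$ satisfy $\pi_m\in\fI$; acting by such an $x^{[m]}$ simultaneously annihilates $T$ and pushes the image in $\bD$ into arbitrarily deep $\fI$-adic levels, so the obstruction to lifting a trivialization from $\fI^k$ to $\fI^{k+1}$ can be absorbed using the supply of $\pi_m\in\fI$. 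The $\fI$-adic completeness of each graded piece of $\bD$ then assembles these compatible approximations into a genuine cocycle splitting. The hardest technical step is verifying the Mittag-Leffler-type compatibility of the successive trivializations, and the hypothesis of infinitely many $\pi_n\in\fI$ is essential here: Remark~\ref{rmk:ext-torsion} shows the theorem fails without completeness, and failures would arise from exactly the kind of compatibility that this infinite supply of $\pi_n\in\fI$ is designed to repair.
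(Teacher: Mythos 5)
Your reductions are sound and largely parallel the paper's: the $\rR\varprojlim$ reduction to finitely generated $T$, the observation that such $T$ is a finitely generated $\bk$-module killed by $\bD_{>N}$, and the special-resolution d\'evissage bringing $M$ down to $\bD$ (the paper does the last step more cleanly, via the adjunction $\rR\uHom_{\bD}(T,\bD')=\rR\uHom_{\bD'}(T\stackrel{\rL}{\otimes}_{\bD}\bD',\bD')$ with $\bD'$ again a GDPA, rather than your ``transfinite descent''; note also that this base change is derived, so the higher $\Tor$'s must be observed to be torsion). The problem is that after these reductions you arrive at the actual content of the theorem — $\rR\uHom_{\bD}(T,\bD)=0$ — and at that point you offer only a plan (``represent $\xi$ by a Yoneda cocycle and inductively trivialize it modulo each power $\fI^k\bD$''), explicitly flagging the key compatibility step as unverified. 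That step is not a routine verification: it is where all the work lives.

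Concretely, two things are missing. First, your successive-approximation scheme tacitly assumes that trivializing modulo $\fI^k$ is easy and only the coherence of the trivializations is at issue. But the ``base case'' — vanishing of $\uExt^{\bullet}$ after reduction modulo $\fI$, i.e.\ the case where infinitely many $\pi_n$ are literally zero — is itself a substantial theorem. The paper proves it (Lemmas~\ref{extor1} and~\ref{extor2}) by writing $\bD$ as a union of the finite $\bk$-algebras $\bD_{<n_i}$, exploiting their self-injectivity (Lemma~\ref{lem:truncinj}) and a degree-by-degree $\uExt$ computation done by noetherian induction on $\bk$, and then controlling an inverse limit of Hom-complexes including its $\rR^1\varprojlim$ term; your heuristic about $x^{[m]}$ ``pushing the image into deep $\fI$-adic levels'' does not engage with this at all. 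Second, passing from the mod-$\fI^k$ statements to the statement over $\bk$ requires the derived completion identity $\rR\uHom_{\bD}(T,M)=\rR\varprojlim\rR\uHom_{\bD/\fI^i\bD}(T/\fI^iT,M/\fI^iM)$ (the paper's Lemma~\ref{dercompl}), whose hypotheses ($\Tor$-vanishing of $T$ against $\bD/\fI^i\bD$, surjectivity of transition maps, graded-piecewise completeness of $M$) must be checked; a naive cocycle-by-cocycle limit does not substitute for this, precisely because $\rR^1\varprojlim$ obstructions are the issue. Until the vanishing of $\rR\uHom_{\bD}(\bk,\bD)$ in the ``infinitely many $\pi_n=0$'' case is actually established, the proof is incomplete at its core.
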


\begin{remark} \label{rmk:ext-torsion}
Here is an example showing that Theorem~\ref{thm:ext-torsion} can fail without the completeness hypothesis. Let $\bk=\bZ_{(p)}$, let $M=\bD/(x^{[1]})$, and let $N \subset M$ be the submodule of strictly positive degree elements. Note that every homogeneous element of $N$ is killed by a power of $p$, and so $N$ is naturally a $\bZ_p$-module. We have an exact sequence
\begin{equation} \label{eq:eq5}
0 \to N \to M \to \bZ_{(p)} \to 0.
\end{equation}
Applying $\uHom_{\bD}(\bZ_{(p)}, -)_0$, we obtain an exact sequence
\begin{displaymath}
0 \to \bZ_{(p)} \to \uExt^1_{\bD}(\bZ_{(p)}, N)_0 \to \uExt^1_{\bD}(\bZ_{(p)}, M)_0.
\end{displaymath}
Thus $\uExt^1_{\bD}(\bZ_{(p)}, N)_0$ is a $\bZ_p$-module containing a copy of $\bZ_{(p)}$. It cannot be equal to $\bZ_{(p)}$, and so $\uExt^1_{\bD}(\bZ_{(p)}, M)_0$ must be non-zero. To obtain an explicit extension, take the push-out of \eqref{eq:eq5} along a map $N \stackrel{a}{\to} N \to M$, where $a \in \bZ_p \setminus \bZ_{(p)}$, and the second map is the inclusion.
\end{remark}

\begin{remark}
We also remark that Theorem~\ref{thm:ext-torsion} can fail in the non-graded case, as Example~\ref{ex:ext} shows.
\end{remark}

\begin{lemma} \label{extor1}
Suppose $\pi_n=0$ and put $D=\bD_{<n}$. Let $d<n-1$ be an integer and let $M$ be a graded $\bk$-module supported in non-negative degrees. Then $\rR \uHom_D(\bk, M \otimes_{\bk} D)_d=0$.
\end{lemma}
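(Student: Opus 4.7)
The plan is to exploit the Frobenius structure of $D = \bD_{<n}$ as a graded $\bk$-algebra. By Lemma~\ref{lem:dualfree}, $\Hom_{\bk}(D, \bk)$ is free of rank one as a $D$-module, with generator the dual basis element $\lambda_{n-1}$. Since $\lambda_{n-1}$ lives in graded degree $-(n-1)$, this lemma upgrades to an isomorphism of graded $D$-modules
\[
\Hom_{\bk}(D, \bk) \;\cong\; D[-(n-1)],
\]
equivalently, $D \cong \Hom_{\bk}(D, \bk)[n-1]$. Using the standard evaluation isomorphism $M \otimes_{\bk} \Hom_{\bk}(D,\bk) \xrightarrow{\sim} \Hom_{\bk}(D, M)$ (valid because $D$ is finite free over $\bk$), tensoring with $M$ and shifting yields the graded $D$-module isomorphism
\[
M \otimes_{\bk} D \;\cong\; \Hom_{\bk}(D, M)[n-1].
\]

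Next, I would invoke the graded restriction-coinduction adjunction: for every graded $D$-module $N$,
\[
\uHom_D\!\bigl(N, \Hom_{\bk}(D, M)\bigr) \;\cong\; \uHom_{\bk}(N, M),
\]
where $N$ on the right is viewed as a graded $\bk$-module via restriction of scalars. Because $D$ is $\bk$-flat, the functor $\Hom_{\bk}(D, -)$ is right adjoint to the exact restriction functor, and so sends injective graded $\bk$-modules to injective graded $D$-modules; consequently the adjunction derives to
\[
\rR\uHom_D\!\bigl(N, \Hom_{\bk}(D, M)\bigr) \;\cong\; \rR\uHom_{\bk}(N, M).
\]

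Specializing to $N = \bk$ (using $\rR\uHom_{\bk}(\bk, M) = M$) and pulling out the shift gives
\[
\rR\uHom_D(\bk, M \otimes_{\bk} D) \;\cong\; \rR\uHom_D\!\bigl(\bk, \Hom_{\bk}(D, M)\bigr)[n-1] \;\cong\; M[n-1].
\]
The degree $d$ piece of the right-hand side is $M_{d-(n-1)}$, which vanishes whenever $d < n-1$ because $M$ is supported in non-negative degrees. This yields the desired vanishing $\rR\uHom_D(\bk, M \otimes_{\bk} D)_d = 0$.

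There is no real obstacle in this argument; the content is essentially that $D$ is a graded Frobenius $\bk$-algebra plus standard adjunction, and the hypothesis $d < n-1$ is precisely what is needed to keep the resulting shifted copy of $M$ in strictly negative degrees. The only care needed is in tracking the graded shift by $n-1$ that arises from the fact that the socle of $D$ sits in top degree $n-1$.
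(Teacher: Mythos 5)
Your argument is correct, and it takes a genuinely different route from the paper's. You compute $\rR\uHom_D(\bk, M\otimes_{\bk}D)$ exactly: the graded form of Lemma~\ref{lem:dualfree} gives $\Hom_{\bk}(D,\bk)\cong D[-(n-1)]$ (the generator $\lambda_{n-1}$ sits in degree $-(n-1)$), the evaluation map identifies $M\otimes_{\bk}D$ with the coinduced module $\Hom_{\bk}(D,M)[n-1]$, and the derived restriction--coinduction adjunction (legitimate because $D$ is finite free over $\bk$, so $\Hom_{\bk}(D,-)$ is exact and preserves graded injectives) yields $\rR\uHom_D(\bk,M\otimes_{\bk}D)\cong M[n-1]$, whose degree-$d$ piece is $M_{d-n+1}=0$ for $d<n-1$. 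The paper instead first reduces to finitely generated $M$ by passing to direct limits, handles the field case using self-injectivity of $D$ plus an explicit socle computation for the $\uHom$ term, and then runs a noetherian induction on $\bk$ (non-domain/domain dichotomy, with a multiplication-bijectivity argument forcing a finitely generated module to vanish). Your proof is sharper and more economical: it identifies the entire complex rather than only proving vanishing in low degrees, needs no noetherian hypothesis on $\bk$ and no finiteness reduction on $M$, and in effect inlines (and keeps track of the grading in) the Frobenius/adjunction argument that the paper uses to prove Lemma~\ref{lem:truncinj}. What the paper's route buys is uniformity with the induction template used in Theorem~\ref{thm:injdim}; what yours buys is exactness of the answer, generality, and brevity. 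The only point worth stating explicitly in a final write-up is that exactness of $\Hom_{\bk}(D,-)$ comes from $D$ being \emph{projective} (indeed free) over $\bk$, not merely flat, and that the adjunction is taken in the graded category (where $\uHom_{\bk}(D,-)=\Hom_{\bk}(D,-)$ since $D$ is a finitely generated $\bk$-module).
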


\begin{proof}
Since $D$ is noetherian and $\bk$ is finitely generated, the $\rR \uHom$ in question commutes with direct limits in $M$, and so we can assume $M$ is finitely generated. First suppose $\bk$ is a field. We can then assume $M=\bk[e]$ for some $e \ge 0$. Since $D$ is self-injective (by Lemma~\ref{lem:truncinj}), we have $\uExt^i_D(\bk, D[e])_d=0$ for $i>0$. We also have $\uHom_D(\bk, D[e])_d=\uHom_D(\bk[d-e], D)_0=0$, since $x^{[n-1-(d-e)]}$ acts non-trivially on $x^{[d-e]}$ (see the proof of Lemma~\ref{lem:dualfree}). Thus the result holds when $\bk$ is a field.

We now proceed by noetherian induction. We thus assume the result holds for every proper quotient of $\bk$. Suppose $M$ has non-zero annihilator $\fa$. Then
\begin{displaymath}
\begin{split}
\rR \uHom_D(\bk, M \otimes_{\bk} D)_d
&= \rR \uHom_{D/\fa D}(\bk \stackrel{\rL}{\otimes}_D D/\fa D, M \otimes_{\bk} D)_d \\
&= \rR \uHom_{D/\fa D}(\bk/\fa, M \otimes_{\bk} D)_d = 0,
\end{split}
\end{displaymath}
where the final equality comes from the inductive hypothesis. From this, we can reduce to the case where $\bk$ is a domain (as in Step~2 of the proof of Theorem~\ref{thm:injdim}) and $M=\bk$ (by d\'evissage it suffices to treat cyclic modules $M$, and the only cyclic module with zero annihilator is $\bk$).

Let $a$ be a nonzero element of $\bk$. Using the exact sequence
\begin{displaymath}
0 \to \bk \stackrel{a}{\to} \bk \to \bk/(a) \to 0
\end{displaymath}
in the $M$ variable, we find that multiplication by $a$ is bijective on $\uExt^n_D(\bk, D)_d$. But this $\uExt$ group is a finitely generated $\bk$-module. Thus if $\bk$ is not a field then it must vanish. And we have already treated the field case.
\end{proof}

\begin{lemma} \label{extor2}
Suppose $\fI=0$. Then $\rR \uHom_{\bD}(\bk, \bD)=0$.
\end{lemma}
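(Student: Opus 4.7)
The strategy is to reduce to the truncated setting of Lemma~\ref{extor1} through a change-of-rings argument, and then to approximate $\bk$ by a sequential colimit of the quotients $\bD^{(n)}$. Fix a degree $d$ and an integer $i \ge 0$; I will show $\uExt^i_\bD(\bk,\bD)_d = 0$. Using the hypothesis $\fI=0$, pick $n$ with $\pi_n = 0$ and $n > d+1$, and set $D = \bD_{<n}$. By Proposition~\ref{gdpa:subalg}, $D$ is a graded $\bk$-subalgebra of $\bD$ with $\bD \cong D \otimes_\bk \bD^{(n)}$ as graded $\bk$-algebras; in particular $\bD$ is free over $D$, and as a graded $D$-module it is of the form $\bD^{(n)} \otimes_\bk D$ with $\bD^{(n)}$ a graded $\bk$-module supported in non-negative degrees. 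Lemma~\ref{extor1} therefore yields $\rR\uHom_D(\bk,\bD)_d = 0$.

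Next, since $\bD$ is flat (in fact free) over $D$, the derived extension-of-scalars adjunction gives
\[
\rR\uHom_\bD(\bk \otimes_D \bD,\,\bD) \;=\; \rR\uHom_D(\bk,\bD).
\]
A direct computation (the key input being that $\pi_n = 0$ forces $x^{[jn]} \notin D_+\bD$ for every $j \ge 1$) identifies $\bk \otimes_D \bD = \bD/(D_+\bD)$ with $\bD^{(n)}$ as a graded $\bD$-module, where the $\bD$-action on $\bD^{(n)}$ is the one arising from the surjective $\bk$-algebra map $\bD \twoheadrightarrow \bD^{(n)}$ killing $x^{[k]}$ for $0 < k < n$. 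Thus $\uExt^j_\bD(\bD^{(n)},\bD)_d = 0$ for every $j$.

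To pass from $\bD^{(n)}$ to $\bk$, let $n_1 < n_2 < \cdots$ enumerate the indices with $\pi_{n_k} = 0$. By admissibility (Proposition~\ref{gdpa:elt}(a)), $n_k \mid n_{k+1}$, and the nested ideals $(\bD_{<n_k})_+\bD \subset (\bD_{<n_{k+1}})_+\bD$ induce a sequential system of $\bD$-module surjections $\bD^{(n_k)} \to \bD^{(n_{k+1})}$; since $\bigcup_k (\bD_{<n_k})_+ = \bD_+$, the colimit is $\bD/\bD_+ = \bk$. The standard Milnor exact sequence for the derived Hom out of a sequential colimit then gives
\[
0 \to \varprojlim_k{}^1 \uExt^{i-1}_\bD(\bD^{(n_k)},\bD)_d \to \uExt^i_\bD(\bk,\bD)_d \to \varprojlim_k \uExt^i_\bD(\bD^{(n_k)},\bD)_d \to 0,
\]
and both flanking terms vanish because $\uExt^j_\bD(\bD^{(n_k)},\bD)_d = 0$ for every $j$ once $n_k > d+1$ (a cofinal condition on $k$). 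Hence $\uExt^i_\bD(\bk,\bD)_d = 0$ for all $i$ and $d$, as desired.

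The main conceptual step is the identification $\bk \otimes_D \bD \cong \bD^{(n)}$ as $\bD$-modules, which uses $\pi_n = 0$ essentially: without it, $x^{[n]}$ would lie in $D_+\bD$ and the quotient would be smaller. Once this identification is in place, the appeal to Lemma~\ref{extor1} via the flat adjunction and the subsequent Milnor/colimit argument are formal and the conclusion is immediate.
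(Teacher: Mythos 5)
Your proof is correct. It rests on the same two pillars as the paper's argument --- the degreewise vanishing of Lemma~\ref{extor1} at each finite stage, and a $\varprojlim/\varprojlim^1$ sequence to pass to the limit --- but the implementation is genuinely different and, I think, cleaner. The paper approximates at the level of the \emph{ring and the resolution}: it builds compatible tensor-product projective resolutions $P^{(i)}_\bullet$ of $\bk$ over the increasing subalgebras $D_i=\bD_{<n_i}$, takes their direct limit to resolve $\bk$ over $\bD$, and then applies $\uHom(-,\bD)$ to get an inverse limit of complexes with surjective transition maps, whose outer $\lim$/$\lim^1$ terms are the groups $\uExt^\bullet_{D_i}(\bk,\bD)$ killed by Lemma~\ref{extor1}. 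You instead approximate at the level of the \emph{module}: the flat base-change adjunction $\rR\uHom_{\bD}(\bk\otimes_D\bD,\bD)=\rR\uHom_D(\bk,\bD)$, together with the identification $\bk\otimes_D\bD\cong\bD^{(n)}$ from Proposition~\ref{gdpa:subalg}, converts Lemma~\ref{extor1} into the vanishing of $\uExt^\bullet_{\bD}(\bD^{(n)},\bD)_d$, and then the telescope presentation of $\bk=\varinjlim_k\bD^{(n_k)}$ yields the Milnor sequence whose flanking terms vanish because the inverse systems are eventually zero in each fixed degree. Your route makes explicit, via the adjunction, the change of rings that the paper performs implicitly through its limit of resolutions, and it avoids having to construct and compare the $P^{(i)}_\bullet$ by hand. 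Both arguments correctly break in the ungraded setting for the same reason: Lemma~\ref{extor1} only gives vanishing in degrees $d<n-1$, so one must be able to fix a degree before choosing $n$ (consistent with Example~\ref{ex:ext}).
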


\begin{proof}
Let $n_1<n_2<\ldots$ be a sequence with $\pi_{n_i}=0$, and put $D_i=\bD_{<n_i}$. Let $A_i=D_i^{(n_{i-1})}$, a subalgebra of $D_i$. We have an isomorphism of algebras
%\rohit{Fixed the mistype}
$D_i \cong A_i \otimes D_{i-1}$ (see Proposition~\ref{gdpa:subalg}). Let $Q^{(i)}_{\bullet} \to \bk$ be a projective resolution over $A_i$, with $Q^{(i)}_0=A_i$. Then $P^{(i)}_{\bullet}=Q^{(0)}_{\bullet} \otimes \cdots \otimes Q^{(i)}_{\bullet}$ is a projective resolution of $\bk$ over $D_i$, and we have a natural map $P^{(i)}_{\bullet} \to P^{(i+1)}_{\bullet}$ coming from the natural $\bk$-linear map $\bk \to Q^{(i+1)}_{\bullet}$. One easily sees that the direct limit of the $P^{(i)}_{\bullet}$ is a projective resolution of $\bk$ over $\bD$. We thus see that $\rR \uHom_{\bD}(\bk, \bD)$ is computed by $\uHom_{\bD}(\varinjlim P^{(i)}_{\bullet}, \bD)$. One easily sees that this is isomorphic to $\varprojlim C_i^{\bullet}$, where $C_i^{\bullet}=\uHom_{D_i}(P^{(i)}_{\bullet}, \bD)$. It follows from the construction of $P^{(i)}_{\bullet}$ that the natural map $C_i^{\bullet} \to C_{i-1}^{\bullet}$ is surjective. By general results on inverse limits of complexes (see, for instance, \cite[Ch.~III, Cor.~1.2]{lubkin}), we thus have a short exact sequence
\begin{displaymath}
0 \to \rR^1 \varprojlim \rH^{n-1}(C^{\bullet}_i) \to \rH^n(\varprojlim C^{\bullet}_i) \to \varprojlim \rH^n(C^{\bullet}_i) \to 0
\end{displaymath}
Since $C^{\bullet}_i$ computes $\rR \uHom_{D_i}(\bk, \bD)$, this yields
\begin{displaymath}
0 \to \rR^1 \varprojlim \uExt^{n-1}_{D_i}(\bk, \bD) \to \uExt_{\bD}^n(\bk, \bD) \to \varprojlim \uExt^n_{D_i}(\bk, \bD) \to 0
\end{displaymath}
In any particular degree, the outer terms vanish for $i \gg 0$ by Lemma~\ref{extor1}, and so the outer terms are~0. (Note: this reasoning does not apply in the ungraded case, and the vanishing does not hold.) This completes the proof.
\end{proof}

\begin{lemma} \label{extor5}
Let $M$ be a graded $\bD$-module such that $\rR \uHom_{\bD}(\bk, M)=0$. Then we have $\rR \uHom_{\bD}(T, M)=0$ for all graded torsion modules $T$.
\end{lemma}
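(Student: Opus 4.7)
The plan is to devissage the graded torsion module $T$ down to the case where $T$ is simply a graded $\bk$-module regarded as a $\bD$-module via the quotient $\bD \twoheadrightarrow \bk = \bD/\bD_+$, where the vanishing can be read off from the hypothesis via an adjunction.

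For this base case, if $T$ is a graded $\bk$-module with trivial $\bD_+$-action, the derived restriction-of-scalars adjunction for the graded ring map $\bD \to \bk$ produces a natural isomorphism
\begin{displaymath}
\rR\uHom_{\bD}(T, M) \cong \rR\uHom_{\bk}\bigl(T,\ \rR\uHom_{\bD}(\bk, M)\bigr),
\end{displaymath}
whose right side vanishes by the hypothesis $\rR\uHom_{\bD}(\bk, M) = 0$.

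Next, suppose $T$ is a finitely generated graded torsion $\bD$-module. Each generator is annihilated by $x^{[k]}$ for all $k \gg 0$, so $T$ has bounded degree, and hence the descending $\bD$-submodule filtration $T = T_{\ge 0} \supset T_{\ge 1} \supset \cdots \supset T_{\ge N+1} = 0$ (with $T_{\ge d} = \bigoplus_{n \ge d} T_n$) is finite. Each subquotient $T_{\ge d}/T_{\ge d+1} \cong T_d$, placed in a single degree, is annihilated by $\bD_+$ and is therefore a graded $\bk$-module; the base case together with the long exact sequence of $\uExt$ applied to the filtration inductively yields $\rR\uHom_{\bD}(T, M) = 0$.

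Finally, for arbitrary torsion $T$, write $T = \varinjlim_\alpha T_\alpha$ as the filtered union of its finitely generated $\bD$-submodules, each of which is torsion, so $\rR\uHom_{\bD}(T_\alpha, M) = 0$ by the previous step. Choose an injective resolution $M \to I^\bullet$ in graded $\bD$-modules; the identification $\uHom_{\bD}(T, I^\bullet) = \varprojlim_\alpha \uHom_{\bD}(T_\alpha, I^\bullet)$ expresses $\rR\uHom_{\bD}(T, M)$ as the inverse limit of acyclic complexes with degreewise surjective transition maps (the surjectivity being afforded by injectivity of each $I^j$), so a Mittag-Leffler argument concludes. The main obstacle is this last step, where one must verify that the Mittag-Leffler argument applies to the (possibly uncountable) filtered system; the degreewise surjectivity supplies the required condition in the Grothendieck abelian category of graded $\bD$-modules.
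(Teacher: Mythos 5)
Your first two steps are sound, and the base case is actually cleaner than what the paper does: the co-restriction adjunction $\rR\uHom_{\bD}(T,M) \cong \rR\uHom_{\bk}(T, \rR\uHom_{\bD}(\bk,M))$ disposes of all graded $\bk$-modules with trivial $\bD_+$-action at once, whereas the paper resolves a finitely generated $\bk$-module by free $\bk$-modules and runs a spectral sequence. Your degree filtration in the finitely generated case is also fine (the paper filters by powers of $\bD_+$ instead, to the same effect).

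The gap is exactly where you flagged it, and it is not repaired by the sentence you wrote. For inverse systems indexed by an uncountable directed set, surjectivity of the transition maps (the Mittag--Leffler condition) does \emph{not} imply that $\varprojlim$ is exact or that $\varprojlim^1$ vanishes; this is the content of Neeman's counterexample to Roos's 1961 claim, and it already fails for systems of abelian groups. Since $T$ need not be countably generated, you cannot pass to a countable cofinal subsystem, so ``degreewise surjectivity'' does not supply the required condition. The correct way to finish --- and what the paper does --- is to avoid the underived limit entirely: since filtered colimits of modules are exact, $T = \varinjlim_\alpha T_\alpha$ gives $\rR\uHom_{\bD}(T,M) \cong \rR\varprojlim_\alpha \rR\uHom_{\bD}(T_\alpha, M)$, and the derived inverse limit of a diagram each of whose objects is zero in the derived category is zero; no Mittag--Leffler hypothesis is needed because nothing is being computed underived. (A second, smaller point: your identification $\uHom_{\bD}(T, I^\bullet) = \varprojlim_\alpha \uHom_{\bD}(T_\alpha, I^\bullet)$ is only valid one internal degree at a time, since $\uHom$ is by definition the direct sum of its graded pieces and direct sums do not commute with inverse limits; work with $\uHom(-,-)_n$ for each fixed $n$.)
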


\begin{proof}
We proceed in three steps.

{\it Step 1: $T$ is finitely generated and concentrated in degree~0.} Thus $T$ is just a finitely generated $\bk$-module with $\bD_+$ acting by~0. Let $F_{\bullet} \to T$ be a resolution of $T$ by finitely generated free $\bk$-modules. Regard $F_i$ as a $\bD$-module by letting $\bD_+$ act by zero. We have a convergent spectral sequence
\begin{displaymath}
\uExt^i_{\bD}(F_j,M) \implies \uExt^{i+j}_{\bD}(T,M).
\end{displaymath}
Since each $F_j$ is a finite direct sum of $\bk$'s, each $\uExt^i_{\bD}(F_j,M)$ vanishes by our hypothesis, and so the result follows.

{\it Step 2: $T$ is finitely generated.} We then have $\bD_+^n T=0$ for sufficiently large $n$. Filtering by powers of $\bD_+$ and passing to the associated graded, we can thus assume $\bD_+T=0$. But then $T$ is a finite sum of shifts of modules concentrated in a single degree, and so the result follows from Step~1.

{\it Step 3: arbitrary $T$.} Write $T=\varinjlim T_i$ with $T_i$ finitely generated. Then
\begin{displaymath}
\rR \uHom_{\bD}(T, M) = \rR \varprojlim \rR \uHom_{\bD}(T_i, M).
\end{displaymath}
Since $\rR \uHom_{\bD}(T_i, M)=0$ for all $i$, the result follows.
\end{proof}

\begin{lemma} \label{extor3}
Theorem~\ref{thm:ext-torsion} holds if $\fI=0$.
\end{lemma}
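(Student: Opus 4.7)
The plan is to combine Lemmas \ref{extor2} and \ref{extor5} to prove the theorem when the target module is $\bD$ itself, and then to bootstrap to an arbitrary finitely presented $M$ via the special-resolution theorem. The hypothesis $\fI=0$ asserts that the zero ideal contains infinitely many $\pi_n$'s, so in particular infinitely many $\pi_n$ vanish. Hence Lemma \ref{extor2} gives $\rR\uHom_\bD(\bk,\bD)=0$, and Lemma \ref{extor5} promotes this to $\rR\uHom_\bD(T,\bD)=0$ for every graded torsion $T$; in particular $\rR\uHom_\bD(T,F)=0$ for every finitely generated free $F$.

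Now let $M$ be an arbitrary finitely presented graded $\bD$-module. By Theorem \ref{thm:sd} I would fix a finite resolution
\[
0\to P_r\to P_{r-1}\to\cdots\to P_0\to M\to 0
\]
with $P_0,\ldots,P_{r-1}$ free and $P_r$ special. Viewing this complex as quasi-isomorphic to $M$, the hypercohomology spectral sequence for $\rR\uHom_\bD(T,-)$ reduces the claim to the vanishing of $\rR\uHom_\bD(T,P)$ for each $P$ appearing in the resolution. The free $P_i$ are already handled; devissage through the principal-special filtration of $P_r$, using long exact sequences in $\uExt$, reduces the remaining case to
\[
\rR\uHom_\bD\bigl(T,(\bD/\fa\bD)^{(h)}\bigr)=0,\qquad \pi_h\in\fa.
\]

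For this key case, set $\bk'=\bk/\fa$ and $\bD'=\bD/\fa\bD$, which by Proposition \ref{gdpa:bc} is a GDPA over $\bk'$ whose $\pi$-sequence still has infinitely many zero entries. Since $\ol\pi_h=0$, Proposition \ref{gdpa:subalg} gives $\bD'\cong \bD'_{<h}\otimes_{\bk'}(\bD')^{(h)}$, exhibiting $(\bD')^{(h)}$ as a quotient of $\bD'$ (and hence of $\bD$) by the augmentation ideal of $\bD'_{<h}$. Change-of-rings adjunction along this surjection yields
\[
\rR\uHom_\bD\bigl(T,(\bD')^{(h)}\bigr)=\rR\uHom_{(\bD')^{(h)}}\bigl(T\stackrel{\rL}{\otimes}_\bD(\bD')^{(h)},(\bD')^{(h)}\bigr).
\]
Computing the derived tensor product by resolving $(\bD')^{(h)}$ flatly over $\bD$ and tensoring with the $\bD$-torsion module $T$ shows that each of its cohomologies is a torsion $(\bD')^{(h)}$-module. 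By Proposition \ref{prop:regrade}(c), the regrade of $(\bD')^{(h)}$ is the GDPA $(\bD')^{[h]}$, and taking $d=h$ in the defining product $(\ol\pi)^{[h]}_n=\prod_{d\mid h,\,(h/d,n)=1}\ol\pi_{dn}$ confirms that infinitely many $(\ol\pi)^{[h]}_n$ vanish. Lemmas \ref{extor2} and \ref{extor5} applied to $(\bD')^{[h]}$ therefore yield $\rR\uHom_{(\bD')^{(h)}}(T'',(\bD')^{(h)})=0$ for every torsion graded $(\bD')^{(h)}$-module $T''$, and a final hypercohomology spectral sequence over the cohomologies of $T\stackrel{\rL}{\otimes}_\bD(\bD')^{(h)}$ produces the desired vanishing.

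The crux is the principal-special case: one must simultaneously change rings to a quotient GDPA, change gradings via the $h$-transform, and verify that the ``infinitely many zero $\pi_n$'s'' hypothesis persists through each reduction. Once this regrade-and-bootstrap step is in place, the rest of the argument is a routine devissage.
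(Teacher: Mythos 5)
Your proposal is correct and follows essentially the same route as the paper: reduce via Theorem~\ref{thm:sd} to the principal special case $(\bD/\fa\bD)^{(h)}$, view it (after regrading) as a GDPA quotient of $\bD$, use the change-of-rings adjunction together with the observation that the relevant $\Tor$'s remain torsion, and conclude with Lemmas~\ref{extor2} and~\ref{extor5}. The only difference is that you spell out explicitly the verification that infinitely many of the transformed $\pi$'s still vanish, which the paper leaves implicit.
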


\begin{proof}
By Theorem~\ref{thm:sd}, it suffices to treat the case where $M$ is principal special, say $M=\bD'=(\bD/\fa \bD)^{(h)}$ with $\pi_h \in \fa$. Note that $\bD'$ is a quotient ring of $\bD$, and (after regrading) is itself a GDPA. For a torsion $\bD$-module $T$, we have
\begin{displaymath}
\rR \uHom_{\bD}(T, \bD')=\rR \uHom_{\bD'}(T \stackrel{\rL}{\otimes}_{\bD} \bD', \bD').
\end{displaymath}
Note that $\Tor^{\bD}_p(T, \bD')$ is a torsion $\bD'$-module for all $p$. Thus, renaming $\bD'$ to $\bD$, we have reduced to the case $M=\bD$. As $\rR \uHom_{\bD}(\bk, \bD)=0$ by Lemma~\ref{extor2}, the result follows from Lemma~\ref{extor5}.
\end{proof}

\begin{lemma} \label{extor4}
Theorem~\ref{thm:ext-torsion} holds if $T$ is annihilated by a power of $\fI$.
\end{lemma}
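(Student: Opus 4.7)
The plan is to reduce to Lemma~\ref{extor3} by base change, and then bootstrap from the $\fI$-annihilated case to the $\fI^N$-annihilated case by induction on $N$.

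For the base case $\fI T = 0$, set $\bk' = \bk/\fI$ and $\bD' = \bD/\fI\bD$. By Proposition~\ref{gdpa:bc}, $\bD'$ is a GDPA over the noetherian ring $\bk'$, and since $\fI$ contains infinitely many of the $\pi_n$ their images in $\bk'$ vanish, so $\bD'$ satisfies the hypothesis of Lemma~\ref{extor3} with $\fI' = 0$. Since $T$ is annihilated by $\fI$ it is naturally a $\bD'$-module, and it is torsion as a $\bD'$-module because the $x^{[n]}$ act by the same elements as in $\bD$. Via the standard adjunction $\uHom_\bD(T, -) = \uHom_{\bD'}(T, \uHom_\bD(\bD', -))$, together with the fact that $\uHom_\bD(\bD', -)$ carries injectives to injective $\bD'$-modules, we obtain the Grothendieck spectral sequence
$$E_2^{p,q} = \uExt^p_{\bD'}\bigl(T, \uExt^q_\bD(\bD', M)\bigr) \implies \uExt^{p+q}_\bD(T, M).$$
It therefore suffices to show that every $E_2$-term vanishes.

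To that end, since $\bk$ is noetherian the ideal $\fI$ is finitely generated, so $\bD' = \bD/\fI\bD$ is a finitely presented $\bD$-module. By the coherence of $\bD$ (Theorem~\ref{thm:grobcoh}) and the finite presentation of $M$, each $\uExt^q_\bD(\bD', M)$ is a finitely presented $\bD$-module; being annihilated by $\fI$, it is in fact a finitely presented $\bD'$-module. Applying Lemma~\ref{extor3} to $\bD'$ over $\bk'$ then yields $\uExt^p_{\bD'}\bigl(T, \uExt^q_\bD(\bD', M)\bigr) = 0$ for every $p, q$, as desired.

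For the inductive step, suppose $N \ge 2$ and $\fI^N T = 0$. Both $\fI^{N-1}T$ and $T/\fI^{N-1}T$ are torsion $\bD$-modules; the former is annihilated by $\fI$ and the latter by $\fI^{N-1}$. Applying the base case and the inductive hypothesis respectively, the long exact sequence of $\uExt^{\bullet}_\bD(-, M)$ associated to
$$0 \to \fI^{N-1} T \to T \to T/\fI^{N-1} T \to 0$$
forces $\uExt^n_\bD(T, M) = 0$ for all $n$. The main technical obstacle is the verification that $\uExt^q_\bD(\bD', M)$ is finitely presented over $\bD'$; everything else is a routine combination of change-of-rings with the preceding lemma.
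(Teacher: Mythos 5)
Your proof is correct and follows essentially the same route as the paper: reduce to the $\fI$-annihilated case by filtering $T$ by powers of $\fI$, then apply the change-of-rings adjunction for $\bD \to \bD/\fI\bD$ (the paper phrases your Grothendieck spectral sequence as the derived isomorphism $\rR\uHom_{\bD}(T,M)=\rR\uHom_{\bD/\fI\bD}(T,\rR\uHom_{\bD}(\bD/\fI\bD,M))$) and invoke Lemma~\ref{extor3}. Your explicit verification that each $\uExt^q_{\bD}(\bD/\fI\bD,M)$ is finitely presented over $\bD/\fI\bD$ — via coherence of $\bD$ and finite generation of $\fI$ — fills in a step the paper merely asserts.
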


\begin{proof}
Filtering by powers of $\fI$, it suffices to treat the case where $T$ is annihilated by $\fI$. We then have
\begin{displaymath}
\rR \uHom_{\bD}(T, M)=\rR \uHom_{\bD/\fI \bD}(T, \rR \uHom_{\bD}(\bD/\fI \bD, M)).
\end{displaymath}
Since $\uExt^i_{\bD}(\bD/\fI \bD, M)$ is finitely presented as a $\bD/\fI \bD$-module for all $i$, the result follows from the $\fI=0$ case (Lemma~\ref{extor3}).
\end{proof}

\begin{lemma} \label{dercompl}
Let $M$ and $T$ be modules over a ring $A$ such that $M$ is complete with respect to an ideal $I \subset A$ and $\Tor_i^A(T,A/IA)=0$ for all $i>0$. Then
\begin{displaymath}
\rR \Hom_A(T,M) = \rR \varprojlim \rR \Hom_{A/I^i}(T/I^iT, M/I^iM)
\end{displaymath}
Similar statements hold in the graded case with $\uHom$.
\end{lemma}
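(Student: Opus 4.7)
The plan is to pick a projective resolution $P_\bullet \to T$ of $T$ over $A$ and show that $\Hom_A(P_\bullet, M)$ computes both sides. The left side is immediate: $\rR\Hom_A(T,M) = \Hom_A(P_\bullet, M)$. For the right side, the goal is to realize the tower $\{\rR\Hom_{A/I^n}(T/I^n T, M/I^n M)\}_n$ by compatible complex representatives whose ordinary inverse limit already computes $\rR\varprojlim$.

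First I would promote the Tor-vanishing hypothesis: by induction on $n$, using the short exact sequences $0 \to I^n/I^{n+1} \to A/I^{n+1} \to A/I^n \to 0$ (the left term being an $A/I$-module on which $\Tor$ vanishes), I obtain $\Tor^A_i(T, A/I^n) = 0$ for all $i>0$ and all $n \ge 1$. Consequently, for any projective resolution $P_\bullet \to T$ over $A$, the complex $P_\bullet \otimes_A A/I^n$ is a projective resolution of $T/I^n T$ over $A/I^n$. Thus, via tensor-hom adjunction,
\begin{displaymath}
\Hom_A(P_\bullet, M/I^n M) \;=\; \Hom_{A/I^n}(P_\bullet \otimes_A A/I^n,\, M/I^n M)
\end{displaymath}
is an honest representative of $\rR\Hom_{A/I^n}(T/I^n T, M/I^n M)$.

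Next I would use the completeness of $M$. Since $\Hom_A(X, -)$ commutes with inverse limits in the second argument, and $M = \varprojlim_n M/I^n M$,
\begin{displaymath}
\Hom_A(P_\bullet, M) \;=\; \varprojlim_n \Hom_A(P_\bullet, M/I^n M).
\end{displaymath}
To upgrade this ordinary limit to a derived one, note that in each cohomological degree $k$ the transition map $\Hom_A(P_k, M/I^{n+1}M) \to \Hom_A(P_k, M/I^n M)$ is surjective: the surjection $M/I^{n+1}M \twoheadrightarrow M/I^n M$ remains surjective after applying $\Hom_A(P_k, -)$ because $P_k$ is projective. Thus the tower of complexes is termwise surjective, satisfies Mittag--Leffler, and $\rR^1\varprojlim$ of each term vanishes; the ordinary inverse limit therefore coincides with $\rR\varprojlim$.

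Combining these identifications gives
\begin{displaymath}
\rR\Hom_A(T,M) \;=\; \Hom_A(P_\bullet, M) \;=\; \varprojlim_n \Hom_{A/I^n}(P_\bullet \otimes_A A/I^n,\, M/I^n M) \;\simeq\; \rR\varprojlim_n \rR\Hom_{A/I^n}(T/I^n T, M/I^n M),
\end{displaymath}
which is the desired equivalence; the graded case is handled identically by replacing $\Hom$ with $\uHom$ and using a graded free resolution. The only real subtlety, and what I would think of as the main obstacle, is the passage from $\varprojlim$ to $\rR\varprojlim$; everything else is a formal combination of tensor-hom adjunction and the $\Tor$ hypothesis. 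The surjectivity of the transition maps coming from projectivity of the $P_k$ is what makes this step go through cleanly, so the proof hinges on choosing a genuine projective (not merely flat) resolution of $T$.
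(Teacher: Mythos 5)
Your proposal follows essentially the same route as the paper's proof, just transcribed from the derived category down to explicit complexes. The paper writes the chain $\rR\Hom_A(T,M) \cong \rR\Hom_A(T,\rR\varprojlim M/I^iM) \cong \rR\varprojlim\rR\Hom_A(T,M/I^iM) \cong \rR\varprojlim\rR\Hom_{A/I^i}(T/I^iT,M/I^iM)$, citing completeness, surjectivity of the transition maps in $\{M/I^iM\}$ to kill higher inverse limits, and derived tensor--hom adjunction ``combined with the vanishing of the higher $\Tor$'s of $T$ with $A/I^i$.'' Your projective resolution $P_\bullet$ and the termwise-surjective tower $\Hom_A(P_\bullet,M/I^nM)$ are exactly the concrete incarnation of these steps, and you are if anything more explicit than the paper about why $\varprojlim$ computes $\rR\varprojlim$ here.

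The one step that does not hold up is your promotion of the $\Tor$ hypothesis from $A/I$ to $A/I^n$. You assert $\Tor^A_i(T,I^n/I^{n+1})=0$ for $i>0$ ``the left term being an $A/I$-module''; but the hypothesis only kills $\Tor^A_i(T,-)$ on $A/I$ itself, hence on flat $A/I$-modules. For a general $A/I$-module $N$, the hypothesis gives $\Tor^A_i(T,N)\cong\Tor^{A/I}_i(T/IT,N)$ (change of rings), which need not vanish unless $T/IT$ or $N$ is flat over $A/I$. The implication is genuinely false in this generality: take $A=k[x,y]/(xy)$, $I=(x)$, $T=A/(x-y)$; then $x-y$ is a non-zero-divisor on $A$ and on $A/I=k[y]$, so $\Tor^A_i(T,A/I)=0$ for all $i>0$, yet $\Tor^A_1(T,A/I^2)\neq 0$ since the class of $x$ in $A/(x^2)$ is nonzero and killed by $x-y$. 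To be fair, the paper's own proof asserts the same vanishing of the higher $\Tor$'s of $T$ with $A/I^i$ without justification, so this is a defect of the lemma as stated rather than of your argument alone; in the intended application (Theorem~\ref{thm:ext-torsion}) one has $I=\fI\bD$ extended from $\bk$ and $T$ flat over $\bk$, so $I^n/I^{n+1}\cong\bD\otimes_{\bk}(\fI^n/\fI^{n+1})$ and the needed vanishing does hold. Your write-up should either strengthen the hypothesis to $\Tor^A_i(T,A/I^n)=0$ for all $n$, or add a condition (such as flatness of $T/IT$ over $A/I$, or of each $I^n/I^{n+1}$ over $A/I$) under which your induction is valid.
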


\begin{proof}
We have
\begin{displaymath}
\begin{split}
\rR \Hom_A(T,M)
&\cong \rR \Hom_A(T, \rR \varprojlim M/I^i M) \\
&\cong \rR \varprojlim \rR \Hom_A(T, M/I^i M) \\
&\cong \rR \varprojlim \rR \Hom_{A/I^i}(T/I^i T, M/I^i M)
\end{split}
\end{displaymath}
The first isomorphism follows from the completeness of $M$; note that the transition maps in the inverse system $M/I^i M$ are surjective, and so there are no higher inverse limits. The second isomorphism follows from the (derived version of the) universal property of inverse limits. And the third is derived adjunction of tensor and $\Hom$, combined with the vanishing of the higher $\Tor$'s of $T$ with $A/I^i$.
\end{proof}

\begin{proof}[Proof of Theorem~\ref{thm:ext-torsion}]
By Lemma~\ref{extor5}, it suffices to treat the case $T=\bk$. By Lemma~\ref{dercompl}, we have
\begin{displaymath}
\rR \uHom_{\bD}(T, M) = \rR \varprojlim \rR \uHom_{\bD/\fI^i \bD}(T/\fI^i T, M/\fI^i M).
\end{displaymath}
Since $\uExt^i_{\bD/\fI^i \bD}(T/\fI^i T, M/\fI^i M)=0$ for all $i$ by Lemma~\ref{extor4}, the result follows. (Note that the hypotheses of Lemma~\ref{dercompl} are satisfied: $T$ is free over $\bk$, so all higher $\Tor$'s with it vanish, and each graded piece of $M$ is finitely generated over $\bk$, and thus $\fI$-adically complete, and so $M=\varprojlim M/\fI^i M$ in the category of graded $\bD$-modules.)
\end{proof}

\section{Nearly finitely presented modules} \label{s:nfp}

\subsection{Definitions}

Let $\bD$ be a GDPA. For a graded
%\rohit{Added the word ``graded"}
$\bD$-module $M$, we put
\begin{displaymath}
\tau^{\le n}(M) = \bigoplus_{k=0}^n M_n, \qquad \tau_{\ge n}(M) = \bigoplus_{k=n}^{\infty} M_n.
\end{displaymath}
Then $\tau_{\ge n}(M)$ is a $\bD$-submodule of $M$, while $\tau^{\le n}$ is a quotient $\bD$-module of $M$.

\begin{definition}
A $\bD$-module $M$ is {\bf nearly finitely presented} (nfp) if each $M_n$ is finitely generated as a $\bk$-module, $M_n=0$ for $n \ll 0$, and there exists a finitely presented $\bD$-module $N$, called a {\bf weak fp-envelope} of $M$, such that $\tau_{\ge n}(M) \cong \tau_{\ge n}(N)$ for some $n$. We let $\Mod_{\bD}^{\nfp}$ be the full subcategory of $\Mod_{\bD}$ on the nearly finitely presented modules.
\end{definition}

\begin{definition}
Let $M$ be a $\bD$-module. An {\bf fp-envelope} of $M$ is a map of $\bD$-modules $f \colon M \to N$ with $N$ finitely presented such that any other map from $M$ to a finitely presented $\bD$-module factors through $f$.
\end{definition}

\begin{definition}
A map of $\bD$-modules $f \colon M \to N$ is a {\bf near isomorphism} if $f_n \colon M_n \to N_n$ is an isomorphism for all $n \gg 0$.
\end{definition}

\subsection{The complete case}

Throughout this section we fix a GDPA over a noetherian ring $\bk$. We assume that $\bk$ is complete with respect to an ideal $\fI$ containing infinitely many of the $\pi_n$'s. We show, in this setting, that there is a very good theory of nfp modules.

\begin{lemma} \label{lem:exttau}
Let $\varphi \colon M \to M'$ be a near isomorphism of $\bD$-modules and let $N$ be a finitely presented $\bD$-module. Then the restriction map
\begin{displaymath}
\varphi^* \colon \uExt^i_{\bD}(M', N) \to \uExt^i_{\bD}(M, N)
\end{displaymath}
is an isomorphism for all $i$.
\end{lemma}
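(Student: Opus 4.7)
The plan is to factor the near isomorphism through its image and reduce to applying Theorem~\ref{thm:ext-torsion} to the kernel and cokernel. Concretely, let $K = \ker(\varphi)$ and $C = \coker(\varphi)$. Since $\varphi_n$ is an isomorphism for all $n \gg 0$, both $K$ and $C$ are supported in bounded degrees. I would first observe that any graded $\bD$-module supported in bounded degrees is torsion: for a homogeneous element $m$ of degree $d$, the element $x^{[n]}m$ has degree $n+d$, which lies outside the support for $n$ large, hence vanishes. Thus $K$ and $C$ are torsion graded $\bD$-modules.

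Next I would split the map $\varphi$ into the two short exact sequences
\begin{equation*}
0 \to K \to M \to \mathrm{im}(\varphi) \to 0, \qquad 0 \to \mathrm{im}(\varphi) \to M' \to C \to 0.
\end{equation*}
Applying $\uHom_{\bD}(-, N)$ and taking the long exact sequences in $\uExt^{\bullet}_{\bD}(-,N)$, I get (for each $i$) exact pieces
\begin{equation*}
\uExt^i_{\bD}(K, N) \to \uExt^i_{\bD}(M, N) \to \uExt^i_{\bD}(\mathrm{im}(\varphi), N) \to \uExt^{i+1}_{\bD}(K, N)
\end{equation*}
and similarly for the second sequence involving $C$.

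Now I invoke Theorem~\ref{thm:ext-torsion}: since $\bk$ is $p$-adically complete (or more generally complete with respect to an ideal containing infinitely many $\pi_n$'s, which is the standing assumption of this subsection), $N$ is finitely presented, and $K$ and $C$ are torsion, we have $\uExt^i_{\bD}(K, N) = \uExt^i_{\bD}(C, N) = 0$ for all $i \ge 0$. Plugging this vanishing into the two long exact sequences gives $\uExt^i_{\bD}(M, N) \cong \uExt^i_{\bD}(\mathrm{im}(\varphi), N) \cong \uExt^i_{\bD}(M', N)$ for every $i$, and one checks the composition is $\varphi^*$.

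There is essentially no obstacle here beyond setting up the two short exact sequences and verifying that torsion hypothesis; all the real work has been done in Theorem~\ref{thm:ext-torsion}. The only small point to be careful of is that $K$ and $C$ genuinely are torsion in the sense of \S\ref{s:tor} (not just bounded), but this is immediate from their being supported in bounded degrees together with the fact that the action of $x^{[n]}$ raises degree by $n$.
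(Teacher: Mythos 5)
Your proof is correct and follows essentially the same route as the paper: the paper's proof also takes the four-term exact sequence $0 \to K \to M \to M' \to C \to 0$, observes that $K$ and $C$ are torsion because $\varphi$ is a near isomorphism, and applies Theorem~\ref{thm:ext-torsion}. Your version merely spells out the splitting into two short exact sequences and the (correct, if slightly overstated --- boundedness \emph{above} of the support is what matters) verification that $K$ and $C$ are torsion.
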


\begin{proof}
Consider the 4-term exact sequence
\begin{displaymath}
0 \to K \to M \stackrel{\varphi}{\to} M' \to C \to 0
\end{displaymath}
where $K$ and $C$ are the kernel and cokernel of $\varphi$. Since $\varphi$ is a near isomorphism, both $K$ and $C$ are torsion. By Theorem~\ref{thm:ext-torsion}, we have $\uExt^i_{\bD}(K, N)=\uExt^i_{\bD}(C, N)=0$ for all $i$. The result follows.
\end{proof}

\begin{proposition} \label{prop:fpenv}
The notions of ``weak fp-envelope'' and ``fp-envelope'' coincide for nfp modules. Precisely:
\begin{enumerate}
\item Suppose $M$ is an nfp module and $M'$ is a weak fp-envelope, so that we have an isomorphism $\tau_{\ge n}(M) \to \tau_{\ge n}(M')$ for some $n$. Then there exists a unique map $M \to M'$ restricting to the given isomorphism in degrees $\ge n$, and this makes $M'$ into an fp-envelope of $M$.
\item Suppose $M$ is an nfp module and $M \to M'$ is an fp-envelope. Then this map is a near isomorphism, and so $M'$ is a weak fp-envelope of $M$.
\end{enumerate}
In particular, every nfp module admits an fp-envelope, and any two weak fp-envelopes are canonically isomorphic.
\end{proposition}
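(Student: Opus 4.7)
The entire proof will be driven by Lemma~\ref{lem:exttau}, which in particular says that for any finitely presented $N$ and any near isomorphism $\varphi$, precomposition with $\varphi$ induces a bijection on $\uHom$ into $N$. The plan is to use this bijection to both construct the canonical map in part~(a) and verify the universal property, and then to deduce part~(b) by the standard uniqueness of universal objects.

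For part~(a), I would begin by noting that the inclusions $\iota_M\colon \tau_{\ge n}(M) \hookrightarrow M$ and $\iota_{M'}\colon \tau_{\ge n}(M') \hookrightarrow M'$ are near isomorphisms (they are isomorphisms in all sufficiently high degrees). Composing the given isomorphism $\alpha\colon \tau_{\ge n}(M)\xrightarrow{\sim}\tau_{\ge n}(M')$ with $\iota_{M'}$ yields a map $\tau_{\ge n}(M) \to M'$, and Lemma~\ref{lem:exttau} applied to $\iota_M$ with $i=0$ and target $N=M'$ says that $\uHom_{\bD}(M,M')\to \uHom_{\bD}(\tau_{\ge n}(M),M')$ is a bijection. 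Hence $\iota_{M'}\circ \alpha$ extends uniquely to a map $\varphi\colon M \to M'$. By construction $\varphi$ agrees with $\iota_{M'}\circ\alpha\circ\iota_M^{-1}$ in degrees $\ge n$, and is therefore a near isomorphism. To verify the universal property, let $f\colon M \to N$ be any map with $N$ finitely presented. Applying Lemma~\ref{lem:exttau} (with $i=0$) separately to the near isomorphisms $\iota_M$ and $\varphi$, we get a chain of bijections
\begin{displaymath}
\uHom_{\bD}(M',N) \xrightarrow{\varphi^*} \uHom_{\bD}(M,N) \xleftarrow{\iota_M^*} \uHom_{\bD}(\tau_{\ge n}(M),N),
\end{displaymath}
so $\varphi^*$ is a bijection and $f$ factors uniquely through $\varphi$.

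For part~(b), suppose $f\colon M \to M'$ is an fp-envelope of an nfp module $M$. Pick any weak fp-envelope $M''$ of $M$, and let $g\colon M \to M''$ be the canonical map produced by part~(a); then $g$ is a near isomorphism and exhibits $M''$ as an fp-envelope of $M$. Uniqueness of universal objects gives a unique isomorphism $\psi\colon M' \xrightarrow{\sim} M''$ with $\psi\circ f = g$. Since $g$ is a near isomorphism and $\psi$ is an isomorphism, $f$ is a near isomorphism as well, so $M'$ is a weak fp-envelope.

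The final assertions are immediate: existence of an fp-envelope for an arbitrary nfp module is the statement of part~(a) applied to any weak fp-envelope, and if $M'_1$ and $M'_2$ are two weak fp-envelopes, then by part~(a) both are fp-envelopes and the standard universal-property argument gives a unique isomorphism $M'_1\xrightarrow{\sim} M'_2$ compatible with the canonical maps from $M$.

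The main obstacle is really just Lemma~\ref{lem:exttau}, which itself rests on Theorem~\ref{thm:ext-torsion} (the vanishing of $\uExt$ between torsion and finitely presented modules in the complete setting); once that lemma is in hand, everything in Proposition~\ref{prop:fpenv} reduces to formal manipulations with the bijections it provides. No additional input is needed from the structure theory of GDPA's beyond what has already gone into Theorem~\ref{thm:ext-torsion}.
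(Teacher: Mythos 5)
Your proof is correct and follows essentially the same route as the paper: both construct the canonical map in (a) by extending $\tau_{\ge n}(M)\to M'$ uniquely via Lemma~\ref{lem:exttau}, verify the universal property by applying the same lemma to the resulting near isomorphism, and deduce (b) from (a) by the uniqueness of fp-envelopes. No gaps.
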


\begin{proof}
(a) Let $M$ be an nfp module and let $M'$ be a weak fp-envelope, so that we are given an isomorphism $\tau_{\ge n}(M) \cong \tau_{\ge n}(M')$ for some $n$. By Lemma~\ref{lem:exttau}, the map $\tau_{\ge n}(M) \to \tau_{\ge n}(M)' \subset M'$ extends uniquely to a map $\varphi \colon M \to M'$. Note that $\varphi$ is a near isomorphism. Thus if $N$ is finitely presented then the map
\begin{displaymath}
\varphi^* \colon \uHom_{\bD}(M', N) \to \uHom_{\bD}(M, N)
\end{displaymath}
is an isomorphism by Lemma~\ref{lem:exttau}, which exactly says that $\varphi$ is an fp-envelope.

(b) Let $M \to M'$ be an fp-envelope of the nfp module $M$, and let $M''$ be a weak fp-envelope of $M$. By (a), $M''$ is canonically an fp-envelope of $M$. Since any two fp-envelopes are canonically isomorphic, we see $M'$ is also a weak fp-envelope.
\end{proof}

For an nfp module $M$, let $\Phi(M)$ denote its fp-envelope. Then $\Phi$ defines a functor $\Mod_{\bD}^{\nfp} \to \Mod_{\bD}^{\rm fp}$, and there is a natural transformation $M \to \Phi(M)$.

\begin{proposition}
The category $\Mod_{\bD}^{\nfp}$ is an abelian subcategory of $\Mod_{\bD}$.
\end{proposition}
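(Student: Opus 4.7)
The plan is to use Proposition~\ref{prop:fpenv} to show that the fp-envelope $\Phi$ is a functor from $\Mod_{\bD}^{\nfp}$ to the (abelian) category of finitely presented $\bD$-modules, and that kernels, cokernels and images of maps of nfp modules agree with those computed after applying $\Phi$ in sufficiently high degrees.

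First I would verify functoriality of $\Phi$: given a map $f \colon M \to N$ of nfp modules, the composition $M \to N \to \Phi(N)$ factors uniquely through $M \to \Phi(M)$ by the fp-envelope universal property, producing $\Phi(f) \colon \Phi(M) \to \Phi(N)$. Moreover, the natural map $M \to \Phi(M)$ is a near isomorphism (Proposition~\ref{prop:fpenv}(b)), so we may choose $n_0$ large enough that $\tau_{\ge n_0}$ applied to $M \to \Phi(M)$ and $N \to \Phi(N)$ are isomorphisms, and hence the square
\begin{displaymath}
\xymatrix{
\tau_{\ge n_0}(M) \ar[r]^{\tau_{\ge n_0}(f)} \ar[d]_{\cong} & \tau_{\ge n_0}(N) \ar[d]^{\cong} \\
\tau_{\ge n_0}(\Phi(M)) \ar[r]^{\tau_{\ge n_0}(\Phi(f))} & \tau_{\ge n_0}(\Phi(N))
}
\end{displaymath}
commutes. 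Since $\bD$ is coherent (Theorem~\ref{thm:grobcoh}), $\ker(\Phi(f))$, $\coker(\Phi(f))$, and $\im(\Phi(f))$ are finitely presented $\bD$-modules.

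Next I would observe that the exact functor $\tau_{\ge n_0}$ commutes with taking kernels, cokernels, and images, so the above isomorphism of diagrams shows that $\tau_{\ge n_0}(\ker f) \cong \tau_{\ge n_0}(\ker \Phi(f))$, and similarly for cokernel and image. This exhibits $\ker \Phi(f)$, $\coker \Phi(f)$, $\im \Phi(f)$ as weak fp-envelopes of $\ker f$, $\coker f$, $\im f$ respectively. The remaining condition for nfp — that each graded piece is finitely generated over $\bk$ and that the module vanishes in sufficiently negative degrees — is immediate: since $\bk$ is noetherian, $\ker(f)_n$ is a submodule of $M_n$, while $\coker(f)_n$ is a quotient of $N_n$, and $\im(f)_n$ is a quotient of $M_n$, and all three vanish for $n \ll 0$ because $M$ and $N$ do.

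Finally I would note that the zero module is trivially nfp, and that finite direct sums of nfp modules are nfp (envelopes add componentwise), so the inclusion $\Mod_{\bD}^{\nfp} \subset \Mod_{\bD}$ is exact and $\Mod_{\bD}^{\nfp}$ inherits an abelian structure. The only subtle point, which is where Theorem~\ref{thm:ext-torsion} is doing the real work via Proposition~\ref{prop:fpenv}, is the functoriality and compatibility of $\Phi$ with the morphisms $f$; everything else is a formal consequence once we know that $M$ and $\Phi(M)$ agree in high degrees and that $\bD$ is coherent.
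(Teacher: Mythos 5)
Your proposal is correct and follows essentially the same route as the paper: form the commutative square relating $f$ to $\Phi(f)$ via the near-isomorphisms $M \to \Phi(M)$ and $N \to \Phi(N)$, and deduce that kernel, cokernel, and image of $f$ agree in high degrees with those of the map of finitely presented modules $\Phi(f)$ (which are finitely presented by coherence). The paper's proof is just a terser version of this argument; your extra checks (functoriality of $\Phi$, exactness of truncation, the finiteness of graded pieces) are the details it leaves implicit.
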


\begin{proof}
Let $f \colon M \to N$ be a map of nfp modules. Then we have a commutative square
\begin{displaymath}
\xymatrix{
M \ar[r]^f \ar[d] & N \ar[d] \\
\Phi(M) \ar[r]^{\Phi(f)} & \Phi(N) }
\end{displaymath}
The vertical maps are near isomorphisms. It follows that the map $\ker(f) \to \ker(\Phi(f))$ is also a near isomorphism, and so $\ker(f)$ is nfp. Similarly for the cokernel and image of $f$. This shows that $\Mod_{\bD}^{\nfp}$ is an abelian subcategory of $\Mod_{\bD}$.
\end{proof}

\begin{proposition}
The functor $\Phi$ is exact.
\end{proposition}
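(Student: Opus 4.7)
The plan is to deduce exactness of $\Phi$ from two inputs already established in the paper: the universal property of the fp-envelope (which makes $\Phi$ into a left adjoint, hence automatically right exact) and the $\uHom$-vanishing result for torsion modules (Theorem~\ref{thm:ext-torsion}), which will supply left exactness.

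First, I would observe that Proposition~\ref{prop:fpenv} exhibits $\Phi$ as the left adjoint of the inclusion $\Mod_{\bD}^{\rm fp} \hookrightarrow \Mod_{\bD}^{\nfp}$: the natural map $M \to \Phi(M)$ is universal for maps from $M$ into a finitely presented module. Since the preceding proposition identifies $\Mod_{\bD}^{\nfp}$ as an abelian subcategory of $\Mod_{\bD}$, cokernels computed in it agree with cokernels in $\Mod_{\bD}$, so $\Phi$ is a left adjoint between abelian categories and is therefore right exact. Thus, for any short exact sequence $0 \to M \to N \to P \to 0$ in $\Mod_{\bD}^{\nfp}$, the sequence $\Phi(M) \to \Phi(N) \to \Phi(P) \to 0$ is exact in $\Mod_{\bD}^{\rm fp}$.

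It remains to show that $\Phi(M) \to \Phi(N)$ is injective. Let $K$ denote its kernel. By coherence of $\bD$ (Theorem~\ref{mainthm:coh}), $K$ is finitely presented. The maps $M \to \Phi(M)$ and $N \to \Phi(N)$ are near isomorphisms, so for all sufficiently large $n$ they are isomorphisms in degree $n$, and the commutative square in degree $n$ together with injectivity of $M_n \to N_n$ forces $\Phi(M)_n \to \Phi(N)_n$ to be injective. Hence $K$ is supported in only finitely many degrees, and in particular every homogeneous element of $K$ is killed by $x^{[n]}$ for $n \gg 0$; thus $K$ is a torsion $\bD$-module in the sense of \S\ref{s:tor}.

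The finishing move is Theorem~\ref{thm:ext-torsion}: since $K$ is torsion and $\Phi(M)$ is finitely presented, $\uHom_{\bD}(K, \Phi(M)) = 0$. But the inclusion $K \hookrightarrow \Phi(M)$ would be a nonzero element of this group unless $K = 0$. Therefore $K = 0$, and combined with the right exactness above this yields the desired exactness of $\Phi$. The only place where any care is needed is ensuring that Theorem~\ref{thm:ext-torsion} applies; this relies on the standing completeness assumption of this subsection, and modulo that no further obstacle arises.
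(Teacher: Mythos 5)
Your proof is correct, but it is organized differently from the paper's. The paper's own argument is more compact: it applies $\Phi$ to the whole short exact sequence at once, observes that the resulting three-term complex agrees with the original exact sequence in all sufficiently high degrees, and concludes that its homology is simultaneously torsion and finitely presented, hence zero (by the lemma following Theorem~\ref{notors}). You instead split the problem in two: right exactness comes for free from the adjunction $\uHom_{\bD}(\Phi(M),N)_0 \cong \uHom_{\bD}(M,N)_0$ furnished by Proposition~\ref{prop:fpenv}, so that $\Phi$, being a left adjoint of the inclusion of abelian subcategories, preserves cokernels; and left exactness is handled by showing that the kernel $K$ of $\Phi(M)\to\Phi(N)$ vanishes in high degrees, hence is torsion, and then invoking Theorem~\ref{thm:ext-torsion} to kill the inclusion $K\hookrightarrow \Phi(M)$. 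Both halves are sound. Two small remarks: for the second half you do not actually need $K$ to be finitely presented (Theorem~\ref{thm:ext-torsion} only requires $K$ torsion and the target finitely presented), so the appeal to coherence there is superfluous; and you could finish more cheaply with the lemma after Theorem~\ref{notors}, since completeness puts $\fI$, and hence infinitely many of the $\pi_n$, into every maximal ideal of $\bk$. The adjunction viewpoint is a nice structural observation that the paper leaves implicit, at the cost of a slightly longer argument.
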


\begin{proof}
Suppose that
\begin{displaymath}
0 \to M_1 \to M_2 \to M_3 \to 0
\end{displaymath}
is an exact sequence of nfp modules. Then the sequence
\begin{displaymath}
0 \to \Phi(M_1) \to \Phi(M_2) \to \Phi(M_3) \to 0
\end{displaymath}
is isomorphic to the original one in all sufficiently high degrees. Thus the homology of this sequence is both torsion and finitely presented, and therefore vanishes.
\end{proof}

Let $\Mod_{\bD}^{\tors}$ denote the category of finitely generated torsion modules. This is a Serre subcategory of $\Mod_{\bD}^{\nfp}$.

\begin{proposition}
The inclusion functor $\Mod_{\bD}^{\rm fp} \to \Mod_{\bD}^{\nfp}$ induces an equivalence of categories $\Mod_{\bD}^{\rm fp} \to \Mod_{\bD}^{\nfp}/\Mod_{\bD}^{\tors}$. The functor $\Phi \colon \Mod_{\bD}^{\nfp} \to \Mod_{\bD}^{\rm fp}$ induces a quasi-inverse.
\end{proposition}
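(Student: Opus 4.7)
The plan is to show that $\Phi$ descends to an exact functor $\bar\Phi$ on the Serre quotient and is quasi-inverse to $\pi\iota$, where $\iota$ denotes the inclusion and $\pi$ the quotient functor. The key input will be Theorem~\ref{thm:ext-torsion}, which makes torsion modules invisible to finitely presented modules.

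First I would verify that every finitely generated torsion module $T$ has fp-envelope $\Phi(T) = 0$. Since $T$ is finitely generated and each generator is killed by $x^{[n]}$ for all sufficiently large $n$, the module $T$ vanishes in all sufficiently high degrees; in particular $T$ is nfp with $0$ as a weak fp-envelope. That $0$ is the fp-envelope follows from $\uHom_{\bD}(T, N) = 0$ for every finitely presented $N$, which is Theorem~\ref{thm:ext-torsion}. Combined with exactness of $\Phi$ (established in the previous proposition), the universal property of Serre quotients yields an exact functor $\bar\Phi \colon \Mod_{\bD}^{\nfp}/\Mod_{\bD}^{\tors} \to \Mod_{\bD}^{\rm fp}$ such that $\bar\Phi \circ \pi = \Phi$.

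Next I would check that $\bar\Phi$ is a quasi-inverse to $\pi\iota$. On the one hand, if $M$ is finitely presented then $\id_M$ is clearly an fp-envelope, so $\Phi(M) \cong M$ naturally, giving $\bar\Phi \circ \pi\iota \cong \id$. On the other hand, for $M$ nfp the canonical map $M \to \Phi(M)$ is a near isomorphism by Proposition~\ref{prop:fpenv}(b), so its kernel and cokernel are finitely generated torsion modules and hence lie in $\Mod_{\bD}^{\tors}$; consequently this map becomes an isomorphism after applying $\pi$. Naturality in $M$ follows from the uniqueness clause in Proposition~\ref{prop:fpenv}(a). This yields $\pi\iota \circ \bar\Phi \cong \id$ and completes the argument.

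I do not anticipate any serious obstacle: all the essential structural work has already been done in Proposition~\ref{prop:fpenv} (functoriality and universal property of $\Phi$) and Theorem~\ref{thm:ext-torsion} (vanishing of $\uHom$ and $\uExt$ from torsion into fp modules). The only minor points to double-check are that finitely generated torsion modules really are nfp (immediate from the definitions) and that the induced isomorphisms assemble into natural isomorphisms of functors (immediate from the universal property characterization of fp-envelopes).
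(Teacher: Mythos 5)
Your proposal is correct and follows essentially the same route as the paper: show $\Phi$ is exact and kills finitely generated torsion modules so that it descends to the Serre quotient, then use that $M \to \Phi(M)$ is a near isomorphism (hence has torsion kernel and cokernel) together with $\Phi(M)=M$ for finitely presented $M$ to conclude the two functors are quasi-inverse. The only difference is cosmetic: you invoke Theorem~\ref{thm:ext-torsion} to see $\Phi(T)=0$ for torsion $T$, whereas this already follows from Proposition~\ref{prop:fpenv}(a) since $0$ is a weak fp-envelope of any finitely generated torsion module.
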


\begin{proof}
Since $\Phi$ is exact and kills $\Mod_{\bD}^{\tors}$, it does factor through the Serre quotient. If $M$ is an nfp module, then the kernel and cokernel of $M \to \Phi(M)$ are torsion, and so this map is an isomorphism in the Serre quotient. Of course, if $M$ is finitely presented, then $\Phi(M)=M$. This proves the claim.
\end{proof}

\begin{proposition}
An extension of nfp modules is again nfp.
\end{proposition}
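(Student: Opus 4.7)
Let $0 \to M_1 \to M_2 \to M_3 \to 0$ be a short exact sequence of $\bD$-modules with $M_1$ and $M_3$ nfp. I will show $M_2$ is nfp. The two pointwise conditions (each $(M_2)_n$ finitely generated over $\bk$; $(M_2)_n=0$ for $n \ll 0$) are immediate from the short exact sequence. The content is to exhibit a finitely presented $\bD$-module $N_2$ and an integer $n$ with $\tau_{\ge n}(M_2) \cong \tau_{\ge n}(N_2)$.

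Write $N_1 = \Phi(M_1)$ and $N_3 = \Phi(M_3)$ for the fp-envelopes, with the canonical maps $M_i \to N_i$. By Proposition~\ref{prop:fpenv}, these maps are near isomorphisms. The short exact sequence defines a class $\xi \in \uExt^1_{\bD}(M_3, M_1)_0$. Push it forward along $M_1 \to N_1$ to obtain $\xi' \in \uExt^1_{\bD}(M_3, N_1)_0$. Since $N_1$ is finitely presented and $M_3 \to N_3$ is a near isomorphism, Lemma~\ref{lem:exttau} gives an isomorphism
\begin{displaymath}
\uExt^1_{\bD}(N_3, N_1)_0 \xrightarrow{\ \sim\ } \uExt^1_{\bD}(M_3, N_1)_0.
\end{displaymath}
Let $\xi'' \in \uExt^1_{\bD}(N_3, N_1)_0$ be the unique preimage of $\xi'$, and let $0 \to N_1 \to N_2 \to N_3 \to 0$ be the corresponding extension. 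Because $\bD$ is coherent (Theorem~\ref{mainthm:coh}) and both $N_1, N_3$ are finitely presented, so is $N_2$.

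Next I construct a map of extensions $M_2 \to N_2$. The bijection $\uExt^1(N_3,N_1) \cong \uExt^1(M_3,N_1)$ is realized concretely by pullback along $M_3 \to N_3$. Therefore, forming the pullback $P$ of $N_2 \twoheadrightarrow N_3 \leftarrow M_3$ gives an extension $0 \to N_1 \to P \to M_3 \to 0$ representing $\xi'$, and by definition of $\xi'$ this extension also arises as the pushout of the top row along $M_1 \to N_1$. The pushout universal property supplies a map $M_2 \to P$, and the pullback universal property then supplies a map $P \to N_2$ over $N_3$; composing yields a commutative diagram
\begin{displaymath}
\xymatrix{
0 \ar[r] & M_1 \ar[r] \ar[d] & M_2 \ar[r] \ar[d] & M_3 \ar[r] \ar[d] & 0 \\
0 \ar[r] & N_1 \ar[r] & N_2 \ar[r] & N_3 \ar[r] & 0
}
\end{displaymath}
in which the outer vertical maps are near isomorphisms. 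The five lemma, applied in each sufficiently large degree, shows that $M_2 \to N_2$ is a near isomorphism, i.e.\ $\tau_{\ge n}(M_2) \cong \tau_{\ge n}(N_2)$ for some $n$. Hence $N_2$ is a weak fp-envelope of $M_2$, and $M_2$ is nfp.

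The main structural step is the lifting of the extension class across the comparison $M_3 \to N_3$ and $M_1 \to N_1$; both rely on the $\uExt$ vanishing of Theorem~\ref{thm:ext-torsion} as packaged by Lemma~\ref{lem:exttau}. Once the extension $N_2$ is constructed, the verification that it is an fp-envelope of $M_2$ is routine: pushout/pullback diagram chase plus the five lemma in sufficiently high degree. I do not expect any obstruction beyond ensuring the pushout and pullback extensions are identified cleanly, which is a standard manipulation with Yoneda extensions.
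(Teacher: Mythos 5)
Your proof is correct and follows essentially the same route as the paper: form the fp-envelopes $N_1=\Phi(M_1)$ and $N_3=\Phi(M_3)$, use Lemma~\ref{lem:exttau} to transfer the extension class across the near isomorphisms (the paper does this as an explicit pushout along $M_1\to\Phi(M_1)$ followed by recognizing the result as a pullback of a finitely presented extension of $\Phi(M_3)$ by $\Phi(M_1)$), and conclude that $M_2$ is near-isomorphic to a finitely presented module. The only cosmetic difference is that you phrase the argument in terms of Yoneda classes and a five-lemma check rather than the paper's two-step pushout/pullback, but the substance is identical.
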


\begin{proof}
Consider an exact sequence
\begin{displaymath}
0 \to M_1 \to M_2 \to M_3 \to 0
\end{displaymath}
with $M_1$ and $M_3$ nfp. We must show that $M_2$ is nfp. Let $M_1'$ and $M_3'$ be the fp-envelopes of $M_1$ and $M_3$. Pusing out the above extension along the map $M_1 \to M_1'$, we obtain a new extension
\begin{displaymath}
0 \to M_1' \to M_2' \to M_3 \to 0.
\end{displaymath}
The map $M_2 \to M_2'$ is a near isomorphism, so it suffices to show $M_2'$ is nfp. By Lemma~\ref{lem:exttau}, the pullback map $\Ext^1(M_3', M_1') \to \Ext^1(M_3, M_1')$ is an isomorphism. Thus there is an extension
\begin{displaymath}
0 \to M_1' \to M_2'' \to M_3' \to 0
\end{displaymath}
such that $M_2'$ is the pullback of $M_2''$ along $M_3 \to M_3'$. Since $M_2''$ is an extension of finitely presented modules, it is finitely presented. The map $M_2' \to M_2''$ is a near isomorphism, and so $M_2'$ is nfp. The result follows.
\end{proof}

\subsection{Decompletion over noetherian rings}

In this section we prove some results about completions that are needed in the following section. We fix a noetherian ring $R$ and an ideal $\fa$ of $R$. We assume that $R$ is $\fa$-adically separated, or equivalently, that $\fa$ is contained in the Jacobson radical of $R$. For a $R$-module $M$, we write $\wh{M}$ for its $\fa$-adic completion.

\begin{proposition}
Let $M$ and $N$ be finitely generated $R$-modules. Then there exists an integer $k \ge 0$ such that for any $n \ge k$ the natural map $\fa^{n-k} \Hom_{R}(M, \fa^k N) \to \Hom(M, \fa^n N)$ is an isomorphism.
\end{proposition}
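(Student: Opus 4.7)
The plan is to reduce this statement to the classical Artin--Rees lemma. I would first choose a surjection $R^m \twoheadrightarrow M$ of $R$-modules, which induces an injection $\Hom_R(M, N) \hookrightarrow \Hom_R(R^m, N) = N^m$. Under this embedding, a map $\varphi \colon M \to N$ lies in $\fa^n N^m$ if and only if its image is contained in $\fa^n N$, so $\Hom_R(M, \fa^n N)$ is identified with $\Hom_R(M, N) \cap \fa^n N^m$. In other words, the decreasing filtration $\{\Hom_R(M, \fa^n N)\}_{n \ge 0}$ of $\Hom_R(M, N)$ is exactly the filtration induced from the $\fa$-adic filtration on the ambient module $N^m$.

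Since $R$ is noetherian and $N^m$ is finitely generated, I would then invoke the Artin--Rees lemma for the submodule $\Hom_R(M, N) \subseteq N^m$: there exists $k \ge 0$ such that for all $n \ge k$,
\[
\fa^n N^m \cap \Hom_R(M, N) \;=\; \fa^{n-k}\bigl(\fa^k N^m \cap \Hom_R(M, N)\bigr).
\]
Translating via the identification above, the left side equals $\Hom_R(M, \fa^n N)$ and the right side equals $\fa^{n-k} \Hom_R(M, \fa^k N)$, which is precisely the image of the natural map in the statement. Since that natural map is injective (being the inclusion $\fa^{n-k} \Hom_R(M, \fa^k N) \hookrightarrow \Hom_R(M, \fa^n N)$ taken inside $\Hom_R(M, N)$), the equality of submodules upgrades it to an isomorphism.

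The argument presents no real obstacle: the only content is the recognition that the functor $\Hom_R(M, -)$ applied to the $\fa$-adic filtration of $N$ can be realized as a submodule filtration on $N^m$, after which Artin--Rees does all the work. I note in particular that neither the $\fa$-adic separatedness of $R$ nor any completeness hypothesis is needed for this lemma --- noetherianity of $R$ together with finite generation of $M$ and $N$ is enough.
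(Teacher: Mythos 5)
Your proof is correct and is in substance the same as the paper's: the paper also starts from a surjection $F \to M$ with $F$ finite free, and its graded module $H = \bigoplus_{n \ge 0} \Hom_R(M, \fa^n N)$ over the blow-up algebra $B = \bigoplus_{n \ge 0} \fa^n$ is exactly the Rees module of the induced filtration $\Hom_R(M,N) \cap \fa^n N^m$ that you feed into Artin--Rees. The only difference is that the paper writes out the Rees-algebra finiteness argument (in effect reproving Artin--Rees in situ) instead of quoting the lemma, and your remark that neither separatedness nor completeness is needed here is consistent with the paper, which invokes those hypotheses only in the later propositions.
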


\begin{proof}
The map in the proposition is clearly injective, so it suffices to prove it is surjective. Let $B = \bigoplus_{n \ge 0} \fa^n$ be the blow-up algebra, a graded noetherian $R$-algebra, and let $H=\bigoplus_{n \ge 0} \Hom_{R}(M, \fa^n N)$, a graded $B$-module. Let $F \to M$ be a surjection with $F$ a finite free $R$-module, and define $H'$ similarly but with $F$ in place of $M$. Then $H$ is naturally a $B$-submodule of $H'$. However, $H'$ is just a finite direct sum of copies of $\bigoplus_{n \ge 0} \fa^n N$, and thus a finitely generated $B$-module. By noetherianity, $H$ is finitely generated as a $B$-module. Suppose $H$ is generated in degrees $\le k$. Then, since $B$ is generated in degree~1 (as a $R$-algebra), we find that the natural map $B_{n-k} \otimes_{R} H_k \to H_n$ is surjective for any $n \ge k$. This proves the proposition.
\end{proof}

\begin{proposition} \label{prop:complhom}
Let $M$ and $N$ be finitely generated $R$-modules. Then the natural map
\begin{displaymath}
\Phi \colon \Hom_{R}(M, N)^{\wedge} \to \Hom_{\wh{R}}(\wh{M}, \wh{N})
\end{displaymath}
is an isomorphism.
\end{proposition}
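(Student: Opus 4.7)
The plan is to realize both sides as the kernel of the same map $\wh N^a \to \wh N^b$ coming from a finite presentation of $M$, and then to verify that $\Phi$ identifies them.

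Since $R$ is noetherian, $M$ admits a finite presentation $R^b \xrightarrow{A} R^a \to M \to 0$. Applying $\Hom_R(-, N)$ gives the left-exact sequence
\[ 0 \to \Hom_R(M,N) \to N^a \xrightarrow{A^T} N^b. \]
Tensoring the presentation with $\wh R$, and applying $\Hom_{\wh R}(-, \wh N)$ to the resulting right-exact sequence $\wh R^b \to \wh R^a \to \wh M \to 0$, yields
\[ 0 \to \Hom_{\wh R}(\wh M, \wh N) \to \wh N^a \xrightarrow{A^T} \wh N^b. \]
Thus it would suffice to show that $\fa$-adic completion carries the first sequence to
\[ 0 \to \Hom_R(M,N)^\wedge \to \wh N^a \xrightarrow{A^T} \wh N^b; \]
for then both $\Hom_R(M,N)^\wedge$ and $\Hom_{\wh R}(\wh M, \wh N)$ are canonically identified with $\ker(A^T \colon \wh N^a \to \wh N^b)$, and $\Phi$ intertwines these identifications by construction.

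The main obstacle, and the only place the preceding proposition enters, is verifying this exactness. The subspace topology on $\Hom_R(M,N)$ inherited from $N^a$ is given by the filtration $\Hom_R(M,N) \cap \fa^n N^a$. Identifying $\phi \in \Hom_R(M,N)$ with the tuple $(\phi(e_i))_i \in N^a$, where $e_1, \ldots, e_a$ are the images of the standard generators of $R^a$, one sees that this intersection equals $\Hom_R(M, \fa^n N)$. The preceding proposition asserts that for $n \ge k$ one has $\Hom_R(M, \fa^n N) = \fa^{n-k}\Hom_R(M, \fa^k N)$; hence the subspace filtration is cofinal with the $\fa$-adic filtration on $\Hom_R(M,N)$, and they define the same topology. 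Consequently $\Hom_R(M,N)^\wedge$ embeds into $\wh N^a$ as the closure of $\Hom_R(M,N)$.

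It remains to show this closure fills out all of $\ker(A^T \colon \wh N^a \to \wh N^b)$. The containment $\subseteq$ is immediate from continuity of $A^T$. For the reverse containment, given $\xi$ in the kernel, approximate $\xi$ by a sequence $\xi_n \in N^a$; then $A^T \xi_n$ lies in $A^T(N^a) \cap \fa^n N^b$, which by the classical Artin--Rees lemma applied to the submodule $A^T(N^a) \subseteq N^b$ is contained in $\fa^{n-c} A^T(N^a)$ for $n$ large. Thus $A^T \xi_n = A^T \eta_n$ for some $\eta_n \in \fa^{n-c} N^a$, and the corrected sequence $\xi_n - \eta_n \in \ker(A^T) = \Hom_R(M,N)$ still converges to $\xi$, establishing density and hence the desired equality.
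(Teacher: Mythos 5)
Your proof is correct. It uses the same two ingredients as the paper's argument --- the preceding proposition (the $\Hom$-version of Artin--Rees) and the classical Artin--Rees lemma applied along a finite presentation of $M$ --- but it packages them differently: rather than proving injectivity and surjectivity of $\Phi$ directly via Cauchy sequences of homomorphisms, you identify both sides with $\ker\bigl(A^T \colon \wh{N}^a \to \wh{N}^b\bigr)$. Your density argument (approximate $\xi$ on generators by $\xi_n \in N^a$, then correct by $\eta_n$ so that the relations are killed) is in substance the paper's surjectivity construction of the maps $g_n = h_n - \sum a_i h_i'$, and your topology-comparison step (the subspace filtration $\Hom_R(M,N)\cap \fa^n N^a = \Hom_R(M,\fa^n N)$ is cofinal with the $\fa$-adic one) is in substance the paper's injectivity step. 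The kernel-comparison framing is arguably cleaner, and it makes visible a shortcut you did not take: since $\fa$-adic completion is exact on finitely generated modules over a noetherian ring (equivalently, $\wh{R}$ is flat and $\wh{M} = M\otimes_R \wh{R}$), the completed sequence $0 \to \Hom_R(M,N)^{\wedge} \to \wh{N}^a \to \wh{N}^b$ is exact for free, which would let you dispense with the closure/density discussion entirely; on the other hand, your hands-on verification keeps the proof self-contained at the same level of generality as the paper's.
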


\begin{proof}
We first prove that $\Phi$ is injective. Suppose $\Phi(f)=0$, and let $(f_i) \in \Hom_{R}(M, N)$ be a sequence converging to $f$ in the $\fa$-adic topology. Since $\Phi(f)=0$, we have $\Phi(f_i) \to 0$. Thus, for any $n \ge 0$, we have $\Phi(f_i) \in \fa^n \Hom_{\wh{R}}(\wh{M}, \wh{N})$ for all $i \gg n$. Fix such an $i$. Thus $f_i$ maps $M$ into $N \cap \fa^n \wh{N} = \fa^n N$, and so $f_i \in \Hom_{R}(M, \fa^n N)$. By the previous proposition, we find $f_i \in \fa^{n-k} \Hom_{R}(M, N)$ for some $k$ depending only on $M$ and $N$. This shows that $f_i \to 0$ in $\Hom_{R}(M, N)$ for the $\fa$-adic topology, and so $f=0$.

We now prove that $\Phi$ is surjective. Let $f \colon \wh{M} \to \wh{N}$ be given. It suffices to find a sequence $(g_n)$ in $\Hom_{R}(M, N)$ converging to $f$. Let
\begin{displaymath}
0 \to K \to F \to M \to 0
\end{displaymath}
be a presentation for $M$ with $F$ finite free, and let $e_1, \ldots, e_r$ be a basis for $F$. Let $x_i=f(e_i) \in \wh{N}$ and choose $y^{(n)}_i \in N$ such that $x_i-y^{(n)}_i \in \fa^n \wh{N}$. Define $h_n \colon F \to N$ by $h_n(e_i)=y^{(n)}_i$. We have $h_n(K) \subset \fa^n N$.

Let $H$ be the image of the restriction map $\Hom_{R}(F, N) \to \Hom_{R}(K, N)$. Let $k \ge 0$ be such that
\begin{displaymath}
\Hom_{R}(K, \fa^n) = \fa^{n-k} \Hom_{R}(K, \fa^k N)
\end{displaymath}
for $n \ge k$, and let $\ell$ be such that
\begin{displaymath}
H \cap \fa^n \Hom_{R}(K, N) = \fa^{n-\ell}(H \cap \fa^{\ell} \Hom_{R}(K, N))
\end{displaymath}
for $n \ge k$. The number $k$ exists by the previous proposition, while the number $\ell$ exists by the Artin--Rees lemma. We have
\begin{displaymath}
h_n \vert_K \in H \cap \Hom_{R}(K, \fa^n N)
=H \cap \fa^{n-k} \Hom_{R}(K, \fa^k N) = \fa^{n-k-\ell} (H \cap \fa^{\ell} \Hom_{R}(K, \fa^k N)),
\end{displaymath}
and so $h_n \vert_K \in \fa^{n-k-\ell} H$. In other words, we can find maps $h'_i \colon F \to N$ and elements $a_i \in \fa^{n-k-\ell}$, for $i$ in some index set $I_n$, such that $h_n$ and $\sum_{i \in I_n} a_i h'_i$ have the same restriction to $K$.

Let $g_n=h_n-\sum_{i \in I_n} a_i h'_i$. Then $g_n$ restricts to~0 on $K$, and thus defines an element of $\Hom_{R}(M, N)$. We have $g_n(e_i)=h_n(e_i)$ modulo $\fa^{n-k-\ell}$, and $h_n(e_i)=x_i$ modulo $\fa^n$. Thus $f-g_n$ maps into $\fa^{n-k-\ell} \wh{N}$, that is, $f-g_n \in \Hom_{\wh{R}}(\wh{M}, \fa^{n-k-\ell} \wh{N})$. Another application of the previous proposition now gives $g_n \to f$, which completes the proof.
\end{proof}

\begin{proposition}
Let $M$ and $N$ be finitely generated $R$-modules. Then $\Isom(M, N)$ is open in $\Hom(M, N)$ for the $\fa$-adic topology.
\end{proposition}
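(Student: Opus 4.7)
The plan is to show that the open neighborhood $f + \fa \Hom_R(M, N)$ of an isomorphism $f$ consists entirely of isomorphisms; this is stronger than needed but proves openness at once. The key observation is that the hypothesis that $\fa$ lies in the Jacobson radical of $R$, together with Nakayama's lemma, lets us promote ``close to an isomorphism'' to ``isomorphism'' via a $1$-step perturbation argument.

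Concretely, I would fix $f \in \Isom(M, N)$ and let $g = f + h$ with $h \in \fa \Hom_R(M, N)$. Composing with $f^{-1}$ on the right yields the endomorphism
\begin{displaymath}
\phi = g f^{-1} = \id_N + h f^{-1} \in \End_R(N).
\end{displaymath}
Writing $h$ as a finite $R$-linear combination $\sum_i a_i h_i$ with $a_i \in \fa$ and $h_i \in \Hom_R(M, N)$ shows that $h f^{-1} = \sum_i a_i (h_i f^{-1})$ lies in $\fa \End_R(N)$. Hence $\phi \equiv \id_N \pmod{\fa \End_R(N)}$, so $\phi(N) + \fa N = N$.

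Since $N$ is finitely generated and $\fa$ is contained in the Jacobson radical of $R$, Nakayama's lemma gives $\phi(N) = N$. Because $N$ is a finitely generated module over the noetherian ring $R$, a surjective endomorphism is automatically an isomorphism (\stacks{05G8}). Thus $\phi$ is an isomorphism of $N$, and so $g = \phi \circ f$ is an isomorphism of $M$ with $N$. This shows $f + \fa \Hom_R(M, N) \subset \Isom(M, N)$, proving that $\Isom(M, N)$ is open.

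There is no real obstacle here: the argument is a direct application of Nakayama plus the standard fact that surjective endomorphisms of noetherian modules are injective. The earlier Proposition~\ref{prop:complhom} is not needed for this statement, although morally it explains why working with the $\fa$-adic topology on $\Hom_R(M,N)$ is the right thing to do.
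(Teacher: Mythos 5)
Your argument is correct and is essentially the paper's proof: both reduce to showing that a perturbation of an isomorphism by an element of $\fa \Hom_R(M,N)$ is still an isomorphism, via Nakayama's lemma (using that $\fa$ lies in the Jacobson radical) plus the fact that a surjective endomorphism of a finitely generated module is injective \stacks{05G8}. The only cosmetic difference is that the paper first reduces to the case $M=N$ and works with a neighborhood of $\id_M$ in $\Aut(M)$, whereas you compose with $f^{-1}$ directly; these are the same idea.
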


\begin{proof}
If $M$ and $N$ are not isomorphic then $\Isom(M, N)$ is empty, and thus open. Thus assume $M$ and $N$ are isomorphic. It suffices to treat the case where $M=N$. Furthermore, since $\Aut(M)$ is a group, it suffices to show that some open neighborhood of $\id_M$ in $\End(M)$ is contained in $\Aut(M)$. In fact, $\id+\fa \End(M) \subset \Aut(M)$. To see this, let $f \colon M \to M$ be an endomorphism such that $f-\id \in \fa \End(M)$. Then $f \colon M/\fa M \to M/\fa M$ is surjective, and so $f$ is surjective by Nakayama's lemma (here it is important that $\fa$ is contained in the Jacobson radical of $R$). Surjectivity of $f$ implies injectivity \stacks{05G8}, and so $f \in \Aut(M)$.
\end{proof}

\begin{proposition}
Let $f \colon M \to N$ be a map of finitely generated $R$-modules. If $\wh{f} \colon \wh{M} \to \wh{N}$ is an isomorphism, then $f$ is an isomorphism.
\end{proposition}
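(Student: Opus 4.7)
The plan is to analyze the cokernel and kernel of $f$ separately, using that $\fa$-adic completion is exact on finitely generated modules over the noetherian ring $R$, together with Nakayama's lemma (which applies because $\fa$ lies in the Jacobson radical of $R$).

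First I would prove surjectivity. Let $C = \coker(f)$, which is a finitely generated $R$-module. Applying the right-exact functor $(-)^{\wedge}$ to the surjection $N \to C \to 0$ yields a surjection $\wh{N} \to \wh{C} \to 0$, and since the image of $\wh{f}$ is all of $\wh{N}$, we conclude $\wh{C} = 0$. For a finitely generated module, $\wh{C}/\fa^n \wh{C} = C/\fa^n C$, so $C/\fa C = 0$, i.e.\ $C = \fa C$. Because $\fa$ lies in the Jacobson radical of $R$ and $C$ is finitely generated, Nakayama's lemma forces $C = 0$, so $f$ is surjective.

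Next I would prove injectivity. Let $K = \ker(f)$, which is finitely generated since $R$ is noetherian and $M$ is finitely generated. From the short exact sequence $0 \to K \to M \to N \to 0$ (now exact on the right by the previous step), the exactness of $\fa$-adic completion on finitely generated modules over a noetherian ring gives
\begin{displaymath}
0 \to \wh{K} \to \wh{M} \xrightarrow{\wh{f}} \wh{N} \to 0.
\end{displaymath}
Since $\wh{f}$ is an isomorphism, $\wh{K} = 0$, and the same Nakayama argument as above yields $K = 0$.

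There is no real obstacle here: all the work has been done in the preceding propositions and in the exactness of completion on finitely generated modules over a noetherian ring. The only point to keep in mind is that we genuinely need $\fa$ to lie in the Jacobson radical (guaranteed by the standing assumption that $R$ is $\fa$-adically separated) so that Nakayama applies; without this hypothesis the statement would fail.
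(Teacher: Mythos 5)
Your proof is correct and follows essentially the same route as the paper: complete the (co)kernel exact sequence using exactness of $\fa$-adic completion on finitely generated modules over the noetherian ring $R$, conclude the kernel and cokernel have vanishing completion, and deduce they vanish. The paper does this in one stroke with the four-term sequence $0 \to K \to M \to N \to C \to 0$ and leaves the final step implicit, whereas you split it into surjectivity and injectivity and spell out the Nakayama argument — a presentational difference only.
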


\begin{proof}
Consider the sequence
\begin{displaymath}
0 \to K \to M \stackrel{f}{\to} N \to C \to 0.
\end{displaymath}
Since completion is exact, we find $\wh{K}=\wh{C}=0$, and so $K=C=0$. Thus $f$ is an isomorphism.
\end{proof}

\begin{proposition} \label{prop:decompl}
Let $M$ and $N$ be finitely generated $R$-modules. If $\wh{M} \cong \wh{N}$ then $M \cong N$.
\end{proposition}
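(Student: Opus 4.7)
The plan is to assemble the three preceding propositions in this subsection into a single argument: density of $\Hom_R(M,N)$ inside $\Hom_{\wh R}(\wh M,\wh N)$ (Proposition~\ref{prop:complhom}), openness of the isomorphism locus (the openness proposition, applied to $\wh R$), and the fact that completion reflects isomorphisms (the preceding proposition). The strategy is to pick an approximation of the given isomorphism $\wh M \cong \wh N$ by an actual $R$-module map $M \to N$, argue that this approximation is itself already an isomorphism after completion, and then descend.

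Concretely, I would proceed as follows. Fix an isomorphism $\varphi \colon \wh M \to \wh N$ of $\wh R$-modules. Since $\wh R$ is again noetherian with $\fa\wh R$ in its Jacobson radical, and $\wh M, \wh N$ are finitely generated $\wh R$-modules, the openness proposition applies to give an integer $N \ge 0$ such that any $\psi \in \Hom_{\wh R}(\wh M,\wh N)$ with $\psi - \varphi \in \fa^N \Hom_{\wh R}(\wh M,\wh N)$ is again an isomorphism.

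By Proposition~\ref{prop:complhom}, the natural map
\begin{displaymath}
\Phi \colon \Hom_R(M,N)^{\wedge} \longrightarrow \Hom_{\wh R}(\wh M,\wh N)
\end{displaymath}
is an isomorphism, and in particular the image of $\Hom_R(M,N)$ is dense in $\Hom_{\wh R}(\wh M,\wh N)$ for the $\fa$-adic topology. Choose $f \in \Hom_R(M,N)$ with $\wh f - \varphi \in \fa^N \Hom_{\wh R}(\wh M,\wh N)$. By the choice of $N$, the map $\wh f$ is an isomorphism, and then the proposition immediately preceding this one gives that $f \colon M \to N$ is an isomorphism, completing the proof.

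There is no real obstacle beyond checking that the openness proposition is being invoked for the correct pair $(\wh R, \fa \wh R)$, which is routine because completion preserves noetherianity and places $\fa \wh R$ in the Jacobson radical. The heart of the argument is already contained in the previous three propositions; this proof just chains them together.
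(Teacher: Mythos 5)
Your proposal is correct and follows exactly the paper's argument: density of $\Hom_R(M,N)$ in $\Hom_{\wh{R}}(\wh{M},\wh{N})$ from Proposition~\ref{prop:complhom}, openness of the (nonempty) isomorphism locus over $\wh{R}$, and descent of the resulting isomorphism via the preceding proposition. Your explicit check that the openness statement applies to the pair $(\wh{R}, \fa\wh{R})$ is a point the paper leaves implicit, but the proof is the same.
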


\begin{proof}
Identify $Y=\Hom_{\bD}(M, N)$ with a subset of $X=\Hom_{\wh{\bD}}(\wh{M}, \wh{N})$. We have shown that $X$ is the $\fa$-adic completion of $Y$, and so $Y$ is dense in $X$. The set of isomorphisms in $X$ is open, and non-empty by hypothesis. Since $Y$ is dense in $X$, it follows that $Y$ meets the locus of isomorphisms. Thus there is a map $M \to N$ that induces an isomorphism on completions, and is therefore itself an isomorphism.
\end{proof}

\subsection{Uniqueness of weak fp-envelopes}

Fix a GDPA $\bD$ over a noetherian ring $\bk$. We assume that the Jacobson radical $\fI$ of $\bk$ contains infinitely many of the $\pi_n$'s. Our main result is:

\begin{theorem} \label{thm:weakfp}
Let $M$ and $N$ be finitely presented $\bD$-modules such that $\tau_{\ge n}(M)$ is isomorphic to $\tau_{\ge n}(N)$ for some $n$. Then $M$ is isomorphic to $N$.
\end{theorem}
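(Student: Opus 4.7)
The plan is to reduce to the complete case (where uniqueness of fp-envelopes is already available) and then descend along the flat map $\bk \to \widehat{\bk}$. Write $\widehat{\bk}$ for the $\fI$-adic completion of $\bk$, and set $\widehat{\bD} = \bD \otimes_{\bk} \widehat{\bk}$, $\widehat{M} = M \otimes_{\bk} \widehat{\bk}$, $\widehat{N} = N \otimes_{\bk} \widehat{\bk}$. By Proposition~\ref{gdpa:bc}, $\widehat{\bD}$ is a GDPA over $\widehat{\bk}$; since $\widehat{\bk}$ is complete with respect to $\fI \widehat{\bk}$ and the latter still contains infinitely many of the $\pi_n$'s, the hypotheses of Theorem~\ref{thm:ext-torsion} are met for $\widehat{\bD}$.

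First I would produce an isomorphism $\widehat{\varphi} \colon \widehat{M} \to \widehat{N}$. The given isomorphism $f \colon \tau_{\ge n}(M) \to \tau_{\ge n}(N)$ base changes to $\widehat{f} \colon \tau_{\ge n}(\widehat{M}) \to \tau_{\ge n}(\widehat{N})$. The inclusion $\tau_{\ge n}(\widehat{M}) \hookrightarrow \widehat{M}$ is a near isomorphism (its cokernel is concentrated in degrees $<n$, hence torsion), so Lemma~\ref{lem:exttau}, applied in the complete setting, gives $\uHom_{\widehat{\bD}}(\widehat{M}, \widehat{N}) \cong \uHom_{\widehat{\bD}}(\tau_{\ge n}(\widehat{M}), \widehat{N})$ via restriction. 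Hence $\widehat{f}$ extends uniquely to a map $\widehat{\varphi} \colon \widehat{M} \to \widehat{N}$; the same recipe applied to $\widehat{f}^{-1}$ yields $\widehat{\psi}$, and uniqueness of extensions forces $\widehat{\psi}\widehat{\varphi} = \mathrm{id}$ and $\widehat{\varphi}\widehat{\psi} = \mathrm{id}$.

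Next I would approximate $\widehat{\varphi}$ by an actual map $\varphi \colon M \to N$. The key input is that each graded piece of $\uHom_{\bD}(M, N)$ is a finitely generated $\bk$-module, which follows from a finite free presentation $F_1 \to F_0 \to M \to 0$ together with the observation that each graded piece of $\uHom_{\bD}(F_i, N)$ is a finite direct sum of graded pieces of $N$, themselves finitely generated over the noetherian ring $\bk$. Writing $H = \uHom_{\bD}(M, N)_0$ and $\widehat{H} = \uHom_{\widehat{\bD}}(\widehat{M}, \widehat{N})_0$, flatness of $\bk \to \widehat{\bk}$ gives $\widehat{H} = H \otimes_{\bk} \widehat{\bk}$, which by finite generation over the noetherian ring $\bk$ coincides with the $\fI$-adic completion of $H$. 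Thus $H$ is $\fI$-adically dense in $\widehat{H}$, and I can pick $\varphi \in H$ with $\widehat{\varphi} - \varphi \in \fI \widehat{H}$.

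The main obstacle, which turns out to be mild once set up correctly, is showing that this $\varphi$ is an isomorphism. Writing $\varphi = \widehat{\varphi} \circ (\mathrm{id} + \widehat{\varphi}^{-1}(\varphi - \widehat{\varphi}))$ inside $\widehat{H}$, the correction term lies in $\fI \cdot \mathrm{End}_{\widehat{\bD}}(\widehat{M})_0$, and on each graded piece acts as an endomorphism of the finitely generated $\widehat{\bk}$-module $\widehat{M}_k$ with entries in $\fI \widehat{\bk}$, which sits in the Jacobson radical of $\widehat{\bk}$. Nakayama then makes $\mathrm{id} + \widehat{\varphi}^{-1}(\varphi - \widehat{\varphi})$ an automorphism on every graded piece, so the base change $\varphi \otimes_{\bk} \widehat{\bk}$ is an isomorphism of graded $\widehat{\bD}$-modules. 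Now the kernel and cokernel of $\varphi$ are finitely presented $\bD$-modules by coherence (Theorem~\ref{thm:grobcoh}), with finitely generated graded pieces that vanish after tensoring with $\widehat{\bk}$; Nakayama applied on each piece (using that $\fI$ is the Jacobson radical of $\bk$) forces them to be zero, and hence $\varphi \colon M \to N$ is the desired isomorphism.
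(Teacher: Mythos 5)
Your argument is correct, and it follows the same overall strategy as the paper's proof — pass to the $\fI$-adic completion, produce an isomorphism $\wh{M} \cong \wh{N}$ there using the vanishing of $\uExt$ against torsion modules (your use of Lemma~\ref{lem:exttau} is exactly the content of Proposition~\ref{prop:fpenv}(a)), then approximate that isomorphism by an honest map $M \to N$ and check that the approximation is still an isomorphism. The descent step, however, is implemented quite differently. The paper first truncates to $M_0 = \tau^{\le d}(M)$ and $N_0 = \tau^{\le d}(N)$, which are finitely generated over the noetherian ring $\tau^{\le d}(\bD)$, and then invokes the general ungraded decompletion results of the preceding subsection: Proposition~\ref{prop:complhom} (whose proof requires the blow-up algebra and Artin--Rees), openness of the isomorphism locus, and Proposition~\ref{prop:decompl}, before lifting the truncated isomorphism back to $M \cong N$ via Lemma~\ref{lem:truncisom}. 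You avoid the truncation entirely by observing that $\uHom_{\bD}(M,N)_0$ is a finitely generated $\bk$-module and that flat base change along $\bk \to \wh{\bk}$ (legitimate because $M$ is finitely presented and $\wh{\bD} = \bD \otimes_{\bk} \wh{\bk}$ is flat over $\bD$) identifies $\uHom_{\wh{\bD}}(\wh{M}, \wh{N})_0$ with the $\fI$-adic completion of $\uHom_{\bD}(M,N)_0$; the openness of the isomorphism locus is then replaced by the explicit factorization $\varphi \otimes \wh{\bk} = \wh{\varphi} \circ u$ with $u \in \id + \fI\, \uHom_{\wh{\bD}}(\wh{M}, \wh{M})_0$, a unit by Nakayama applied degreewise, followed by a second degreewise Nakayama argument on $\ker \varphi$ and $\coker \varphi$. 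This yields a shorter, more self-contained descent that never needs Proposition~\ref{prop:complhom}; the paper's route has the advantage that its decompletion lemmas are stated for arbitrary finitely generated modules over a noetherian ring and are reusable. One minor remark: your appeal to coherence at the end is unnecessary — the graded pieces of $\ker \varphi$ and $\coker \varphi$ are finitely generated over the noetherian ring $\bk$ simply because they are subquotients of graded pieces of $M$ and $N$, which is all the final Nakayama step requires.
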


\begin{corollary} \label{cor:weakfp}
Any two weak fp-envelopes of an nfp module are isomorphic.
\end{corollary}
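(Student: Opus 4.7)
The corollary follows immediately from Theorem~\ref{thm:weakfp}. Given an nfp module $M$ with two weak fp-envelopes $N_1, N_2$, by definition there exist integers $n_1, n_2$ with $\tau_{\ge n_i}(M) \cong \tau_{\ge n_i}(N_i)$. Setting $n = \max(n_1, n_2)$ and noting that $\tau_{\ge n}$ applied to either side is just a further restriction of the grading, we obtain $\tau_{\ge n}(N_1) \cong \tau_{\ge n}(M) \cong \tau_{\ge n}(N_2)$. Since $N_1$ and $N_2$ are finitely presented, Theorem~\ref{thm:weakfp} yields $N_1 \cong N_2$. The real work is the theorem, which I sketch below.

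The strategy for Theorem~\ref{thm:weakfp} is to reduce to the complete case and then descend, mirroring the proof pattern of Proposition~\ref{prop:decompl}. Let $\wh{\bk}$ denote the $\fI$-adic completion of $\bk$ and put $\wh{\bD} = \bD \otimes_{\bk} \wh{\bk}$, $\wh{M} = M \otimes_{\bk} \wh{\bk}$, $\wh{N} = N \otimes_{\bk} \wh{\bk}$. By Proposition~\ref{gdpa:bc}, $\wh{\bD}$ is a GDPA over the complete ring $\wh{\bk}$ (whose completion ideal contains infinitely many $\pi_n$'s), and since $\tau_{\ge n}$ commutes with flat base change we have $\tau_{\ge n}(\wh{M}) \cong \tau_{\ge n}(\wh{N})$. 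Both $\wh{M}$ and $\wh{N}$ are finitely presented $\wh{\bD}$-modules serving as weak fp-envelopes of the common nfp module $\tau_{\ge n}(\wh{M})$, so Proposition~\ref{prop:fpenv} (applied in the complete setting) produces a canonical isomorphism $\varphi \colon \wh{M} \to \wh{N}$.

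To descend $\varphi$, I would analyze the graded $\bk$-module $L = \uHom_{\bD}(M, N)_0$. Choosing a presentation $F_1 \to F_0 \to M \to 0$ by finitely generated free graded $\bD$-modules, one sees that $L$ is the kernel of a map between finite direct sums of graded pieces of $N$; since $\bD$ is coherent (Theorem~\ref{mainthm:coh}) each such graded piece is finitely generated over $\bk$, and by noetherianity $L$ is finitely generated. Flat base change then identifies $\uHom_{\wh{\bD}}(\wh{M}, \wh{N})_0 = L \otimes_{\bk} \wh{\bk} = \wh{L}$. Similarly $R = \uHom_{\wh{\bD}}(\wh{M}, \wh{M})_0$ is a finitely generated, hence $\fI$-adically complete, $\wh{\bk}$-module. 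The geometric series argument inside $R$ shows $\mathrm{id} + u$ is invertible whenever $u \in \fI R$, whence the subset of isomorphisms is open in $\wh{L}$; since $\wh{L}$ is the completion of $L$ and this non-empty open subset contains $\varphi$, density produces some $g \in L$ whose completion $\wh{g}$ is an isomorphism.

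Finally, $g$ itself must be an isomorphism: its kernel and cokernel are finitely presented $\bD$-modules (by coherence), so their graded pieces are finitely generated $\bk$-modules which vanish after $- \otimes_{\bk} \wh{\bk}$; Krull's intersection theorem, applicable because $\fI$ lies in the Jacobson radical of $\bk$, forces each such module to embed into its completion and hence be zero. The main obstacle will be the bookkeeping in the third paragraph---in particular the identification $\uHom_{\wh{\bD}}(\wh{M}, \wh{N})_0 = \wh{L}$ as topological $\bk$-modules and the completeness of the endomorphism ring $R$---both of which rest crucially on the finite generation of graded Hom spaces provided by Theorem~\ref{mainthm:coh}.
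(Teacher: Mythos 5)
Your proof is correct, and the reduction of the corollary to Theorem~\ref{thm:weakfp} is exactly as in the paper. For the theorem itself your skeleton matches the paper's --- complete along $\fI$, apply Proposition~\ref{prop:fpenv} over $\wh{\bk}$ to get $\wh{M} \cong \wh{N}$, then descend --- but your descent is organized genuinely differently. The paper truncates to $M_0=\tau^{\le d}(M)$, $N_0=\tau^{\le d}(N)$ over the noetherian ring $\tau^{\le d}(\bD)$ so that its general decompletion package (Proposition~\ref{prop:complhom}, openness of $\Isom$, Proposition~\ref{prop:decompl}) applies verbatim, and then climbs back up with Lemma~\ref{lem:truncisom}. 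You instead work directly with $L=\uHom_{\bD}(M,N)_0$, observing it is a finitely generated $\bk$-module and that $\wh{L}\cong\uHom_{\wh{\bD}}(\wh{M},\wh{N})_0$ by flat base change along a finite presentation of $M$; this avoids the truncation and Lemma~\ref{lem:truncisom} at the cost of re-deriving the three decompletion facts (Hom commutes with completion, openness of the isomorphism locus via the geometric series in the complete endomorphism ring, and faithfulness of completion on the kernel and cokernel) in the graded $\bD$-setting --- all of which do go through, precisely because everything in sight is a finitely generated module over the noetherian ring $\bk$ with $\fI$ in its Jacobson radical. Two cosmetic remarks: coherence of $\bD$ is not actually needed where you invoke it (finite generation of $L$, and of the graded pieces of $\ker(g)$ and $\coker(g)$, already follows from $M$ being finitely generated and $N$ having finitely generated graded pieces over noetherian $\bk$); and in the last step Nakayama applied to $P/\fI P=0$ is marginally more direct than Krull intersection, though either works.
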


Beware that the isomorphism provided by the theorem is not canonical! We require a few lemmas before proving the theorem. We write $\wh{M}$ for the $\fI$-adic completion of $M$. Note that if $M$ is a graded $\bk$-module then $\wh{M}$ is the direct sum of the completions of the graded pieces of $M$.

\begin{lemma} \label{lem:trunchom}
Let $M$ and $N$ be $\bD$-modules with $M$ finitely presented. Let $n$ be larger than the degrees of generators and relations of $M$. Then the natural map $\uHom_{\bD}(M, N)_0 \to \uHom_{\bD}(\tau^{\le n}(M), \tau^{\le n}(N))_0$ is an isomorphism.
\end{lemma}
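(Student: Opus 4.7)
The strategy is to reduce everything to the behavior of a chosen presentation in degrees $\le n$. Fix a presentation $F_1 \xrightarrow{\partial} F_0 \to M \to 0$ where $F_0 = \bigoplus_i \bD[-d_i]$ and $F_1 = \bigoplus_j \bD[-e_j]$ with all $d_i, e_j \le n$; such a presentation exists by the hypothesis on $n$. Applying the right-exact functor $\tau^{\le n}$ (which is a quotient functor, so in particular preserves surjections), we get $\tau^{\le n}(F_0) \twoheadrightarrow \tau^{\le n}(M)$, and this surjection is induced by the corresponding map of $\bD$-modules before truncation.

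For injectivity of the restriction map: suppose $f \colon M \to N$ is a degree-$0$ map with $\tau^{\le n}(f)=0$. Let $g_i \in M_{d_i}$ be the image of the $i$th basis vector of $F_0$. Then $f(g_i) \in N_{d_i}$, and since $d_i \le n$, the projection $N \to \tau^{\le n}(N)$ is the identity on $N_{d_i}$. The assumption $\tau^{\le n}(f) = 0$ forces the image of $f(g_i)$ in $\tau^{\le n}(N)_{d_i}$ to vanish, so $f(g_i)=0$. Since the $g_i$ generate $M$, we conclude $f=0$.

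For surjectivity: given $\varphi \colon \tau^{\le n}(M) \to \tau^{\le n}(N)$ of degree $0$, compose with the quotient $\tau^{\le n}(F_0) \twoheadrightarrow \tau^{\le n}(M)$ to obtain a degree-$0$ map $\psi \colon \tau^{\le n}(F_0) \to \tau^{\le n}(N)$. Because each basis vector of $F_0$ lives in degree $d_i \le n$, $\psi$ is determined by the images $\psi(e_i) \in \tau^{\le n}(N)_{d_i} = N_{d_i}$. Define $\tilde f \colon F_0 \to N$ by sending $e_i$ to this element of $N_{d_i}$; this is a well-defined degree-$0$ $\bD$-module map, and by construction $\tau^{\le n}(\tilde f) = \psi$. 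We must show $\tilde f \circ \partial = 0$, after which $\tilde f$ descends to the desired lift $f \colon M \to N$. Any basis relation $r_j \in F_1$ lies in degree $e_j \le n$, so $\tilde f(\partial r_j) \in N_{e_j}$, and the projection $N_{e_j} \to \tau^{\le n}(N)_{e_j}$ is the identity. But $\tau^{\le n}(\tilde f \circ \partial) = \psi \circ \tau^{\le n}(\partial)$ vanishes, since $\tau^{\le n}(\partial)$ lands in the kernel of $\tau^{\le n}(F_0) \to \tau^{\le n}(M)$ and $\psi$ factors through that quotient. Hence $\tilde f(\partial r_j)=0$, $\tilde f$ descends to $f$, and $\tau^{\le n}(f)$ agrees with $\varphi$ on the generators of $\tau^{\le n}(M)$, so $\tau^{\le n}(f)=\varphi$.

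There is no real obstacle here: the whole argument is a matter of carefully separating the quotient-module structure of $\tau^{\le n}(N)$ from $N$ itself and using that both agree in degrees $\le n$. The only point requiring a moment of care is verifying that the putative lift $\tilde f$ on $F_0$ kills the relations, which is where the degree bound on relations is actually used.
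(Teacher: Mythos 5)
Your proof is correct, and it is in substance the argument the paper itself describes informally before formalizing it: ``to give a map $M \to N$ one says where the generators go and checks that the relations go to $0$, and all of this can be done by looking up to degree $n$.'' The only difference is in the mechanics. You fix a presentation $F_1 \to F_0 \to M \to 0$ with generators and relations in degrees $\le n$ and construct the inverse map by hand, checking that the lift to $F_0$ kills the relations because $N$ and $\tau^{\le n}(N)$ agree in degrees $\le n$. The paper instead applies $\uHom_{\bD}(M,-)_0$ to the sequence $0 \to \tau_{>n}(N) \to N \to \tau^{\le n}(N) \to 0$ and observes that both $\uHom_{\bD}(M, \tau_{>n}(N))_0$ and $\uExt^1_{\bD}(M, \tau_{>n}(N))_0$ vanish for degree reasons (the latter because the kernel $K$ of $F \to M$ is generated in degrees $\le n$). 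These two vanishings correspond exactly to your two steps -- injectivity/well-definedness on generators, and the lift killing the relations -- so the content is identical; the homological packaging just trades your explicit diagram chase for an appeal to the long exact sequence. Both are complete proofs.
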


\begin{proof}
Informally, to give a map $M \to N$ one says where the generators go and checks that relations to go to~0, and all of this can be done by looking up to degree $n$. We now give a formal argument. We have an exact sequence
\begin{displaymath}
0 \to \tau_{>n}(N) \to N \to \tau^{\le n}(N) \to 0.
\end{displaymath}
Applying $\uHom_{\bD}(M, -)_0$, we find
\begin{displaymath}
0 \to \uHom_{\bD}(M, N)_0 \to \uHom_{\bD}(M, \tau^{\le n}(N))_0 \to \uExt^1_{\bD}(M, \tau_{>n}(M))_0.
\end{displaymath}
Note that $\uHom_{\bD}(M, \tau_{>n}(N))_0=0$, since the generators of $M$ must map to 0 for degree reasons. Now, consider an exact sequence
\begin{displaymath}
0 \to K \to F \to M \to 0
\end{displaymath}
with $F$ free. The map $\uHom_{\bD}(K, \tau_{>n}(N))_0 \to \uExt^1(M, \tau_{>n}(N))_0$ is surjective. But once again, this $\uHom$ is~0 for degree reasons. We have thus shown that the natural map $\uHom_{\bD}(M, N)_0 \to \uHom_{\bD}(M, \tau^{\le n}(N))_0$ is an isomorphism. It is clear that the natural map
\begin{displaymath}
\uHom_{\bD}(\tau^{\le n}(M), \tau^{\le n}(N))_0 \to \uHom_{\bD}(M, \tau^{\le n}(N))_0
\end{displaymath}
is also an isomorphism, and so the result follows.
\end{proof}

\begin{lemma} \label{lem:truncisom}
Let $M$ and $N$ be finitely presented $\bD$-modules. Let $n$ be larger than the degrees of generators and relations of both $M$ and $N$. Then the natural map $\uIsom_{\bD}(M, N)_0 \to \uIsom_{\bD}(\tau^{\le n}(M), \tau^{\le n}(N))_0$ is an isomorphism.
\end{lemma}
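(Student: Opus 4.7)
The plan is to reduce everything to Lemma~\ref{lem:trunchom}, which already supplies the bijection on $\uHom$'s. Write $T=\tau^{\le n}$ throughout and abbreviate $\Phi_{X,Y}\colon\uHom_{\bD}(X,Y)_0\to\uHom_{\bD}(T(X),T(Y))_0$ for the natural truncation map; by Lemma~\ref{lem:trunchom}, $\Phi_{X,Y}$ is an isomorphism whenever $X$ is finitely presented with generators and relations in degrees $<n$, a hypothesis satisfied for all four relevant pairs $(M,N),(N,M),(M,M),(N,N)$.

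First, it is clear that if $\varphi\colon M\to N$ is an isomorphism then $T(\varphi)$ is too, so $\Phi_{M,N}$ carries $\uIsom_{\bD}(M,N)_0$ into $\uIsom_{\bD}(T(M),T(N))_0$. For the converse, suppose $f\in\uIsom_{\bD}(T(M),T(N))_0$, and let $g=f^{-1}$. By Lemma~\ref{lem:trunchom} applied to $(N,M)$, there is a unique $\psi\in\uHom_{\bD}(N,M)_0$ with $T(\psi)=g$, and by Lemma~\ref{lem:trunchom} applied to $(M,N)$ there is a unique $\varphi\in\uHom_{\bD}(M,N)_0$ with $T(\varphi)=f$.

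Now consider the composition $\psi\varphi\in\uHom_{\bD}(M,M)_0$. Its truncation is $T(\psi)T(\varphi)=gf=\id_{T(M)}=T(\id_M)$. By the injectivity half of Lemma~\ref{lem:trunchom} (applied to $(M,M)$), this forces $\psi\varphi=\id_M$. Symmetrically $\varphi\psi=\id_N$, so $\varphi$ is an isomorphism with $\Phi_{M,N}(\varphi)=f$. This proves the map on $\uIsom$'s is a bijection.

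There is no real obstacle here — the whole content sits in Lemma~\ref{lem:trunchom}; the present lemma is simply the observation that a bijection on $\uHom$ that is compatible with composition automatically restricts to a bijection on $\uIsom$, because isomorphisms are detected by the existence of a two-sided inverse and the identity maps in both categories correspond under truncation.
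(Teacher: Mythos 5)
Your proof is correct and is essentially identical to the paper's: both lift the truncated isomorphism and its inverse via Lemma~\ref{lem:trunchom}, then use the injectivity of that lemma (applied to $(M,M)$ and $(N,N)$) to conclude that the lifted compositions equal the identities. Nothing further is needed.
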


\begin{proof}
Injectivity follows immediately from the previous proposition. Let $\ol{f} \colon \tau^{\le n}(M) \to \tau^{\le n}(N)$ be an isomorphism, and let $\ol{g}$ be its inverse. By the previous proposition, $\ol{f}=\tau^{\le n}(f)$ and $\ol{g}=\tau^{\le n}(g)$. Since $\tau^{\le n}(gf)=\tau^{\le n}(\id_M)$, the previous proposition gives $gf=\id_M$. Similarly $fg=\id_N$, and the result follows.
\end{proof}

\begin{proof}[Proof of Theorem~\ref{thm:weakfp}]
Let $M$ and $N$ be finitely presented $\bD$-modules with $\tau_{\ge n}(M) \cong \tau_{\ge n}(N)$. Suppose that the generators and relations of $M$ and $N$ are in degrees $\le d$. Let $M_0=\tau^{\le d}(M)$ and $N_0=\tau^{\le d}(N)$, both of which are finitely generated modules over the noetherian ring $R=\tau^{\le d}(\bD)$. Since $\tau_{\ge n}(M) \cong \tau_{\ge n}(N)$, we find $\tau_{\ge n}(\wh{M}) \cong \tau_{\ge n}(\wh{N})$. Proposition~\ref{prop:fpenv} yields an isomorphism $\wh{M} \cong \wh{N}$ of $\wh{\bD}$-modules. Applying $\tau^{\le d}$, we obtain an isomorphism $\wh{M}_0 \cong \wh{N}_0$ of $\wh{R}$-modules. Proposition~\ref{prop:decompl} now gives an isomorphism $M_0 \cong N_0$ of $R$-modules. (Here we use $\fa=\fI \cdot R$.) Finally, Lemma~\ref{lem:truncisom} gives $M \cong N$, completing the proof.
\end{proof}

\subsection{Some counterexamples}

We now give some examples showing that the hypotheses in some of our results are necessary.

\subsubsection{An nfp module without an fp-envelope}

Let $\bk=\bZ_{(p)}$, let $\bD$ be the classical divided power algebra over $\bk$, and let $M=\bD/(x^{[1]})$.

\begin{proposition}
\label{prop:counter-example}
The nfp module $\tau_{\ge 1}(M)$ does not have an fp-envelope.
\end{proposition}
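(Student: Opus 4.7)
The plan is to exploit the natural $\bZ_p$-module structure on $N \coloneq \tau_{\ge 1}(M)$. First, I compute $M_n = \bk/n\bk$ in $M = \bD/(x^{[1]})$ (using $x^{[1]} \cdot x^{[n-1]} = n \, x^{[n]}$), so each graded piece $N_n$ for $n \ge 1$ has exponent dividing a power of $p$. Hence the $\bk = \bZ_{(p)}$-action on each piece extends uniquely to a $\bZ_p$-action, and these extensions commute with the $\bD$-action (which is $\bk$-linear on each graded piece). This yields a ring homomorphism $\rho \colon \bZ_p \to \uHom_{\bD}(N,N)_0$, which is injective: for $\alpha = p^r u$ with $u \in \bZ_p^\times$, multiplication by $\alpha$ is nonzero on $N_{p^{r+1}} = \bZ/p^{r+1}$.

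Next, suppose for contradiction that $\iota \colon N \to N'$ is an fp-envelope with $N'$ finitely presented. Since $M$ itself is finitely presented (Theorem~\ref{mainthm:coh} applied to the ideal $(x^{[1]})$), the inclusion $j \colon N \hookrightarrow M$ must factor as $j = \tilde{j} \circ \iota$, which forces $\iota$ to be injective. For each $\alpha \in \bZ_p$, applying the universal property to $\iota \circ \rho(\alpha) \colon N \to N'$ produces some $\tilde\alpha \in \uHom_{\bD}(N', N')_0$ with $\tilde\alpha \circ \iota = \iota \circ \rho(\alpha)$. Setting $K \coloneq \{\psi \in \uHom_{\bD}(N',N')_0 : \psi \circ \iota = 0\}$, the residue $\tilde\alpha \bmod K$ is independent of the chosen lift, and $\alpha \mapsto \tilde\alpha \bmod K$ gives a well-defined ring homomorphism $\bZ_p \to \uHom_{\bD}(N',N')_0 / K$. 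Its injectivity is immediate from the injectivity of $\iota$ and of $\rho$.

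The contradiction will then come from the observation that $\uHom_{\bD}(N', N')_0$ is a finitely generated $\bZ_{(p)}$-module: a finite presentation $\bigoplus_j \bD[d_j] \to \bigoplus_i \bD[e_i] \to N' \to 0$ embeds it, via left-exactness of $\uHom$, into a finite direct sum of graded pieces of $N'$, each finitely generated over $\bk$; since $\bk$ is noetherian, the quotient $\uHom_{\bD}(N',N')_0/K$ is also finitely generated over $\bZ_{(p)}$. But $\bZ_p$ is not finitely generated over $\bZ_{(p)}$ (any cyclic $\bZ_{(p)}$-module surjecting onto $\bF_p$ would equal $\bZ_{(p)}$ by Nakayama), contradicting the injection. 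The key step requiring care is the injectivity of $\iota$, which is where the finite presentability of the ambient module $M$ is essential; without an actual finitely presented module to map $N$ into injectively, one could not rule out $\iota$ having a large kernel that swallows all the $\bZ_p$-action.
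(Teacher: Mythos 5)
Your proof is correct and rests on the same idea as the paper's: the faithful $\bZ_p$-action on $\tau_{\ge 1}(M)$ cannot coexist with the finite generation over $\bZ_{(p)}$ of a Hom-group attached to a putative finitely presented envelope. The paper reaches the contradiction slightly more directly, identifying $\uHom_{\bD}(\tau_{\ge 1}(M),\tau_{\ge 1}(M))_0$ with $\uHom_{\bD}(N,M)_0$ via the universal property, which avoids your detour through $\uHom_{\bD}(N',N')_0/K$ (where, as a quibble, $K$ is only a left ideal, so your map is better viewed as an injection of $\bZ_{(p)}$-modules than as a ring homomorphism --- which is all the argument needs).
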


\begin{proof}
Since $M_n$ is torsion for $n \ge 1$, we see that $\tau_{\ge 1}(M)$ is naturally a $\bZ_p$-module. Furthermore, since there is no bound on the exponent of the group $M_n$, it follows that $\bZ_p$ acts faithfully on $\tau_{\ge 1}(M)$.

Now, suppose that $N$ is an fp-envelope of $\tau_{\ge 1}(M)$. Then
\begin{displaymath}
\uHom_{\bD}(\tau_{\ge 1}(M), \tau_{\ge 1}(M))_0 = \uHom_{\bD}(\tau_{\ge 1}(M), M)_0 = \uHom_{\bD}(N, M)_0.
\end{displaymath}
The first equality is elementary, while the second comes from the definition of fp-envelope. We have shown that the left group above contains $\bZ_p$, while the right group is a finitely generated $\bZ_{(p)}$-module. This is a contadiction, and so the result follows.
\end{proof}

This shows that the completeness assumption in place in Proposition~\ref{prop:fpenv} cannot be removed.

\subsubsection{An nfp modules with multiple weak fp-envelopes}

Let $\bk$ be the ring of integers in a number field $\bK$, let $\bD$ be the classical divided power algebra
%\rohit{Fixed the mistype}
over $\bk$, and let $M=\bD/(x^{[1]})$.

\begin{proposition}
\label{prop:class-group}
If the class group of $\bK$ is non-trivial then the nfp module $\tau_{\ge 1}(M)$ admits non-isomorphic weak fp-envelopes.
\end{proposition}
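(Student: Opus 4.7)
The plan is to exhibit, for any non-principal non-zero ideal $\fa \subset \bk$ (which exists since $\Cl(\bK)$ is non-trivial), a finitely presented $\bD$-module $N$ that is a weak fp-envelope of $\tau_{\ge 1}(M)$ but is not isomorphic to $M$. First I would set $N = M \otimes_\bk \fa$. Since $\fa$ is invertible (hence flat and finitely generated) and $\bD$ is coherent, $N$ is finitely presented, with graded pieces $N_0 = \fa$ and $N_k = \fa/k\fa$ for $k \ge 1$. Because $\fa$ is non-principal, $N_0 \not\cong M_0 = \bk$ as $\bk$-modules, and therefore $N \not\cong M$ as $\bD$-modules.

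The bulk of the argument will be the construction of a $\bD$-module isomorphism $\tau_{\ge 1}(M) \xrightarrow{\sim} \tau_{\ge 1}(N)$, which exhibits $N$ as another weak fp-envelope of $\tau_{\ge 1}(M)$. Both sides are $\bk$-torsion, since each graded piece is a cyclic torsion $\bk$-module. They therefore decompose canonically into $\fp$-primary summands over the non-zero prime ideals $\fp$ of $\bk$, giving $\tau_{\ge 1}(M) = \bigoplus_\fp T_\fp$ and $\tau_{\ge 1}(N) = \bigoplus_\fp T'_\fp$. Because the $\bD$-action is $\bk$-linear and $\bk$-linear maps preserve primary decompositions of torsion $\bk$-modules, these are direct sum decompositions of $\bD$-modules.

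For each $\fp$ I would select $\alpha_\fp \in \fa \setminus \fp\fa$; this is possible since $\fa/\fp\fa$ is a nonzero $\bk/\fp$-vector space (in fact one-dimensional, as $\fa$ is locally principal at $\fp$). The $\bD$-module homomorphism $M \to N = M \otimes_\bk \fa$ given by $m \mapsto m \otimes \alpha_\fp$ restricts on each graded piece of $T_\fp$ to multiplication by $\alpha_\fp$ viewed as a map $\bk/\fp^j \to \fa/\fp^j\fa$, and this is an isomorphism for every $j$ because $\alpha_\fp$ is a local generator of $\fa$ at $\fp$. Thus each restriction $T_\fp \to T'_\fp$ is a $\bD$-module isomorphism, and assembling them yields the desired $\phi = \bigoplus_\fp (\alpha_\fp \cdot) \colon \tau_{\ge 1}(M) \xrightarrow{\sim} \tau_{\ge 1}(N)$.

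The main conceptual point is that while no single element $\alpha \in \fa$ can generate $\fa$ at every prime simultaneously (such an element would force $\fa$ to be principal), the torsion nature of $\tau_{\ge 1}(M)$ splits the problem into independent local problems via primary decomposition, and we are free to pick different local generators at different primes. The one subtlety to verify is that the prime-by-prime isomorphisms assemble into a genuinely $\bD$-linear map on the direct sum, but this is automatic because $\bk$ is central in $\bD$, so multiplication by any element of $\bk$ commutes with the action of $\bD$.
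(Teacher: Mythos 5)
Your proof is correct and follows essentially the same route as the paper: take $N = M \otimes_{\bk} \fa$ for a non-principal ideal $\fa$, observe that the degree-zero pieces ($\bk$ versus $\fa$) distinguish $M$ from $N$, and use the torsion nature of the positive-degree pieces to identify $\tau_{\ge 1}(M)$ with $\tau_{\ge 1}(N)$. The only difference is in the last step, which the paper dispatches in one line by noting that $\tau_{\ge 1}(M)$ is a module over $\wh{\bk} = \bk \otimes_{\bZ} \wh{\bZ}$ and that $\fa \otimes_{\bk} \wh{\bk} \cong \wh{\bk}$, whereas you unpack exactly this fact by hand via primary decomposition and a choice of local generator $\alpha_{\fp}$ of $\fa$ at each prime.
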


\begin{proof}
Each graded piece of $\tau_{\ge 1}(M)$ is torsion as a $\bk$-module, and is thus a module over $\wh{\bk} = \bk \otimes_{\bZ} \wh{\bZ}$. Let $\fa$ be an ideal of $\bk$. Then $\fa \otimes_{\bk} \wh{\bk}$ is isomorphic to $\wh{\bk}$. Thus $\tau_{\ge 1}(M \otimes_{\bk} \fa)$ is isomorphic to $\tau_{\ge 1}(M)$. It follows that both $M$ and $M \otimes_{\bk} \fa$ are weak fp-envelopes of $M$. If $\fa$ is not principal then these modules are not isomorphic, since their degree~0 pieces are not isomorphic.
\end{proof}

We thus see that in Corollary~\ref{cor:weakfp}, the hypothesis that infinitely many of the $\pi_n$'s belong to the Jacobson radical is necessary. (Note that in the above example, the Jacobson radical of $\bk$ is 0, and $\pi_n$ is nonzero for all $n \ge 1$.)

\begin{remark}
In fact, there is a simpler, though less interesting, counterexample: taking $\bD=\bk[x]$, the nfp module $\bD$ itself admits non-isomorphism fp-envelopes, such as $\bD$ and $\bD \oplus \bD/(x)$.
\end{remark}

\section{Open problems} \label{s:prob}

\setcounter{enumi}{0}
\newcommand{\prob}{\removelastskip\vskip.5\baselineskip\noindent\refstepcounter{enumi}{\bf \arabic{enumi}.}\ }

\prob
Does there exist a graded-coherent GDPA $\bD$ such that $\bD^{[h]}$ is not graded-coherent for some $h$?

\prob
Let $\bD$ be the classical divided power algebra over $\bk$. Suppose that $\bD$ is graded-coherent and $\bD \otimes \bQ \cong (\bk \otimes \bQ)[x]$ is Gr\"obner-coherent. Is $\bD$ Gr\"obner-coherent?

\prob
In Proposition~\ref{prop:tor1} we computed $\Tor^{\bD}_1(\bk, \bk)$. Can one compute $\Tor^{\bD}_{\bullet}(\bk, \bk)$ as a co-algebra? If $M$ is a finitely presented $\bD$-module, does $\Tor^{\bD}_{\bullet}(M, \bk)$ admit a nice structure as a co-module?

\prob
If $f \colon M \to N$ is a surjection of special modules, is $\ker(f)$ special?

\prob
Can special dimension be detected by the vanishing of some kind of derived functor?

\prob
Conjecture~\ref{conj:k} on the structure of $\uK(\bD)$ when $(\ast)$ does not hold, e.g., for the $q$-divided power algebra.
%\rohit{Fixed the mistype}.
To prove this, it would be useful to have a refined version of the $\rL$ invariant that picks off the $\cK_n$ piece of a class in $\uK(\bD)$, much as $\rL$ picks off the $\cK_+$ piece. At the very least, there should be a ``residue map'' $\uK(\bD) \to \cK_n \otimes \bQ[\zeta_n]$ for each $n \ge 1$, where $\zeta_n$ is primitive $n$th root of unity, corresponding to taking the residue of an element of $\ul{\sK}$ at $t=\zeta_n$. One can construct this map for the $q$-divided power algebra over $\bZ[q]$ using Proposition~\ref{prop:qdprat}, but for a general GDPA we do not know how to construct it.

\prob
Does the equivalence in Remark~\ref{rmk:dercat} hold? Can one reconstruct $\cD$ from $\cF$, $\cD^t$, and $\cD^t \cap \cF$ in some direct manner?

\prob
Are weak fp-envelopes unique when $\bk=\bZ$?

\prob
Suppose $\bk$ is noetherian and its Jacobson radical contains infinitely many of the $\pi_n$'s. Is $\Mod_{\bD}^{\nfp}$ abelian?

\end{document}